\documentclass[10pt]{amsart}

\usepackage{bbm}
\usepackage{eucal}
\numberwithin{equation}{subsection}
\usepackage{xcolor}
\definecolor{winered}{rgb}{0.8,0,0}
\definecolor{deepblue}{rgb}{0,0,0.8}
\usepackage[colorlinks=true]{hyperref}
\hypersetup{linkcolor=black, citecolor=black}
\usepackage{amsmath}
\usepackage{amsthm}
\usepackage{amssymb}
\usepackage{mathrsfs}
\usepackage{tikz-cd}
\newtheorem{thm}{Theorem}[subsection]
\newtheorem{prop}[thm]{Proposition}
\newtheorem{cor}[thm]{Corollary}

\newtheorem{lem}[thm]{Lemma}
\theoremstyle{definition}
\newtheorem{df}[thm]{Definition}

\newtheorem{exm}[thm]{Example}
\newtheorem{const}[thm]{Construction}

\theoremstyle{remark}


\newcommand{\bC}{\mathbf{C}}


\newcommand{\A}{\mathbb{A}}

\newcommand{\E}{\mathbb{E}}

\newcommand{\G}{\mathbb{G}}

\newcommand{\N}{\mathbb{N}}
\renewcommand{\P}{\mathbb{P}}

\newcommand{\Z}{\mathbb{Z}}


\newcommand{\cA}{\mathcal{A}}

\newcommand{\cC}{\mathcal{C}}
\newcommand{\cD}{\mathcal{D}}
\newcommand{\cE}{\mathcal{E}}
\newcommand{\cF}{\mathcal{F}}
\newcommand{\cG}{\mathcal{G}}

\newcommand{\cK}{\mathcal{K}}

\newcommand{\cM}{\mathcal{M}}

\newcommand{\cO}{\mathcal{O}}

\newcommand{\cS}{\mathcal{S}}

\newcommand{\cV}{\mathcal{V}}



\newcommand{\sT}{\mathscr{T}}



\DeclareMathOperator{\Hom}{Hom}

\DeclareMathOperator{\Spec}{Spec}

\newcommand{\id}{\mathrm{id}}


\newcommand{\SH}{\mathrm{SH}}
\newcommand{\DM}{\mathrm{DM}}
\newcommand{\DA}{\mathrm{DA}}


\newcommand{\Sh}{\mathrm{Sh}}

\newcommand{\Sp}{\mathrm{Sp}}

\newcommand{\cofib}{\mathrm{cofib}}


\newcommand{\infCat}{\mathrm{Cat}}
\newcommand{\CAlg}{\mathrm{CAlg}}
\newcommand{\Fun}{\mathrm{Fun}}

\newcommand{\PrL}{\mathrm{Pr}^\mathrm{L}}

\newcommand{\Corr}{\mathrm{Corr}}
\newcommand{\cart}{\mathrm{cart}}
\newcommand{\op}{\mathrm{op}}


\newcommand{\unit}{\mathbf{1}}


\newcommand{\dNis}{\mathrm{dNis}}
\newcommand{\setale}{\mathrm{s\acute{e}t}}
\newcommand{\ketale}{\mathrm{k\acute{e}t}}



\newcommand{\Set}{\mathrm{Set}}
\newcommand{\Sch}{\mathrm{Sch}}
\newcommand{\lSch}{\mathrm{lSch}}
\newcommand{\lSpc}{\mathrm{lSpc}}

\newcommand{\Sm}{\mathrm{Sm}}

\newcommand{\rank}{\mathrm{rank}}
\newcommand{\ver}{\mathrm{ver}}
\newcommand{\divi}{\mathrm{div}}

\newcommand{\ol}{\overline}
\newcommand{\pt}{\mathrm{pt}}

\newcommand{\eSm}{\mathrm{eSm}}
\newcommand{\sSm}{\mathrm{sSm}}
\newcommand{\lSm}{\mathrm{lSm}}

\newcommand{\gp}{\mathrm{gp}}

\newcommand{\Spc}{\mathrm{Spc}}

\newcommand{\colim}{\mathop{\mathrm{colim}}}

\newcommand{\ul}{\underline}
\newcommand{\Ho}{\mathrm{Ho}}

\newcommand{\Mod}{\mathrm{Mod}}
\newcommand{\Op}{\mathrm{Op}}
\newcommand{\Fin}{\mathrm{Fin}}

\newcommand{\LMod}{\mathrm{LMod}}

\newcommand{\ML}{\mathrm{M\Lambda}}

\newcommand{\st}{\mathrm{st}}
\newcommand{\ex}{\mathrm{ex}}

\begin{document}
\title{Log motivic exceptional direct image functors}
\author{Doosung Park}
\address{Department of Mathematics and Informatics, University of Wuppertal, Germany}
\email{dpark@uni-wuppertal.de}
\subjclass[2020]{Primary 14F42; Secondary 14A21}
\keywords{log motives, motivic homotopy theory, six-functor formalism}

\begin{abstract}
In this paper, we construct the motivic exceptional direct image functors for fs log schemes. This construction is a part of the motivic six-functor formalism for fs log schemes.
\end{abstract}
\maketitle
\section{Introduction}

The Grothendieck six-functor formalism of schemes consists of the functors $f^*$, $f_*$, $f_!$, $f^!$, $\otimes$, and $Hom$ (internal Hom) together with the relations among them.
Grothendieck and his collaborators \cite{SGA4} show that the derived categories of $\ell$-adic sheaves satisfy this formalism.
Ayoub \cite{Ayo071},\cite{Ayo072} establishes this formalism in the motivic setting,
and he defines $f_!$ for quasi-projective morphisms.

\

In both the $\ell$-adic and motivic settings,
one can construct $f^*$, $f_*$, $\otimes$, and $Hom$ by formal categorical methods.
On the other hand,
defining $f_!$ requires an extra effort.
A typical restriction on $f$ is that $f$ is a compactifiable morphism,
which means that $f$ admits a factorization $X\xrightarrow{j} Y\xrightarrow{g} S$ such that $j$ is an open immersion and $g$ is proper.
Then one can define $f_!:=p_*j_!$,
and one needs to show that this $f_!$ is independent of the compactifications.
After that, $f^!$ is defined to be a right adjoint of $f_!$.
This is Deligne's argument in \cite{SGA4}.
Cisinski and D\'eglise \cite{CD19} use this argument to construct $f_!$ in the motivic setting for compactifiable morphisms.

\

This paper is the continuation of the series developed in \cite{logA1}, \cite{divspc}, and \cite{logGysin}, which explores $\A^1$-homotopy theory of fs log schemes.
The purpose of this paper is to construct the exceptional direct image functor $f_!$ in the $\A^1$-invariant log motivic setting.
We plan to study further properties of $f_!$ including the Poincar\'e duality in \cite{logsix}.

\

Let $f\colon X\to S$ be a morphism of fs log schemes such that its underlying morphism of schemes $\ul{f}\colon \ul{X}\to \ul{S}$ is compactifiable.
Then the morphism $f$ admits the induced factorization
\[
X\xrightarrow{q} \ul{X}\times_{\ul{S}}S\xrightarrow{p} S,
\]
where $p$ is the projection.
One can define $f_!:=q_*p_!$,
where $p_!$ is further defined using a compactification of $\ul{f}\colon \ul{X}\to \ul{S}$.
This is Nakayama's argument in \cite[\S 5.4]{MR1457738}.
Since $q$ is canonical,
we only need to show that $p_!$ is independent of the compactifications to achieve the main goal.

\subsection{Log motivic \texorpdfstring{$\infty$-}{infinity }categories}

To provide an axiomatic argument,
we introduce the notion of a \emph{log motivic $\infty$-category},
whose terminology is imitating a motivic triangulated category due to Cisinski-D\'eglise \cite[Definition 2.4.45]{CD19}.
A log motivic $\infty$-category $\sT$ is a dividing Nisnevich sheaf of presentably symmetric monoidal $\infty$-categories
\[
\sT
\in
\Sh_{\dNis}(\lSch/B,\CAlg(\PrL))
\]
satisfying certain conditions,
where $B$ is a finite dimensional noetherian base scheme throughout the paper.
See Definition \ref{logmotivic.1} for the details.

\

The fundamental $\infty$-category in $\A^1$-homotopy theory of fs log schemes is  $\SH$ in \cite[Definition 2.5.5]{logA1},
which is an extension of the original $\SH$ due to Morel-Voevodsky in \cite{MV} from schemes to fs log schemes.

\

The theory of log transfers over a perfect field is introduced in \cite{logDM},
but we do not know the theory of log transfers over arbitrary log schemes yet.
Hence we do not know how to extend the $\infty$-category of Voevodsky's motives $\DM$ from schemes to arbitrary log schemes.

\

Instead, we consider the functor $\Mod_\E\colon \lSch/B\to \CAlg(\PrL)$ in Construction \ref{logChow.9} for a commutative algebra object $\E$ of $\SH(B)$ such that $\Mod_\E(X)$ is the symmetric monoidal $\infty$-category of $\E$-module objects of $\SH(X)$ for $X\in \lSch/B$.
Let $\ML$ denotes the motivic Elienberg-MacLane spectrum in $\SH(B)$ for a commutative ring $\Lambda$,
which is introduced by Voevodsky \cite{zbMATH01194164},
but we use here Spitzweck's version \cite{MR3865569}.
If $k$ is a field and the exponential characteristic $p$ of $k$ is invertible in $\Lambda$,
then Cisinski and D\'eglise \cite[Theorem 3.1]{MR3404379} show that the canonical functor
\[
\Mod_{\ML}(X)
\to
\DM(X,\Lambda)
\]
is an equivalence of triangulated categories after taking homotopy categories (and hence an equivalence of $\infty$-categories) for smooth schemes $X$ over $k$.
In the case that $k$ has characteristic $0$ and $X=\Spec(k)$,
this result is due to R\"{o}ndig-{\O}stv{\ae}r \cite{MR2435654}.
Hence our $\Mod_{\ML}(X)$ for log smooth fs log schemes $X$ over $k$ can be considered as a logarithmic generalization of $\DM(X,\Lambda)$ in the case that $p$ is invertible in $\Lambda$.
We show that these examples are log motivic $\infty$-categories as follows:

\begin{thm}[Theorems \ref{restriction.15} and \ref{logChow.8}]
let $\E$ be a commutative algebra object of $\SH(B)$.
Then $\SH$ and $\Mod_{\E}$ are log motivic $\infty$-categories.
\end{thm}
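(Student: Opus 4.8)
The plan is to verify, separately for $\infSH$ and for $\Mod_\E$, the axioms constituting Definition \ref{logmotivic.1}: that the functor is a dividing Nisnevich sheaf $\lSch/B\to\CAlg(\PrL)$; that it has the adjoint property (every $f^*$ has a right adjoint $f_*$, and $f^*$ has a left adjoint $f_\#$ for $f$ in the relevant class of morphisms, with the accompanying base change and projection formulas); the homotopy property, i.e.\ $\boxx$-invariance; the stability property; and the localization property for a strict closed immersion and its open complement.

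For $\infSH$, almost all of this is already available from the prequels \cite{logA1}, \cite{divspc}, and \cite{logGysin}. By its construction as a dividing Nisnevich localization of a category of $\boxx$-invariant, $\P^1$-stable presheaves of spectra on log smooth fs log schemes (suitably extended to $\lSch/B$), the functor $\infSH$ is a $\CAlg(\PrL)$-valued dividing Nisnevich sheaf satisfying the homotopy and stability properties; the adjoint property holds because each $\infSH(X)$ is presentable and each $f^*$ is symmetric monoidal and colimit-preserving, so $f_*$ exists by the adjoint functor theorem, while the existence of $f_\#$ for the relevant class together with the base change and projection formulas are part of the formalism set up in the prequels. The one substantial ingredient is the localization property, which I would take from the log localization theorem of \cite{divspc}. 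It then remains to confirm that these properties, several of which are most naturally formulated over log smooth objects, propagate to all of $\lSch/B$; this is done by descent along dividing covers, which reduces to the log smooth case.

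For $\Mod_\E$, the strategy is to derive every axiom formally from the corresponding fact for $\infSH$ via the theory of module $\infty$-categories over a symmetric monoidal base. Recall from Construction \ref{logChow.9} that $\Mod_\E(X)=\Mod_{\E|_X}(\infSH(X))$, where $\E|_X$ is the image of $\E$ under $\infSH(B)\to\infSH(X)$; since $\E|_X$ is pulled back from $B$ it is compatible with every functor $g^*$, so $X\mapsto\Mod_\E(X)$ is a functor valued in $\CAlg(\PrL)$, and the free-module functors define a symmetric monoidal, colimit-preserving natural transformation $\infSH\to\Mod_\E$. From this one gets: the dividing Nisnevich sheaf property, because forming module categories over a compatible family of algebras commutes with the limits in $\CAlg(\PrL)$ that compute descent; the adjoint property, with $f_*$ on modules the right adjoint of $f^*$ and $f_\#$ on modules built from $f_\#$ on $\infSH$ via the projection formula; $\boxx$-invariance and stability, because the relevant pullbacks are fully faithful and the Tate object is invertible already in $\infSH$, and these pass to module categories (the pullbacks commute with the limits computing mapping spaces, and the Tate twist acts through $\infSH(X)$ on the tensored category $\Mod_\E(X)$); and the localization recollement, obtained by applying the exact functor $\E|_X\otimes_{\infSH(X)}(-)$ to the recollement for $\infSH$ and using the projection formula to identify the terms $j_\#$, $i_*$, etc., at the module level.

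I expect the main obstacle to be, on the $\infSH$ side, the localization property over arbitrary objects of $\lSch/B$ — hence the dependence on \cite{divspc} and a reduction along dividing covers — and, on the $\Mod_\E$ side, the verification that $\Mod$ commutes with the limits computing dividing Nisnevich descent, i.e.\ that modules over a sheaf of algebras again assemble into a sheaf. Both are by now standard features of the available machinery, so beyond these two points the argument is essentially bookkeeping.
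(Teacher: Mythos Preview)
Your plan has a genuine gap: you have overlooked the last two axioms of Definition \ref{logmotivic.1}, the cartesian-square conditions \eqref{outline.0.2} and \eqref{outline.0.3} concerning the boundary $\partial X$. These are precisely the axioms the paper singles out as non-obvious; the author remarks just after Definition \ref{logmotivic.1} that it is not known whether they follow from the other axioms, and their verification for $\infSH$ requires the specific results \cite[Theorems 4.3.8, 4.3.9]{logA1}. Your identification of the ``main obstacle'' as the localization property is therefore misplaced: (Loc) is part of the standard setup from the prequels, whereas the boundary-cartesian-square axioms carry real content and need to be named and cited explicitly. For $\Mod_\E$ these two axioms must likewise be transferred from $\infSH$; the paper does this via \cite[Lemma 2.0.1]{logGysin} and the commutative squares relating $f^*$, $f_*$, $f_\sharp$ on $\infSH$ and $\Mod_\E$, not by a general ``modules commute with limits'' argument.

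A second, smaller issue: your proposed reduction ``by descent along dividing covers, which reduces to the log smooth case'' does not work. Dividing covers are log \'etale modifications of the log structure and do not change the underlying scheme; an arbitrary object of $\lSch/B$ is not log smooth after a dividing cover. The axioms of Definition \ref{logmotivic.1} are stated over all of $\lSch/B$ from the outset, and the paper verifies them directly using the cited results rather than by any such reduction. Relatedly, writing ``$\boxx$-invariance'' blurs the three separate axioms ($\A^1$-inv), ($div$-inv), and ($\ver$-inv) that the definition actually imposes; they play distinct roles and should be checked individually.

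Apart from these points, your transfer strategy for $\Mod_\E$ is broadly the same as the paper's: build the commutative squares \eqref{logChow.8.1}--\eqref{logChow.8.4} relating $(-)\otimes\E$ and the forgetful functor $U$ to $f^*$, $f_*$, $f_\sharp$, then use conservativity and colimit-preservation of $U$ to pull each axiom across from $\infSH$.
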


Nakayama and Ogus \cite[Theorem 0.3]{zbMATH05809283} show that the Kato-Nakayama realization of an exact log smooth morphism is a topological submersion.
This indicates that when $\sT$ is a log motivic $\infty$-category,
it is easier to study $\sT^\ex$ in Definition \ref{restriction.1} for the six-functor formalism.
This is the full subcategory generated by the motives associated with exact log smooth schemes under colimits, shifts, and twists.
In this paper and the sequel \cite{logsix}, we are mainly interested in the properties of $\sT^\ex$.

\subsection{Support property}

Cisinski and D\'eglise \cite[Definition 2.2.5]{CD19} introduce the support property in the motivic setting,
which implies that the construction of $f_!$ is independent of the choice of the compactifications.
We show a similar property in the log motivic setting as follows:

\begin{thm}[Theorem \ref{nonversupp.13}]
Let $\sT$ be a log motivic $\infty$-category.
Then $\sT^\ex$ satisfies the support property in the following sense.
Let 
\[
\begin{tikzcd}
V\ar[r,"j'"]\ar[d,"g"']&
X\ar[d,"f"]
\\
U\ar[r,"j"]&
S
\end{tikzcd}
\]
be a cartesian square in $\lSch/B$ such that $f$ is proper and $j$ is an open immersion.
Then the canonical natural transformation $Ex\colon j_\sharp g_*\to f_*j_\sharp'$ is an isomorphism.
\end{thm}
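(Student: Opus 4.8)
\emph{Plan of proof.} First recall that $Ex$ is the composite
\[
j_\sharp g_* \xrightarrow{\ \eta\ } f_* f^* j_\sharp g_* \xrightarrow{\ \sim\ } f_* j_\sharp' g^* g_* \xrightarrow{\ \varepsilon\ } f_* j_\sharp' ,
\]
where $\eta$ is the unit of $(f^*,f_*)$, the middle arrow is the smooth base change equivalence $f^*j_\sharp\simeq j_\sharp'g^*$ (available because $j$ is an open immersion, hence belongs to the class of morphisms for which $\sharp$-functors and smooth base change form part of the structure of a log motivic $\infty$-category), and $\varepsilon$ is the counit of $(g^*,g_*)$; we must prove that this composite is an equivalence. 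Since the functors $j_\sharp$, $g_*$, $f_*$, $j_\sharp'$ restrict to the subcategories $\sT^\ex(-)$, and a map of these is an equivalence precisely when it is one in $\sT$, and since $\sT^\ex(V)$ is generated under colimits, shifts and twists by the objects $a_\sharp\unit_W$ with $a\colon W\to V$ exact log smooth, it suffices to evaluate $Ex$ on such objects. It is this restriction that makes the relative purity input below applicable, and that reflects the Nakayama--Ogus submersivity of exact log smooth morphisms.

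I would then reduce both the base and the morphism. Because $\sT$ is a dividing Nisnevich sheaf and the formation of $Ex$ commutes with pullback along the members of a dividing Nisnevich cover of $S$ --- using smooth base change for $\sharp$-functors, Nisnevich base change for $*$-pushforwards, and the invertibility of dividing covers in $\sT$ --- we may assume $\ul S$ local henselian. The class of proper morphisms $f$ satisfying the conclusion for every cartesian square of the given shape is closed under composition; using this I would first settle the projective case and then deduce the general proper case. For $\ul f$ projective one factors $f$ as
\[
X \xrightarrow{\ q\ } \ul X\times_{\ul S}S \xrightarrow{\ \iota\ } \P^n_S \xrightarrow{\ \pi\ } S ,
\]
with $q$ an isomorphism on underlying schemes, $\iota$ a strict closed immersion, and $\pi$ the smooth proper projection, and --- using closure under composition once more --- treats the three factors separately: for $\iota$ the assertion follows formally from the localization property of $\sT$, which realizes $g_*$ and $f_*$ as the fully faithful ``closed'' parts of the respective recollements and identifies $Ex$ with the induced comparison; for $\pi$, relative purity from \cite{logGysin} provides a base-change-compatible equivalence $\pi_*\simeq\pi_\sharp\circ\Sigma^{-N}$ for the appropriate Thom twist $N$, after substituting which $Ex$ becomes an instance of the composition isomorphism for $\sharp$-functors; and the factor $q$, whose underlying morphism is an isomorphism, is treated by a separate, essentially logarithmic argument using the dividing Nisnevich topology to reduce it to strict morphisms. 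Finally, for general proper $\ul f$ one argues by noetherian induction on $\ul X$: Chow's lemma produces a projective birational $\ul X'\to\ul X$ with $\ul X'\to\ul S$ projective together with a dense open $\ul U\subseteq\ul X$ over which it is an isomorphism, and the localization triangles attached to $\ul U$ and to its closed complement --- of strictly smaller dimension, hence covered by the inductive hypothesis --- are used to transport the statement from the (now projective) morphism $\ul X'\to\ul S$ to $\ul f$.

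The step I expect to be the main obstacle is precisely this last reduction, from an arbitrary proper $\ul f$ to the projective case. Since a log motivic $\infty$-category need not satisfy cdh descent, one cannot simply treat the relevant blow-up square as cocartesian; the Chow's lemma and noetherian induction argument must instead be carried out triangle by triangle, while keeping track simultaneously of the open--closed decompositions of $\ul X$ and $\ul X'$ and of the compatibility of the $\sharp$-, $*$- and base-change transformations with them. A further, logarithmic difficulty is the control of the log structures throughout these reductions --- in particular of the morphisms that are isomorphisms on underlying schemes --- which is exactly the point at which the dividing Nisnevich topology and the invertibility of log blow-ups in $\sT$, developed earlier in this series, are essential.
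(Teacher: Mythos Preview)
Your overall plan mirrors the paper's at a coarse level---factor $f$ through the canonical morphism $q\colon X\to \ul{X}\times_{\ul S}S$ (with $\ul q$ an isomorphism) and a strict proper piece, handle the strict proper piece via the classical motivic theory (localization plus purity/Chow's lemma, here imported from Cisinski--D\'eglise for $\Ho(\ul{\sT}_{/S}^\ex)$, see Proposition~\ref{semisupp.10})---but the decisive step, namely the ``separate, essentially logarithmic argument'' for $q$, is where your proposal has a genuine gap.

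You write that $q$ can be treated by ``using the dividing Nisnevich topology to reduce it to strict morphisms.'' This is not how it works, and I do not see a way to make it work. A dividing cover is a log blow-up, which refines the log structure of the source; it cannot turn a morphism that genuinely changes the log structure (as $q$ does whenever $X$ is not strict over $S$) into a strict one. The paper instead spends the entirety of \S\S\ref{semisupp}--\ref{proof} on this point. The key inputs are: (i) a purity theorem from \cite{logGysin} for the compactified multiplication map $\A_{\N^2}\to\A_\N$, used in Proposition~\ref{versupp.5} together with a delicate comparison of $f_*$ on $\sT$ and $\sT^\ex$ (the natural transformation $\varphi_\sharp f_*\to f_*\varphi_\sharp$ is \emph{not} a priori an isomorphism, so your claim that ``a map of these is an equivalence precisely when it is one in $\sT$'' needs justification that you do not supply); (ii) the notions of strictly and vertically universal support property, which allow an induction over charts; and (iii) the last two axioms of Definition~\ref{logmotivic.1}, which are invoked in Propositions~\ref{restriction.9}, \ref{restriction.8}, \ref{nonversupp.8}, \ref{nonversupp.11}, and \ref{nonversupp.4} to compare with the standard log point $\pt_\N$. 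The final proof (Theorem~\ref{nonversupp.13}) does not even factor through $\ul X\times_{\ul S}S$: it instead reduces to the support property for the morphisms $X\to\ul X$ and $Y\to\ul Y$ that remove the log structure, established in Proposition~\ref{nonversupp.12}. None of this is subsumed by dividing-Nisnevich locality, and without it your argument does not close.
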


For the organization of the proof,
see the beginning of \S \ref{supp}.

\

Let $(\lSch/B)_{\mathrm{comp}}$ denote the subcategory of $\lSch/B$ spanned by compactifiable morphisms.
Using the support property,
we prove the following in Theorem \ref{base.2}:

\begin{thm}
There exists a functor
\[
\sT_!^\ex
\colon
(\lSch/B)_{\mathrm{comp}}
\to
\infCat_\infty
\]
such that $\sT_!^\ex(f)\simeq f_*$ (resp.\ $\sT_!^\ex(f)\simeq f_\sharp$) if $f$ is a proper morphism (resp.\ an open immersion).
\end{thm}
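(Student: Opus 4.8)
The plan is to construct $\sT^\ex_!$ by gluing the proper pushforward functors $p_*$ with the open-immersion functors $j_\sharp$, with the support property of Theorem \ref{nonversupp.13} supplying the one nonformal compatibility; this is the $\infty$-categorical form of Deligne's construction of $f_!$ (compare \cite{SGA4} and, in the triangulated motivic setting, \cite{CD19}). Since every morphism $f$ of $(\lSch/B)_{\mathrm{comp}}$ factors as $f\simeq p\circ j$ with $j$ an open immersion and $p$ proper, the wide subcategories $(\lSch/B)_{\mathrm{prop}}$ of proper morphisms and $(\lSch/B)_{\mathrm{open}}$ of open immersions together generate $(\lSch/B)_{\mathrm{comp}}$. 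The construction then has four steps: (i) promote $p\mapsto p_*$ and $j\mapsto j_\sharp$ to functors valued in $\infCat_\infty$; (ii) check that they agree on the overlap; (iii) supply the exchange datum gluing them; (iv) feed the resulting package into an abstract gluing procedure.

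For (i) and (ii), I would first note that $\sT^\ex$ is a subfunctor of $\sT$, hence a functor $(\lSch/B)^{\op}\to\infCat_\infty$, $X\mapsto\sT^\ex(X)$, $g\mapsto g^*$, which by the results of the earlier sections sends each proper $g$ to a functor $g^*$ admitting a right adjoint $g_*$ and each open immersion $j$ to a functor $j^*$ admitting a left adjoint $j_\sharp$, with $\sT^\ex$ stable under the adjoints $g_*$ and $j_\sharp$. Passing to right adjoints over the subcategory of proper morphisms (a functorial operation for $\infCat_\infty$-valued functors) produces $\sT^\ex_*\colon (\lSch/B)_{\mathrm{prop}}\to\infCat_\infty$ with $\sT^\ex_*(p)\simeq p_*$, and passing to left adjoints over the subcategory of open immersions produces $\sT^\ex_\sharp\colon (\lSch/B)_{\mathrm{open}}\to\infCat_\infty$ with $\sT^\ex_\sharp(j)\simeq j_\sharp$. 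Over the intersection of these two subcategories, which consists of the isomorphisms, both $g_*$ and $g_\sharp$ agree with the inverse of $g^*$, and this identification is coherent because the two functors are both extracted from the single functor $\sT^\ex$.

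For (iii) and (iv), the structural facts I would use are that proper morphisms and open immersions are each stable under base change in $\lSch/B$, and that the factorization of a morphism of $(\lSch/B)_{\mathrm{comp}}$ into an open immersion followed by a proper morphism is essentially unique---the category of compactifications of a fixed $f$ is cofiltered, since any two compactifications admit a common refinement. Granting this, the package consisting of $\sT^\ex$ together with $\sT^\ex_*$, $\sT^\ex_\sharp$, and the exchange isomorphism $Ex\colon j_\sharp g_*\xrightarrow{\ \sim\ }f_*j'_\sharp$ of Theorem \ref{nonversupp.13}---which is exactly the base-change compatibility that the gluing demands---determines a functor $\sT^\ex_!\colon (\lSch/B)_{\mathrm{comp}}\to\infCat_\infty$. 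Concretely, one assembles this data into a functor out of an auxiliary $\infty$-category of compactified morphisms (composition being the standard compactification of a composite) and checks that it descends along the projection to $(\lSch/B)_{\mathrm{comp}}$; equivalently, one repackages the input through a category of correspondences. Evaluating $\sT^\ex_!$ on the trivial factorizations $X\xrightarrow{\id}X\xrightarrow{p}S$ and $X\xrightarrow{j}S\xrightarrow{\id}S$ then yields $\sT^\ex_!(p)\simeq p_*$ and $\sT^\ex_!(j)\simeq j_\sharp$.

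I expect the main obstacle to be the homotopy-coherence bookkeeping in step (iv): it is not enough that each exchange map $Ex$ be invertible; one needs the whole family of these isomorphisms, together with the composition law on compactified morphisms obtained from iterated fiber products and closures, to assemble into the coherent datum that an $\infty$-categorical gluing requires, and one must verify that the auxiliary $\infty$-category of compactified morphisms maps to $(\lSch/B)_{\mathrm{comp}}$ by a localization. The $\infty$-categorical formulation of the support property is what makes this go through; the remaining ingredients---pullback-stability of proper morphisms and of open immersions, cofilteredness of the compactification categories, and the fact that $\sT^\ex_*$ and $\sT^\ex_\sharp$ are both extracted from one inverse-image functor---are formal once Theorem \ref{nonversupp.13} is in hand.
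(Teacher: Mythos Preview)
Your high-level plan---glue $j_\sharp$ and $p_*$ using the support property, with an $\infty$-categorical gluing mechanism handling the coherence---is the right idea, but the two-class decomposition you propose hides a genuine difficulty in the log setting, and this is precisely where the paper's proof diverges from yours.

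You assert that every compactifiable morphism $f$ in $\lSch/B$ factors as $p\circ j$ with $j$ an open immersion and $p$ proper, and that the category of such factorizations is cofiltered. For ordinary schemes this is Nagata's theorem plus the closure-of-the-diagonal trick, but in $\lSch/B$ an open immersion is by convention \emph{strict}; producing such a $j$ therefore amounts to extending the fs log structure of $X$ across a scheme-theoretic compactification of $\ul{X}$ over $\ul{S}$, compatibly with the map to $S$. Neither this log-Nagata statement nor the cofilteredness of log compactifications is established anywhere in the paper, and neither is obvious.

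The paper sidesteps the issue via Nakayama's trick (see the introduction and Construction~\ref{base.6}): every compactifiable $f\colon X\to S$ factors canonically as
\[
X\xrightarrow{\;q\;}\ul{X}\times_{\ul{S}}S\xrightarrow{\;j\;}\ul{V}\times_{\ul{S}}S\xrightarrow{\;p\;}S
\]
with $\ul{q}$ an isomorphism (so $q$ is proper), $j$ an open immersion, and $p$ strict proper; only the strict piece requires a compactification, and there ordinary Nagata applies. Accordingly Construction~\ref{base.1} runs the Liu--Zheng multisimplicial machinery of \cite{1211.5294,1211.5948} with the \emph{three} admissible classes $\cE_1=\text{strict proper}$, $\cE_2=\text{open immersions}$, $\cE_4=\{f:\ul{f}\text{ an isomorphism}\}$ (together with a fourth class $\cE_6=\text{exact}$ to obtain the correspondence-category enhancement of Theorem~\ref{base.2}): one first passes to right adjoints on $\cE_1\cup\cE_4$ using Proposition~\ref{base.5}, then to the left adjoint on $\cE_2$ using Theorem~\ref{nonversupp.13} and ($\eSm$-BC), and finally invokes \cite[Theorem~5.4]{1211.5294} twice to collapse $(\cE_1,\cE_2)$ into $\cE_3$ and then $(\cE_3,\cE_4)$ into $\cE_5$. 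The identification $g_!\simeq g_*$ for an \emph{arbitrary} proper $g$ is not an input to the gluing but is \emph{derived} at the end (proof of Theorem~\ref{base.2}) by writing $g=p\circ i$ with $p\in\cE_1$ and $i\in\cE_4$, whence $g_!\simeq p_!i_!\simeq p_*i_*\simeq g_*$.

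In short, the gap in your argument is the unjustified log-Nagata factorization and its cofilteredness; once you replace the two-class decomposition by Nakayama's canonical three-step one and feed it into the Liu--Zheng machinery, your outline becomes the paper's proof.
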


To show this, we need the technique of Liu-Zheng in \cite{LZ} for the $\infty$-categorical independence of the choice of the compactifications.
We also show a base change property under a certain assumption on exactness,
which is contained in Theorem \ref{base.2} too.

\subsection*{Acknowledgements}

This research was conducted in the framework of the DFG-funded research training group GRK 2240: \emph{Algebro-Geometric Methods in Algebra, Arithmetic and Topology}.

\subsection*{Notation and conventions}

Our standard reference for log geometry is Ogus's book \cite{Ogu}.
We employ the following notation throughout this article:

\begin{tabular}{l|l}
$B$ & base noetherian scheme of finite dimension
\\
$\Sch/B$ & category of schemes of finite type over $B$
\\
$\lSch/B$ &  category of fs log schemes of finite type over $B$
\\
$\lSpc/B$ & category of divided log spaces over $B$
\\
$\Sm$ & class of smooth morphisms in $\Sch/B$
\\
$\lSm$ & class of log smooth morphisms in $\lSch/B$
\\
$\eSm$ & class of exact log smooth morphisms in $\lSch/B$
\\
$\sSm$ & class of strict smooth morphisms in $\lSch/B$
\\
$\Hom_{\cC}$ & Hom space in an $\infty$-category $\cC$
\\
$\Spc$ & $\infty$-category of spaces
\\
$\Sh_t(\cC,\cV)$ & $\infty$-category of $t$-sheaves with values in an $\infty$-category $\cV$ on $\cC$
\\
$\id\xrightarrow{ad}f_*f^*$ & unit of an adjunction $(f^*,f_*)$
\\
$f^*f_*\xrightarrow{ad'}\id$ & counit of an adjunction $(f^*,f_*)$
\end{tabular}

\section{Exact log smooth motives}

We introduce the notion of a log motivic $\infty$-category $\sT$ in \S \ref{logmotivic}.
We show in \S \ref{module} that $\SH$ and $\Mod_\E$ are log motivic $\infty$-categories for every commutative algebra object $\E$ of $\SH(B)$ in \S \ref{module}.
We obtain $\sT^\ex$ from $\sT$ in \S \ref{restriction} by restricting to exact log smooth motives.
This paper and its sequel \cite{logsix} are mainly concerned about $\sT^\ex$.
In \S \ref{properties},
we explore several basic properties of $\sT^\ex$.
There is a further restriction $\sT^\st$ to strict smooth motives in \S \ref{generation},
which has a technical advantage as shown in Proposition \ref{generation.4}.

\subsection{Log motivic \texorpdfstring{$\infty$-}{infinity }categories}
\label{logmotivic}

\begin{df}
\label{logmotivic.1}
A \emph{log motivic $\infty$-category} is a dividing Nisnevich sheaf of presentably symmetric monoidal $\infty$-categories
\[
\sT
\in
\Sh_{\dNis}(\lSch/B,\CAlg(\PrL))
\]
satisfying the following conditions:
\begin{itemize}
\item For every morphism $f$ in $\lSch/B$, let $f_*$ denote a right adjoint of $f^*:=\sT(f)$.
\item For every log smooth morphism $f\colon X\to S$ in $\lSch/B$, $f^*$ admits a left adjoint $f_\sharp$.
We set
\[
M_S(X)
:=
f_\sharp \unit_X,
\]
where $\unit_X$ (or simply $\unit$) is the monoidal unit of $\sT(X)$.
\item ($\lSm$-BC) For every cartesian square
\[
\begin{tikzcd}
X'\ar[d,"f'"']\ar[r,"g'"]&
X\ar[d,"f"]
\\
S'\ar[r,"g"]&
S
\end{tikzcd}
\]
such that $f$ is log smooth, the induced natural transformation
\[
Ex\colon f_\sharp'g'^*
\to
g^*f_\sharp
\]
is an isomorphism.
\item ($\lSm$-PF) For every log smooth morphism $f\colon X\to S$,
the induced natural transformation
\[
Ex\colon f_\sharp ((-)\otimes f^*(-))
\to
f_\sharp (-)\otimes (-)
\]
is an isomorphism.
\item (Loc) Let $i$ be a closed immersion in $\lSch/B$ with its open complement $j$.
Then the pair $(i^*,j^*)$ is conservative, and $i_*$ is fully faithful.
\item ($\A^1$-inv)
Let $p\colon X\times \A^1\to X$ be the projection, where $X\in \lSm/S$ and $S\in \lSch/B$.
Then $M_S(p)$ is an isomorphism.
\item ($\divi$-inv)
Let $f\colon Y\to X$ be a dividing cover in $\lSm/S$, where $S\in \lSch/B$.
Then $M_S(f)$ is an isomorphism.
\item ($\ver$-inv)
Let $j\colon X-\partial_S X\to X$ be the open immersion, where $X\in \lSm/S$ and $S\in \lSch/B$.
Then $M_S(j)$ is an isomorphism.
We refer to \cite[Definition 2.3.5]{logA1} for the notation $\partial_S X$,
which is the vertical boundary of $X$ over $S$.
If $S$ has the trivial log structure,
then $\partial_S X$ is the boundary $\partial X$ of $X$ consisting of the points $x\in X$ such that $\ol{\cM}_{X,x}$ is nontrivial.
\item ($\P^1$-Stab)
For $S\in \lSch/B$,
the object
\[
\unit(1)
:=
\cofib(M_S(S)\xrightarrow{M_S(i_0)}M_S(\P_S^1))[-2]
\]
of $\sT(S)$ is $\otimes$-invertible,
where $i_0\colon S\to \P_S^1$ is the zero section.
\item For $S\in \lSch/B$,
the family
\[
\{M_S(X)(d)[n]:X\in \lSm/S,\;d,n\in \Z\}
\]
generates $\sT(S)$ under colimits.
Furthermore,
there exists a regular cardinal $\kappa$ such that $M_S(X)(d)[n]$ is $\kappa$-compact \cite[Definition 5.3.4.5]{HTT} in $\sT(S)$ for every $X\in \lSm/S$ and $d,n\in \Z$.
\item Let $f\colon X\to S$ be a log smooth morphism in $\lSch/B$ with $S\in \Sch/B$.
Consider the induced cartesian square
\begin{equation}
\label{outline.0.1}
\begin{tikzcd}
\partial X\ar[r,"i'"]\ar[d,"p'"']&
X\ar[d,"p"]
\\
\ul{\partial X}\ar[r,"i"]&
\ul{X}
\end{tikzcd}
\end{equation}
in $\lSch/B$.
Then the square
\begin{equation}
\label{outline.0.2}
\begin{tikzcd}
f^*\ar[d,"ad"']\ar[r,"ad"]&
i_*i^*f^*\ar[d,"ad"]
\\
p_*p^*f^*\ar[r,"ad"]&
q_*q^*f^*
\end{tikzcd}
\end{equation}
is cartesian, where $q=pi'=ip'$.
\item For $X\in \lSch/B$ such that $X$ admits a chart $\N$,
consider the induced cartesian square of the form \eqref{outline.0.1}.
Then the square
\begin{equation}
\label{outline.0.3}
\begin{tikzcd}
\id\ar[d,"ad"']\ar[r,"ad"]&
i_*i^*\ar[d,"ad"]
\\
p_*p^*\ar[r,"ad"]&
q_*q^*
\end{tikzcd}
\end{equation}
is cartesian, where $q=pi'=ip'$.
\end{itemize}
A log motivic $\infty$-category $\sT$ is called \emph{compactly generated} if $M_S(X)$ is a compact object of $\sT(S)$ for every $S\in \lSch/B$ and $X\in \lSm/S$.
\end{df}

A notable difference with a motivic triangulated category of Cisinski-D\'eglise \cite[Definition 2.4.45]{CD19} is that we need the axioms ($\divi$-inv) and ($\ver$-inv) to control the boundary behavior of fs log schemes.
We do not know whether the last two axioms of a log motivic $\infty$-category are consequences of the other axioms or not,
so we need to include them unfortunately.
Nevertheless,
we manage to show that $\SH$ is a log motivic $\infty$-category as follows.

\begin{thm}
\label{restriction.15}
The functors
\[
\SH,\DA(-,\Lambda),\DA_{\setale}(-,\Lambda),\DA_{\ketale}(-,\Lambda)\colon (\lSch/B)^{\op}\to \CAlg(\PrL)
\]
obtained from \cite[Definition 2.5.5]{logA1} are log motivic $\infty$-categories,
where $\Lambda$ are commutative ring.
\end{thm}
\begin{proof}
We focus on the case of $\SH$ since the proofs are similar.
If $S\in \lSch/B$, then we have
\[
\SH(S)
\simeq
\Sp_{\P^1}((\A^1\cup \ver)^{-1}\Sh_{\dNis}(\lSm/S,\Spc)),
\]
which is the $\P^1$-stablization of the $\infty$-category of $(\A^1\cup \ver)$-local dividing Nisnevich sheaves of spaces on $\lSm/S$.
By a standard argument,
$\Sigma^{n,d}\Sigma_{\P^1}^\infty X_+$ is a compact object of $\SH(S)$ for all $X\in \lSm/S$ and integers $d$ and $n$.
We see that $\SH$ is a log motivic $\infty$-category together with \cite[Theorems 4.3.9, 4.3.10]{logA1}, \cite[Example 2.0.2]{logGysin}, and Theorem \ref{update.2}.

\end{proof}

\subsection{Modules}
\label{module}

\begin{const}
\label{logChow.4}
Here, we review the theory of modules in higher algebra.
Let $\cC^\otimes$ be a symmetric monoidal $\infty$-category.
In \cite[Definition 3.3.3.8]{HA},
we have the $\infty$-operad $\Mod(\cC)^{\otimes}$ of module objects of $\cC$.
Let $\Mod(\cC)$ denote its underlying $\infty$-category.
On the other hand,
in \cite[Definitions 4.2.1.13, 4.2.2.10]{HA},
we have the $\infty$-category $\LMod(\cC)$ of left module objects of $\cC$ and $\infty$-category $\LMod^{\A_\infty}(\cC)$ of $\A_\infty$-module objects of $\cC$.
For a commutative algebra object $A$ in $\cC$,
we have the full subcategories $\Mod_A(\cC)^\otimes$, $\LMod_A^{\A_\infty}(\cC)$, and $\LMod_A(\cC)$.

Let $\mathbf{\Delta}$ denote the category of combinatorial simplices.
We have the $\infty$-operads $\mathcal{LM}^\otimes$ in \cite[Definition 4.2.1.7]{HA},
whose underlying $\infty$-category is a disjoint union of the two vertices $\mathbf{a}$ and $\mathbf{m}$,
see \cite[Remark 4.2.1.8]{HA}.
There exists a natural commutative diagram
\[
\begin{tikzcd}
\Mod(\cC)\ar[r]\ar[d,hookrightarrow]&
\LMod^{\A_\infty}(\cC)\ar[d,hookrightarrow]\ar[r,leftarrow]&
\LMod(\cC)\ar[d,hookrightarrow]
\\
\Fun_{\Fin_*}((\Fin_*)_{\langle 1\rangle/},\cC^\otimes)
\ar[r,"\Phi^*"]&
\Fun_{\Fin_*}(\mathbf{\Delta}^{\op}\times \Delta^1,\cC^\otimes)\ar[r,leftarrow,"\gamma^*"]&
\Fun_{\Fin_*}(\mathcal{LM}^{\otimes},\cC^\otimes)
\end{tikzcd}
\]
whose vertical arrows are fully faithful,
see \cite[Proposition 4.2.2.12]{HA} for the right square and the proof of \cite[Proposition 4.5.1.4]{HA} for the left square.
To obtain the forgetful functors from $\Mod(\cC)$, $\LMod^{\A_\infty}(\cC)$, and $\LMod(\cC)$ to $\cC$,
we restrict $(\Fin_*)_{\langle 1\rangle /}$ (resp.\ $\mathbf{\Delta}^{\op}\times \Delta^1$, resp.\ $\mathcal{LM}^{\otimes}$) to the object $\id\colon \langle 1\rangle \to \langle 1\rangle$ (resp.\ $([0],0)$, resp.\ $\mathbf{m}$) and $\cC^\otimes$ to $\cC:=\cC^\otimes_{\langle 1\rangle}$.
Hence we obtain a commutative diagram
\begin{equation}
\label{logChow.4.1}
\begin{tikzcd}
\Mod(\cC)\ar[rd,"U"']\ar[r]&
\LMod^{\A_\infty}(\cC)\ar[r,leftarrow]\ar[d,"U"]&
\LMod(\cC)\ar[ld,"U"]
\\
&
\cC
\end{tikzcd}
\end{equation}
whose vertical and diagonal arrows are the forgetful functors.
Note that the functor $\LMod(\cC)\to \LMod^{\A_\infty}(\cC)$ is an equivalence of $\infty$-categories by \cite[Proposition 4.2.2.12]{HA}.
We have a similar commutative diagram for $\Mod_A(\cC)$, $\LMod_A^{\A_\infty}(\cC)$, and $\LMod_A(\cC)$.
By \cite[Corollary 4.5.1.5]{HA},
the functor $\Mod_A(\cC)\to \LMod_A^{\A_\infty}(\cC)$ is an equivalence of $\infty$-categories.
\end{const}

\begin{const}
\label{logChow.5}
Let $\Op_\infty$ denote the $\infty$-category of $\infty$-operads,
which contains $\CAlg(\infCat_\infty)$ as a subcategory according to \cite[Variant 2.1.4.13]{HA}.
For the functoriality of the module construction,
consider a functor of $\infty$-categories $\cE^{\otimes}\colon \cS\to \Op_\infty$,
where $\cS$ is an $\infty$-category.
Due to \cite[Remark 5.3.1.20]{HA},
this corresponds a cocartesian $\cS$-family of $\infty$-operads $\cE^{\otimes}\to \Fin_*\times \cS$ in the sense of \cite[Definition 5.3.1.19]{HA}.
Let $R$ be a cocartesian $\cS$-family of commutative algebra (i.e., $\Fin_*$-algebra) objects of $\cE^{\otimes}$ in the sense of \cite[Remark 5.3.1.22]{HA}.
By \cite[Definition 5.3.1.23, Remark 5.3.1.24]{HA},
there exists an $\infty$-category $\Mod_R^{\cS}(\cE)^{\otimes}$ equipped with a forgetful functor $\Mod_R^{\cS}(\cE)^{\otimes}\to \Fin_*\times \cS$ such that there exists a natural equivalence
\[
\Mod_R^{\cS}(\cE)^{\otimes} \times_{\cS} \{s\}
\simeq
\Mod_{R_s}(\cE_s)^{\otimes}
\]
for $s\in \cS$.
Furthermore,
\cite[Proposition 5.3.1.25(1)]{HA} implies that the forgetful functor $\Mod_R^{\cS}(\cE)^{\otimes}\to \Fin_*\times \cS$ is a cocartesian $\cS$-family of $\infty$-operads.
This corresponds to a functor
\[
\Mod_R^{\cS}(\cE)
\colon
\cS\to \Op_\infty
\]
due to \cite[Remark 5.3.1.20]{HA}.
In particular,
we obtain a natural map of $\infty$-operads
\[
\Mod_{R_s}(\cE_s)^{\otimes}
\to
\Mod_{R_t}(\cE_t)^{\otimes}
\]
for every morphism $s\to t$ in $\sT$.

Let $\cK$ denote the full subcategory of $\Fun(\Delta^1,\Fin_*)$ spanned by semi-inert morphisms in $\Fin_*$ in the sense of \cite[Notation 3.3.2.1]{HA}.
We have the functor $\Fin_*\to \cK$ sending $\langle n \rangle$ to $\id\colon \langle n \rangle \to \langle n\rangle$ for every integer $n\geq 0$.
Using this functor,
we obtain the map of simplicial sets
\[
\Fun_{\Fun(\{1\},\Fin_*)}
(X\times_{\Fun(\{0\},\Fin_*)}\cK,\cE^\otimes)
\to
\Fun_{\Fin_*}
(X,\cE^\otimes)
\]
for every simplicial set $X$ over $\Fin_*$.
Together with \cite[Construction 3.3.3.1, Definition 3.3.3.8]{HA},
we obtain the forgetful functor $\Mod(\cE)^\otimes \to \cE^\otimes$.
Using \cite[Remark 5.3.1.20]{HA},
we obtain a commutative square of $\infty$-categories
\begin{equation}
\label{logChow.4.2}
\begin{tikzcd}
\Mod_{R_s}(\cE_s)^\otimes
\ar[d,"U"']\ar[r]&
\Mod_{R_t}(\cE_t)^\otimes
\ar[d,"U"]
\\
\cE_s^\otimes
\ar[r]&
\cE_t^\otimes
\end{tikzcd}
\end{equation}
whose vertical arrows are the forgetful functors for every morphism $s\to t$ in $\cS$.
\end{const}

\begin{const}
\label{logChow.7}
Suppose that $\cC^\otimes$ is a presentably symmetric monoidal $\infty$-category.
Let $A$ be a commutative algebra object of $\cC$.
As a consequence of \cite[Theorem 4.5.2.1]{HA},
$\Mod_A(\cC)^\otimes$ is a presentably symmetric monoidal $\infty$-category.
Furthermore, the commutativity of \eqref{logChow.4.1} and \cite[Corollaries 4.2.3.2, 4.2.3.7(1),(2)]{HA} imply that 
$\Mod_A(\cC)$ is presentable and the forgetful functor
\[
U
\colon
\Mod_A(\cC)
\to
\cC
\]
is conservative and preserves colimits and limits.
This admits a left adjoint
\[
(-)\otimes A \colon \cC\to \Mod_A(\cC)
\]
by \cite[Corollary 4.2.4.8]{HA},
i.e., the composite $\cC\xrightarrow{(-)\otimes A} \Mod_A(\cC)\xrightarrow{U} \cC$ sends $X\in \cC$ to $X\otimes A$.
\end{const}

\begin{lem}
\label{logChow.6}
Let $F\colon \cC^\otimes \to \cD^\otimes$ be a symmetric monoidal functor of symmetric monoidal presentable $\infty$-categories.
If $A$ is a commutative algebra object of $\cC$,
then the induced functor
\[
G\colon \Mod_{A}(\cC)^\otimes
\to
\Mod_{F(A)}(\cD)^\otimes
\]
is symmetric monoidal.
\end{lem}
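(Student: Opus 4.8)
The plan is to obtain $G$, together with a lax symmetric monoidal structure, from the functoriality packaged in Construction \ref{logChow.5}, and then to promote it to a symmetric monoidal functor by verifying that its lax structure maps are equivalences; for the latter I would resolve the relative tensor product of modules by the two-sided bar construction.

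The functor $G$ is obtained from Construction \ref{logChow.5} as follows. Take $\cS:=\Delta^1$; since $F$ is symmetric monoidal it is a morphism in $\CAlg(\infCat_\infty)$, hence a functor $\Delta^1\to\CAlg(\infCat_\infty)\subseteq\Op_\infty$, which we take as $\cE^\otimes$, so that $\cE_0^\otimes=\cC^\otimes$, $\cE_1^\otimes=\cD^\otimes$, and the transition functor is $F$. As $F$ induces $\CAlg(\cC^\otimes)\to\CAlg(\cD^\otimes)$, $A\mapsto F(A)$, the object $A$ together with $F(A)$ gives an $\cS$-family of commutative algebra objects $R$ of $\cE^\otimes$ (in the sense of \cite[Remark 5.3.1.22]{HA}) with $R_0=A$ and $R_1=F(A)$. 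Construction \ref{logChow.5} then produces the cocartesian $\cS$-family of $\infty$-operads $\Mod_R^{\cS}(\cE)^\otimes\to\Fin_*\times\cS$, hence a functor $\cS\to\Op_\infty$, and in particular a map of $\infty$-operads over $\Fin_*$
\[
G\colon\Mod_A(\cC)^\otimes\to\Mod_{F(A)}(\cD)^\otimes.
\]
Moreover \eqref{logChow.4.2} places $G$ in a commutative square of $\infty$-operads with the forgetful maps to $\cC^\otimes$ and $\cD^\otimes$ and with $F$ along the bottom; writing $U$, $U'$ for the underlying forgetful functors, this gives $U'\circ G\simeq F\circ U$ on underlying functors.

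By Construction \ref{logChow.7} (i.e.\ \cite[Theorem 4.5.2.1]{HA}), $\Mod_A(\cC)^\otimes$ and $\Mod_{F(A)}(\cD)^\otimes$ are symmetric monoidal $\infty$-categories, so the map of $\infty$-operads $G$ is at least lax symmetric monoidal. To see it is symmetric monoidal it suffices to show it carries cocartesian arrows over $\Fin_*$ to cocartesian arrows; being a map of $\infty$-operads it already preserves cocartesian lifts of inert maps, so one reduces to the active maps $\langle 0\rangle\to\langle 1\rangle$ and $\langle 2\rangle\to\langle 1\rangle$, i.e.\ to showing that the canonical maps
\[
\unit_{\Mod_{F(A)}(\cD)}\to G(\unit_{\Mod_A(\cC)}),
\qquad
G(M)\otimes_{F(A)}G(N)\to G(M\otimes_A N)
\]
are equivalences for all $M,N\in\Mod_A(\cC)$. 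The first is immediate: the units are $A$ and $F(A)$, on underlying objects $G$ agrees with $F$ (previous paragraph), and $F$ is symmetric monoidal, so after the conservative functor $U'$ the map is an identity. For the second, I would first record that $G$ preserves colimits --- from $U'\circ G\simeq F\circ U$, together with the facts that $U$ and $U'$ preserve and detect colimits (Construction \ref{logChow.7}) and that $F$ preserves colimits, the comparison map $\colim(G\circ D)\to G(\colim D)$ becomes an equivalence after $U'$, hence is one --- and then I would compute $M\otimes_A N$ as the geometric realization of the two-sided bar construction $[n]\mapsto M\otimes A^{\otimes n}\otimes N$ in $\cC$, with its induced $A$-module structure (see \cite[\S 4.4]{HA}). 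Since $G$ agrees with $F$ on underlying objects and $F$ is symmetric monoidal, the bar object computing $G(M)\otimes_{F(A)}G(N)$ is the degreewise image under $F$ of the one computing $M\otimes_A N$; since $F$ preserves geometric realizations, the canonical comparison map becomes an equivalence after $U'$, hence is one. Therefore $G$ is symmetric monoidal.

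The step I expect to be the main obstacle is the bookkeeping in this last argument: one must identify the bar object computing $G(M)\otimes_{F(A)}G(N)$ with the degreewise $F$-image of the bar object for $M\otimes_A N$, and identify the canonical comparison map --- produced by the operadic multiplication of \cite[Theorem 4.5.2.1]{HA} together with the map $G$ of Construction \ref{logChow.5} --- with the colimit-interchange map $\colim_{\mathbf{\Delta}^{op}}(F\circ\mathrm{bar})\to F(\colim_{\mathbf{\Delta}^{op}}\mathrm{bar})$. An alternative that sidesteps part of this is to observe that, by its construction, $G$ sends the free $A$-module $X\otimes A$ to the free $F(A)$-module $F(X)\otimes F(A)$ compatibly with the symmetric monoidal free-module functors $\cC^\otimes\to\Mod_A(\cC)^\otimes$ and $\cD^\otimes\to\Mod_{F(A)}(\cD)^\otimes$ lying over $F$; this reduces the binary structure map to free modules $M=X\otimes A$ and $N=Y\otimes A$, where it is the evident equivalence $F(X)\otimes F(Y)\otimes F(A)\simeq F(X)\otimes F(Y)\otimes F(A)$, and one then extends to arbitrary $M,N$ because $\otimes_A$, $\otimes_{F(A)}$, and $G$ all preserve colimits and the free modules generate under colimits.
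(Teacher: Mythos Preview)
Your proposal is correct and follows essentially the same approach as the paper: obtain the lax symmetric monoidal structure on $G$ from Construction \ref{logChow.5} and the square \eqref{logChow.4.2}, reduce to checking the unit and binary multiplication maps, and verify the latter after applying the conservative forgetful functor by identifying the relative tensor product with the geometric realization of the two-sided bar construction \cite[\S 4.4.1, Theorem 4.5.2.1(2)]{HA}, which $F$ preserves termwise and as a colimit. The paper's argument is terser but structurally identical; your alternative reduction to free modules is not in the paper but is a harmless variant.
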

\begin{proof}
Observe from Construction \ref{logChow.5} that $G$ is a map of $\infty$-operads, i.e., a lax symmetric monoidal functor.
Since $G$ sends the unit $A$ in $\Mod_A(\cC)$ to the unit $F(A)$ in $\Mod_{F(A)}(\cD)$,
it suffices to show that the induced morphism
\[
G(X)\otimes G(Y)
\to
G(X\otimes Y)
\]
is an isomorphism for $X,Y\in \Mod_A(\cC)$.
The forgetful functor $U\colon \Mod_{F(A)}(\cD)\to \cD$ is conservative,
so it suffices to show this after applying $U$ in the front.
By the description of the monoidal structure on $\Mod_A(\cC)$ in \cite[\S 4.4.1, Theorem 4.5.2.1(2)]{HA} and the commutativity of \eqref{logChow.4.2},
it suffices to show 
\begin{align*}
&
\colim
\big(
\cdots
\;
\substack{\rightarrow\\[-1em] \rightarrow \\[-1em] \rightarrow}
\;
FU(X)\otimes F(A) \otimes FU(Y)
\;
\substack{\rightarrow\\[-1em] \rightarrow}
\;
FU(X)\otimes FU(Y)
\big)
\\
\simeq
&
F\big(
\colim
\big(
\cdots
\;
\substack{\rightarrow\\[-1em] \rightarrow \\[-1em] \rightarrow}
\;
U(X)\otimes A \otimes U(Y)
\;
\substack{\rightarrow\\[-1em] \rightarrow}
\;
U(X)\otimes U(Y)
\big)
\big),
\end{align*}
where the colimit diagrams are obtained by the two-sided bar constructions.
This is clear due to the monoidality of $F$.
\end{proof}

\begin{const}
\label{logChow.9}
Let $\E$ be a commutative algebra object of $\sT(B)$,
where $\sT$ is a log motivic $\infty$-category.
Consider the induced functor $\sT\colon (\lSch/B)^{\op}\to \Op_\infty$.
The collection of $p^*\E$ for all morphisms $p\colon S\to B$ in $\lSch/B$ yields a natural transformation $\E\colon *\to \sT\colon (\lSch/B)^{\op}\to \Op_\infty$,
where $*$ is the constant functor to $\Fin_*$.
By \cite[Remark 5.3.1.20]{HA},
this corresponds to a cocartesian $(\lSch/B)^{\op}$-family of commutative algebra objects of $\sT$.
We set
\[
\Mod_{\E}^{\otimes}
:=
\Mod_{\E}^{(\lSch/B)^{\op}}(\sT)^{\otimes}
\colon
(\lSch/B)^{\op}
\to
\Op_\infty.
\]
using Construction \ref{logChow.5}.
Observe that for every morphism $p\colon S\to B$ in $\lSch/B$,
we have
\[
\Mod_{\E}(S)^{\otimes}
\simeq
\Mod_{p^*\E}(\sT(S))^{\otimes}.
\]
For every morphism $f\colon X\to S$ in $\lSch/B$,
the induced functor
\[
f^*
\colon
\Mod_\E(S)
\to
\Mod_\E(X)
\]
is a colimit preserving of presentable $\infty$-categories by the commutativity of \eqref{logChow.4.2} and Construction \ref{logChow.7}.
Furthermore,
the induced functor
\[
f^*
\colon
\Mod_\E(S)^\otimes
\to
\Mod_\E(X)^\otimes
\]
is a symmetric monoidal functor of symmetric monoidal $\infty$-categories by Construction \ref{logChow.7} and Lemma \ref{logChow.6}.
Hence we obtain the functor
\[
\Mod_{\E}^\otimes
\colon
(\lSch/B)^{\op}
\to
\CAlg(\PrL)
\]
by restricting the codomain.
We often use the notation $\Mod_{\E}$ instead from now on when no confusion seems likely to arise.
\end{const}

\begin{thm}
\label{logChow.8}
Let $\E$ be a commutative algebra object of $\sT(B)$,
where $\sT$ is a compactly generated log motivic $\infty$-category.
Then $\Mod_\E$ is a compactly generated log motivic $\infty$-category.
\end{thm}
\begin{proof}
For $S\in \lSch/B$,
we have the adjunction
\[
-\otimes \E
:
\sT(S)
\leftrightarrows
\Mod_{\E}(S)
:
U,
\]
where $U$ is the forgetful functor.
For $V\in \lSm/S$ and integers $d$ and $n$,
we set
\[
M_S(V)(d)[n]:=
\Sigma^{n,d}\Sigma^\infty V_+ \otimes \E
\in
\Mod_{\E}(S).
\]
Using the conservativity of the forgetful functor $U$,
we see that the family
\begin{equation}
\label{logChow.8.5}
\{M_S(V)(d)[n]:V\in \lSm/S,\;d,n\in \Z\}
\end{equation}
generates $\Mod_\E(S)$ and every object of the family is compact.

For $S\in \lSch/B$,
the object 
$\unit(-1):= \Sigma^{0,-1} \unit \otimes \E$ of $\Mod_{\E}(S)$ is a $\otimes$-inverse of $\unit(1)\simeq \Sigma^{0,1}\unit \otimes \E$.
It follows that $\Mod_{\E}$ satisfies ($\P^1$-stab).

From \eqref{logChow.4.2},
we have a commutative square
\begin{equation}
\label{logChow.8.1}
\begin{tikzcd}
\Mod_{\E}(S)\ar[d,"U"']\ar[r,"f^*"]&
\Mod_{\E}(X)\ar[d,"U"]
\\
\sT(S)\ar[r,"f^*"]&
\sT(X)
\end{tikzcd}
\end{equation}
for every morphism $f\colon X\to S$ in $\lSch/B$.
Furthermore,
we have a commutative square
\begin{equation}
\label{logChow.8.2}
\begin{tikzcd}
\sT(S)\ar[r,"f^*"]\ar[d,"(-)\otimes \E_S"']&
\sT(X)\ar[d,"(-)\otimes \E_X"]
\\
\Mod_{\E}(S)\ar[r,"f^*"]&
\Mod_{\E}(X),
\end{tikzcd}
\end{equation}
whose commutativity can be checked by composing with the conservative forgetful functor $U\colon \Mod_{\E}(X)\to \sT(X)$.
By taking a right adjoint,
we have a commutative square
\begin{equation}
\label{logChow.8.3}
\begin{tikzcd}
\Mod_{\E}(X)\ar[d,"U"']\ar[r,"f_*"]&
\Mod_{\E}(S)\ar[d,"U"]
\\
\sT(X)\ar[r,"f_*"]&
\sT(S).
\end{tikzcd}
\end{equation}

If $f\colon X\to S$ is a morphism in $\lSm$,
then the commutativity of \eqref{logChow.8.1} implies that $f^*\colon \Mod_{\E}(S)\to \Mod_{\E}(X)$ admits a left adjoint $f_\sharp$ since the forgetful functors $U$ preserve colimits and are conservative due to Construction \eqref{logChow.7}.
By taking a left adjoint of \eqref{logChow.8.1},
we have a commutative square
\begin{equation}
\label{logChow.8.4}
\begin{tikzcd}
\sT(X)\ar[r,"f_\sharp"]\ar[d,"(-)\otimes \E_X"']&
\sT(S)\ar[d,"(-)\otimes \E_S"]
\\
\Mod_{\E}(X)\ar[r,"f_\sharp"]&
\Mod_{\E}(S).
\end{tikzcd}
\end{equation}

To show ($\lSm$-PF) for $\Mod_\E$,
it suffices to show that the natural morphism
\[
U f_\sharp (M_X(W) \otimes f^* M_S(V))
\xrightarrow{Ex}
U f_\sharp M_X(W)\otimes M_S(V)
\]
in $\sT(S)$ is an isomorphism since the family \eqref{logChow.8.5} generates $\Mod_{\E}(S)$.
This is a consequence of ($\lSm$-PF) for $\sT$ since \eqref{logChow.8.2} and \eqref{logChow.8.4} commute.

The remaining conditions of a log motivic $\infty$-category for $\Mod_{\E}$ are consequences of the corresponding conditions for $\sT$ and \cite[Lemma 2.0.1]{logGysin} since \eqref{logChow.8.1}, \eqref{logChow.8.2}, \eqref{logChow.8.3}, and \eqref{logChow.8.4} commute.
\end{proof}

\subsection{Restrictions to exact log smooth log schemes}
\label{restriction}

Throughout this subsection,
we fix a log motivic $\infty$-category $\sT$.

\begin{df}
\label{restriction.1}
For $S\in \lSch/B$, let $\sT^\ex(S)$ be the full subcategory generated under colimits by $M_S(X)(d)[n]$ for all $X\in \eSm/S$ and integers $d$ and $n$.
\end{df}

\begin{prop}
\label{restriction.2}
For $S\in \lSch/B$, the family
\[
\cA:=\{M_S(X)(d)[n]:X\in \eSm/S,\; d,n\in \Z\}
\]
consists of $\kappa$-compact objects and generates $\sT^\ex(S)$ under colimits.
Furthermore, $\sT^\ex(S)$ is presentable.
\end{prop}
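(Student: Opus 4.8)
The plan is to exhibit $\sT^\ex(S)$ as an $\mathrm{Ind}_\kappa$-completion of a small $\infty$-category; this gives presentability and the $\kappa$-compactness of the generators in one stroke. Two inputs are needed at the outset. First, since $B$ is noetherian of finite dimension, $\lSch/B$ is essentially small, hence so is $\eSm/S$, so that $\cA$ is a small set of objects of $\sT(S)$. Second, since $\eSm/S\subseteq\lSm/S$, the compactness clause of Definition \ref{logmotivic.1} provides a single regular cardinal $\kappa$ such that every object of $\cA$ is $\kappa$-compact in $\sT(S)$.

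Let $\cA'$ be the smallest full subcategory of $\sT(S)$ containing $\cA$ and closed under $\kappa$-small colimits. It is essentially small, it admits $\kappa$-small colimits, and its objects remain $\kappa$-compact in $\sT(S)$ (a $\kappa$-small colimit of $\kappa$-compact objects being $\kappa$-compact). Hence $\mathrm{Ind}_\kappa(\cA')$ is presentable, and since $\sT(S)$ admits $\kappa$-filtered colimits the inclusion $\cA'\hookrightarrow\sT(S)$ extends to an essentially unique colimit-preserving functor $F\colon\mathrm{Ind}_\kappa(\cA')\to\sT(S)$; by the recognition criterion for such extensions (see \cite[\S 5.3.5, \S 5.5.1]{HTT}), $F$ is fully faithful because $\cA'\hookrightarrow\sT(S)$ is fully faithful with image consisting of $\kappa$-compact objects. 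Since $\mathrm{Ind}_\kappa(\cA')$ is cocomplete and $F$ is fully faithful and colimit-preserving, the essential image of $F$ is closed under all small colimits taken in $\sT(S)$; on the other hand it equals the closure of $\cA'$ under $\kappa$-filtered colimits in $\sT(S)$, and so is contained in every full subcategory of $\sT(S)$ that contains $\cA$ and is closed under small colimits. Therefore the essential image of $F$ is exactly $\sT^\ex(S)$, and $F$ induces an equivalence $\mathrm{Ind}_\kappa(\cA')\xrightarrow{\ \sim\ }\sT^\ex(S)$. In particular $\sT^\ex(S)$ is presentable and cocomplete, and the inclusion $\sT^\ex(S)\hookrightarrow\sT(S)$ preserves all small colimits.

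For the remaining assertions, that $\cA$ generates $\sT^\ex(S)$ under colimits is just Definition \ref{restriction.1}. For the $\kappa$-compactness of an object $A\in\cA$ inside $\sT^\ex(S)$, let $\colim_i X_i$ be a $\kappa$-filtered colimit in $\sT^\ex(S)$; via the fully faithful, colimit-preserving inclusion $\sT^\ex(S)\hookrightarrow\sT(S)$, the canonical map $\colim_i\Map_{\sT^\ex(S)}(A,X_i)\to\Map_{\sT^\ex(S)}(A,\colim_i X_i)$ is identified with the corresponding map in $\sT(S)$, which is an equivalence because $A$ is $\kappa$-compact in $\sT(S)$. Hence $A$ is $\kappa$-compact in $\sT^\ex(S)$, and $\sT^\ex(S)$ is $\kappa$-compactly generated.

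I do not expect a genuine obstacle here: the argument is formal higher category theory, and its one substantive ingredient---the $\kappa$-compactness of the objects $M_S(X)(d)[n]$ in $\sT(S)$---is built into the definition of a log motivic $\infty$-category. The only points that need care are keeping track of the ambient $\infty$-category in which each colimit is formed (all reduce to colimits in $\sT(S)$, which $\sT^\ex(S)$ inherits) and citing the $\mathrm{Ind}_\kappa$-recognition result in a version that does not presuppose idempotent-completeness of $\cA'$.
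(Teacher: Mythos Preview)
Your proof is correct and takes a genuinely different route from the paper. The paper argues as follows: it first shows that $\cA$ \emph{weakly} generates $\sT^\ex(S)$ (i.e., $\Hom_{\sT^\ex(S)}(\cA,\cF)\simeq 0$ forces $\cF\simeq 0$), by observing that the class of $\cG$ with $\Hom(\cG,\cF)\simeq 0$ is closed under colimits and contains $\cA$, hence equals all of $\sT^\ex(S)$; then it notes that $\sT^\ex(S)$ is locally small and admits small sums (inherited from $\sT(S)$), and concludes presentability by invoking \cite[Corollary 1.4.4.2]{HA}. Your argument instead identifies $\sT^\ex(S)$ explicitly with $\mathrm{Ind}_\kappa(\cA')$ via the recognition principle of \cite[\S 5.3.5]{HTT}. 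The paper's approach is shorter because it outsources the work to a single black-box citation, but yours is more informative: it gives the exact shape of $\sT^\ex(S)$ as an $\mathrm{Ind}_\kappa$-completion and, as a byproduct, directly establishes that the inclusion $\sT^\ex(S)\hookrightarrow\sT(S)$ preserves all small colimits---a fact the paper proves separately in Construction \ref{restriction.3} by citing \cite[Proposition 1.4.4.1(2)]{HA}.
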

\begin{proof}
Assume that $\cF\in \sT^\ex(S)$ satisfies $\Hom_{\sT^\ex(S)}(M_S(X)(d)[n],\cF)\simeq 0$ for all $X\in \eSm/S$ and $d,n\in \Z$.
We need to show $\cF\simeq 0$.
Let $\cC$ be the full subcategory of $\sT^\ex(S)$ consisting of $\cG\in \sT^\ex(S)$ such that $\Hom_{\sT^\ex(S)}(\cG,\cF)\simeq 0$.
Then $\cC$ is closed under colimits and contains $M_S(X)(d)[n]$ for all $X\in \eSm/S$ and $d,n\in \N$.
It follows that $\cC$ contains $\sT^\ex(S)$.
This means $\Hom_{\sT^\ex(S)}(\cF,\cF)\simeq 0$, so $\cF\simeq 0$.
Hence $\cA$ generates $\sT^\ex(S)$ under colimits.

By our assumption on $\sT(S)$, $\cA$ consists of $\kappa$-compact objects.
Since $\sT(S)$ is presentable, $\sT^\ex(S)$ is locally small and admits sums.
Hence \cite[Corollary 1.4.4.2]{HA} shows that $\sT^\ex(S)$ is presentable too.
\end{proof}

\begin{const}
\label{restriction.3}
Let
\[
\varphi_\sharp\colon
\sT^\ex(S)
\to
\sT(S)
\]
be the inclusion functor.
Since $\varphi_\sharp$ preserves sums, $\varphi_\sharp$ preserves colimits by \cite[Proposition 1.4.4.1(2)]{HA}.
It follows that $\varphi_\sharp$ admits a right adjoint
\[
\varphi^*\colon \sT(S)\to \sT^\ex(S).
\]

For any morphism $f\colon X\to S$ in $\lSch/B$, we have a natural isomorphism
\[
\Hom_{\sT^\ex(S)}(-,\varphi^*f_*\varphi_\sharp (-))
\simeq
\Hom_{\sT(S)}(f^*\varphi_\sharp (-),\varphi_\sharp(-)).
\]
The class $\eSm$ is closed under pullbacks.
Hence the essential image of $f^*\varphi_\sharp$ is contained in the essential image of $\varphi_\sharp$, so we have natural isomorphisms
\begin{align*}
\Hom_{\sT(S)}(f^*\varphi_\sharp (-),\varphi_\sharp(-))
\simeq &
\Hom_{\sT^\ex(S)}(\varphi^*f^*\varphi_\sharp (-),\varphi^*\varphi_\sharp(-))
\\
\simeq &
\Hom_{\sT^\ex(S)}(\varphi^*f^*\varphi_\sharp (-),-).
\end{align*}
It follows that $f^*:=\varphi^*f^*\varphi_\sharp$ is left adjoint to $f_*:=\varphi^*f_*\varphi_\sharp$.
Furthermore, we have a natural isomorphism
\begin{equation}
\label{restriction.3.1}
\varphi_\sharp f^*
\simeq
f^*\varphi_\sharp.
\end{equation}
On the other hand, we have a natural transformation
\begin{equation}
\label{restriction.3.2}
Ex\colon \varphi_\sharp f_*
\to
f_*\varphi_\sharp
\end{equation}
given by
\[
\varphi_\sharp \varphi^* f_* \varphi_\sharp
\xrightarrow{ad'}
f_*\varphi_\sharp.
\]

The class $\eSm$ is closed under compositions.
Hence the essential image of $f_\sharp \varphi_\sharp$ is contained in the essential image of $\varphi_\sharp$.
Assume $f\in \eSm$.
We can similarly show that $f^*=\varphi^*f^*\varphi_\sharp$ is right adjoint to $f_\sharp:=\varphi^*f_\sharp\varphi_\sharp$.
Furthermore, we have a natural isomorphism
\begin{equation}
\label{restriction.3.3}
\varphi_\sharp f_\sharp
\simeq
f_\sharp\varphi_\sharp.
\end{equation}
\end{const}

\begin{prop}
\label{restriction.4}
There exists a functor
\[
\sT^\ex
\colon
(\lSch/B)^{\op}
\to
\CAlg(\PrL)
\]
such that $f^*:=\sT^\ex(f)$ for every morphism $f$ in $\lSch/B$ coincides with the above construction.
\end{prop}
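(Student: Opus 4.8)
The plan is to realize $\sT^\ex$ as a fiberwise symmetric monoidal full subcategory of $\sT$, using straightening/unstraightening. I would begin with the underlying $\infty$-categories: unstraightening the underlying functor of $\sT$ yields a cocartesian fibration $p\colon \cT\to(\lSch/B)^{op}$ with $\cT\times_{(\lSch/B)^{op}}\{S\}\simeq \sT(S)$ and with cocartesian transport along the edge associated to $f\colon X\to S$ given by $f^*=\sT(f)$. Let $\cT^\ex\subseteq\cT$ be the full subcategory spanned by the objects lying in some $\sT^\ex(S)$. The decisive point is that a $p$-cocartesian edge with source in $\cT^\ex$ has target in $\cT^\ex$; equivalently, $f^*$ carries $\sT^\ex(S)$ into $\sT^\ex(X)$, which is recorded in Construction \ref{restriction.3} and holds because $\eSm$ is stable under base change (so $f^*$ sends each generator $M_S(X)(d)[n]$ into $\sT^\ex(X)$) while $f^*$ preserves colimits and $\sT^\ex(X)$ is closed under colimits in $\sT(X)$. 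A fiberwise full subcategory of a cocartesian fibration that is stable under cocartesian transport is itself a cocartesian fibration, with the inclusion preserving cocartesian edges: the cocartesian lifts already present in $\cT$ land in $\cT^\ex$ by the above, and remain cocartesian there because $\cT^\ex$ is full. Straightening $p|_{\cT^\ex}$ then gives a functor $(\lSch/B)^{op}\to\infCat_\infty$ with value $\sT^\ex(S)$ at $S$ and with $\sT^\ex(f)$ the restriction of $\sT(f)$; by \eqref{restriction.3.1} and the full faithfulness of $\varphi_\sharp$ this restriction is identified with $\varphi^*f^*\varphi_\sharp$, as desired.

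It remains to enhance this to a functor valued in $\CAlg(\PrL)$. Each $\sT^\ex(S)$ is presentable by Proposition \ref{restriction.2}, and each $\sT^\ex(f)=f^*$ preserves colimits, since $\varphi_\sharp$ is fully faithful and colimit-preserving (hence reflects colimits) while $f^*\varphi_\sharp\simeq\varphi_\sharp f^*$ is a composite of colimit-preserving functors. Moreover $\sT^\ex(S)\subseteq\sT(S)$ is a symmetric monoidal subcategory: it contains the unit $\unit_S=M_S(S)$, and ($\lSm$-PF) together with ($\lSm$-BC) give $M_S(X)(d)[n]\otimes M_S(Y)(e)[m]\simeq M_S(X\times_S Y)(d+e)[n+m]$ with $X\times_S Y\in\eSm/S$, so $\sT^\ex(S)$ is closed under $\otimes$ on generators and hence everywhere, with $\otimes$ preserving colimits in each variable. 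To make the symmetric monoidal structures and the symmetric monoidal functors $f^*$ (note $\varphi_\sharp f^*\simeq f^*\varphi_\sharp$ is symmetric monoidal and $\varphi_\sharp$ is symmetric monoidal and fully faithful) cohere into a single functor, I would run the same fiberwise-subcategory argument one categorical level up: $\sT$ corresponds to a cocartesian $(\lSch/B)^{op}$-family of $\infty$-operads $\cT^\otimes\to\Fin_*\times(\lSch/B)^{op}$ as in Construction \ref{logChow.5}, and the full subcategory spanned by the tuples of objects of the $\sT^\ex(S)$ is stable under cocartesian transport, because such transport is assembled from $f^*$ and $\otimes$, both of which preserve the subcategories by the above. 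Hence one gets again a cocartesian family of $\infty$-operads, whose straightening lands in symmetric monoidal $\infty$-categories and, by the previous remarks, factors through $\CAlg(\PrL)$. Taking the fiber over $\langle 1\rangle\in\Fin_*$ recovers the functor of the first paragraph, so this functor is the required $\sT^\ex$.

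The main obstacle is precisely this last coherence step: the pointwise assertions — $\sT^\ex(S)$ presentable and symmetric monoidal, $f^*$ colimit-preserving and symmetric monoidal, all compatibly with $f^*=\varphi^*f^*\varphi_\sharp$ — are elementary, but organizing them into an honest functor $(\lSch/B)^{op}\to\CAlg(\PrL)$ requires the operadic straightening/unstraightening bookkeeping, i.e.\ working with cocartesian families of $\infty$-operads rather than one $\infty$-category at a time. Everything else — the base-change stability used in the first paragraph and the computation of $\otimes$ on generators — is routine given the axioms of a log motivic $\infty$-category and Proposition \ref{restriction.2}.
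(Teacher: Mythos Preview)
Your argument is correct, and the underlying idea is the same as the paper's: pass from $\sT$ to the fiberwise full subcategory $\sT^\ex$, check that it is stable under both $f^*$ and $\otimes$, and then invoke a coherence theorem to assemble this into a functor to $\CAlg(\PrL)$. The verification that each $\sT^\ex(S)$ is presentable and that $f^*$ preserves colimits is identical to the paper's.

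Where you diverge is in the choice of coherence machine. You propose to unstraighten $\sT$ to a cocartesian $(\lSch/B)^{op}$-family of $\infty$-operads $\cT^\otimes\to\Fin_*\times(\lSch/B)^{op}$, carve out the full subcategory on tuples from the $\sT^\ex(S)$, check stability under cocartesian transport (which, as you note, amounts to stability under $f^*$ and $\otimes$), and straighten back. The paper instead invokes the equivalence of \cite[Proposition 2.4.1.7, Theorem 2.4.1.18]{HA} between functors $\cS\to\CAlg(\infCat_\infty)$ and lax cartesian structures $\cS^{\amalg}\to\infCat_\infty$: it converts $\sT$ to a functor $\sT_\amalg$ on $((\lSch/B)^{op})^{\amalg}$, takes the fiberwise full subcategory $\sT_\amalg^\ex(X_1,\ldots,X_n):=\prod_i\sT^\ex(X_i)$, observes this is still a lax cartesian structure (tautologically, since the condition is a product decomposition), and converts back. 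The paper's route has the advantage that the ``lax cartesian structure'' condition is a condition on products rather than on operad maps, so nothing needs to be checked beyond the product formula; your route is perfectly valid but asks you to verify that the full subcategory of $\cT^\otimes$ is again a cocartesian family of $\infty$-operads, which is true but a bit heavier to cite precisely. Either way the substance is the same: closure under $f^*$ and $\otimes$ is the only real content, and you have that.
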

\begin{proof}
For a category $\bC$, let $\bC^\amalg$ denote the category defined as follows:
\begin{enumerate}
\item[(i)] An object of $\bC^\amalg$ is an $n$-tuple $(X_1,\ldots,X_n)\in \bC^n$ with $n\geq 0$.
\item[(ii)] A morphism $f\colon (X_1,\ldots,X_m)\to (Y_1,\ldots,Y_n)$ is a function $\alpha\colon \{1,\ldots,m\}\to \{1,\ldots,n\}$ equipped with morphisms $X_i\to Y_{\alpha(i)}$ for all $i\in \{1,\ldots,m\}$.
\end{enumerate}
Apply \cite[Proposition 2.4.1.7, Theorem 2.4.1.18]{HA} to $\sT\colon (\lSch/B)^{\op}\to \CAlg(\infCat_\infty)$ to obtain a functor
\[
\sT_{\amalg}\colon ((\lSch/B)^{\op})^{\amalg} \to \infCat_\infty
\]
that is a lax cartesian structure in the sense of \cite[Definition 2.4.1.1]{HA}.
This means
\[
\sT_{\amalg}(X_1,\ldots,X_n)
\simeq
\sT(X_1)\times \cdots \times \sT(X_n)
\]
for all $X_1,\ldots,X_n\in \lSch/B$ and integer $n\geq 0$.
We set
\[
\sT_{\amalg}^\ex(X_1,\ldots,X_n)
:=
\sT^\ex(X_1)\times \cdots \times \sT^\ex(X_n),
\]
and we regard $\sT_{\amalg}^\ex(X_1,\ldots,X_n)$ as a full subcategory of $\sT_{\amalg}(X_1,\ldots,X_n)$.
This construction yields a functor
\[
\sT_{\amalg}^\ex\colon ((\lSch/B)^{\op})^{\amalg} \to \infCat_\infty
\]
and a natural transformation $\varphi_\sharp\colon \sT_{\amalg}^\ex\to \sT_{\amalg}$.
By construction, $\sT_{\amalg}^\ex$ is a lax cartesian structure.
Use \cite[Proposition 2.4.1.7, Theorem 2.4.1.18]{HA} again to obtain
\[
\sT^\ex\colon (\lSch/B)^{\op} \to \CAlg(\infCat_\infty)
\]
and a natural transformation $\varphi_\sharp\colon \sT^\ex\to \sT$.

For all $S\in \lSch/B$, Proposition \ref{restriction.2} shows that $\sT^\ex$ is presentable.
For all morphism $f\colon X\to S$ in $\lSch/B$, $f^*\colon \sT(S)\to \sT(X)$ preserves colimits by assumption.
Since $\varphi_\sharp$ preserves colimits and is fully faithful, $\varphi_\sharp f^*\simeq f^*\varphi_\sharp$ shows that $f^*\colon \sT^\ex(S)\to \sT^\ex(X)$ preserves colimits too.
It follows that $\sT^\ex$ factors through $\CAlg(\PrL)$.
\end{proof}

\begin{thm}
\label{restriction.16}
The functor
\[
\sT^\ex
\colon
(\lSch/B)^{\op}
\to
\CAlg(\PrL)
\]
satisfies the following properties:
\begin{itemize}
\item Let $f\colon X\to S$ be an exact log smooth morphism in $\lSch/B$.
Then $f^*$ admits a left adjoint $f_\sharp$.
\item \textup{($\eSm$-BC)}
Let
\[
\begin{tikzcd}
X'\ar[d,"f'"']\ar[r,"g'"]&
X\ar[d,"f"]
\\
S'\ar[r,"g"]&
S
\end{tikzcd}
\]
be a cartesian square in $\lSch/B$ such that $f$ is exact log smooth.
Then the natural transformation
\[
f_\sharp'g'^*
\xrightarrow{Ex}
g^*f_\sharp
\]
is an isomorphism.
\item \textup{($\eSm$-PF)}
Let $f$ be an exact log smooth morphism in $\lSch/B$.
Then the natural transformation
\[
f_\sharp((-)\otimes f^*(-))
\xrightarrow{Ex}
f_\sharp(-)\otimes (-)
\]
is an isomorphism.
\item \textup{($\A^1$-inv)}
Let $p\colon X\times \A^1\to X$ be the projection,
where $X\in \lSch/B$.
Then $p^*$ is fully faithful.
\item \textup{($\divi$-inv)}
Let $f\colon X\to S$ be a dividing cover in $\lSch/B$.
Then $f^*$ is fully faithful.
\item \textup{($\ver$-inv)}
Let $f\colon X\to S$ be an exact log smooth morphism in $\lSch/B$, and let $j\colon X-\partial_S X\to X$ be the open immersion.
Then the natural transformation
\[
f^*
\xrightarrow{ad}
j_*j^*f^*
\]
is an isomorphism.
\item \textup{($\square$-inv)}
Let $p\colon X\times \square\to X$ be the projection,
where $X\in \lSch/B$.
Then $p^*$ is fully faithful.
\item \textup{($\P^1$-stab)}
For $S\in \lSch/B$, $\unit_S(1)$ is $\otimes$-invertible.
\item \textup{(Loc)}
Let $i$ be a strict closed immersion in $\lSch/B$,
and let $j$ be its open complement.
Then the pair of functors $(i^*,j^*)$ is conservative,
and the functor $i_*$ is fully faithful.
\end{itemize}
\end{thm}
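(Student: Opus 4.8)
The plan is to establish the nine properties in turn, essentially in the listed order, by transporting the corresponding features of $\sT$ across the adjunction $\varphi_\sharp\dashv\varphi^*$ of Construction \ref{restriction.3}. The basic device is that $\varphi_\sharp\colon\sT^\ex(S)\hookrightarrow\sT(S)$ is fully faithful (hence conservative), preserves colimits, and is symmetric monoidal, the latter because $\sT^\ex(S)$ is $\otimes$-stable in $\sT(S)$ — by ($\lSm$-PF) for $\sT$ one has $M_S(V)\otimes M_S(W)\simeq M_S(V\times_S W)$ with $V\times_S W\in\eSm/S$ — so that $\varphi_\sharp$ is a morphism in $\CAlg(\PrL)$ as in Proposition \ref{restriction.4}. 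Together with the isomorphisms $\varphi_\sharp f^*\simeq f^*\varphi_\sharp$ of \eqref{restriction.3.1}, $\varphi_\sharp f_\sharp\simeq f_\sharp\varphi_\sharp$ for $f$ exact log smooth of \eqref{restriction.3.3}, the exchange $Ex\colon\varphi_\sharp f_*\to f_*\varphi_\sharp$ of \eqref{restriction.3.2}, and the stability of $\eSm$ under pullback and composition, this reduces most statements to the axioms of Definition \ref{logmotivic.1} for $\sT$. The existence of $f_\sharp$ for exact log smooth $f$ is already Construction \ref{restriction.3}.

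For ($\eSm$-BC) and ($\eSm$-PF) I would argue as follows: the pullback $f'$ (resp.\ the morphism $f$) is again exact log smooth, so every functor in sight restricts to the exact-log-smooth subcategory; applying $\varphi_\sharp$, rewriting via \eqref{restriction.3.1}, \eqref{restriction.3.3} and the monoidality of $\varphi_\sharp$, and using that these identifications carry the exchange transformation for $\sT^\ex$ to that for $\sT$ (a routine mate computation), turns the claim into ($\lSm$-BC), resp.\ ($\lSm$-PF), for $\sT$; conservativity of $\varphi_\sharp$ then finishes.

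For the three ``$g^*$ is fully faithful'' items, the plan is to first prove full faithfulness of the analogous functor on $\sT$ itself and then transport it along $\varphi_\sharp\sT^\ex(g)\simeq g^*\varphi_\sharp$ together with the full faithfulness of $\varphi_\sharp$. The projections off $\A^1$ and off $\square$ are log smooth, and $M_X(p)$ is invertible in $\sT(X)$ — for $\A^1$ this is ($\A^1$-inv), and for $\square$ it follows from ($\A^1$-inv), ($div$-inv) and ($ver$-inv) as in \cite{logA1} — so ($\lSm$-PF) for $\sT$ identifies the counit $p_\sharp p^*\to\id$ with the natural transformation induced by $M_X(p)$, whence $p^*$ is fully faithful on $\sT$. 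For a dividing cover $f$ one uses instead $dNis$-descent for $\sT$: a dividing cover is a monomorphism of fs log schemes (\cite{divspc}), so its \v{C}ech nerve is constant and $f^*$ is an equivalence on $\sT$. Finally ($\P^1$-stab) is immediate: $\varphi_\sharp\unit_S(1)\simeq\unit_S(1)$ in $\sT(S)$, the object $\unit_S(-1)$ already belongs to the generating family of $\sT^\ex(S)$ of Proposition \ref{restriction.2}, and a symmetric monoidal fully faithful functor reflects $\otimes$-invertibility.

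The delicate items are ($ver$-inv) and the ``$i_*$ fully faithful'' half of (Loc), where the right adjoints $j_*$, $i_*$ enter and the exchange \eqref{restriction.3.2} is not a priori invertible. Conservativity of $(i^*,j^*)$ is not affected: $i^*$ and $j^*$ commute with $\varphi_\sharp$ up to isomorphism, so vanishing of $i^*\cF$ and $j^*\cF$ forces vanishing of $\varphi_\sharp\cF$ by (Loc) for $\sT$, hence of $\cF$. For the remaining parts, the plan is to show that the functors in question preserve the subcategories $\sT^\ex$ on a generating set — using the localization cofiber sequence $j_\sharp j^*\to\id\to i_*i^*$ of $\sT$, which exhibits $i_*i^*M_S(V)$, and the analogous boundary objects controlled by the squares \eqref{outline.0.2} and \eqref{outline.0.3} of Definition \ref{logmotivic.1}, as (co)fibers of motives of exact log smooth schemes — so that \eqref{restriction.3.2} becomes an isomorphism in the relevant range; one then reduces, on that generating set, to (Loc), resp.\ to the boundary axioms together with ($ver$-inv), for $\sT$. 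I expect the main obstacle to be precisely this bookkeeping: verifying that the complement, boundary and gluing constructions attached to strict closed immersions, open immersions and vertical boundaries stay inside $\eSm$, so that the comparison with $\sT$ is legitimate. This is where the exactness hypotheses in the statements, and the last two axioms of Definition \ref{logmotivic.1}, are used.
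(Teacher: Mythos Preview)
Your reductions for the first several items---existence of $f_\sharp$, ($\eSm$-BC), ($\eSm$-PF), the three full-faithfulness statements, and ($\P^1$-stab)---match the paper's proof. The divergence is in how you handle ($ver$-inv) and the $i_*$ half of (Loc).

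For (Loc), the paper does not show that $i_*$ preserves $\sT^\ex$; that is Proposition \ref{restriction.5}, proven \emph{after} this theorem and using it. Instead the paper argues directly: from \eqref{restriction.3.1} one obtains, by passing to right adjoints, $i_*^{\sT^\ex}\varphi^*\simeq\varphi^*i_*^{\sT}$ and likewise $i^*\varphi^*\simeq\varphi^*i^*$, whence
\[
(i^*i_*)^{\sT^\ex}=i^*\varphi^*i_*\varphi_\sharp\simeq\varphi^*i^*i_*\varphi_\sharp\simeq\varphi^*\varphi_\sharp\simeq\id,
\]
using only (Loc) for $\sT$ and full faithfulness of $\varphi_\sharp$. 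No localization cofiber sequence is needed. Your route via ``$i_*$ preserves $\sT^\ex$ on generators'' would also work, but it is longer and duplicates the later Proposition \ref{restriction.5}.

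For ($ver$-inv), your plan is misdirected. The statement $f^*\xrightarrow{ad}j_*j^*f^*$ is an isomorphism if and only if its mate $f_\sharp j_\sharp j^*\xrightarrow{ad'}f_\sharp$ is, and the latter involves only $(-)_\sharp$ and $(-)^*$. These commute with $\varphi_\sharp$ by \eqref{restriction.3.1} and \eqref{restriction.3.3} (note that $j$ is an open immersion, hence in $\eSm$), so one is reduced to showing $M_S(W\times_X(X-\partial_SX))\to M_S(W)$ is an isomorphism in $\sT(S)$ for $W\in\eSm/X$, which follows from the axiom ($ver$-inv) for $\sT$. This is what the paper means by ``similarly''. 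You do not need to know that $j_*$ preserves $\sT^\ex$, and you do not need the last two axioms of Definition \ref{logmotivic.1}.

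In particular your closing sentence is wrong: the squares \eqref{outline.0.2} and \eqref{outline.0.3} play no role in this theorem. They are invoked only later, in Propositions \ref{restriction.9} and \ref{restriction.8}, as input to the support property in \S\ref{supp}.
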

\begin{proof}
The first point is shown in Construction \ref{restriction.3}.

Combine ($\lSm$-BC) for $\sT$, \eqref{restriction.3.1}, and \eqref{restriction.3.3} to show ($\eSm$-BC).
One can similarly show ($\eSm$-PF), ($\A^1$-inv), and ($\ver$-inv).
We deduce ($\square$-inv) from ($\A^1$-inv) and ($\ver$-inv).

If $f\colon X\to S$ is a dividing cover in $\lSch/B$,
then $f^*\colon \sT(S)\to \sT(X)$ is an equivalence.
Combine with \eqref{restriction.3.1} to show ($\divi$-inv).

Hence it remains to show (Loc).
We have the natural isomorphisms
\[
i^*\varphi^*i_*\varphi_\sharp
\simeq
\varphi^*i^*i_* \varphi_\sharp
\simeq
\varphi^*\varphi_\sharp
\simeq
\id,
\]
where (Loc) for $\sT$ is needed for the third isomorphism.
This means that the functor $i_*=\varphi^*i_*\varphi_\sharp$ is fully faithful.
Since $\varphi_\sharp$ is fully faithful, it remains to check that the pair of functors $(\varphi_\sharp i^*,\varphi_\sharp j^*)$ is conservative.
We have $\varphi_\sharp i^* \simeq i^*\varphi_\sharp$ and $\varphi_\sharp j^* \simeq  j^*\varphi_\sharp$.
Hence (Loc) for $\sT$ finishes the proof.
\end{proof}

\subsection{Properties of exact log smooth motives}
\label{properties}

Throughout this subsection,
we fix a log motivic $\infty$-category $\sT$.
In this subsection,
we collect several properties for $\sT^\ex$ that are needed later.

\begin{prop}
\label{restriction.5}
Let $i\colon Z\to S$ be a strict closed immersion in $\lSch/B$.
Then the natural transformation
\[
Ex\colon \varphi_\sharp i_*
\to
i_* \varphi_\sharp
\]
is an isomorphism.
\end{prop}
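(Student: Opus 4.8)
The plan is to reduce the statement to the already-established identity $\varphi_\sharp i^* \simeq i^* \varphi_\sharp$ from Construction~\ref{restriction.3} together with the fact that $i_*$ is fully faithful on $\sT$ and interacts well with exact log smooth schemes. First I would unwind the definition of the exchange transformation \eqref{restriction.3.2}: the map $Ex\colon \varphi_\sharp i_* \to i_* \varphi_\sharp$ is the composite $\varphi_\sharp \varphi^* i_* \varphi_\sharp \xrightarrow{ad'} i_* \varphi_\sharp$, induced by the counit of the adjunction $(\varphi_\sharp, \varphi^*)$. So the claim is equivalent to saying that the counit $ad'\colon \varphi_\sharp \varphi^* i_* \varphi_\sharp(\cF) \to i_* \varphi_\sharp(\cF)$ is an isomorphism for every $\cF \in \sT^\ex(Z)$, i.e.\ that $i_* \varphi_\sharp(\cF)$ lies in the essential image of $\varphi_\sharp$, i.e.\ that $i_*$ carries $\sT^\ex(Z)$ into $\sT^\ex(S)$.

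Next I would prove exactly this: for a strict closed immersion $i\colon Z \to S$, the functor $i_*\colon \sT(Z) \to \sT(S)$ sends $\sT^\ex(Z)$ into $\sT^\ex(S)$. The key geometric input is that strict closed immersions pull back exact log smooth morphisms to exact log smooth morphisms, and more importantly that one can relate $i_* M_Z(Y)$ for $Y \in \eSm/Z$ to motives of exact log smooth $S$-schemes. The natural tool is the localization axiom (Loc): with $j\colon U \to S$ the open complement of $i$, the pair $(i^*, j^*)$ is conservative and there is a localization triangle. Using ($\lSm$-BC) and the projection/purity formulas for $\varphi_\sharp$ recorded in \eqref{restriction.3.1} and \eqref{restriction.3.3}, I would argue that $i_* M_Z(Y)$ can be computed as a colimit built from $M_S$ of exact log smooth $S$-schemes — concretely, by writing $i_* \varphi_\sharp$ against the generators and using that $\Hom_{\sT(S)}(M_S(X)(d)[n], i_* M_Z(Y)) \simeq \Hom_{\sT(Z)}(i^* M_S(X)(d)[n], M_Z(Y)) = \Hom_{\sT(Z)}(M_Z(X\times_S Z)(d)[n], M_Z(Y))$ for $X \in \eSm/S$, which shows $i_* M_Z(Y)$ is detected by the generators of $\sT^\ex(S)$ and hence lies in $\sT^\ex(S)$ once one knows the subcategory is closed under the relevant colimits and is detected by these Hom's. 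Since $\sT^\ex(Z)$ is generated under colimits by the $M_Z(Y)(d)[n]$ and $i_*$ preserves colimits along $\sT^\ex$ (being a right adjoint that we are checking lands in a presentable colimit-closed subcategory), this extends from generators to all of $\sT^\ex(Z)$.

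Once $i_*(\sT^\ex(Z)) \subseteq \sT^\ex(S)$ is known, the conclusion is formal: for $\cF \in \sT^\ex(Z)$ we have $i_* \varphi_\sharp(\cF) = \varphi_\sharp(\cG)$ for some $\cG \in \sT^\ex(S)$, so $\varphi^* i_* \varphi_\sharp(\cF) \simeq \varphi^* \varphi_\sharp(\cG) \simeq \cG$ (the counit is an equivalence since $\varphi_\sharp$ is fully faithful), and the exchange map $\varphi_\sharp \varphi^* i_* \varphi_\sharp(\cF) \to i_* \varphi_\sharp(\cF)$ becomes $\varphi_\sharp(\cG) \xrightarrow{\sim} \varphi_\sharp(\cG)$. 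I expect the main obstacle to be the second step — precisely verifying that $i_*$ preserves the exact log smooth subcategory. The subtlety is that $i_*$ on $\sT$ need not commute with all colimits, so one cannot simply say ``$i_*$ of a colimit of generators is a colimit of things in $\sT^\ex$''; instead I would lean on the localization axiom (Loc) to present $i_* M_Z(Y)$ via a cofiber sequence $j_\sharp j^* i_* M_Z(Y) \to i_* M_Z(Y) \to i_* i^* i_* M_Z(Y) \simeq i_* M_Z(Y)$ type analysis — or more cleanly, use that $j^* i_* \simeq 0$ so that $i_* M_Z(Y)$ is itself the cofiber $\Cone(j_\sharp(\text{something}) \to M_S(?))$ built from exact log smooth motives, and finally invoke that $\sT^\ex(S)$ is closed under colimits and cofibers. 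Some care with the vertical-boundary behavior (the $\eSm$ versus $\lSm$ distinction, and that $Y \times_Z Z = Y$ but $Y$ viewed over $S$ via $i$ is still exact over $S$) is where the ($ver$-inv) axiom and the structure of exact morphisms will be needed.
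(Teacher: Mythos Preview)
Your reformulation is correct: the claim is equivalent to showing that $i_*$ carries $\sT^\ex(Z)$ into $\sT^\ex(S)$, since $\varphi_\sharp$ is fully faithful. But your proposed proofs of this step both have gaps.

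The Hom-detection argument is simply wrong. The computation
\[
\Hom_{\sT(S)}(M_S(X)(d)[n], i_* M_Z(Y)) \simeq \Hom_{\sT(Z)}(M_Z(X\times_S Z)(d)[n], M_Z(Y))
\]
is fine, but ``being detected by the generators of $\sT^\ex(S)$'' does not imply ``lying in $\sT^\ex(S)$''. A full subcategory closed under colimits is not in general characterized by Hom-vanishing against its generators; that would require $\sT^\ex(S)$ to be a localizing subcategory with a \emph{right} orthogonal complement, which you have not established.

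The cofiber-sequence idea is the right one, but you never say what ``$M_S(?)$'' is, and for good reason: given $Y\in\eSm/Z$ there is no obvious lift to an exact log smooth $S$-scheme. The paper sidesteps this entirely by working functorially. It compares the two cofiber sequences
\[
\varphi_\sharp j_\sharp j^* \to \varphi_\sharp \to \varphi_\sharp i_* i^*
\quad\text{and}\quad
j_\sharp j^* \varphi_\sharp \to \varphi_\sharp \to i_* i^* \varphi_\sharp,
\]
the first coming from (Loc) in $\sT^\ex$ (Theorem~\ref{restriction.16}) and the exactness of $\varphi_\sharp$, the second from (Loc) in $\sT$. The left-hand terms agree by \eqref{restriction.3.1} and \eqref{restriction.3.3}, so $\varphi_\sharp i_* i^* \simeq i_* i^* \varphi_\sharp$. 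The missing idea in your sketch is then to invoke that $i^*\colon \sT^\ex(S)\to \sT^\ex(Z)$ is \emph{essentially surjective} (because $i_*$ is fully faithful, so $i^*i_*\simeq\id$); this is what lets you drop the trailing $i^*$ and conclude $\varphi_\sharp i_*\simeq i_*\varphi_\sharp$. In your language, the ``$M_S(?)$'' should be any object $\cG\in\sT^\ex(S)$ with $i^*\cG\simeq \cF$, and essential surjectivity guarantees one exists. Your worry about $i_*$ not preserving colimits is also dissolved by this argument, since one never needs to pass from generators to arbitrary objects of $\sT^\ex(Z)$.
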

\begin{proof}
Consider the commutative diagram
\[
\begin{tikzcd}[row sep=small]
\varphi_\sharp j_\sharp j^*\ar[dd,"\simeq"']\ar[r,"ad'"]&
\varphi_\sharp\ar[dd,"\id"]\ar[r,"ad"]&
\varphi_\sharp i_*i^*\ar[d,"Ex"]
\\
&
&
i_*\varphi_\sharp i^*\ar[d,"\simeq"]
\\
j_\sharp j^* \varphi_\sharp\ar[r,"ad'"]&
\varphi_\sharp \ar[r,"ad'"]&
i_*i^*\varphi_\sharp.
\end{tikzcd}
\]
By (Loc),
the top and bottom rows are cofiber sequences.
It follows that the upper right vertical arrow is an isomorphism.
To conclude, observe that $i^*$ is essentially surjective since $i_*$ is fully faithful.
\end{proof}

\begin{prop}
\label{restriction.6}
Let $P$ be a sharp fs monoid, and let $\theta\colon P\to Q$ be an injective, local, and locally exact homomorphism of fs monoids with a $\theta$-critical face $G$ of $Q$.
Then for every morphism $S\to \A_P\times B$ in $\lSch/B$, the natural transformation
\[
f_\sharp j_\sharp j^* f^*
\xrightarrow{ad'}
f_\sharp f^*
\]
is an isomorphism, where $f\colon S\times_{\A_P}\A_Q\to S$ is the projection, and $j\colon S\times_{\A_P}\A_{Q_G}\to S\times_{\A_P}\A_Q$ is the induced open immersion.
\end{prop}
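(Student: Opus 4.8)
The strategy is to reduce, via the projection formula, to the statement that the motive of a strict open immersion is invertible, and then to recognise that open immersion---after the relevant base change---as the removal of part of the relative vertical boundary, which is harmless by ($\ver$-inv). In detail: $\A_{Q_G}\to\A_Q$ is a strict open immersion (inverting the face $G$ only turns the sections in $G$ into units, so the restricted and the ambient log structures have the same associated log structure), hence so is its base change $j\colon V:=S\times_{\A_P}\A_{Q_G}\to S\times_{\A_P}\A_Q=:X$. Since $\theta$ is injective---and here the hypotheses that $\theta$ is local and locally exact serve to keep the fs fibre product under control---the projection $f\colon X\to S$ lies in $\lSm$, so $X,V\in\lSm/S$ and the functors $f_\sharp$ and $(fj)_\sharp=f_\sharp j_\sharp$ exist. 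Applying ($\lSm$-PF) to $f$ and to $fj$ identifies $f_\sharp f^*(-)$ with $M_S(X)\otimes(-)$ and $f_\sharp j_\sharp j^*f^*(-)$ with $M_S(V)\otimes(-)$, and under these identifications the transformation $ad'$ of the statement becomes $M_S(j)\otimes(-)$. Thus it suffices to show that $M_S(j)\colon M_S(V)\to M_S(X)$ is an isomorphism in $\sT(S)$; if one wishes, one may reduce further to the universal case $S=\A_P\times B$ by ($\lSm$-BC), but this is not needed.

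The geometric input is that, because $G$ is a $\theta$-critical face, the closed complement $\A_Q\setminus\A_{Q_G}$ is contained in the relative vertical boundary $\partial_{\A_P}\A_Q$; this is essentially the content of $\theta$-criticality, and the hypotheses that $\theta$ is local and locally exact are what let one read off the toric strata of $\A_Q$ together with their images in $\A_P$. Since the relative log structure of $X$ over $S$ is pulled back from that of $\A_Q$ over $\A_P$, the formation of the relative vertical boundary is compatible with the base change along $S\to\A_P$, so $X\setminus V\subseteq\partial_S X$, i.e.\ $X^\circ:=X-\partial_S X$ is contained in $V$. As $V\hookrightarrow X$ is a strict open immersion we also have $\partial_S V=V\cap\partial_S X$, and hence
\[
V-\partial_S V=V\setminus\partial_S X=X\setminus\partial_S X=X^\circ.
\]

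Finally, apply ($\ver$-inv) twice: with the log smooth $S$-scheme $X$ it shows that $M_S(X^\circ\hookrightarrow X)$ is an isomorphism, and with $V\in\lSm/S$ it shows that $M_S(X^\circ\hookrightarrow V)$ is an isomorphism. Since the composite $X^\circ\hookrightarrow V\xrightarrow{\,j\,}X$ is the inclusion $X^\circ\hookrightarrow X$, functoriality of $M_S$ gives $M_S(j)\circ M_S(X^\circ\hookrightarrow V)=M_S(X^\circ\hookrightarrow X)$, and two of the three maps being isomorphisms forces $M_S(j)$ to be one as well, completing the argument. The step I expect to be the main obstacle is the geometric input of the previous paragraph---unwinding the definitions of a $\theta$-critical face and of the relative vertical boundary $\partial_S$, checking that $\partial_S$ commutes with the base change $S\to\A_P$, and thereby establishing $X\setminus V\subseteq\partial_S X$; the hypotheses that $\theta$ is injective, local, and locally exact ought to be exactly what makes this bookkeeping work, guaranteeing in particular that $f\in\lSm$ and that the fs fibre products involved have the expected underlying schemes.
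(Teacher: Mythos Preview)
Your reduction via ($\lSm$-PF) to showing that $M_S(j)\colon M_S(V)\to M_S(X)$ is an isomorphism is fine, but the central geometric claim---that the closed complement $X\setminus V$ lies inside the relative vertical boundary $\partial_S X$---is false in general. Take the most basic example: $P=\N$, $Q=\N^2$, $\theta$ the diagonal $n\mapsto(n,n)$, and $G=\N\times\{0\}$. This $\theta$ is injective, local, and locally exact (indeed integral), and $G$ is a maximal $\theta$-critical face. But the face of $\N^2$ generated by $\theta(1)=(1,1)$ is all of $\N^2$, so $\theta$ is \emph{vertical} and hence $\partial_{\A_P}\A_Q=\emptyset$. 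On the other hand $\A_{Q_G}=\G_m\times\A_\N$ is a proper open of $\A_{\N^2}$ with complement the nonempty divisor $\{x=0\}$. So $X\setminus V\not\subseteq\partial_S X$, and your two invocations of ($\ver$-inv) degenerate to the tautology $M_S(X)\simeq M_S(X)$, never seeing $V$ at all. The slogan ``$\theta$-criticality means the complement lies in the vertical boundary'' is simply not what $\theta$-critical means.

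The paper does not argue this result internally; it invokes \cite[Theorem~3.2.4]{logGysin}. In the example above, what actually makes $M_{\A_\N}(\G_m\times\A_\N)\simeq M_{\A_\N}(\A_{\N^2})$ work is not ($\ver$-inv) alone but a combination with ($div$-inv): one passes to a log blow-up (a dividing cover) of $\A_{\N^2}$, after which the comparison can be carried out chart by chart using ($\A^1$-inv) and ($\ver$-inv). The general statement in \cite{logGysin} is proved by an induction along the fan of $Q$ that systematically produces such dividing covers. So the missing idea is a genuine use of the dividing topology, not a sharper bookkeeping of $\partial_S$.
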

\begin{proof}
This is an immediate consequence of \cite[Theorem 3.2.4]{logGysin}.
\end{proof}

\begin{prop}
\label{restriction.7}
Let $f\colon X\to S$ be an exact morphism in $\lSch/B$.
Then there exists a Kummer log smooth morphism $g\colon S'\to S$ such that $u^*\colon \sT^\ex(Y)\to \sT^\ex(Y\times_S S')$ is conservative for every pullback $u\colon Y\times_S S'\to Y$ of $g$ and the projection $X\times_S S'\to S'$ is saturated.
\end{prop}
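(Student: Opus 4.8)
The plan is to separate the log-geometric input — producing a Kummer log smooth cover along which $f$ becomes saturated — from the motivic input — conservativity of pullback along Kummer log smooth covers on $\sT^\ex$ — and to treat the latter by a d\'evissage over the rank of the characteristic monoids powered by Proposition~\ref{restriction.6}.

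\emph{Construction of $g$.} Since $f$ is exact, it admits exact charts strict \'etale locally on $S$, so I would pass to a finite strict \'etale cover $\{S_i\to S\}_i$ together with sharp fs monoids $P_i$, strict morphisms $S_i\to\A_{P_i}$, and injective exact homomorphisms $\theta_i\colon P_i\to Q_i$ of fs monoids charting $f|_{S_i}$. Let $N_i\ge 1$ annihilate the torsion subgroup of $Q_i^\gp/P_i^\gp$, set $P_i':=\tfrac{1}{N_i}P_i\subseteq P_i^\gp\otimes\Q$, and put $g_i\colon S_i':=S_i\times_{\A_{P_i}}\A_{P_i'}\to S_i$. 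Each $g_i$ is a finite surjective Kummer log smooth morphism, and by the standard fact of log geometry that an exact morphism is made saturated after a Kummer base change of the target killing the torsion of the cokernel (see \cite{Ogu}), the projection $X\times_S S_i'\to S_i'$ is saturated. Now set $g:=\bigl(\coprod_i S_i'\to S\bigr)$. Each $S_i'\to S$, being a composite of a strict \'etale morphism with a Kummer log smooth one, is Kummer log smooth; hence $g$ is Kummer log smooth, it is surjective because the $S_i$ cover $S$, and $X\times_S S'=\coprod_i(X\times_S S_i')\to S'$ is saturated since saturatedness is strict \'etale local on the target.

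\emph{Reduction of the conservativity.} Let $u\colon Y\times_S S'\to Y$ be a pullback of $g$. Over the open cover $\{Y\times_S S_i\hookrightarrow Y\}_i$ of $Y$, the morphism $u$ is a disjoint union of composites of an open immersion with a base change of $g_i$. The family $\{Y\times_S S_i\hookrightarrow Y\}_i$ is jointly conservative on $\sT^\ex$: because $\varphi_\sharp$ is fully faithful and commutes with open pullbacks by~\eqref{restriction.3.1}, this follows from the corresponding joint conservativity on $\sT$, which holds as $\sT$ is a Nisnevich sheaf. Hence it suffices to prove that $h^*$ is conservative on $\sT^\ex$ for every base change $h$ of some $g_i$; and since such an $h$ is again a surjective Kummer log smooth morphism, it is enough to prove the following: for every surjective Kummer log smooth morphism $h$, the functor $h^*$ is conservative on $\sT^\ex$.

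\emph{Conservativity for Kummer covers, and the main obstacle.} For this last statement I would Zariski-localize and choose a chart to reduce to the model case $h=S\times_{\A_P}\A_{P'}\to S$ with $P$ sharp and $P\hookrightarrow P'$ Kummer, and then argue by noetherian induction on $S$ together with induction on the rank of the characteristic monoids, the rank-zero case being trivial. In the inductive step I would invoke the localization property (Loc) of Theorem~\ref{restriction.16} for the strict closed immersion cutting out a boundary stratum, with open complement $j$: combined with Proposition~\ref{restriction.5} and the $\ver$-invariance of Theorem~\ref{restriction.16}, this detects the vanishing of an object of $\sT^\ex(S)$ on the closed stratum, over which $h$ restricts to a Kummer cover treated by the inductive hypothesis, and on the complementary open, over which the log structure becomes trivial and $h$ restricts to a base change of the multiplication map $[N]\colon\G_m^{r}\to\G_m^{r}$. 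The compatibility required to pass between these two cases along the localization triangle — i.e., the precise interaction of $h^*$, equivalently of $h_\sharp$ via adjunction and the projection formula ($\eSm$-PF), with the removal of the boundary — is exactly what Proposition~\ref{restriction.6} is built to supply, once one identifies the relevant $\theta$-critical faces at each stage. I expect that setting up this induction correctly — choosing the boundary strata and tracking the $\theta$-critical faces so that Proposition~\ref{restriction.6} applies — together with the base case, namely conservativity of $[N]^*$ for the multiplication map on a split torus in the motivic category over ordinary schemes, is where essentially all the difficulty lies; the construction of $g$ and the reduction to Kummer covers are routine by comparison.
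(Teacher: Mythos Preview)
Your construction of $g$ and the reduction to conservativity along surjective Kummer log smooth morphisms are reasonable, but on the conservativity step you are working far harder than necessary, and the d\'evissage you outline is left as an admitted sketch.

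The paper's proof is a two-line transfer. The full statement of the proposition for $\sT$ itself --- both the existence of the Kummer log smooth $g$ making $f$ saturated \emph{and} the conservativity of $u^*$ on $\sT$ for every pullback $u$ of $g$ --- is already available as \cite[Proposition~3.3.3]{logGysin}. Passing from $\sT$ to $\sT^\ex$ is then immediate: if $\alpha$ is a morphism in $\sT^\ex(Y)$ with $u^*\alpha$ an isomorphism, then by \eqref{restriction.3.1} one has $u^*\varphi_\sharp\alpha\simeq\varphi_\sharp u^*\alpha$, which is an isomorphism; conservativity of $u^*$ on $\sT$ gives that $\varphi_\sharp\alpha$ is an isomorphism; and since $\varphi_\sharp$ is fully faithful, $\alpha$ is an isomorphism.

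You already deploy exactly this $\varphi_\sharp$-trick to handle the joint conservativity of the Zariski pieces in your reduction, but you then abandon it for the Kummer part and launch an induction internal to $\sT^\ex$ via (Loc), Proposition~\ref{restriction.6}, and $\ver$-invariance. That induction is, in spirit, a re-derivation of the cited result from \cite{logGysin}, and as you yourself acknowledge, the bookkeeping of strata and $\theta$-critical faces together with the torus base case is where essentially all the work would lie; your proposal does not carry it out. The missed observation is that the very same one-line transfer along $\varphi_\sharp$ that you used for the open cover applies verbatim to the Kummer cover, making any such internal argument unnecessary here.
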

\begin{proof}
By \cite[Proposition 3.3.3]{logGysin}, there exists a Kummer log smooth morphism $g\colon S'\to S$ such that $u^*\colon \sT(Y)\to \sT(Y\times_S S')$ is conservative for every pullback $u$ of $g$ and the projection $X\times_S S'\to S'$ is saturated.

Assume that $\alpha$ is a morphism in $\sT^\ex(S)$ such that $u^*\alpha$ is an isomorphism.
Since $\varphi_\sharp u^*\simeq u^*\varphi_\sharp$, $u^*\varphi_\sharp \alpha$ is an isomorphism.
By the above paragraph, $\varphi_\sharp \alpha$ is an isomorphism.
To conclude, observe that $\varphi_\sharp$ is fully faithful.
\end{proof}

\begin{prop}
\label{restriction.12}
Let $\{g_i\colon S_i\to S\}_{i\in I}$ be a family of morphisms in $\lSch/B$ producing a dividing Nisnevich covering sieve.
Then the family of functors $\{g_i^*\}_{i\in I}$ is conservative.
\end{prop}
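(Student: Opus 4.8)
The plan is to reduce the assertion to the corresponding statement for $\sT$ itself, which is built into the definition of a log motivic $\infty$-category (dividing Nisnevich descent), and then transport the conclusion across the fully faithful embedding $\varphi_\sharp\colon \sT^\ex(S)\to \sT(S)$ of Construction \ref{restriction.3}.

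First I would show that the family $\{g_i^*\colon \sT(S)\to \sT(S_i)\}_{i\in I}$ is jointly conservative on $\sT$. Since $\sT$ is a dividing Nisnevich sheaf with values in $\CAlg(\PrL)$, and $\{g_i\}_{i\in I}$ generates a dividing Nisnevich covering sieve $R$ on $S$, the sheaf condition gives an equivalence $\sT(S)\xrightarrow{\sim}\lim_{(h\colon T\to S)\in R}\sT(T)$, a limit that may be computed at the level of underlying $\infty$-categories because the forgetful functors from $\CAlg(\PrL)$ to $\PrL$ and then to $\infty$-categories preserve limits. Equivalences in a limit of $\infty$-categories are detected by the projection functors, so a morphism $\alpha$ in $\sT(S)$ is an equivalence as soon as $h^*\alpha$ is an equivalence for every $(h\colon T\to S)\in R$. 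By definition of the sieve generated by $\{g_i\}$, any such $h$ factors as $T\xrightarrow{h'}S_i\xrightarrow{g_i}S$ for some $i\in I$, so $h^*\alpha\simeq h'^*g_i^*\alpha$; since pullback functors preserve equivalences, the hypothesis that $g_i^*\alpha$ is an equivalence for all $i$ forces $h^*\alpha$ to be an equivalence for all $h\in R$, hence $\alpha$ to be an equivalence.

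Next I would transfer this to $\sT^\ex$. Let $\alpha$ be a morphism in $\sT^\ex(S)$ with $g_i^*\alpha$ an equivalence for all $i\in I$. The natural isomorphism $\varphi_\sharp f^*\simeq f^*\varphi_\sharp$ of \eqref{restriction.3.1} gives $g_i^*\varphi_\sharp\alpha\simeq \varphi_\sharp g_i^*\alpha$, which is an equivalence in $\sT(S_i)$ for every $i$. By the previous paragraph $\varphi_\sharp\alpha$ is an equivalence in $\sT(S)$, and since $\varphi_\sharp$ is fully faithful it reflects equivalences, so $\alpha$ is an equivalence. Hence $\{g_i^*\}_{i\in I}$ is conservative on $\sT^\ex$.

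The only point needing care — and it is hardly an obstacle — is the bookkeeping that the descent equivalence is along the full covering sieve $R$ rather than merely along the generating family, so that one must pass through the factorization of an arbitrary $h\in R$ through some $g_i$; once that is noted, the argument is formal.
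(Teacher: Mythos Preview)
Your proof is correct and follows essentially the same approach as the paper's: establish conservativity of $\{g_i^*\}$ on $\sT$, then transfer to $\sT^\ex$ via the fully faithful $\varphi_\sharp$ and the compatibility \eqref{restriction.3.1}. The only difference is that the paper cites \cite[Proposition 2.2.3]{logGysin} for the conservativity on $\sT$, whereas you argue it directly from the dividing Nisnevich sheaf condition; your unpacking of this step is correct and makes the argument self-contained.
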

\begin{proof}
Since $\varphi_\sharp$ is fully faithful,
\cite[Proposition 2.2.3]{logGysin} implies that the family $\{g_i^*\varphi_\sharp\}_{i\in I}$ is conservative.
Together with \eqref{restriction.3.1},
we see that the family $\{\varphi_\sharp g_i^*\}_{i\in I}$ is conservative.
Use the fact that $\varphi_\sharp$ is fully faithful to conclude.
\end{proof}

\begin{prop}
\label{restriction.13}
Let
\[
Q
:=
\begin{tikzcd}
X'\ar[d,"f'"']\ar[r,"g'"]&
X\ar[d,"f"]
\\
S'\ar[r,"g"]&
S
\end{tikzcd}
\]
be a strict Nisnevich distinguished square in $\lSch/B$.
Then the induced square
\begin{equation}
\label{restriction.13.1}
\begin{tikzcd}
h_\sharp h^*\ar[d,"ad'"']\ar[r,"ad'"]&
g_\sharp g^*\ar[d,"ad'"]
\\
f_\sharp f^*\ar[r,"ad'"]&
\id
\end{tikzcd}
\end{equation}
is cartesian,
where $h:=gf'=fg'$.
\end{prop}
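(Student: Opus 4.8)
The plan is to reduce the assertion to the statement that the motive functor $M_S$ carries strict Nisnevich distinguished squares to cocartesian squares, which is a formal consequence of the dividing Nisnevich sheaf axiom for $\sT$. Since $Q$ is a strict Nisnevich distinguished square, each of the morphisms $f$, $g$, $g'$ and $h=fg'=gf'$ is strict étale, hence exact log smooth; in particular the four endofunctors $h_\sharp h^*$, $g_\sharp g^*$, $f_\sharp f^*$, $\id$ of $\sT^\ex(S)$ are all defined and colimit-preserving. The category $\sT(S)$ is stable, hence so is its full subcategory $\sT^\ex(S)$, which is closed under colimits and shifts; therefore \eqref{restriction.13.1} is cartesian if and only if it is cocartesian. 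Finally the inclusion $\varphi_\sharp\colon\sT^\ex(S)\hookrightarrow\sT(S)$ is fully faithful and colimit-preserving, and by \eqref{restriction.3.1} and \eqref{restriction.3.3} it intertwines \eqref{restriction.13.1} with the analogous square of endofunctors of $\sT(S)$; since a fully faithful colimit-preserving functor preserves and reflects cocartesian squares, it suffices to show that \eqref{restriction.13.1}, regarded as a square of endofunctors of $\sT(S)$, is cocartesian.

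Its total cofiber is then a colimit-preserving endofunctor of $\sT(S)$, so it vanishes as soon as it vanishes on the generators $M_S(X_0)(d)[n]$ with $X_0\in\lSm/S$ and $d,n\in\Z$. Fix such an $X_0$, with structure morphism $a\colon X_0\to S$. Applying ($\lSm$-BC) to the base changes of $a$ along $f$, $g$ and $h$, together with ($\lSm$-PF), one obtains $f^*M_S(X_0)\simeq M_X(X_0\times_S X)$ and $f_\sharp M_X(X_0\times_S X)\simeq M_S(X_0\times_S X)$, and likewise for $g$ and $h$; tracing the compatibility of these exchange equivalences with the counits $ad'$, the square obtained by evaluating \eqref{restriction.13.1} at $M_S(X_0)$ is identified with $M_S$ applied to the square $X_0\times_S Q$, with structure maps induced by those of $Q$. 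Twisting by the autoequivalence $(d)[n]$, and using that $X_0\times_S Q$ is again a strict Nisnevich distinguished square (such squares are stable under base change), we are reduced to showing that $M_S$ sends a strict Nisnevich distinguished square in $\lSm/S$ to a cocartesian square of $\sT(S)$.

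So let $A'=A\times_C C'$ sit in a strict Nisnevich distinguished square with legs $A\to C$ and $C'\to C$, all of whose corners lie in $\lSm/S$. We must show that $M_S(A)\amalg_{M_S(A')}M_S(C')\to M_S(C)$ is an equivalence, and by the Yoneda lemma it is enough to check, for every $\mathcal{G}\in\sT(S)$, that the induced map
\[
\Map_{\sT(S)}(M_S(C),\mathcal{G})\longrightarrow \Map_{\sT(S)}(M_S(A),\mathcal{G})\times_{\Map_{\sT(S)}(M_S(A'),\mathcal{G})}\Map_{\sT(S)}(M_S(C'),\mathcal{G})
\]
is an equivalence. Using the adjunctions $(b_\sharp,b^*)$ for the structure morphisms $b$ of $C$, $A$, $A'$, $C'$ and the fact that each $b^*$ is symmetric monoidal (so $b^*\unit\simeq\unit$), this map is identified with the comparison map
\[
\Map_{\sT(C)}(\unit,\mathcal{G}|_C)\longrightarrow \Map_{\sT(A)}(\unit,\mathcal{G}|_A)\times_{\Map_{\sT(A')}(\unit,\mathcal{G}|_{A'})}\Map_{\sT(C')}(\unit,\mathcal{G}|_{C'}).
\]
Since $\sT$ is a dividing Nisnevich sheaf it is in particular a strict Nisnevich sheaf, hence carries the distinguished square to a cartesian square of $\infty$-categories $\sT(C)\simeq\sT(A)\times_{\sT(A')}\sT(C')$ under which $\unit_C$ corresponds to the triple of monoidal units and $\mathcal{G}$ to the triple of its restrictions; taking mapping spaces yields the desired equivalence, and hence the claim.

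The one genuinely delicate point is the bookkeeping in the second step: one has to verify that the exchange equivalences furnished by ($\lSm$-BC) are compatible with the counits $ad'$, so that the evaluated square is literally a twist of $M_S$ applied to $X_0\times_S Q$ with the expected structure maps. Everything else is formal, resting only on colimit-preservation, stability of $\sT^\ex(S)$, and the sheaf axiom.
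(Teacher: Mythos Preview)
Your proof is correct and follows essentially the same approach as the paper: reduce to checking the square on generators $M_S(U)$, observe that the result is $M_S$ applied to the pulled-back square $U\times_S Q$, and conclude from the strict Nisnevich sheaf property of $\sT$. The paper's proof works directly in $\sT^\ex(S)$ with $U\in\eSm/S$ rather than passing through $\varphi_\sharp$ to $\sT(S)$ and $\lSm/S$, and leaves the descent step implicit, but the argument is the same.
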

\begin{proof}
It suffices to show that the square obtained by applying \eqref{restriction.13.1} to $M_S(U)$ is cartesian for every $U\in \eSm/S$.
This holds since $Q\times_S U$ is a strict Nisnevich distinguished square.
\end{proof}

\begin{prop}
\label{restriction.9}
Let $f\colon X\to S$ be a log smooth morphism in $\lSch/B$ with $S\in \Sch/B$.
Consider the induced cartesian square of the form \eqref{outline.0.1}.
Then the square
\[
\begin{tikzcd}
f^*\ar[d,"ad"']\ar[r,"ad"]&
i_*i^*f^*\ar[d,"ad"]
\\
p_*p^*f^*\ar[r,"ad"]&
q_*q^*f^*
\end{tikzcd}
\]
is cartesian, where $q=pi'=ip'$.
\end{prop}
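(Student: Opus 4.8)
The plan is to deduce this from the penultimate condition in Definition \ref{logmotivic.1} — that \eqref{outline.0.2} is cartesian — applied to $\sT$ itself, transferring the statement along the fully faithful, colimit-preserving inclusion $\varphi_\sharp\colon\sT^\ex\to\sT$ (note that $\sT(\ul X)$ and $\sT^\ex(\ul X)$ are stable, since all shifts of motives are present among the generators). Keep the notation of \eqref{outline.0.1}, so $i\colon\ul{\partial X}\to\ul X$ is a strict closed immersion; let $j\colon X-\partial X\to\ul X$ be its open complement. Since the interior $X-\partial X$ has the trivial log structure, $j$ is a strict open immersion, hence exact log smooth, and the restriction of $p$ over $X-\partial X$ is $j$. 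The first step is to rewrite the square: by (Loc) for $\sT^\ex$ together with ($\eSm$-BC) [Theorem \ref{restriction.16}], the localization cofiber sequence $j_\sharp j^*\xrightarrow{ad'}\id\xrightarrow{ad}i_*i^*$ holds on $\sT^\ex(\ul X)$, so $\fib(f^*\xrightarrow{ad}i_*i^*f^*)\simeq j_\sharp j^*f^*$; and the base change isomorphism $p^*i_*\simeq i'_*p'^*$ along the strict closed immersion $i$ — which follows formally from (Loc) and Proposition \ref{restriction.5}, just as in $\sT$ — gives $q_*q^*\simeq p_*p^*i_*i^*$. Hence the square is obtained by applying the unit $ad\colon\id\to p_*p^*$ to the natural transformation $f^*\xrightarrow{ad}i_*i^*f^*$, so it is cartesian if and only if the unit
\[
\eta\colon j_\sharp j^*f^*\xrightarrow{ad}p_*p^*j_\sharp j^*f^*
\]
is an isomorphism of functors into $\sT^\ex(\ul X)$. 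The same reformulation in $\sT$ shows that the axiom that \eqref{outline.0.2} is cartesian is exactly the statement that the corresponding unit is an isomorphism in $\sT$.

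The second step transfers this isomorphism from $\sT$ to $\sT^\ex$. Put $G:=j_\sharp j^*f^*$, computed in $\sT^\ex$. The key point is that $G$ takes values in the essential image of $\varphi_\sharp$: the functors $f^*$ and $j^*$ commute with $\varphi_\sharp$ by \eqref{restriction.3.1}, and $j_\sharp$ commutes with $\varphi_\sharp$ by \eqref{restriction.3.3} since $j$ is exact log smooth, whence $\varphi_\sharp G\simeq j_\sharp j^*f^*\varphi_\sharp$ lands in the image of $\varphi_\sharp$; in particular the counit $\varphi_\sharp\varphi^*\varphi_\sharp G\xrightarrow{ad'}\varphi_\sharp G$ is an isomorphism. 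Since $p^*$ and $p_*$ on $\sT^\ex$ are by construction $\varphi^*p^*\varphi_\sharp$ and $\varphi^*p_*\varphi_\sharp$, the composite $p_*p^*$ on $\sT^\ex$ equals $\varphi^*\circ(p_*p^*)\circ\varphi_\sharp$ with the inner $p_*p^*$ that of $\sT$, using \eqref{restriction.3.1}. A chase with the triangle identities for $(\varphi_\sharp,\varphi^*)$ then identifies $\varphi_\sharp(\eta)$ with the composite of the isomorphism $\varphi_\sharp G\simeq\varphi_\sharp\varphi^*\varphi_\sharp G$ with $\varphi_\sharp\varphi^*$ applied to the $\sT$-unit $\varphi_\sharp G\xrightarrow{ad}p_*p^*\varphi_\sharp G$. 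The latter unit is an isomorphism by the first paragraph (this is precisely the axiom \eqref{outline.0.2}), hence so is $\varphi_\sharp\varphi^*$ of it, and therefore $\varphi_\sharp(\eta)$ is an isomorphism. Since $\varphi_\sharp$ is conservative, $\eta$ is an isomorphism, and the square is cartesian.

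I expect the main obstacle to be the bookkeeping in the second step: matching the unit of the adjunction $(p^*,p_*)$ on $\sT^\ex$ with that on $\sT$ after the passage through the sandwich $\varphi^*(-)\varphi_\sharp$ requires careful use of \eqref{restriction.3.1} and the triangle identities for $(\varphi_\sharp,\varphi^*)$. What makes this tractable is the reduction via (Loc) in the first step: it eliminates the right adjoints $i_*$ and $q_*$ — whose commutation with $\varphi_\sharp$ is not controlled in general (for $i_*$ there is Proposition \ref{restriction.5}, but $q_*$ is a pushforward along a morphism that is not a strict closed immersion) — in favour of a single unit for $p_*p^*$ evaluated on the object $G$, which lies in the $\eSm$-generated subcategory and is therefore governed by \eqref{restriction.3.1} and \eqref{restriction.3.3}. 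A minor point left to verify is the base change $p^*i_*\simeq i'_*p'^*$ used in the first step, which is deduced in $\sT^\ex$ from (Loc) and Proposition \ref{restriction.5} by the argument that proves it in $\sT$.
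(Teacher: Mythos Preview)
Your argument is correct, but it takes a longer route than the paper's one-line proof. The paper simply applies $\varphi^*(-)\varphi_\sharp$ directly to the cartesian square \eqref{outline.0.2} in $\sT$: since $\varphi^*$ is a right adjoint it preserves limits, so the resulting square is cartesian, and the identifications $\varphi_\sharp(-)^*\simeq(-)^*\varphi_\sharp$ from \eqref{restriction.3.1} together with its adjoint $\varphi^*(-)_*\simeq(-)_*\varphi^*$ show that this square \emph{is} the square of the proposition in $\sT^\ex$. Your worry at the end --- that $i_*$ and $q_*$ may not commute with $\varphi_\sharp$ --- is what drove you to the detour through (Loc), but it is misplaced: one never needs to push $\varphi_\sharp$ past a right adjoint, only $\varphi^*$, and the commutation $\varphi^*(-)_*\simeq(-)_*\varphi^*$ holds unconditionally as the right adjoint of \eqref{restriction.3.1}. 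What your reformulation does buy is an explicit reduction to a single isomorphism $\eta$, which may be useful if one later wants finer control (e.g.\ identifying the fiber), but for the bare statement it is unnecessary. Two minor points: the base change $p^*i_*\simeq i'_*p'^*$ you invoke in the first step follows from (Loc) and ($\eSm$-BC), not from Proposition \ref{restriction.5}; and the identification of the square with ``$p_*p^*$ applied to $f^*\to i_*i^*f^*$'' does require checking that the unit maps are compatible with the base change identification $q_*q^*\simeq p_*p^*i_*i^*$, which you implicitly assume.
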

\begin{proof}
Apply $\varphi^*(-)\varphi_\sharp$ to \eqref{outline.0.2},
and use the fact that $\varphi_\sharp$ commutes with $(-)^*$ and $\varphi^*$ commutes with $(-)_*$.
\end{proof}

\begin{prop}
\label{restriction.8}
For $X\in \lSch/B$ with a chart $\N$,
consider the induced cartesian square of the form \eqref{outline.0.1}.
Then the square
\[
\begin{tikzcd}
\id\ar[d,"ad"']\ar[r,"ad"]&
i_*i^*\ar[d,"ad"]
\\
p_*p^*\ar[r,"ad"]&
q_*q^*
\end{tikzcd}
\]
is cartesian, where $q=pi'=ip'$.
\end{prop}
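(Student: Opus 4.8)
The plan is to deduce this from the last axiom of a log motivic $\infty$-category by transporting it along the adjunction $(\varphi_\sharp,\varphi^*)$, in exactly the way Proposition \ref{restriction.9} is deduced from the axiom \eqref{outline.0.2}. Since $X$ admits a chart $\N$, that axiom applies to $\sT$: with $i$, $p$, $q$ as in \eqref{outline.0.1}, the square \eqref{outline.0.3} of endofunctors of $\sT(\ul{X})$ is pointwise cartesian.

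The first step is to apply the operation $\varphi^*(-)\varphi_\sharp$ to \eqref{outline.0.3}. Precomposing a pointwise cartesian square of functors with $\varphi_\sharp$ and postcomposing with $\varphi^*$ again yields a pointwise cartesian square of functors $\sT^\ex(\ul{X})\to\sT^\ex(\ul{X})$, since $\varphi^*$, being a right adjoint, preserves pullbacks. The second step is to identify the four vertices with those of the square in the statement. The top-left vertex is $\varphi^*\varphi_\sharp\simeq\id$. For the other three, one uses the isomorphism $g^*\varphi_\sharp\simeq\varphi_\sharp g^*$ of \eqref{restriction.3.1} (for $g\in\{i,p,q\}$) to rewrite, for instance, $\varphi^*i_*i^*\varphi_\sharp\simeq\varphi^*i_*\varphi_\sharp i^*$, after which $\varphi^*i_*\varphi_\sharp$ is by definition the $\sT^\ex$-pushforward $i_*$ of Construction \ref{restriction.3}; so $\varphi^*(i_*i^*)\varphi_\sharp$, $\varphi^*(p_*p^*)\varphi_\sharp$, and $\varphi^*(q_*q^*)\varphi_\sharp$ become $i_*i^*$, $p_*p^*$, and $q_*q^*$ with the $\sT^\ex$-functors. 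The third step is to check that under these identifications the three unit $2$-cells of the transported square correspond to the unit $2$-cells of the $\sT^\ex$-square.

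The step I expect to be the main obstacle is this last compatibility of $2$-cells: that $\varphi^*$ applied to the unit $\id\xrightarrow{ad}g_*g^*$ of $\sT$ becomes, after the rewriting above, the unit of the $\sT^\ex$-adjunction $(g^*,g_*)$ for $g\in\{i,p,q\}$. This is exactly the verification already used tacitly in the proof of Proposition \ref{restriction.9}; it is a formal consequence of the construction of the adjunctions $(f^*,f_*)$ on $\sT^\ex$ in Construction \ref{restriction.3} together with the full faithfulness of $\varphi_\sharp$. One may also observe that $i$ is a strict closed immersion, so that Proposition \ref{restriction.5} gives $\varphi_\sharp i_*\simeq i_*\varphi_\sharp$, which streamlines the identification of the $i_*i^*$-vertex and the matching of its unit $2$-cell. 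Everything else is routine.
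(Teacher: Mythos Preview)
Your proposal is correct and follows essentially the same approach as the paper: apply $\varphi^*(-)\varphi_\sharp$ to the axiom \eqref{outline.0.3} and use that $\varphi_\sharp$ commutes with $(-)^*$ via \eqref{restriction.3.1} and $\varphi^*$ commutes with $(-)_*$ by the very definition of $f_*$ on $\sT^\ex$. The paper's proof is the one-line version of what you wrote; your additional remarks on the compatibility of the unit $2$-cells and the reference to Proposition~\ref{restriction.5} are accurate but not strictly needed beyond what the paper already packs into ``$\varphi^*$ commutes with $(-)_*$''.
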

\begin{proof}
Apply $\varphi^*(-)\varphi_\sharp$ to \eqref{outline.0.3},
and use the fact that $\varphi_\sharp$ commutes with $(-)^*$ and $\varphi^*$ commutes with $(-)_*$.
\end{proof}

\begin{df}
\label{generation.5}
Suppose $S\in \lSch/B$.
Let
\[
\ul{\sT}_{/S}^\ex
\colon
(\Sch/\ul{S})^{\op}
\to
\CAlg(\PrL)
\]
be the functor given by
\[
\ul{\sT}_{/S}^\ex(V)
:=
\sT^\ex(V\times_{\ul{S}}S)
\]
for all $V\in \Sch/\ul{S}$.

For every $\infty$-category $\cC$,
let $\Ho(\cC)$ be its homotopy category.
Observe that the functor $\Ho(\ul{\sT}_{/S}^\ex)$ obtained by taking $\Ho$ objectwise is a motivic triangulated category in the sense of \cite[Definition 2.4.25]{CD19}.
\end{df}

\begin{prop}
\label{generation.6}
Let
\[
\begin{tikzcd}
X'\ar[d,"f'"']\ar[r,"g'"]&
X\ar[d,"f"]
\\
S'\ar[r,"g"]&
S
\end{tikzcd}
\]
be a cartesian square in $\lSch/B$ such that $f$ is strict compactifiable smooth and $g$ is strict proper.
Then the natural transformation
\[
f_\sharp g_*'
\xrightarrow{Ex}
g_*f_\sharp'
\]
is an isomorphism.
\end{prop}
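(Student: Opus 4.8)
The plan is to reduce the statement to the case of ordinary schemes and then quote the smooth--proper base change for motivic triangulated categories of Cisinski--D\'eglise. The key point is that every morphism in the square is strict, so the whole square is pulled back, along the canonical map $S\to \ul S$, from a square in $\Sch/\ul S$, where the assertion is already known.

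Concretely, I would first exploit strictness. Since $f$ is strict, $X\simeq \ul X\times_{\ul S}S$ for a compactifiable smooth morphism $\ul f\colon \ul X\to \ul S$, and since $g$ is strict, $S'\simeq \ul{S'}\times_{\ul S}S$ for a proper morphism $\ul g\colon \ul{S'}\to\ul S$; putting $\ul{X'}:=\ul X\times_{\ul S}\ul{S'}$ one gets $X'\simeq \ul{X'}\times_{\ul S}S$, and the square in the statement is the image, under the base change functor $V\mapsto V\times_{\ul S}S$, of the cartesian square in $\Sch/\ul S$ with corners $\ul{X'},\ul X,\ul{S'},\ul S$. Evaluating the functor $\ul\sT^\ex_{/S}$ of Definition \ref{generation.5} on that square gives back the square in the statement: by construction $\ul f^*$ and $\ul{f'}^*$ are $f^*$ and $f'^*$, hence their right adjoints $\ul g_*$ and $\ul{g'}_*$ are $g_*$ and $g'_*$, and since $\ul f$ is smooth the left adjoint $\ul f_\sharp$ of $\ul f^*$ is $f_\sharp$ (and likewise $\ul{f'}_\sharp=f'_\sharp$). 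As the exchange transformation is assembled formally from these adjunctions and the relevant strict-smooth base change isomorphism, the canonical map $Ex\colon \ul f_\sharp\ul{g'}_*\to\ul g_*\ul{f'}_\sharp$ for $\ul\sT^\ex_{/S}$ is identified with $Ex\colon f_\sharp g'_*\to g_* f'_\sharp$.

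Then I would conclude: by Definition \ref{generation.5}, $\Ho(\ul\sT^\ex_{/S})$ is a motivic triangulated category in the sense of \cite{CD19}, so the smooth--proper base change property of \cite{CD19} applies to the square above --- with $\ul f$ smooth compactifiable and $\ul g$ proper --- giving that $Ex\colon \ul f_\sharp\ul{g'}_*\to\ul g_*\ul{f'}_\sharp$ is an isomorphism, which by the identification above is precisely the assertion.

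I do not expect a serious obstacle here: the real content is the observation that strictness reduces the claim to \cite{CD19}. The one point demanding care is the bookkeeping in the second paragraph --- checking that the four functors and, above all, the exchange transformation really match under $\ul\sT^\ex_{/S}$, and that the hypotheses of the cited result of \cite{CD19} hold verbatim for $\Ho(\ul\sT^\ex_{/S})$. Note that there is no circularity with the logarithmic support property (Theorem \ref{nonversupp.13}): we invoke only the non-logarithmic statement of \cite{CD19}, whose proof takes place entirely over ordinary schemes.
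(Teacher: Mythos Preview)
Your approach is correct and is essentially the same as the paper's: both reduce to the motivic triangulated category $\Ho(\ul{\sT}^\ex_{/S})$ of Definition~\ref{generation.5} and then invoke the smooth--proper base change from \cite{CD19} (the paper cites specifically the proof of \cite[Theorem 2.4.26(3)]{CD19}). Your write-up simply spells out the reduction that the paper compresses into a single line; one small slip is the clause ``hence their right adjoints $\ul g_*$ and $\ul{g'}_*$ are $g_*$ and $g'_*$,'' where the ``hence'' does not follow from the preceding statement about $\ul f^*$ and $\ul{f'}^*$---you mean that the identifications $\ul g^*=g^*$, $\ul{g'}^*=g'^*$ give the matching of right adjoints.
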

\begin{proof}
Using $\Ho(\ul{\sT}_{/S}^\ex)$,
this follows from the proof of \cite[Theorem 2.4.26(3)]{CD19}.
\end{proof}

\begin{prop}
\label{generation.7}
Let $f\colon X\to S$ be a strict smooth compactifiable morphism in $\lSch/B$, and let $i$ be its section.
Then the functor $f_\sharp i_*$ is an equivalence of $\infty$-categories.
\end{prop}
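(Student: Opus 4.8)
We must show that the functor $f_\sharp i_*\colon \sT^\ex(S)\to \sT^\ex(S)$ is an equivalence of $\infty$-categories; the plan is to identify it with tensoring by a $\otimes$-invertible object, which we then compute on underlying schemes. First I would reduce to the invertibility of $L:=f_\sharp i_*\unit_S$. Since $f$ is compactifiable it is separated, so the section $i$ is a closed immersion, and since $f$ is strict it is a strict closed immersion; write $j$ for its open complement, an exact log smooth (indeed strict \'etale) morphism. Full faithfulness $j^*j_\sharp\simeq\id$ of $j_\sharp$ follows from ($\eSm$-BC) of Theorem \ref{restriction.16}, and, fed into the localization triangle $j_\sharp j^*\to\id\to i_*i^*$ that is a standard consequence of (Loc), it gives $j^*i_*\simeq 0$. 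Hence the canonical comparison $i_*(\cF)\otimes\cG\to i_*(\cF\otimes i^*\cG)$ is an isomorphism, as it becomes one after applying the conservative pair $(i^*,j^*)$ (on $i^*$ using $i^*i_*\simeq\id$, on $j^*$ both sides vanishing); this is the projection formula for the strict closed immersion $i$. Combining it with the identification $i^*f^*\simeq(fi)^*\simeq\id$ and with ($\eSm$-PF), valid since $f$ is exact log smooth, one gets natural isomorphisms
\[
f_\sharp i_*\cF
\;\simeq\;
f_\sharp i_*\bigl(i^*f^*\cF\bigr)
\;\simeq\;
f_\sharp\bigl(i_*\unit_S\otimes f^*\cF\bigr)
\;\simeq\;
(f_\sharp i_*\unit_S)\otimes\cF,
\]
so $f_\sharp i_*\simeq L\otimes(-)$. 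Since tensoring with a $\otimes$-invertible object is an equivalence of $\infty$-categories, it remains to prove that $L$ is $\otimes$-invertible.

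Next I would pass to schemes. As $f$ is strict, $X\simeq\ul{X}\times_{\ul{S}}S$, so Definition \ref{generation.5} provides identifications $\sT^\ex(X)\simeq\ul{\sT}_{/S}^\ex(\ul{X})$ and $\sT^\ex(S)\simeq\ul{\sT}_{/S}^\ex(\ul{S})$ compatible with $(-)^*$, $(-)_*$ and $(-)_\sharp$, under which $L$ corresponds to $\ul{f}_\sharp\ul{i}_*\unit$, with $\ul{f}\colon\ul{X}\to\ul{S}$ smooth and compactifiable and $\ul{i}$ a section of $\ul{f}$. An object of a symmetric monoidal $\infty$-category is $\otimes$-invertible precisely when its image in the homotopy category is: an inverse in the homotopy category lifts to a morphism of the $\infty$-category, which is then an equivalence because equivalences are detected on homotopy categories. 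So it suffices to show that $\ul{f}_\sharp\ul{i}_*\unit$ is invertible in $\Ho(\ul{\sT}_{/S}^\ex)$, and by Definition \ref{generation.5} this is a motivic triangulated category.

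The remaining, and principal, point is the scheme-theoretic computation. In a motivic triangulated category the deformation to the normal cone applies to the closed immersion $\ul{i}$ --- its inputs, namely localization along closed immersions, $\A^1$-invariance, and homotopy invariance of vector-bundle torsors, being supplied by Theorem \ref{restriction.16} --- and identifies $\ul{f}_\sharp\ul{i}_*\unit$ with the Thom motive $\Thom_{\ul{S}}(\cN)$ of the normal bundle $\cN\simeq\ul{i}^*T_{\ul{X}/\ul{S}}$ of the section; and $\Thom_{\ul{S}}(\cN)$ is $\otimes$-invertible by Zariski descent from the trivial case, where it equals $\unit(n)[2n]$ and $\unit(1)$ is invertible by ($\P^1$-stab). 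Concretely, one may invoke the deformation-to-the-normal-cone arguments of \cite[\S 2.4]{CD19} (see also \cite{Ayo07}) for the fact that $\ul{g}_\sharp\ul{s}_*$ is an equivalence whenever $\ul{g}$ is smooth and compactifiable with a section $\ul{s}$, and apply it to $\Ho(\ul{\sT}_{/S}^\ex)$. I expect this last step to be the main obstacle --- making sure the classical purity computation goes through in $\ul{\sT}_{/S}^\ex$ and is correctly cited; the earlier steps are formal consequences of Theorem \ref{restriction.16} and Definition \ref{generation.5}.
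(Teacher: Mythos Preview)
Your proof is correct and, at bottom, takes the same route as the paper: both arguments ultimately apply the relative purity result of \cite[\S 2.4]{CD19} (the paper cites Corollary 2.4.14) to the motivic triangulated category $\Ho(\ul{\sT}_{/S}^\ex)$ of Definition \ref{generation.5}. The paper's proof is the single line ``Apply \cite[Corollary 2.4.14]{CD19} to $\Ho(\ul{\sT}_{/S}^\ex)$,'' whereas you spell out the intermediate reductions --- the projection formula for the strict closed immersion $i$, the identification $f_\sharp i_*\simeq L\otimes(-)$ via ($\eSm$-PF), and the observation that $\otimes$-invertibility is detected on homotopy categories. Your extra care is not wasted: it makes transparent why a statement proved for the triangulated category $\Ho(\ul{\sT}_{/S}^\ex)$ yields an equivalence of $\infty$-categories, a point the paper leaves implicit. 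But the substance is the same reference applied to the same object.
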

\begin{proof}
Apply \cite[Corollary 2.4.14]{CD19} to $\Ho(\ul{\sT}_{/S}^\ex)$.
\end{proof}

\begin{df}
\label{generation.8}
Let $f\colon X\to S$ be an exact log smooth proper morphism in $\lSch/B$.
Consider the induced commutative diagram
\[
\begin{tikzcd}
X\ar[r,"a"]&
X\times_S X\ar[d,"p_1"']\ar[r,"p_2"]&
X\ar[d,"f"]
\\
&
X\ar[r,"f"]&
S,
\end{tikzcd}
\]
where $a$ is the diagonal morphism, and $p_1$ and $p_2$ are the first and second projections.
Then we have the composite natural transformation
\[
\mathfrak{p}_f
\colon
f_\sharp
\xrightarrow{\simeq}
f_\sharp p_{1*}a_*
\xrightarrow{Ex}
f_* p_{2\sharp} a_*.
\]
\end{df}

\begin{prop}
\label{generation.9}
Let $f$ be a strict smooth proper morphism in $\lSch/B$.
Then $\mathfrak{p}_f$ is an isomorphism.
\end{prop}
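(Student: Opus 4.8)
The plan is to recognize $\mathfrak{p}_f$ as the composite of a formal isomorphism and an exchange transformation already controlled by Proposition \ref{generation.6}, so that essentially no new work is needed.

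First I would note that the first arrow $f_\sharp\xrightarrow{\simeq}f_\sharp p_{1*}a_*$ in Definition \ref{generation.8} is an isomorphism for formal reasons: since $p_1\circ a=\id_X$, the canonical comparison $p_{1*}a_*\xrightarrow{\simeq}(p_1a)_*=\id$ coming from the compatibility of $(-)_*$ with composition is an isomorphism, and precomposing on the target side with $f_\sharp$ preserves this. Hence it suffices to show that the second arrow $Ex\colon f_\sharp p_{1*}a_*\to f_*p_{2\sharp}a_*$ is an isomorphism; as this arrow is obtained by precomposing $Ex\colon f_\sharp p_{1*}\to f_*p_{2\sharp}$ with $a_*$, it is enough to show that $Ex\colon f_\sharp p_{1*}\to f_*p_{2\sharp}$ is an isomorphism.

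Next I would identify this last exchange transformation with the one attached to the cartesian square
\[
\begin{tikzcd}
X\times_S X\ar[d,"p_2"']\ar[r,"p_1"]&
X\ar[d,"f"]
\\
X\ar[r,"f"]&
S,
\end{tikzcd}
\]
which is the base change of $f$ along itself, with $p_2$ regarded as the pullback of the exact log smooth morphism $f$ and the lower $f$ as the proper base morphism. Since $f$ is strict smooth proper, it is in particular strict compactifiable smooth and also strict proper, so the hypotheses of Proposition \ref{generation.6} are satisfied and that exchange transformation is an isomorphism. Composing the two isomorphisms shows that $\mathfrak{p}_f$ is an isomorphism.

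The one point that requires care --- and the step I would treat most carefully --- is the bookkeeping in the previous paragraph: matching the orientation of the displayed square with the notation of Proposition \ref{generation.6}, and checking that the natural transformation labelled $Ex$ in Definition \ref{generation.8}, once the harmless factor $a_*$ is removed, is literally the exchange transformation used in Proposition \ref{generation.6}, i.e.\ the composite $f_\sharp p_{1*}\xrightarrow{ad}f_*f^*f_\sharp p_{1*}\simeq f_*p_{2\sharp}p_1^*p_{1*}\xrightarrow{ad'}f_*p_{2\sharp}$ built from the $(\eSm$-BC$)$ isomorphism $p_{2\sharp}p_1^*\xrightarrow{\simeq}f^*f_\sharp$ together with the unit of $(f^*,f_*)$ and the counit of $(p_1^*,p_{1*})$. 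Once this identification is in place nothing further is needed, and in particular no geometric input beyond Proposition \ref{generation.6} enters; note also that Proposition \ref{generation.7}, which is what makes the twisting functor $p_{2\sharp}a_*$ itself an equivalence, plays no role in the present statement.
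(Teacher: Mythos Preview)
Your proof is correct. The decomposition of $\mathfrak{p}_f$ is exactly right: the first arrow is an isomorphism because $p_1a=\id_X$, and the second arrow is obtained by whiskering the exchange $Ex\colon f_\sharp p_{1*}\to f_*p_{2\sharp}$ with $a_*$. Applying Proposition~\ref{generation.6} to the base change of $f$ along itself, with the roles of ``strict compactifiable smooth'' and ``strict proper'' both played by $f$, yields that this exchange is an isomorphism, and hence so is $\mathfrak{p}_f$.

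The paper's own proof is slightly different in packaging: it invokes \cite[Theorem~2.4.26(3)]{CD19} directly on the motivic triangulated category $\Ho(\ul{\sT}_{/S}^\ex)$, rather than routing through Proposition~\ref{generation.6}. Since Proposition~\ref{generation.6} is itself proved by extracting from the \emph{proof} of the same Cisinski--D\'eglise theorem, the two arguments have the same underlying content; your version simply keeps the citation localized to Proposition~\ref{generation.6} and makes the reduction from $\mathfrak{p}_f$ to an exchange transformation explicit. There is no circularity, and your careful check that the $Ex$ in Definition~\ref{generation.8} matches the exchange of Proposition~\ref{generation.6} is appropriate.
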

\begin{proof}
Apply \cite[Theorem 2.4.26(3)]{CD19} to $\Ho(\ul{\sT}_{/S}^\ex)$
\end{proof}

\begin{prop}
\label{restriction.10}
Let
\[
\begin{tikzcd}
X'\ar[r,"g'"]\ar[d,"f'"']&
X\ar[d,"f"]
\\
S'\ar[r,"g"]&
S
\end{tikzcd}
\]
be a cartesian square in $\lSch/B$ such that $f$ is strict proper.
Then the natural transformation
\[
Ex\colon
g^*f_*
\to
f_*'g'^*
\]
is an isomorphism.
\end{prop}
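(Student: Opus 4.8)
The plan is to run the classical d\'evissage for proper base change and to check that every ingredient is insensitive to the base-change morphism $g$. Since $f$ is strict we have $X\simeq\ul X\times_{\ul S}S$, and under this identification $f^*$, $f_*$ on $\sT^\ex$ are the pullback and pushforward along $\ul f$ in the motivic triangulated category $\Ho(\ul{\sT}_{/S}^\ex)$ of Definition~\ref{generation.5}; the same holds for $f'$, which is again strict and proper. Thus the localization, relative purity, and cdh-descent statements invoked below are supplied by the Cisinski--D\'eglise theory of motivic triangulated categories, and $g$ intervenes only as a (possibly non-strict) base-change morphism, with respect to which ($\eSm$-BC) of Theorem~\ref{restriction.16} and (Loc) place no restriction.

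I would first dispose of the case where $f=i$ is a strict closed immersion, with open complement $j$. Apply $g^*$ to the localization triangle $j_\sharp j^*\to\id\to i_*i^*$ (which follows from (Loc), with $j_\sharp$ existing because $j$ is exact log smooth); using ($\eSm$-BC) for the open immersion $j$ and the tautology $i'^*g^*=g'^*i^*$, one identifies $g^*$ of this triangle with the localization triangle of the base-changed pair $(i',j')$, so that $g^*i_*i^*\simeq i_*'g'^*i^*$. Since $i_*$ is fully faithful, $i^*$ is essentially surjective, and we conclude that $Ex\colon g^*i_*\to i_*'g'^*$ is an isomorphism; no hypothesis on $g$ is used.

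Next suppose $f$ is strict, smooth and proper. By Proposition~\ref{generation.9}, $\mathfrak{p}_f\colon f_\sharp\to f_*p_{2\sharp}a_*$ is an isomorphism, and $p_{2\sharp}a_*$ is the invertible twist $(-)\otimes L$ by the relative Thom object $L$ of $f$ (relative purity); hence $f_*\simeq f_\sharp\big((-)\otimes L^{-1}\big)$, and similarly $f_*'\simeq f_\sharp'\big((-)\otimes (L')^{-1}\big)$ with $L'$ the Thom object of $f'$. Therefore
\[
g^*f_*\ \simeq\ g^*f_\sharp\big((-)\otimes L^{-1}\big)\ \simeq\ f_\sharp'\big(g'^*(-)\otimes g'^*(L^{-1})\big)\ \simeq\ f_\sharp'\big(g'^*(-)\otimes (L')^{-1}\big)\ \simeq\ f_*'g'^*,
\]
using ($\eSm$-BC), the monoidality of $g'^*$, and $g'^*L\simeq L'$. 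As a strict projective morphism factors as a strict closed immersion into some $\P_S^n$ followed by the projection, composing with the previous paragraph gives proper base change for all strict projective $f$.

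For a general strict proper $f$ I would argue by Noetherian induction on $\dim\ul X$, via Chow's lemma for $\ul f$: pick a projective birational $\ul\pi\colon\ul{\widetilde X}\to\ul X$ with $\ul{\widetilde X}\to\ul S$ projective and a closed $\ul Z\subset\ul X$ over whose complement $\ul\pi$ is an isomorphism, and put $\widetilde X:=\ul{\widetilde X}\times_{\ul S}S$, $Z:=\ul Z\times_{\ul S}S$, $\widetilde Z:=\ul\pi^{-1}(\ul Z)\times_{\ul S}S$ (all strict over $S$). The abstract blow-up square attached to this datum is cdh-distinguished, so by cdh descent in $\Ho(\ul{\sT}_{/S}^\ex)$ the associated endofunctors of $\sT^\ex(X)$ form a bicartesian square; post-composing with $f_*$ and comparing with its $S'$-analogue along $g'^*$, the three corners corresponding to $\widetilde X\to S$ (strict projective), to $\widetilde Z\to S$ (a strict closed immersion into $\widetilde X$ followed by $\widetilde X\to S$), and to $Z\to S$ (strict proper with $\dim\ul Z<\dim\ul X$) yield isomorphisms by the previous two paragraphs and the inductive hypothesis, hence so does the fourth corner, which is $Ex$ for $f$. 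The main obstacle is precisely this step: setting up the Noetherian induction on an appropriate well-founded invariant and verifying that the cdh-descent d\'evissage — most naturally carried out for schemes — transports faithfully to the strict log setting and stays compatible with the possibly non-strict $g$; everything else is formal and $g$-independent.
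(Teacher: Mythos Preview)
Your proof follows the same three-step d\'evissage as the paper: the strict closed immersion case via (Loc), the strict smooth proper case via $\mathfrak{p}_f$, and the general strict proper case by Chow's lemma (which the paper simply defers to \cite[Proposition~2.3.11(2)]{CD19}). The only point to tighten is in the smooth proper step: your chain of isomorphisms manufactures \emph{an} isomorphism $g^*f_*\simeq f'_*g'^*$, but you still need to identify it with the specific exchange transformation $Ex$, and the claimed compatibility $g'^*L\simeq L'$ unwinds to base change for the strict closed immersion $a\colon X\to X\times_S X$---both of these are exactly what the paper's commutative diagram \eqref{restriction.10.1} verifies, so your repackaging via Thom twists is really the same argument with that diagram chase suppressed.
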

\begin{proof}
We first treat the case when $f$ is a strict closed immersion.
Let $j\colon U\to S$ be the open complement of $f$,
and let $j'\colon U\times_S S'\to S'$ be the pullback of $j$.
Since the pair of functors $(f'^*,j'^*)$ is conservative by (Loc),
it suffices to show that
\[
f'^*g^*f_*
\to
f'^*f_*'g'^*,
\text{ }
j'^*g^*f_*
\to
j'^*f_*'g'^*
\]
are isomorphisms.
The first one is an isomorphism by (Loc),
and the second one is an isomorphism by ($\eSm$-BC).

Next,
we deal with the case when $f$ is strict smooth proper.
Consider the induced commutative diagram with cartesian squares
\[
\begin{tikzcd}
X'\ar[d,"g'"']\ar[r,"a'"]&
X'\times_{S'}X'\ar[d,"g''"]\ar[r,"p_2'"]&
X'\ar[d,"g'"]
\\
X\ar[r,"a"]&
X\times_S X\ar[r,"p_2"]&
X,
\end{tikzcd}
\]
where $a$ is the diagonal morphism, and $p_2$ is the second projection.
Let $p_1\colon X\times_S X\to X$ and $p_1'\colon X'\times_{S'} X'\to X'$ be the first projections.
Observe that $a$ is a strict closed immersion.
The diagram
\[
\begin{tikzcd}[column sep=small, row sep=small]
f_\sharp'g'^*\ar[rrr,"Ex"]\ar[d,"\simeq"']\ar[rd,"\simeq"]&
&
&
g^*f_\sharp\ar[d,"\simeq"]
\\
f_\sharp'p_{1*}'a_*'g'^*\ar[dd,"Ex"']\ar[rd,"Ex"',leftarrow]&
f_\sharp'g'^*p_{1*}a_*\ar[d,"Ex"]\ar[rr,"Ex"]&
&
g^*f_\sharp p_{1*}a_*\ar[dd,"Ex"]
\\
&
f_\sharp'p_{1*}'g''^*a_*\ar[d,"Ex"]
\\
f_*'p_{2\sharp}'a_*'g'^*\ar[r,"Ex",leftarrow]&
f_*'p_{2\sharp}'g''^*a_*\ar[r,"Ex^{-1}",leftarrow]&
f_*'g'^*p_{2\sharp}a_*\ar[r,"Ex",leftarrow]&
g^*f_*p_{2\sharp}a_*
\end{tikzcd}
\]
commutes,
where the arrow $Ex^{-1}$ is obtained by ($\eSm$-BC).
Hence the diagram
\begin{equation}
\label{restriction.10.1}
\begin{tikzcd}
f_\sharp'g'^*\ar[rrr,"Ex"]\ar[d,"\mathfrak{p}_{f'}"']&
&
&
g^*f_\sharp\ar[d,"\mathfrak{p}_f"]
\\
f_*'p_{2\sharp}'a_*'g'^*\ar[r,"Ex",leftarrow]&
f_*'p_{2\sharp}'g''^*a_*\ar[r,"Ex^{-1}",leftarrow]&
f_*'g'^*p_{2\sharp}a_*\ar[r,"Ex",leftarrow]&
g^*f_*p_{2\sharp}a_*
\end{tikzcd}
\end{equation}
commutes.
The upper and lower middle horizontal arrows are isomorphisms by ($\eSm$-BC),
and the vertical arrows are isomorphisms by Proposition \ref{generation.9}.
The lower left arrow is an isomorphism by the above special case,
so the lower right arrow is an isomorphism.
Together with Proposition \ref{generation.7},
we see that $g^*f_* \xrightarrow{Ex} f_*'g'^*$ is an isomorphism if $f$ is strict proper smooth.

For general $f$,
argue as in the proof of \cite[Proposition 2.3.11(2)]{CD19}.
\end{proof}

\begin{prop}
\label{restriction.14}
Let $f\colon X\to S$ be a strict proper morphism in $\lSch/B$.
Then the natural morphism
\[
Ex\colon
f_*\cF \otimes \cG
\to
f_*(\cF \otimes f^*\cG)
\]
is an isomorphism for $\cF\in \sT^\ex(X)$ and $\cG\in \sT^\ex(Y)$.
\end{prop}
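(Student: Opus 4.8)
The plan is to deduce the proposition from the projection formula for proper morphisms in the motivic triangulated category $\Ho(\ul{\sT}_{/S}^\ex)$ of Definition \ref{generation.5}. Since a morphism in an $\infty$-category is an equivalence precisely when its image in the homotopy category is an isomorphism, it suffices to check the claim after applying $\Ho$. Because $f$ is strict we have $X\simeq \ul{X}\times_{\ul{S}}S$ (one checks this by comparing the functors of points, using $\cM_X\simeq f^*\cM_S$), hence $\sT^\ex(X)\simeq \ul{\sT}_{/S}^\ex(\ul{X})$ and $\sT^\ex(S)\simeq \ul{\sT}_{/S}^\ex(\ul{S})$, and under these identifications $f^*$, $f_*$, and $\otimes$ become $\ul{f}^*$, $\ul{f}_*$, and the tensor product of $\ul{\sT}_{/S}^\ex$. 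As $Ex$ is assembled from the unit and counit of $(f^*,f_*)$ and the monoidality of $f^*$, it corresponds to the projection-formula transformation for the proper morphism $\ul{f}$ of schemes, so it is enough to prove the latter is an isomorphism. Now $\Ho(\ul{\sT}_{/S}^\ex)$ is a motivic triangulated category by Definition \ref{generation.5}, and for the proper morphism $\ul{f}$ one has $\ul{f}_*\simeq \ul{f}_!$ and the projection formula $\ul{f}_!(\cF\otimes \ul{f}^*\cG)\simeq \ul{f}_!\cF\otimes\cG$ holds by \cite[Theorem 2.4.26]{CD19}, applied just as in the proof of Proposition \ref{restriction.10}; this finishes the proof.

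For orientation, here is the shape of the dévissage underlying that reference, which one can also run directly in $\sT^\ex$. If $f=i$ is a strict closed immersion with open complement $j$, the pair $(i^*,j^*)$ is conservative by (Loc) of Theorem \ref{restriction.16}; after $j^*$ both sides of $Ex$ vanish since $j^*i_*\simeq 0$, and after $i^*$ both sides become $\cF\otimes i^*\cG$ because $i^*$ is symmetric monoidal and $i_*$ is fully faithful, with $i^*(Ex)$ the identity, so $Ex$ is an isomorphism. If $f$ is strict smooth proper it is exact log smooth, so $f_\sharp$ exists and ($\eSm$-PF) holds; with $a\colon X\to X\times_S X$ the diagonal, $p_1,p_2$ the projections and $T:=p_{2\sharp}a_*$, the closed-immersion case applied to $a$ together with ($\eSm$-PF) for $p_2$ identifies $T((-)\otimes f^*\cG)$ with $T(-)\otimes f^*\cG$, and combining this with ($\eSm$-PF) for $f$, the purity equivalence $\mathfrak{p}_f\colon f_\sharp\xrightarrow{\simeq}f_*p_{2\sharp}a_*$ of Proposition \ref{generation.9}, and the fact that $T$ is an equivalence (a form of relative purity, \cite{logGysin}) yields $f_*(\cF\otimes f^*\cG)\simeq f_*\cF\otimes\cG$ compatibly with $Ex$. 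A general strict proper $f$ is then reduced to these two cases by the argument in the proof of \cite[Proposition 2.3.11(2)]{CD19}, exactly as in Proposition \ref{restriction.10}.

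I expect the strict smooth proper case to be the main obstacle: one must either transport to $\Ho(\ul{\sT}_{/S}^\ex)$ the dévissage of \cite{CD19} for the proper projection formula — which proceeds through Chow's lemma and relative purity — or, arguing by hand, recognize $T=p_{2\sharp}a_*$ as tensoring with an invertible Thom object and then verify that the resulting composite of isomorphisms is the canonical transformation $Ex$; it is this last compatibility, rather than any individual isomorphism, that requires care.
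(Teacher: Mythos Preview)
Your proposal is correct and takes essentially the same approach as the paper: the paper's proof is the single line ``Apply \cite[Theorem 2.4.26(2)]{CD19} to $\Ho(\ul{\sT}_{/S}^\ex)$,'' which is precisely the content of your first paragraph. Your second and third paragraphs are just an (accurate) unpacking of what lies inside that reference, and the worry you raise about the strict smooth proper case is already absorbed in the cited theorem, so no additional work is needed.
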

\begin{proof}
Apply \cite[Theorem 2.4.26(2)]{CD19} to $\Ho(\ul{\sT}_{/S}^\ex)$.
\end{proof}

\begin{prop}
\label{restriction.11}
Let $f\colon X\to S$ be a morphism in $\lSch/B$.
Then $f_\sharp$ (if $f$ is exact log smooth), $f^*$, and $f_*$ commute with the twist $(n)$ for every integer $n$.
\end{prop}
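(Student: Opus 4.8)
The plan is to exploit that the twist $(n)$ is by definition the functor $(-)\otimes \unit_S(n)$, where $\unit_S(n):=\unit_S(1)^{\otimes n}$ and $\unit_S(1)^{\otimes(-1)}$ denotes the $\otimes$-inverse of $\unit_S(1)$ supplied by ($\P^1$-stab); thus $(n)$ is tensoring by an $\otimes$-invertible object. Everything will then follow from (i) the compatibility $f^*\unit_S(1)\simeq \unit_X(1)$, together with (ii) the way each of $f^*$, $f_\sharp$, $f_*$ interacts with tensoring by an invertible object. The only genuine content is (i); once it is in place, (ii) is formal.

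For (i), I would use that $f^*\colon \sT^\ex(S)\to\sT^\ex(X)$ is symmetric monoidal and preserves colimits (hence the monoidal unit, cofibers, and shifts). Since $\eSm$ is closed under pullback, ($\eSm$-BC) applied with second argument $\unit$ gives $f^*M_S(Y)\simeq M_X(Y\times_S X)$ for $Y\in\eSm/S$, naturally in $Y$; applying this to $Y=\P^1_S$, whose base change to $X$ is $\P^1_X$ with the zero section pulling back to the zero section, and using $\unit_S(1)=\cofib\big(M_S(S)\xrightarrow{M_S(i_0)}M_S(\P^1_S)\big)[-2]$, we get $f^*\unit_S(1)\simeq \unit_X(1)$. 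Monoidality of $f^*$ then upgrades this to $f^*\unit_S(n)\simeq \unit_X(n)$ for every $n\in\Z$. Granting this, the case of $f^*$ is immediate: $f^*(\cF\otimes \unit_S(n))\simeq f^*\cF\otimes f^*\unit_S(n)\simeq f^*\cF\otimes \unit_X(n)$. The case of $f_\sharp$, for $f$ exact log smooth, follows from ($\eSm$-PF) with second argument $\unit_S(n)$: it yields $f_\sharp(\cF\otimes f^*\unit_S(n))\simeq f_\sharp\cF\otimes \unit_S(n)$, and $f^*\unit_S(n)\simeq\unit_X(n)$ identifies the left-hand side with $f_\sharp(\cF\otimes\unit_X(n))$.

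Finally, for $f_*$ I would invoke the formal projection formula for invertible objects: since tensoring by the invertible object $\unit_S(n)$ (resp.\ $\unit_X(n)\simeq f^*\unit_S(n)$) is an autoequivalence and $f^*$ is monoidal, adjunction gives, naturally in $\cG\in\sT^\ex(S)$,
\[
\Hom_{\sT^\ex(S)}\big(\cG,\, f_*(\cF\otimes f^*\unit_S(n))\big)\simeq \Hom_{\sT^\ex(X)}\big(f^*(\cG\otimes \unit_S(-n)),\,\cF\big)\simeq \Hom_{\sT^\ex(S)}\big(\cG,\, f_*\cF\otimes \unit_S(n)\big),
\]
so that $f_*(\cF\otimes \unit_X(n))\simeq f_*\cF\otimes\unit_S(n)$ by the Yoneda lemma. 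The only step requiring any care is the base identification $f^*\unit_S(1)\simeq\unit_X(1)$ in (i); I do not anticipate a real obstacle, since it is just an unwinding of the definition of the Tate object against ($\eSm$-BC) and the exactness of $f^*$.
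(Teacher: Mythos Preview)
Your argument is correct and in fact slightly more elementary than the paper's. The paper first identifies the endofunctor $(1)$ on $\sT^\ex(S)$ with $p_\sharp a_*$, where $p\colon S\times\P^1\to S$ is the projection and $a$ its zero section (a purity statement, via Proposition~\ref{generation.7}); it then commutes $f^*$ past $p_\sharp$ using ($\eSm$-BC) and past $a_*$ using the proper base change of Proposition~\ref{restriction.10}, concluding $f^*(1)\simeq(1)f^*$. The remaining cases $f_\sharp$, $f_*$ are dispatched with a single phrase ``by adjunction'', which is exactly the formal invertible-object argument you spell out at the end.

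Your route bypasses the detour through $p_\sharp a_*$: you read off $f^*\unit_S(1)\simeq\unit_X(1)$ straight from the cofiber definition, the monoidality of $f^*$, and ($\eSm$-BC) applied to $\P^1_S\to S$. This avoids invoking Proposition~\ref{restriction.10} altogether. Your separate treatments of $f_\sharp$ via ($\eSm$-PF) and of $f_*$ via Yoneda are perfectly fine, though one could equally well say, as the paper does, that once $f^*$ intertwines the autoequivalence $(n)$, both adjoints do too. Either way, the content is the same; your presentation simply makes the underlying mechanism more explicit and uses fewer prior results.
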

\begin{proof}
It suffices to show that $f^*$ commutes with twists since the other cases follow by adjunction.
Consider the induced commutative diagram with cartesian squares
\[
\begin{tikzcd}
X\ar[d,"f"']\ar[r,"a'"]&
X\times \P^1\ar[d,"g"]\ar[r,"p'"]&
X\ar[d,"f"]
\\
S\ar[r,"a"]&
S\times \P^1\ar[r,"p"]&
S,
\end{tikzcd}
\]
where $p$ is the projection, and $a$ is the $0$-section.
We have natural isomorphisms
\[
f^*(1)
\simeq
f^*p_\sharp a_*
\simeq
p_\sharp' g^*a_*
\simeq
p_\sharp' a_*'f^*
\simeq
(1)f^*,
\]
where we need ($\eSm$-BC) and Proposition \ref{restriction.10} for the second and third natural isomorphisms.
Hence $f^*$ commutes with $(1)$.
We also have the natural isomorphisms.
\[
(-1)f^*
\simeq
(-1)f^*(1)(-1)
\simeq
(-1)(1)f^*(-1)
\simeq
f^*(-1).
\]
Hence $f^*$ commutes with $(-1)$.
Together with induction, we deduce that $f^*$ commutes with $(n)$ for all integer $n$.
\end{proof}

\subsection{Strict smooth motives}
\label{generation}

Throughout this subsection,
we fix a log motivic $\infty$-category $\sT$.
Let $\sSm$ denote the class of strict smooth morphisms in $\lSch/B$.

\begin{df}
\label{generation.2}
For $S\in \lSch/B$, let $\sT^\st(S)$ be the full subcategory of $\sT(S)$ generated under colimits by the objects $M(X)(d)[n]$ for all $X\in \sSm/S$ and $d,n\in \Z$.
\end{df}

\begin{prop}
\label{generation.3}
Let $i\colon Z\to S$ be a strict closed immersion in $\lSch/B$.
Then the functor $i_*\colon \sT^\ex(Z)\to \sT^\ex(S)$ sends $\sT^\st(Z)$ to $\sT^\st(S)$.
\end{prop}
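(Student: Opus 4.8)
The plan is to reduce the statement to a generation argument. Since $\sT^{\st}(Z)$ is, by Definition \ref{generation.2}, the smallest full subcategory of $\sT^{\ex}(Z)$ that is closed under colimits and contains $M_Z(W)(d)[n]$ for all $W\in \sSm/Z$ and $d,n\in\Z$, and since $i_*\colon \sT^{\ex}(Z)\to \sT^{\ex}(S)$ preserves colimits (it equals $\varphi^* i_* \varphi_\sharp$, and by Proposition \ref{restriction.5} the exchange transformation $\varphi_\sharp i_* \to i_*\varphi_\sharp$ is an isomorphism, so $i_*$ on $\sT^{\ex}$ is identified with the restriction of $i_*$ on $\sT$, which has a right adjoint $i^!$ because $i^*$ does by construction, hence preserves colimits — alternatively, one invokes that $\sT^{\st}(Z)$ is generated under colimits and it suffices to check on generators), it is enough to show that $i_* M_Z(W)(d)[n] \in \sT^{\st}(S)$ for every $W\in \sSm/Z$ and all $d,n$. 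By Proposition \ref{restriction.11}, $i_*$ commutes with the twist $(d)$, and it trivially commutes with the shift $[n]$, so it suffices to treat the case $d=n=0$, i.e.\ to show $i_* M_Z(W)\in \sT^{\st}(S)$.

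The key step is then to compute $i_* M_Z(W) = i_* g_\sharp \unit_W$ where $g\colon W\to Z$ is the structure morphism. Here I would use that $W\to Z$ is strict smooth and $i\colon Z\to S$ is a strict closed immersion, so the situation is "strict" throughout and one can work inside $\Ho(\ul{\sT}^{\ex}_{/S})$, which is a motivic triangulated category by Definition \ref{generation.5}. Form the cartesian square with $W' := W\times_Z Z \hookrightarrow$ — more precisely, since $i$ is a closed immersion, one does not have a smooth base change directly; instead I would proceed as follows. Choose (Zariski-locally on $S$, using Proposition \ref{restriction.12} to reduce to this case) a factorization or simply observe: strict smooth morphisms and strict closed immersions are both strict, so everything is pulled back from $\Sch/\ul{S}$. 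Apply the projection/base-change machinery available for a motivic triangulated category: for a strict closed immersion $i$ and the smooth morphism $g$, one has $i_* g_\sharp \simeq h_\sharp i''_*$ for a suitable square, where $h$ is strict smooth — this is the standard interplay $g_\sharp i'_* \simeq i_* g'_\sharp$ type identity (the "$\sSm$-projection formula for closed immersions"), valid because $i$ is a closed immersion and the relevant square is cartesian with $g$ smooth. Concretely: the closed immersion $i$ extends $W\to Z$ to a closed immersion $W \hookrightarrow \widetilde{W}$ over $S$ with $\widetilde W \to S$ strict smooth (étale-locally on $S$, $W\to S$ factors as a closed immersion into an affine space bundle), and then $i_* M_Z(W)$ is expressed via a Gysin triangle / localization triangle built from objects $M_S(\widetilde W)$ and $M_S(\widetilde W - W)$, all of which lie in $\sT^{\st}(S)$ since $\widetilde W$ and $\widetilde W - W$ are strict smooth over $S$.

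I expect the main obstacle to be producing, in sufficient generality and functorially, the localization/Gysin presentation of $i_* M_Z(W)$ purely from the axioms of a log motivic $\infty$-category (equivalently, of $\Ho(\ul{\sT}^{\ex}_{/S})$ as a motivic triangulated category): one must either cite the absolute purity / Gysin formalism of \cite{CD19} for motivic triangulated categories applied to $\Ho(\ul{\sT}^{\ex}_{/S})$, or set up the necessary étale-local factorization of the strict closed immersion $Z\hookrightarrow S$ together with $W\to Z$ into a regular closed immersion inside a smooth $S$-scheme, and descend along an étale (strict Nisnevich) cover using Proposition \ref{restriction.12}. A cleaner alternative that avoids purity: use that $\sT^{\st}(S)$ is closed under colimits and extensions, write $M_Z(W)$ itself as built from finitely many cells only after further localization, and instead argue by the conservativity in Proposition \ref{restriction.12} plus Proposition \ref{restriction.13} (strict Nisnevich descent) to reduce to the case where $W\to Z$ is an affine bundle — where $M_Z(W)\simeq \unit_Z$ up to twist by ($\A^1$-inv) — so that $i_* M_Z(W)\simeq i_*\unit_Z = M_S(Z)\in\sT^{\st}(S)$ since $Z\to S$, being a strict closed immersion, is strict and $M_S(Z)$ is the image of the corresponding object — wait, $Z\to S$ is not smooth, so this last identification must instead be obtained from the localization triangle $M_S(S-Z)\to M_S(S)\to i_*\unit_Z$ coming from (Loc), which again lands $i_*\unit_Z$ in $\sT^{\st}(S)$ because $M_S(S)$ and $M_S(S-Z)$ are. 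This reduction to the affine-bundle case, via strict Nisnevich descent and the structure theory of smooth morphisms, is the technical heart of the argument.
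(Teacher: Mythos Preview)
Your second paragraph already contains the paper's proof: lift $W\in\sSm/Z$, Zariski-locally on $W$, to some $X\in\sSm/S$ with $X\times_S Z\simeq W$ (this is \cite[Proposition IV.18.1.1]{EGA}), and then read off $i_*M_Z(W)\simeq i_*i^*M_S(X)$ as the cofiber of $j_\sharp j^*M_S(X)\to M_S(X)$ from (Loc), both terms lying in $\sT^\st(S)$ since $X$ and $X\setminus (X\times_S Z)$ are in $\sSm/S$. The only imprecision is the locality: it is Zariski on $W$, not \'etale on $S$; the reduction is valid because $M_Z(-)$ satisfies Zariski Mayer--Vietoris, $i_*$ is exact on the stable categories, and $\sT^\st(S)$ is closed under finite (co)limits.

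Everything else in your proposal is unnecessary. You do not need purity, refined Gysin triangles, or any base-change gymnastics inside $\Ho(\ul{\sT}^{\ex}_{/S})$: the (Loc) axiom hands you the cofiber sequence directly once the lift $X$ exists. Your ``cleaner alternative'' via reduction to affine bundles does not work as stated --- strict smooth morphisms are not Nisnevich-locally affine bundles, only Zariski-locally \'etale over affine space, so that route would force you back through the same lifting-along-a-closed-immersion argument anyway. Strip out the hedging and the alternatives and you have a two-line proof identical to the paper's.
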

\begin{proof}
We need to show $i_* M_Z(W)\in \sT^\st(S)$.
By Zariski descent, the question is Zariski local on $W$.
Use \cite[Proposition IV.18.1.1]{EGAIVIV} to assume that there exists $X\in \sSm/S$ such that $V\simeq X\times_S Z$ by \cite[Proposition IV.18.1.1]{EGAIVIV}.
The cofiber sequence
\[
j_\sharp j^*M_S(X) \xrightarrow{ad'} M_S(X) \xrightarrow{ad} i_*i^*M_S(X)
\]
obtained from (Loc) finishes the proof since $i^*M_S(X)\simeq M_Z(W)$.
\end{proof}

For a sharp fs monoid $P$,
we set $\pt_P:=\A_P\times_{\ul{\A_P}}\{0\}$,
where $0$ is the origin of $\ul{\A_P}$.

\begin{prop}
\label{generation.1}
Let $f\colon X\to S$ be a saturated log smooth morphism in $\lSch/B$.
Then $M_S(X)\in \sT^\st(S)$.
\end{prop}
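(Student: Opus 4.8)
The plan is to reduce the statement about a saturated log smooth morphism $f\colon X\to S$ to the local structure theory of such morphisms, so that locally on $X$ and $S$ the morphism $f$ looks like a composition of a strict smooth morphism with a base change of a standard monoid map $\A_P\to \A_Q$ arising from a saturated homomorphism $\theta\colon P\to Q$ of sharp fs monoids. The key point is that $\sT^\st(S)$ is closed under colimits by Definition \ref{generation.2}, so by dividing Nisnevich descent (Proposition \ref{restriction.12}) and Zariski descent it suffices to prove $M_S(X)\in \sT^\st(S)$ after passing to a dividing Nisnevich cover of $S$ and to a Zariski cover of $X$; this lets us assume $X$ and $S$ admit global charts.

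\

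First I would set up the local model. After the reductions above, the structure theorem for saturated log smooth morphisms lets us factor $f$ (Zariski-locally on $X$) as $X\xrightarrow{g} S\times_{\A_P}\A_Q \xrightarrow{\pr} S$, where $g$ is strict smooth and $\theta\colon P\to Q$ is an injective saturated homomorphism of sharp fs monoids, with $S\to \A_P\times B$ the given chart. Since strict smooth morphisms preserve $\sT^\st$ under $(-)_\sharp$ essentially by definition (their motives $M_S(X)$ for $X$ strict smooth over $S$ are exactly the generators), it suffices to treat the case $X=S\times_{\A_P}\A_Q$ with $f$ the projection.

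\

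The heart of the argument is then a monoidal/combinatorial induction on the pair $(P,Q)$, using the vanishing-type input from \cite{logGysin} already packaged as Proposition \ref{restriction.6}. Concretely, for a suitable $\theta$-critical face $G$ of $Q$ one gets from Proposition \ref{restriction.6} that $f_\sharp f^* \simeq f_\sharp j_\sharp j^* f^*$, where $j$ is the open immersion $S\times_{\A_P}\A_{Q_G}\to S\times_{\A_P}\A_Q$; applying this to the unit $\unit_S$ and using ($\eSm$-PF) to identify $f_\sharp f^*\unit_S$ with $M_S(X)\otimes(\text{something})$, one replaces $Q$ by the localization $Q_G$, which is "closer" to being a strict situation (it has an invertible part, so $\A_{Q_G}$ has a smaller log structure). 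Iterating, together with the base cases where $\theta$ is already strict (so $X\to S$ is strict smooth) or where the boundary contribution is handled by ($ver$-inv), drives the induction down. The main obstacle I anticipate is exactly organizing this induction cleanly: one must choose the $\theta$-critical face at each stage so that the process terminates, and one must keep careful track of how the twists and tensor factors introduced by ($\eSm$-PF) interact with membership in $\sT^\st(S)$ — but since $\sT^\st(S)$ is closed under colimits, shifts, and (by an argument like Proposition \ref{restriction.11}) twists, these bookkeeping issues should not pose a genuine difficulty once the combinatorial skeleton is fixed.
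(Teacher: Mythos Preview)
Your outline has the right ingredients (local charts, Proposition \ref{restriction.6}, an induction on the combinatorics of $\theta$), but there is a genuine gap in the reduction step. You claim that ``since strict smooth morphisms preserve $\sT^\st$ under $(-)_\sharp$ \ldots\ it suffices to treat the case $X=S\times_{\A_P}\A_Q$.'' This does not follow: writing $g\colon X\to S\times_{\A_P}\A_Q$ strict smooth and $f\colon S\times_{\A_P}\A_Q\to S$ the projection, you have $M_S(X)=f_\sharp M_{S\times_{\A_P}\A_Q}(X)$, and what you would need is that $f_\sharp$ sends $\sT^\st(S\times_{\A_P}\A_Q)$ into $\sT^\st(S)$. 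But that is exactly the proposition you are trying to prove (applied to all strict smooth $W$ over $S\times_{\A_P}\A_Q$), so the reduction is circular. Relatedly, Proposition \ref{restriction.6} gives $f_\sharp j_\sharp j^*f^*\simeq f_\sharp f^*$, i.e.\ it only controls objects in the image of $f^*$; in general $M_{S\times_{\A_P}\A_Q}(X)$ is not of this form, so your plan to ``apply this to $\unit_S$ and use ($\eSm$-PF) to identify $f_\sharp f^*\unit_S$ with $M_S(X)\otimes(\text{something})$'' does not produce $M_S(X)$.

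The paper closes this gap with two ideas absent from your sketch. First, it runs the induction on $r=\max_x\rank(\ol{\cM}_{X/S,x}^\gp)$ and, crucially, uses (Loc) together with Proposition \ref{generation.3} to \emph{stratify $S$} so that one may assume the chart $S\to\A_P$ factors through $\pt_P$. Second, once $S$ lies over $\pt_P$, the closed stratum $S\times_{\A_P}\pt_Q\to S$ has underlying morphism an isomorphism, so the strict smooth $X\times_{\A_Q}\pt_Q\to S\times_{\A_P}\pt_Q$ lifts to a strict smooth $V\to S$. One can then apply Proposition \ref{restriction.6} to $p_\sharp p^*M_S(V)$ (which \emph{is} in the image of $p^*$), and two interlocking localization cofiber sequences transfer the conclusion from $V$ to $X$, with the open complements handled by the induction hypothesis on $r$. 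Without the stratification and the construction of $V$, there is no mechanism to bring general $X$ into the range where Proposition \ref{restriction.6} applies. (A small side remark: your appeal to dividing Nisnevich descent via Proposition \ref{restriction.12} only gives conservativity of pullback, which does not let you test membership in the full subcategory $\sT^\st(S)$; the paper uses only Zariski localization on $X$ and $S$.)
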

\begin{proof}
We proceed by induction on
\[
r:=\max_{x\in X} \rank(\ol{\cM}_{X/S,x}^\gp),
\]
where $x$ runs over the points of $X$.
If $r=0$, then $f$ is strict, so the claim is trivial.
Hence assume $r>0$.

By Zariski descent, the question is Zariski local on $S$ and $X$.
Use \cite[Propositions A.3, A.4]{divspc} to assume that $f$ admits a neat chart $\theta\colon P\to Q$ at $x\in X$ such that $P$ is neat at $f(x)$, $Q$ is neat at $x$, and the induced morphism $X\to S\times_{\A_P}\A_Q$ is strict smooth.
In particular, $P$ and $Q$ are sharp.

Suppose that $i\colon Z\to S$ is a strict closed immersion and $j\colon U\to S$ is its open complement.
By (Loc),
we have the cofiber sequence
\[
j_\sharp M_U(X\times_S U)
\to
M_S(X)
\to
i_*M_Z(X\times_S Z).
\]
Due to Proposition \ref{generation.3},
we reduce to showing $M_U(X\times_S U)\in \sT^\st(U)$ and $M_Z(X\times_S Z)\in \sT^\st(Z)$.
After suitable stratification,
we reduce to the case when the chart $S\to \A_P$ factors through $\pt_P$.

In this case, let $q\colon S\times_{\A_P}\pt_Q\to S$ be the projection, and let $h\colon X\times_{\A_Q}\pt_Q \to S\times_{\A_P}\pt_Q$ be the induced morphism.
Since $\ul{q}$ is an isomorphism and $h$ is strict smooth, there exists a unique cartesian square
\[
\begin{tikzcd}
X\times_{\A_Q}\pt_Q \ar[r]\ar[d,"h"']&
V\ar[d,"g"]
\\
S\times_{\A_P} \pt_Q\ar[r,"q"]&
S.
\end{tikzcd}
\]
The morphism $g$ is automatically strict smooth.

Let $a\colon S\times_{\A_P}\pt_Q\to S\times_{\A_P}\A_Q$ be the induced closed immersion, let $b$ be its open complement, and let $p\colon S\times_{\A_P}\A_Q\to S$ be the projection.
The localization property yields a cofiber sequence
\begin{equation}
\label{generation.1.2}
p_\sharp b_\sharp b^* p^*M_S(V)
\to
p_\sharp p^* M_S(V)
\to
p_\sharp a_*a^*p^*M_S(V).
\end{equation}
Let $G$ be a maximal $\theta$-critical face of $Q$.
Proposition \ref{restriction.6} yields an isomorphism
\[
p_\sharp p^* M_S(V)
\simeq
M_S(V\times_S \A_{Q_G}).
\]
Since $V\times_S \A_{Q_G}$ is strict smooth over $S$, we have $p_\sharp p^*M_S(V)\in \sT^\st(S)$.
By induction, we have $p_\sharp b_\sharp b^*p^*M_S(V)\in \sT^\st(S)$.
Together with the cofiber sequence \eqref{generation.1.2}, we have $p_\sharp a_*a^*p^*M_S(V)\in \sT^\st(S)$.

The localization property yields another cofiber sequence
\begin{equation}
\label{generation.1.3}
p_\sharp b_\sharp b^* M_{S\times_{\A_P}\A_Q}(X)
\to
p_\sharp M_{S\times_{\A_P}\A_Q}(X)
\to
p_\sharp a_* a^* M_{S\times_{\A_P}\A_Q}(X).
\end{equation}
We have isomorphisms
\[
a^* M_{S\times_{\A_P}\A_Q}(X)
\simeq
M_{S\times_{\A_P}\pt_Q}(X\times_{\A_Q}\pt_Q)
\simeq
a^*p^*M_S(V).
\]
This implies $p_\sharp a_*a^*M_{S\times_{\A_P}\A_Q}(X)\in \sT^\st(S)$.
By induction, we have
\[
p_\sharp b_\sharp b^*M_{S\times_{\A_P}\A_Q(X)}(X)\in \sT^\st(S).
\]
Together with the cofiber sequence \eqref{generation.1.3}, we have $p_\sharp M_{S\times_{\A_P}\A_Q}(X)\simeq M_S(X)\in \sT^\st(S)$.
\end{proof}

\begin{prop}
\label{generation.4}
Suppose $S\in \lSch/B$.
Then $\sT^\st(S)$ is the full subcategory of $\sT^\ex(S)$ generated under colimits by $v_*v^*\unit(d)[n]$ for all strict projective morphisms $v\colon S'\to S$ and $d,n\in \Z$.
\end{prop}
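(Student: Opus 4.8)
The plan is to prove the two inclusions separately. Write $\sT^{\st\prime}(S)$ for the full subcategory of $\sT^\ex(S)$ generated under colimits by the objects $v_*v^*\unit_S(d)[n]$ for $v\colon S'\to S$ strict projective and $d,n\in\Z$; we must show $\sT^\st(S)=\sT^{\st\prime}(S)$. For the inclusion $\sT^{\st\prime}(S)\subseteq\sT^\st(S)$, note that if $v\colon S'\to S$ is strict projective then in particular $v$ is strict proper, and by the projection-type formula in Proposition~\ref{restriction.14} applied to $\cF=\unit_{S'}$ and $\cG=\unit_S(d)[n]$, we get $v_*v^*\unit_S(d)[n]\simeq v_*\unit_{S'}\otimes\unit_S(d)[n]$; since $v_*\unit_{S'}=v_*v^*\unit_S$ and strict projective morphisms are compactifiable (indeed proper), this object lies in $\sT^\ex(S)$. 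To see it lies in $\sT^\st(S)$: a strict projective morphism factors as a strict closed immersion into $\P^n_S$ followed by the projection $\P^n_S\to S$, which is strict smooth proper; using Proposition~\ref{generation.3} (that $i_*$ preserves $\sT^\st$ for strict closed immersions $i$) together with Proposition~\ref{generation.9} (that $\mathfrak p_f$ is an isomorphism for strict smooth proper $f$, so $f_*$ is computed by $f_\sharp$ up to twist on the relative diagonal, hence preserves $\sT^\st$), one reduces $v_*v^*\unit_S$ to an object of $\sT^\st(S)$. Since $\sT^\st(S)$ is closed under colimits and twists, this gives $\sT^{\st\prime}(S)\subseteq\sT^\st(S)$.

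For the reverse inclusion $\sT^\st(S)\subseteq\sT^{\st\prime}(S)$, it suffices to show $M_S(X)(d)[n]\in\sT^{\st\prime}(S)$ for every $X\in\sSm/S$. By Proposition~\ref{restriction.11} all the relevant functors commute with twists, so we may drop $(d)[n]$ and treat $M_S(X)$. The idea is to use a compactification: by Nagata, the strict smooth morphism $X\to S$ admits a factorization $X\xrightarrow{j}\ol X\xrightarrow{\ol f}S$ with $j$ an open immersion and $\ol f$ proper; we may take $\ol f$ strict since the log structure is pulled back from $S$. Applying (Loc) repeatedly to the boundary stratification $\ol X\setminus X$ (which can be arranged, after blowing up and further Zariski-localizing as in the proof of Proposition~\ref{generation.1}, to consist of strict closed subschemes that are again compactifiable over $S$), one writes $M_S(X)$ as built by finitely many cofiber sequences out of objects $M_S(Y)$ with $Y\to S$ strict projective. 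For such $Y$ we have $M_S(Y)=f_\sharp\unit_Y$ with $f$ strict smooth projective, and Proposition~\ref{generation.9} gives $f_\sharp\unit_Y\simeq f_*p_{2\sharp}a_*\unit_Y$; pushing this computation one step further and using Propositions~\ref{generation.7} and~\ref{generation.3} to express $p_{2\sharp}a_*\unit_Y$ in terms of pushforwards along strict closed immersions, one rewrites $M_S(Y)$ as a colimit of objects of the form $v_*v^*\unit_S(d)[n]$ with $v$ strict projective. (Alternatively, and more directly: $\sT^{\st\prime}(S)$ visibly contains $\unit_S(d)[n]$, is closed under colimits, and — by Proposition~\ref{generation.9} applied to the projections $\P^n_S\to S$ and by Proposition~\ref{restriction.14} — is closed under the operation $M\mapsto g_*g^*M$ for $g$ strict projective; combining this with (Loc) and the projective compactification of strict smooth $X$, one shows $M_S(X)\in\sT^{\st\prime}(S)$ by Noetherian induction on $\dim X$.)

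The main obstacle I expect is the second inclusion, specifically controlling the boundary $\ol X\setminus X$: one needs the strata of a suitable compactification to themselves be strict projective (or at least compactifiable) over $S$, so that the (Loc) cofiber sequences stay inside $\sT^{\st\prime}(S)$. This is exactly the kind of bookkeeping carried out in the proof of Proposition~\ref{generation.1}, and the cleanest route is probably to mimic that argument: reduce Zariski-locally, choose a good compactification via Nagata, stratify, and run Noetherian induction on dimension, at each stage invoking Proposition~\ref{generation.3} for the closed strata and the projective-bundle computation (Propositions~\ref{generation.7} and~\ref{generation.9}) for the open part. The remaining verifications — closure of $\sT^{\st\prime}(S)$ under $g_*g^*$ for $g$ strict projective, and under twists — are formal consequences of Propositions~\ref{restriction.14}, \ref{restriction.11}, and the fact that a composite of strict projective morphisms is strict projective.
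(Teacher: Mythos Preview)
The paper's proof is a one--line citation: it observes (Definition~\ref{generation.5}) that $\Ho(\ul{\sT}_{/S}^\ex)$ is a motivic triangulated category in the sense of Cisinski--D\'eglise, and then invokes the known generation result \cite[Lemma~2.2.23]{Ayo07} or \cite[Proposition~4.2.13]{CD19} for such categories. Your proposal instead attempts to reprove that result from scratch, and the sketch of the second inclusion contains a genuine gap.

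The problem is in the step ``one writes $M_S(X)$ as built by finitely many cofiber sequences out of objects $M_S(Y)$ with $Y\to S$ strict projective'' followed by ``For such $Y$ we have $M_S(Y)=f_\sharp\unit_Y$ with $f$ strict smooth projective.'' First, Nagata only produces a \emph{proper} compactification, not a projective one; you need to reduce to $X$ quasi--projective (via Zariski descent) so that $X$ has a projective closure $\ol X$ in some $\P^N_S$. Second, and more seriously, the boundary $\ol X\setminus X$ is projective over $S$ but has no reason to be smooth, so you cannot form $M_S(Y)=f_\sharp\unit_Y$ for its strata --- $f_\sharp$ is only defined for (exact log) smooth $f$. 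Stratifying further or blowing up does not help without resolution of singularities.

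The missing idea is that no stratification of the boundary is needed at all. The actual mechanism (which is what the cited results encode) is relative purity: for $f\colon X\to S$ strict smooth of relative dimension $d$ one has $f_\sharp\simeq f_!(d)[2d]$, and $f_!=\ol f_* j_\sharp$ for any factorization $f=\ol f\circ j$ with $j$ open and $\ol f$ proper. Applying the localization triangle $j_\sharp j^*\to\id\to i_*i^*$ on $\ol X$ and pushing forward by $\ol f_*$ gives a cofiber sequence expressing $f_!\unit_X$ in terms of $\ol f_*\ol f^*\unit_S$ and $(\ol f i)_*(\ol f i)^*\unit_S$, both of which are of the form $v_*v^*\unit_S$ with $v$ strict projective --- regardless of whether the boundary is smooth. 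Thus $M_S(X)=f_\sharp\unit_X\simeq f_!\unit_X(d)[2d]$ lies in $\sT^{\st\prime}(S)$ directly. Your Noetherian induction and boundary stratification are unnecessary, and as written your argument would not go through because it requires smoothness where none is available.
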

\begin{proof}
Apply \cite[Lemma 2.2.23]{Ayo071} or \cite[Proposition 4.2.13]{CD19} to $\Ho(\ul{\sT}_{/S}^\ex)$ in Definition \ref{generation.5}.
\end{proof}

\section{Support property}
\label{supp}

We fix a log motivic $\infty$-category $\sT$ throughout this section.

\begin{df}
A proper morphism $f\colon X\to S$ in $\lSch/B$ satisfies the \emph{support property} if for every cartesian square
\[
\begin{tikzcd}
V\ar[r,"j'"]\ar[d,"g"']&
X\ar[d,"f"]
\\
U\ar[r,"j"]&
S
\end{tikzcd}
\]
such that $j$ is an open immersion, the natural transformation $Ex\colon j_\sharp g_*\to f_*j_\sharp'$ is an isomorphism.
\end{df}

The purpose of this section is to prove that every proper morphism satisfies the support property for $\sT^\ex$,
which is Theorem \ref{nonversupp.13}.

The organization of the proof is as follows.
In \S \ref{basic},
we review the basic properties of the support property.
In \S \ref{semisupp},
we introduce the notion of the strictly universal support property,
which is used throughout this section.
This is a technical definition that extends the support property to non-proper morphisms.
In \S \ref{proj},
we introduce the notion of the vertically universal support property,
which plays only a restricted role.
We also show that the vertically universal support property implies the projection formula.
In \S \ref{versupp},
we show that some (not all) vertical morphisms satisfy the strictly or vertically universal support property.
The crucial input for this is \cite[Theorem 5.2.11]{logGysin},
which shows that a certain compactification of the multiplication morphism $\A_{\N^2}\to \A_\N$ satisfies the purity for $\sT$ (not $\sT^\ex$).
In \S \ref{nonversupp},
we show that some (not all) morphisms to the standard log point $\pt_\N$ satisfy the strictly universal support property.
To conclude the proof of the support property in \S \ref{proof},
we need the last two axioms of log motivic $\infty$-categories.
The verifications of these axioms for $\SH$ is due to \cite[Theorems 4.3.8, 4.3.9]{logA1}.

Hence \cite{logA1}, \cite{divspc}, and \cite{logGysin} are all needed to prove the support property.

\subsection{Basic properties of the support property}
\label{basic}

\begin{prop}
\label{basic.1}
Let
\[
\begin{tikzcd}
W\ar[r,"i'"]\ar[d,"h"']&
X\ar[d,"f"]\ar[r,leftarrow,"j'"]&
V\ar[d,"g"]
\\
Z\ar[r,"i"]&
S\ar[r,leftarrow,"j"]&
U
\end{tikzcd}
\]
be a commutative diagram in $\lSch/B$ with cartesian square such that $f$ is proper, $i$ is a strict closed immersion, and $j$ is the open complement of $i$.
Then the following three conditions are equivalent:
\begin{enumerate}
\item[\textup{(1)}] $Ex\colon j_\sharp g_*\to f_*j_\sharp'$ is an isomorphism.
\item[\textup{(2)}] $Ex\colon i^*f_* \to h_*i'^*$ is an isomorphism.
\item[\textup{(3)}] $i^*f_*j_\sharp'\simeq 0$.
\end{enumerate}
If $g$ is an isomorphism,
then these conditions are equivalent to the following condition:
\begin{enumerate}
\item[\textup{(4)}]
The square
\[
Q
:=
\begin{tikzcd}
\id\ar[d,"ad"']\ar[r,"ad"]&
i_*i^*\ar[d,"ad"]
\\
f_*f^*\ar[r,"ad"]&
w_*w^*
\end{tikzcd}
\]
is cartesian,
where $w:=ih=fi'$.
\end{enumerate}
\end{prop}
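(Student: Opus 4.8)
The plan is to prove the equivalence of (1), (2), (3) by a standard localization/conservativity argument, using the fact (Loc) that $(i^*, j^*)$ is a conservative pair and $i_*$ is fully faithful, and then handle (4) separately using a commuting cube of exchange transformations.

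First I would establish $(1) \Leftrightarrow (3)$ and $(2) \Leftrightarrow (3)$. For the direction involving (3): by (Loc) the pair $(i^*, j^*)$ is conservative, so a morphism $\alpha$ in $\sT^\ex(S)$ is an isomorphism iff $i^*\alpha$ and $j^*\alpha$ are isomorphisms. The natural transformation $Ex\colon j_\sharp g_* \to f_* j_\sharp'$ becomes an isomorphism after applying $j^*$ automatically: indeed $j^* j_\sharp \simeq \id$ (as $j$ is an open immersion, so $j^*$ is fully faithful onto the essential image of $j_\sharp$), and on the right side $j^* f_* j_\sharp' \simeq g_* j'^* j_\sharp' \simeq g_*$ by ($\eSm$-BC) applied to the cartesian square with $f$ proper... more carefully, by base change for the open immersion $j$ against $f$. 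Hence $Ex$ is an isomorphism iff $i^* Ex$ is an isomorphism, i.e.\ iff $i^* j_\sharp g_* \to i^* f_* j_\sharp'$ is an isomorphism. Now $i^* j_\sharp g_* \simeq 0$: since the square defining $W$ is cartesian with $i$ a strict closed immersion and $j$ its open complement, the base change $i^* j_\sharp \simeq 0$ (the open part pulls back to the empty scheme over $Z$, or directly $i^* j_\sharp \simeq 0$ because $Z \times_S U = \emptyset$). Therefore $i^* Ex$ is an isomorphism iff $i^* f_* j_\sharp' \simeq 0$, which is (3). This gives $(1) \Leftrightarrow (3)$.

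For $(2) \Leftrightarrow (3)$: the localization cofiber sequence $j'_\sharp j'^* \to \id \to i'_* i'^*$ on $\sT^\ex(X)$, applied after $f_*$, gives a cofiber sequence $f_* j'_\sharp j'^* \to f_* \to f_* i'_* i'^*$. Composing with $i^*$ and using $f i' = i h$ so $f_* i'_* \simeq i_* h_*$, hence $i^* f_* i'_* i'^* \simeq i^* i_* h_* i'^* \simeq h_* i'^*$, we get a cofiber sequence $i^* f_* j'_\sharp j'^* \to i^* f_* \to h_* i'^*$. Wait — I need the middle term to feed into $Ex\colon i^* f_* \to h_* i'^*$; the point is that the composite $i^* f_* \to h_* i'^*$ in this cofiber sequence is precisely the exchange transformation $Ex$ (this is a naturality/compatibility check, unwinding the definition of $Ex$ via the counit $i'^* i'_* \to \id$ or the unit $\id \to i'_* i'^*$). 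Granting that, $Ex$ is an isomorphism iff the fiber $i^* f_* j'_\sharp j'^*$ vanishes; but $j'^* j'_\sharp \simeq \id$ on the relevant subcategory, and more precisely $j'_\sharp j'^* M \to M$ for $M = f^*(-)$... actually the cleanest formulation: $i^* f_* j'_\sharp \simeq 0$ is equivalent to (3) since $j'^*$ is (essentially) surjective in the appropriate sense, or one restricts to the generating objects. I would phrase (3) as literally $i^* f_* j'_\sharp \simeq 0$ and note that the two versions agree because $j'_\sharp j'^* \unit \to \unit$ sits in the localization triangle. This yields $(2) \Leftrightarrow (3)$.

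**The main obstacle** I expect is the last part, $(1)$–$(3)$ $\Leftrightarrow$ $(4)$ under the hypothesis that $g$ is an isomorphism. When $g$ is an isomorphism, $V \simeq U$ and $j' = j$ (up to this identification), so $j'_\sharp \simeq j_\sharp$ and the square $Q$ in (4) is exactly the Mayer--Vietoris-type square built from the unit maps $\id \to i_* i^*$, $\id \to f_* f^*$, $\id \to w_* w^*$. One checks $Q$ is cartesian iff the induced map on total fibers vanishes. The total fiber of $Q$ can be identified, using the localization sequence for the pair $(i, j)$ and the compatibility $f_* f^* \to w_* w^*$, with $j_\sharp j^* f_* f^* [\text{shift}]$ or more precisely with $f_* i'_*(\ldots)$-type term; after a diagram chase the condition "$Q$ cartesian" becomes "$i^* f_* j_\sharp j^* f^* \simeq 0$," which by the essential surjectivity considerations above is equivalent to (3). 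The diagram chase here — tracking that the total-fiber identification is compatible with all the unit/counit maps — is the genuinely fiddly step, and I would organize it by applying the functor $i^*$ to $Q$, observing that $i^* Q$ has $i^* \id \to i^* i_* i^* \simeq i^*$ as an isomorphism on one edge, hence $i^* Q$ cartesian reduces to $i^* f_* f^* \xrightarrow{\sim} i^* w_* w^*$, and then comparing with the $j^*$-localization to pin down the remaining content as (3). I would also record that $j^* Q$ is automatically cartesian (both vertical arrows become isomorphisms after $j^*$ since $j^* i_* \simeq 0$ and $j^* f_* f^* \simeq g_* g^* j^*$ with $g$ an isomorphism), so by conservativity of $(i^*, j^*)$ only the $i^*$-part carries information — this is what makes the reduction to (3) clean.
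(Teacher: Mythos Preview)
Your approach is correct and matches the paper's proof closely: both reduce (1) and (2) to (3) via the conservativity of $(i^*,j^*)$ from (Loc), checking that the $j^*$-component of each $Ex$ is automatically an isomorphism (by the adjoint form of ($\eSm$-BC) for the open immersion) while the $i^*$-component vanishes iff (3) holds; and both handle (4) by testing $Q$ after $i^*$ and $j^*$ separately. The only organizational differences are that the paper proves the cycle $(1),(2)\Rightarrow(3)\Rightarrow(1),(2)$ rather than direct biconditionals, and for $(2)\Leftrightarrow(3)$ the paper checks the $Ex$ map on the images of $i'_*$ and $j'_\sharp$ separately whereas you run the localization cofiber sequence $j'_\sharp j'^*\to\id\to i'_*i'^*$ through $i^*f_*$; these are equivalent unwindings. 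Your remark that every object of $\sT^\ex(V)$ lies in the essential image of $j'^*$ (since $j'^*j'_\sharp\simeq\id$) is exactly what is needed to pass between ``$i^*f_*j'_\sharp j'^*\simeq 0$'' and ``$i^*f_*j'_\sharp\simeq 0$'', so that step is fine once stated carefully.
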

\begin{proof}
Observe that (1) and (2) imply (3) by ($\eSm$-BC).
Assume (3).
This means
\begin{equation}
\label{basic.1.1}
i^*j_\sharp g_*
\simeq
i^*f_*j_\sharp'
\simeq
h_*i'^*j_\sharp'.
\end{equation}

From the commutative square
\[
\begin{tikzcd}
f_*\ar[d,"ad"',"\simeq"]\ar[r,"ad","\simeq"']&
f_*j'^*j_\sharp\ar[d,"Ex",leftarrow,"\simeq"']
\\
j^* j_\sharp f_*\ar[r,"Ex"]&
j^* g_*j_\sharp',
\end{tikzcd}
\]
we have
\begin{equation}
\label{basic.1.2}
j^*j_\sharp g_*\simeq
j^*g_*j_\sharp'.
\end{equation}
We deduce (1) from \eqref{basic.1.1}, \eqref{basic.1.2}, and (Loc).

From the commutative square
\[
\begin{tikzcd}
i^*i_*h_*\ar[r,"ad'","\simeq"']\ar[d,"\simeq"']&
h_*
\\
i^*f_*i_*'\ar[r,"Ex"]&
h_*i'^*i_*',\ar[u,"ad'"',"\simeq"]
\end{tikzcd}
\]
we have
\begin{equation}
\label{basic.1.3}
i^*f_*i_*'
\simeq
h_*i'^*i_*'.
\end{equation}
We deduce (2) from \eqref{basic.1.1}, \eqref{basic.1.3}, and (Loc).
So far,
we have shown that (1), (2), and (3) are equivalent.

From now on,
assume that $g$ is an isomorphism.
By ($\eSm$-BC),
we have
\[
i^*Q j_\sharp'
\simeq
\begin{tikzcd}
0\ar[r]\ar[d]&
0\ar[d]
\\
i^*f_*j_\sharp'\ar[r]&
0.
\end{tikzcd}
\]
Hence (4) implies (3).

Assume (2).
Use (Loc) and (2) to show that $i^*Q$ is cartesian.
Use ($\eSm$-BC) and the assumption that $g$ is an isomorphism to show that $j^*Q$ is cartesian.
Together with (Loc),
we deduce (4).
\end{proof}

\begin{prop}
\label{semisupp.2}
Let $Y\xrightarrow{g} X\xrightarrow{f} S$ be proper morphisms in $\lSch/B$.
If $f$ and $g$ satisfy the support property, then $fg$ satisfies the support property.
\end{prop}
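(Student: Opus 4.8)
The plan is to reduce to the vanishing criterion of Proposition~\ref{basic.1} and then propagate the vanishing through the two hypotheses. Since a composition of proper morphisms is proper, $fg$ is proper, so it is meaningful to ask whether $fg$ satisfies the support property. Fix an open immersion $j\colon U\to S$ and choose a strict closed immersion $i\colon Z\to S$ with open complement $j$ (for instance the reduced complement $S\setminus U$). By the equivalence of conditions (1) and (3) in Proposition~\ref{basic.1}, applied to the proper morphism $fg$ together with the pair $(i,j)$, it suffices to prove
\[
i^*(fg)_* j'_\sharp\simeq 0,
\]
where $j'\colon Y\times_S U\to Y$ is the base change of $j$ along $fg$. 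As $j$ is arbitrary, this will give the support property for $fg$.

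First I would record the relevant base changes. Put $X_U:=X\times_S U$, let $j_X\colon X_U\to X$ be the open immersion obtained by base change of $j$ along $f$, and let $i_X\colon X\times_S Z\to X$ be the complementary strict closed immersion. Since $Y\times_S U=Y\times_X X_U$, the open immersion $j'$ is the base change of $j_X$ along $g$; writing $g_U\colon Y\times_S U\to X_U$ for the base change of $g$, we obtain a cartesian square
\[
\begin{tikzcd}
Y\times_S U\ar[r,"j'"]\ar[d,"g_U"']&
Y\ar[d,"g"]
\\
X_U\ar[r,"j_X"]&
X
\end{tikzcd}
\]
with $j_X$ an open immersion and $g$ proper. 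Since $g$ satisfies the support property, the exchange transformation $(j_X)_\sharp (g_U)_*\to g_* j'_\sharp$ is an isomorphism of functors.

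Now I would combine the two inputs. Using the canonical isomorphism $(fg)_*\simeq f_* g_*$ and the isomorphism just obtained,
\[
i^*(fg)_* j'_\sharp
\simeq
i^* f_* g_* j'_\sharp
\simeq
i^* f_* (j_X)_\sharp (g_U)_*.
\]
Condition (3) of Proposition~\ref{basic.1}, applied now to the proper morphism $f$ and to the same pair $(i,j)$, whose base changes along $f$ are $i_X$ and $j_X$, is exactly the statement $i^* f_* (j_X)_\sharp\simeq 0$, and it holds because $f$ satisfies the support property. Composing on the right with $(g_U)_*$ yields $i^* f_* (j_X)_\sharp (g_U)_*\simeq 0$, hence $i^*(fg)_* j'_\sharp\simeq 0$, and the equivalence (1)$\Leftrightarrow$(3) of Proposition~\ref{basic.1} once more shows that $fg$ satisfies the support property. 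I expect no real obstacle here: the displayed chains involve only isomorphisms of functors, not identifications of named natural transformations, so no $2$-categorical coherence (pasting of mates) needs to be checked — this is precisely what is gained by arguing through the vanishing criterion of Proposition~\ref{basic.1} rather than directly pasting the exchange transformations of the two stacked base-change squares.
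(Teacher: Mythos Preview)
Your proof is correct and takes a genuinely different route from the paper. The paper argues directly with exchange transformations: it forms the stacked cartesian squares over the open immersion $j\colon U\to S$, writes down the commutative diagram
\[
\begin{tikzcd}
j_\sharp f_*'g_*'\ar[d,"\simeq"']\ar[r,"Ex"]&
f_*j_\sharp'g_*\ar[r,"Ex"]&
f_*g_*j_\sharp''\ar[d,"\simeq"]
\\
j_\sharp(f'g')_*\ar[rr,"Ex"]&
&
(fg)_*j_\sharp'',
\end{tikzcd}
\]
and concludes that the bottom arrow is an isomorphism because the top row is. This implicitly uses the pasting compatibility of exchange transformations (that the composite of the two mates is the mate for the composite square), which is standard but is exactly the coherence you set out to avoid.

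Your argument instead passes through the vanishing criterion of Proposition~\ref{basic.1}: you rewrite $i^*(fg)_*j'_\sharp$ as $i^*f_*(j_X)_\sharp(g_U)_*$ using only the support property for $g$ as an isomorphism of functors, and then kill it with condition~(3) for $f$. This is a bit longer to write out but, as you observe, has the merit that no identification of natural transformations is needed---every step is either an isomorphism of functors or a vanishing statement. Both approaches are valid; the paper's is shorter if one grants the pasting lemma, while yours is more self-contained.
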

\begin{proof}
Let $j\colon U\to S$ be an open immersion in $\lSch/B$.
We have a commutative diagram with cartesian squares
\[
\begin{tikzcd}
W\ar[r,"g'"]\ar[d,"j''"']&
V\ar[r,"f'"]\ar[d,"j'"]&
U\ar[d,"j"]
\\
Y\ar[r,"g"]&
X\ar[r,"f"]&
S.
\end{tikzcd}
\]
Since the diagram
\[
\begin{tikzcd}
j_\sharp f_*'g_*'\ar[d,"\simeq"']\ar[r,"Ex"]&
f_*j_\sharp'g_*\ar[r,"Ex"]&
f_*g_*j_\sharp''\ar[d,"\simeq"]
\\
j_\sharp(f'g')_*\ar[rr,"Ex"]&
&
(fg)_*j_\sharp'',
\end{tikzcd}
\]
commutes, $fg$ satisfies the support property.
\end{proof}

\begin{prop}
\label{semisupp.10}
Let $f\colon X\to S$ be a strict proper morphism in $\lSch/B$.
Then $f$ satisfies the support property.
\end{prop}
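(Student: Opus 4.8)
The plan is to reduce the support property for a strict proper morphism $f\colon X\to S$ to the already-established base change and projection results for $\sT^\ex$ along strict morphisms, namely Proposition \ref{restriction.10} and Proposition \ref{basic.1}. Since $f$ is strict proper, for any open immersion $j\colon U\to S$ the pullback square
\[
\begin{tikzcd}
V\ar[r,"j'"]\ar[d,"g"']&
X\ar[d,"f"]
\\
U\ar[r,"j"]&
S
\end{tikzcd}
\]
has $g\colon V\to U$ again strict proper and $j'$ again an open immersion. By Proposition \ref{basic.1}, applied with the strict closed immersion $i\colon Z\to S$ complementary to $j$ (and $W:=X\times_S Z$, $h\colon W\to Z$ the pullback of $f$), the assertion that $Ex\colon j_\sharp g_*\to f_*j'_\sharp$ is an isomorphism is equivalent to $Ex\colon i^*f_*\to h_*i'^*$ being an isomorphism, where $i'\colon W\to X$ is the pullback of $i$. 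So it suffices to verify the latter.

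First I would observe that $i$ is a strict closed immersion, hence in particular a strict proper morphism, and that in the cartesian square
\[
\begin{tikzcd}
W\ar[r,"i'"]\ar[d,"h"']&
X\ar[d,"f"]
\\
Z\ar[r,"i"]&
S
\end{tikzcd}
\]
the vertical map $f$ is strict proper. Now Proposition \ref{restriction.10} is exactly the statement that for a cartesian square in $\lSch/B$ whose right vertical arrow is strict proper, the exchange transformation $Ex\colon g^*f_*\to f'_*g'^*$ is an isomorphism. Applying this with the roles arranged so that $i$ plays the part of the horizontal base-change morphism $g$ and $f$ plays the part of the strict proper morphism being pushed forward, we get precisely that $Ex\colon i^*f_*\to h_*i'^*$ is an isomorphism. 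By the equivalence of conditions (1) and (2) in Proposition \ref{basic.1}, this gives that $Ex\colon j_\sharp g_*\to f_*j'_\sharp$ is an isomorphism, which is the support property for $f$.

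The only point requiring care is that Proposition \ref{restriction.10} as stated allows $f$ strict \emph{proper} (not merely strict proper smooth or a strict closed immersion — those are the special cases treated inside its proof), so it applies directly here with no further reduction. In particular there is no real obstacle: the proposition is essentially a repackaging of Proposition \ref{restriction.10} through the equivalences of Proposition \ref{basic.1}, and the main thing to check is that all the morphisms appearing ($g$, $j'$, $h$, $i'$) are of the expected type, which follows because the classes of strict proper morphisms and of open immersions are stable under base change.
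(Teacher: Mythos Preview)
Your proof is correct. It takes a slightly different route from the paper's own argument, which is a single citation: the paper applies \cite[Theorem 2.4.26(2)]{CD19} to the motivic triangulated category $\Ho(\ul{\sT}_{/S}^\ex)$ of Definition \ref{generation.5}, invoking directly that such categories satisfy the support property for proper morphisms.

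Your argument is instead internal to the paper: you use the equivalence (1)$\Leftrightarrow$(2) of Proposition \ref{basic.1} to trade the support property for the base change isomorphism $i^*f_*\xrightarrow{Ex}h_*i'^*$ along a strict closed immersion $i$, and then you observe that this is exactly a special case of Proposition \ref{restriction.10} (strict proper base change). There is no circularity, since Proposition \ref{restriction.10} does not rely on Proposition \ref{semisupp.10}. Of course, Proposition \ref{restriction.10} itself ultimately appeals to Cisinski--D\'eglise (via Propositions \ref{generation.7}, \ref{generation.9}, and \cite[Proposition 2.3.11(2)]{CD19}), so both approaches rest on the same underlying machinery; your version simply routes through a result the paper has already established rather than quoting the support property from \cite{CD19} directly.
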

\begin{proof}
Apply \cite[Theorem 2.4.26(2)]{CD19} to the functor $\Ho(\ul{\sT}_{/S}^\ex)$ in Definition \ref{generation.5}.
\end{proof}

\subsection{Strictly universal support property}
\label{semisupp}

\begin{df}
\label{semisupp.1}
Let $f\colon X\to S$ be a morphism in $\lSch/B$.
We say that $f$ satisfies the \emph{strictly universal support property} if for every strict morphism $U\to \ul{X}\times_{\ul{S}}S$,
the projection $X\times_{\ul{X}\times_{\ul{S}}S} U\to U$ satisfies the support property.
\end{df}

Observe that $f$ satisfies the strictly universal support property if and only if the induced morphism $X\to \ul{X}\times_{\ul{S}}S$ satisfies the strictly universal support property.

\begin{prop}
\label{semisupp.3}
Let $f\colon X\to S$ be a proper morphism in $\lSch/B$.
Then $f$ satisfies the strictly universal support property if and only if for all strict morphism $S'\to S$, the projection $f'\colon X\times_S S'\to S'$ satisfies the support property.
\end{prop}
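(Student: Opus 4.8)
The plan is to deduce both implications from the two basic stability facts already at hand---strict proper morphisms satisfy the support property (Proposition \ref{semisupp.10}), and the support property is stable under composition (Proposition \ref{semisupp.2})---applied to the canonical factorization $X\xrightarrow{q}T\xrightarrow{p}S$, where $T:=\ul{X}\times_{\ul{S}}S$. Two features of this factorization are decisive: the underlying morphism $\ul{q}$ is an isomorphism, and $p$ is \emph{strict}, being the base change of the strict morphism $\ul{X}\to\ul{S}$ along $S\to\ul{S}$. Since $f$ is proper, $\ul{f}$ is separated, which is what will force the relevant sections below to be closed immersions. Throughout one uses that properness and strictness of morphisms of fs log schemes are detected on underlying schemes and are stable under base change, so that the morphisms fed into Propositions \ref{semisupp.2} and \ref{semisupp.10} satisfy the required hypotheses.

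For the ``if'' direction, assume the projection $X\times_S S'\to S'$ satisfies the support property for every strict $S'\to S$, and let $U\to T$ be an arbitrary strict morphism; one must show that $g\colon X\times_T U\to U$ satisfies the support property. The composite $U\to T\xrightarrow{p}S$ is strict, so the hypothesis applied with $S'=U$ shows that $f_U\colon X\times_S U\to U$ satisfies the support property. The projection $r\colon T\times_S U\to U$ is strict and proper, so its section $\sigma\colon U\to T\times_S U$ induced by $U\to T$ is a strict closed immersion. The cancellation isomorphism $X\times_T U\simeq (X\times_S U)\times_{T\times_S U}U$ (with the last factor included via $\sigma$) exhibits $g$ as the composite $X\times_T U\xrightarrow{\iota}X\times_S U\xrightarrow{f_U}U$, in which $\iota$ is the base change of $\sigma$ and hence again a strict closed immersion. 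Now Proposition \ref{semisupp.10} applies to $\iota$ and Proposition \ref{semisupp.2} to $f_U\circ\iota=g$.

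For the ``only if'' direction, assume $f$ satisfies the strictly universal support property and let $S'\to S$ be strict; write $X':=X\times_S S'$. Put $U:=T\times_S S'$. The projection $U\to T$ is strict, so by hypothesis $X\times_T U\to U$ satisfies the support property; under the cancellation isomorphisms $X\times_T U\simeq X'$ and $U\simeq \ul{X'}\times_{\ul{S'}}S'$ this says precisely that the canonical morphism $q'\colon X'\to \ul{X'}\times_{\ul{S'}}S'$ satisfies the support property. Since $p'\colon \ul{X'}\times_{\ul{S'}}S'\to S'$ is strict and proper, it satisfies the support property by Proposition \ref{semisupp.10}, and hence $f'=p'\circ q'$ does too by Proposition \ref{semisupp.2}. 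I expect the only real work to be the recognition that $p$ is strict together with the bookkeeping around the cancellation isomorphisms and the check that $\sigma$ (hence $\iota$) is a strict closed immersion; no new idea beyond the first paragraph should be needed.
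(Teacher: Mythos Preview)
Your proof is correct and follows essentially the same approach as the paper: both directions use the canonical factorization through $T=\ul{X}\times_{\ul{S}}S$ together with Propositions \ref{semisupp.2} and \ref{semisupp.10}. The only cosmetic difference is that in the ``if'' direction the paper identifies $X\times_T U\to X\times_S U$ as a pullback of the diagonal $T\to T\times_S T$, whereas you identify it as a pullback of the section $\sigma\colon U\to T\times_S U$; these are equivalent observations.
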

\begin{proof}
If $f$ satisfies strictly universal support property and $S'\to S$ is a strict morphism, then the projection $f'\colon X\times_S S'\to S'$ has the induced factorization
\[
X\times_S S'\to \ul{X}\times_{\ul{S}} S'\to S'.
\]
The first morphism satisfies the support property since the pullback $\ul{X}\times_{\ul{S}} S'\to \ul{X}\times_{\ul{S}} S$ is strict, and the second morphism satisfies the support property since it is strict proper.
Proposition \ref{semisupp.2} shows that $f'$ satisfies the support property.

Conversely, assume that for every strict morphism $S'\to S$, the projection $f'\colon X\times_S S'\to S'$ satisfies the support property.
We set $Y:=\ul{X}\times_{\ul{S}}S$ for simplicity of notation.
If $U\to Y$ is a strict morphism, then the projection $p'\colon X\times_{Y}U\to U$ admits the induced factorization
\[
X\times_{Y} U
\xrightarrow{r}
X\times_{S} U
\xrightarrow{q}
U.
\]
Since $q$ is a pullback of $f$ and $U\to S$ is strict, our assumption implies that $q$ satisfies the support property.
The diagonal morphism $Y\to Y\times_S Y$ is a strict closed immersion, so its pullback $r$ is also a strict closed immersion.
It follows that $r$ satisfies the support property.
Proposition \ref{semisupp.2} shows that $p'$ satisfies the support property.
\end{proof}

\begin{prop}
\label{semisupp.4}
Let $f\colon X\to S$ be a morphism in $\lSch/B$.
If there exists a Zariski covering $\{v_i\colon V_i\to X\}_{i\in I}$ such that $fv_i$ satisfies the strictly universal support property for every $i\in I$, then $f$ satisfies the strictly universal support property.
\end{prop}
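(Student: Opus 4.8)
The strategy is to reduce the assertion to the statement that the support property is local on the base for the Zariski topology, and then prove that using the $\eSm$-base change isomorphism and the $\sharp$-adjunctions for open immersions supplied by Theorem \ref{restriction.16}.

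Write $Y:=\ul{X}\times_{\ul{S}}S$; by Definition \ref{semisupp.1} it suffices to fix a strict morphism $U\to Y$ and prove that the projection $p\colon P:=X\times_Y U\to U$ satisfies the support property. Note that $\ul{p}$ is an isomorphism (since $\ul{Y}=\ul{X}$), so $p$ is proper. Since each $v_i\colon V_i\to X$ is an open immersion, $\ul{V_i}$ is an open subscheme of $\ul{X}=\ul{Y}$; put $Y_i:=Y|_{\ul{V_i}}$, $U_i:=U\times_Y Y_i$, $P_i:=P\times_U U_i=p^{-1}(U_i)$ and let $p_i\colon P_i\to U_i$ be the projection. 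Then $\{U_i\to U\}$ is a Zariski covering of $U$, the open immersions $\ul{V_i}\hookrightarrow\ul{X}$ identify $Y_i$ with $\ul{V_i}\times_{\ul{S}}S$, the morphism $U_i\to Y_i$ is strict, and $P_i\simeq V_i\times_{Y_i}U_i$ over $U_i$. Hence, feeding the strict morphism $U_i\to Y_i$ into the strictly universal support property of $fv_i$, we obtain that $p_i\colon P_i\to U_i$ satisfies the support property for every $i$.

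By Proposition \ref{basic.1}, since every open immersion into $U$ is the complement of a strict closed immersion, $p$ satisfies the support property if and only if $\iota^*p_*j'_\sharp\simeq 0$ for every strict closed immersion $\iota\colon Z\to U$ with open complement $j\colon W\to U$, where $j'\colon P\times_U W\to P$ is the pullback of $j$ along $p$. Set $Z_i:=Z\times_U U_i$, so that $\{Z_i\to Z\}$ is a Zariski covering and the functors $(Z_i\to Z)^*$ are jointly conservative by Proposition \ref{restriction.12}; it therefore suffices to check $(Z_i\to Z)^*\iota^*p_*j'_\sharp\simeq 0$. Denote by $u_i\colon U_i\to U$, $\mathrm{pr}_i\colon P_i\to P$, $\iota_i\colon Z_i\to U_i$, $j_i\colon W\times_U U_i\to U_i$ and $j'_i\colon P_i\times_{U_i}(W\times_U U_i)\to P_i$ the evident base changes; all of $u_i,\mathrm{pr}_i,j,j',j_i,j'_i$ are open immersions. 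Using $(Z_i\to Z)^*\iota^*\simeq\iota_i^*u_i^*$ together with the two isomorphisms
\[
u_i^*p_*\;\xrightarrow{\ \sim\ }\;p_{i*}\mathrm{pr}_i^*,
\qquad
\mathrm{pr}_i^*j'_\sharp\;\xrightarrow{\ \sim\ }\;j'_{i\sharp}\,(\mathrm{pr}'_i)^*
\]
(where $\mathrm{pr}'_i$ is the pullback of $\mathrm{pr}_i$ along $j'$), the first being base change along the open immersion $u_i$ and the second the $\eSm$-base change isomorphism for $j'$, we obtain
\[
(Z_i\to Z)^*\iota^*p_*j'_\sharp\;\simeq\;\iota_i^*p_{i*}j'_{i\sharp}\,(\mathrm{pr}'_i)^*.
\]
Since $\iota_i$ is a strict closed immersion with open complement $j_i$ and $j'_i$ is the pullback of $j_i$ along $p_i$, Proposition \ref{basic.1} applied to $p_i$ — which satisfies the support property — gives $\iota_i^*p_{i*}j'_{i\sharp}\simeq 0$, so the object above vanishes. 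This proves the proposition.

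The one point that needs care is the base change isomorphism $u_i^*p_*\simeq p_{i*}\mathrm{pr}_i^*$: here $p$ is proper but in general not strict, so none of the strict proper base change results (Propositions \ref{restriction.10} and \ref{semisupp.10}) applies directly. However, $u_i$ and $\mathrm{pr}_i$ are open immersions, hence exact log smooth, so they admit left adjoints $u_{i\sharp}$, $\mathrm{pr}_{i\sharp}$, and the $\eSm$-base change isomorphism $\mathrm{pr}_{i\sharp}p_i^*\xrightarrow{\ \sim\ }p^*u_{i\sharp}$ holds. Composing the adjunctions $u_{i\sharp}\dashv u_i^*$, $p^*\dashv p_*$, $\mathrm{pr}_{i\sharp}\dashv\mathrm{pr}_i^*$, $p_i^*\dashv p_{i*}$ then yields a natural equivalence $\Map(\mathcal G,u_i^*p_*\mathcal F)\simeq\Map(\mathcal G,p_{i*}\mathrm{pr}_i^*\mathcal F)$ for $\mathcal F\in\sT^\ex(P)$ and $\mathcal G\in\sT^\ex(U_i)$, which one checks is induced by the base change transformation. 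All remaining verifications are purely formal consequences of Theorem \ref{restriction.16}.
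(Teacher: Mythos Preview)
Your proof is correct and follows essentially the same approach as the paper: reduce the strictly universal support property to a Zariski-local statement via the conservativity in Proposition \ref{restriction.12}, then verify the required vanishing on each piece using the base change isomorphisms coming from ($\eSm$-BC) for the open immersions $u_i$. The paper's version is more compressed---it first replaces $f$ by $X\to \ul{X}\times_{\ul{S}}S$ and then invokes Proposition \ref{restriction.12} and Proposition \ref{semisupp.3} without spelling out the base change $u_i^*p_*\simeq p_{i*}\mathrm{pr}_i^*$ that you carefully justify via the adjoint of ($\eSm$-BC).
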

\begin{proof}
Replace $f$ by $X\to \ul{X}\times_{\ul{S}}S$ to assume that $\ul{f}$ is an isomorphism.
Then there exists an open immersion $u_i\colon U_i\to S$ for every $i\in I$ such that $X\times_S U_i\simeq V_i$.
Consider the induced commutative diagram with cartesian squares
\[
\begin{tikzcd}
V_i\ar[r,"\id"]\ar[d,"\id"']&
V_i\ar[r,"g_i"]\ar[d,"v_i"]&
U_i\ar[d,"u_i"]
\\
V_i\ar[r,"v_i"]&
X\ar[r,"f"]&
S.
\end{tikzcd}
\]
By Proposition \ref{restriction.12},
to show that $f$ satisfies the strictly universal support property,
it suffices to show that $g_i$ satisfies the strictly universal support property.
This is a consequence of Proposition \ref{semisupp.3} and the assumption that $fv_i$ satisfies the strictly universal support property.
\end{proof}

\begin{prop}
\label{semisupp.5}
Any strict morphism in $\lSch/B$ satisfies the strictly universal support property.
\end{prop}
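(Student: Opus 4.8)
The plan is to reduce the statement immediately to the trivial case of an isomorphism. Recall that a morphism $f\colon X\to S$ in $\lSch/B$ is strict exactly when its structure map $\ul{f}^*\cM_S\to \cM_X$ is an isomorphism, and this is precisely the condition that the canonical morphism $q\colon X\to \ul{X}\times_{\ul{S}}S$ be an isomorphism: both sides have underlying scheme $\ul{X}$, and on log structures $q$ is exactly the structure map of $f$. So the first step is simply to record this identification $X\xrightarrow{\ \sim\ }\ul{X}\times_{\ul{S}}S$ for strict $f$.

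Next I would unwind the definition of the strictly universal support property for $f$: for every strict morphism $U\to \ul{X}\times_{\ul{S}}S$ one must check that the projection $X\times_{\ul{X}\times_{\ul{S}}S}U\to U$ satisfies the support property. But this projection is the base change of $q$ along $U\to \ul{X}\times_{\ul{S}}S$, hence (since $q$ is an isomorphism) is itself an isomorphism, and in particular a strict proper morphism. Applying Proposition \ref{semisupp.10} to this projection then finishes the argument. (Equivalently, one may observe directly that for an isomorphism the natural transformation $Ex\colon j_\sharp g_*\to f_*j_\sharp'$ appearing in the support property is an identity up to canonical equivalence, so no appeal to \ref{semisupp.10} is strictly needed.)

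I do not expect a genuine obstacle here: the entire content is that strictness of $f$ collapses $\ul{X}\times_{\ul{S}}S$ back onto $X$, after which every morphism one is asked to test is an isomorphism. The only point requiring a line of care is the cartesian-square bookkeeping identifying $X\times_{\ul{X}\times_{\ul{S}}S}U\to U$ as the base change of $q$, which is immediate. One could also phrase the whole proof through the remark following Definition \ref{semisupp.1}: it reduces the strictly universal support property of $f$ to that of $q$, and $q$ is an isomorphism, so the claim is then formal.
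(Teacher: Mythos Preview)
Your proof is correct and follows essentially the same route as the paper. The paper phrases it slightly more weakly, saying only that the induced morphism $X\to \ul{X}\times_{\ul{S}}S$ is strict proper (which is enough to invoke Proposition~\ref{semisupp.10}), whereas you observe the sharper fact that it is actually an isomorphism; both arguments then conclude by applying Proposition~\ref{semisupp.10} (or, in your alternative, by the trivial support property for isomorphisms).
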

\begin{proof}
If $f\colon X\to S$ is a strict morphism in $\lSch/B$, then the induced morphism $p\colon X\to \ul{X}\times_{\ul{S}}S$ is a proper strict morphism.
It follows that any pullback of $p$ satisfies the support property by Proposition \ref{semisupp.10}.
\end{proof}

\begin{prop}
\label{semisupp.7}
Let $f\colon X\to S$ be a morphism in $\lSch/B$, and let $S'\to S$ be a strict morphism.
If $f$ satisfies the strictly universal support property, then the projection $X':=X\times_S S'\to S'$ satisfies the strictly universal support property.
\end{prop}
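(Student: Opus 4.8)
The plan is to reduce directly to the definition of the strictly universal support property (Definition \ref{semisupp.1}) and exploit the transitivity of strict morphisms under composition and base change. First I would invoke the reduction observed right after Definition \ref{semisupp.1}: $f$ (resp.\ $f'$) satisfies the strictly universal support property if and only if the canonical morphism $X\to \ul{X}\times_{\ul{S}}S$ (resp.\ $X'\to \ul{X'}\times_{\ul{S'}}S'$) does. Since $S'\to S$ is strict, base change gives $\ul{X'}=\ul{X}\times_{\ul{S}}\ul{S'}$, hence a canonical identification $\ul{X'}\times_{\ul{S'}}S'\simeq (\ul{X}\times_{\ul{S}}S)\times_S S'$; so after replacing $f$ by $X\to \ul{X}\times_{\ul{S}}S$ we may assume $\ul{f}$ is an isomorphism, and $f'$ is then its pullback along the strict morphism $S'\to S$.

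Next I would check the defining condition for $f'$. Let $U\to \ul{X'}\times_{\ul{S'}}S'$ be any strict morphism; under the identification above this is a strict morphism $U\to (\ul{X}\times_{\ul{S}}S)\times_S S'$, hence (composing with the strict projection to $\ul{X}\times_{\ul{S}}S$, which is strict because $S'\to S$ is strict) a strict morphism $U\to \ul{X}\times_{\ul{S}}S$. The projection
\[
X'\times_{\ul{X'}\times_{\ul{S'}}S'}U\to U
\]
is then canonically identified with $X\times_{\ul{X}\times_{\ul{S}}S}U\to U$, which satisfies the support property because $f$ satisfies the strictly universal support property. This is exactly the condition required for $f'$.

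There is essentially no hard step here; the only thing to be careful about is the bookkeeping of the various fiber products and the fact that strictness of $S'\to S$ propagates to the relevant projections, so that a strict morphism over $\ul{X'}\times_{\ul{S'}}S'$ induces a strict morphism over $\ul{X}\times_{\ul{S}}S$ with the same total space. If one prefers a more structural phrasing, one can alternatively note that the class of morphisms satisfying the strictly universal support property is stable under strict base change directly from Proposition \ref{semisupp.3} and Proposition \ref{semisupp.2}: a strict morphism into $\ul{X'}\times_{\ul{S'}}S'$ factors the relevant projection through a pullback of $f$ composed with a strict (proper, by Proposition \ref{semisupp.10}) morphism, but the fiber-product argument above is the cleanest.
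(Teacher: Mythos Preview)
Your proof is correct and follows essentially the same approach as the paper: the key observation is the identification $\ul{X'}\times_{\ul{S'}}S'\simeq(\ul{X}\times_{\ul{S}}S)\times_S S'$, which the paper records as the cartesian square
\[
\begin{tikzcd}
X'\ar[d]\ar[r]&\ul{X'}\times_{\ul{S'}}S'\ar[d]\\
X\ar[r]&\ul{X}\times_{\ul{S}}S,
\end{tikzcd}
\]
and from which the claim follows immediately since the right vertical map is strict. Your write-up spells out the fiber-product bookkeeping more explicitly, but the argument is the same.
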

\begin{proof}
The induced square
\[
\begin{tikzcd}
X'\ar[d]\ar[r]&
\ul{X'}\times_{\ul{S'}}S'\ar[d]
\\
X\ar[r]&
\ul{X}\times_{\ul{S}}S
\end{tikzcd}
\]
is cartesian, which shows the claim.
\end{proof}

\begin{prop}
\label{semisupp.6}
Let $Y\xrightarrow{g} X\xrightarrow{f} S$ be morphisms in $\lSch/B$.
If $f$ and $g$ satisfy the strictly universal support property, then $fg$ satisfies the strictly universal support property.
\end{prop}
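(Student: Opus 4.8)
The plan is to reduce the claim for the composite $fg$ to a statement about pullbacks along strict morphisms, exactly as the strictly universal support property is designed to permit. First I would recall that by the remark after Definition \ref{semisupp.1}, it suffices to treat the case where $\ul{f}$ and $\ul{g}$ are isomorphisms; more precisely, I would replace $g$ by $Y\to \ul{Y}\times_{\ul{X}}X$ and $f$ by $X\to \ul{X}\times_{\ul{S}}S$, noting that the composite $fg$ differs from the corresponding composite of these ``underlying-isomorphism'' morphisms only by strict morphisms, which satisfy the strictly universal support property by Proposition \ref{semisupp.5} and are swallowed by Proposition \ref{semisupp.6} itself (or one can argue directly that it is harmless). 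Once $\ul{fg}$ is an isomorphism, testing the strictly universal support property amounts to checking, for an arbitrary strict morphism $S'\to S$, that the pullback $(fg)'\colon Y\times_S S'\to S'$ satisfies the support property.

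Next I would factor this pullback. Given the strict morphism $S'\to S$, set $X'=X\times_S S'$ and $Y'=Y\times_X X'=Y\times_S S'$, so that $(fg)'$ factors as $Y'\xrightarrow{g'} X'\xrightarrow{f'} S'$. Here $f'$ is the pullback of $f$ along the strict morphism $S'\to S$, hence by Proposition \ref{semisupp.7} it satisfies the strictly universal support property; in particular, being a pullback of $f$ along a strict morphism, it satisfies the support property. Similarly $g'$ is the pullback of $g$ along the strict morphism $X'\to X$, so by Proposition \ref{semisupp.7} it satisfies the strictly universal support property, and in particular the support property.

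Finally I would invoke Proposition \ref{semisupp.2}: the composite of two proper morphisms each satisfying the support property again satisfies the support property. One small point to verify is that $f'$ and $g'$ are proper, which holds because $f$ and $g$ are proper (properness being stable under base change and composition) — here I am using that a morphism satisfying the strictly universal support property, when one reduces to underlying-isomorphism form, has proper pullbacks, or more simply that the reductions preserve properness. Applying Proposition \ref{semisupp.2} to $Y'\xrightarrow{g'} X'\xrightarrow{f'} S'$ shows $(fg)'$ satisfies the support property for every strict $S'\to S$, which is exactly the strictly universal support property for $fg$.

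I expect the main (and only genuine) obstacle to be bookkeeping around the reduction to underlying-isomorphism morphisms: one must be careful that replacing $f$ and $g$ by $X\to\ul{X}\times_{\ul{S}}S$ and $Y\to\ul{Y}\times_{\ul{X}}X$ really does not change whether $fg$ satisfies the strictly universal support property, since the composite $\ul{Y}\times_{\ul{X}}X\to\ul{X}\times_{\ul{S}}S$ is not literally $\ul{Y}\times_{\ul{S}}S$ over $S$. The cleanest fix is to avoid this reduction entirely and argue directly: for a strict morphism $U\to\ul{Y}\times_{\ul{S}}S$, the projection $Y\times_{\ul{Y}\times_{\ul{S}}S}U\to U$ factors through $X\times_{\ul{X}\times_{\ul{S}}S}(\text{image of }U)$, and both factors are pullbacks of $g$, resp.\ $f$, along strict morphisms, to which Proposition \ref{semisupp.7} and then Proposition \ref{semisupp.2} apply. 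Everything else is a routine diagram chase using only the three lemmas \ref{semisupp.2}, \ref{semisupp.5}, and \ref{semisupp.7} already established.
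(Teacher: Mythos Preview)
Your ``cleanest fix'' at the end is precisely the paper's proof: factor the induced morphism $Y\to \ul{Y}\times_{\ul{S}}S$ as
\[
Y\xrightarrow{r} \ul{Y}\times_{\ul{X}}X \xrightarrow{q} \ul{Y}\times_{\ul{S}}S,
\]
observe that $r$ satisfies the strictly universal support property by hypothesis on $g$, that $q$ is the pullback of $X\to\ul{X}\times_{\ul{S}}S$ along the strict morphism $\ul{Y}\times_{\ul{S}}S\to\ul{X}\times_{\ul{S}}S$ and hence inherits the property from $f$ via Proposition~\ref{semisupp.7}, and then conclude using Proposition~\ref{semisupp.2} on each strict pullback. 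Your somewhat cryptic ``$X\times_{\ul{X}\times_{\ul{S}}S}(\text{image of }U)$'' is exactly $(\ul{Y}\times_{\ul{X}}X)\times_{\ul{Y}\times_{\ul{S}}S}U$, so the idea is right even if the notation is awkward.

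The initial attempt, however, is genuinely circular: you replace $g$ by $Y\to\ul{Y}\times_{\ul{X}}X$ and $f$ by $X\to\ul{X}\times_{\ul{S}}S$, but these two morphisms no longer compose, and you then appeal to Proposition~\ref{semisupp.6} itself to absorb the discrepancy. You flag this yourself, but it is worth emphasizing that there is no salvaging that route without essentially rewriting it as the direct factorization above. The statement does not assume $f$ and $g$ proper, so the phrase ``because $f$ and $g$ are proper'' only makes sense \emph{after} the reduction to underlying-isomorphism form, and that reduction is exactly what needs care. In short: drop the first two paragraphs, keep the last one, and you have the paper's argument.
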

\begin{proof}
Consider the induced morphisms
\[
Y\xrightarrow{r} \ul{Y}\times_{\ul{X}}X\xrightarrow{q} \ul{Y}\times_{\ul{S}} S.
\]
By assumption, $r$ satisfies the strictly universal support property.
Since $q$ is a pullback of the induced morphism $X\to \ul{X}\times_{\ul{S}}S$, Proposition \ref{semisupp.7} shows that $q$ satisfies the strictly universal support property.
To finish the proof, check that $rq$ satisfies the strictly universal support property using Proposition \ref{semisupp.2}.
\end{proof}

\begin{prop}
\label{semisupp.9}
Let $Y\xrightarrow{g} X\xrightarrow{f} S$ be morphisms in $\lSch/B$ such that $g$ is proper.
Assume that for every pullback $h$ of $g$ along a strict morphism to $X$, $h^*$ is fully faithful.
If $fg$ satisfies the strictly universal support property, then $f$ satisfies the strictly universal support property.
\end{prop}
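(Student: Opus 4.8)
The plan is to unwind the definition of the strictly universal support property for $f$ and reduce, via the fully faithfulness hypothesis on $g$, to an application of the support property of $fg$. First I would fix a strict morphism $U\to \ul{X}\times_{\ul{S}}S$ and set $X_U:=X\times_{\ul{X}\times_{\ul{S}}S}U$, $Y_U:=Y\times_X X_U$, with the induced morphisms $p\colon X_U\to U$, $g_U\colon Y_U\to X_U$, and $\pi:=pg_U\colon Y_U\to U$. Since the canonical morphism $X\to \ul{X}\times_{\ul{S}}S$ has the identity as its underlying morphism of schemes, $p$ is proper; as $g$ is proper, so are $g_U$ and $\pi$. The morphism $X_U\to X$ is strict (a pullback of $U\to \ul{X}\times_{\ul{S}}S$), so $g_U$ is a pullback of $g$ along a strict morphism to $X$, and hence $g_U^*$ is fully faithful by hypothesis, i.e.\ the unit $\id\to g_{U*}g_U^*$ is an isomorphism. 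It remains to show that $p$ satisfies the support property.

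The main preliminary step is to show that $\pi$ satisfies the support property. For this I would write $\ol{X}:=\ul{X}\times_{\ul{S}}S$, $\ol{Y}:=\ul{Y}\times_{\ul{S}}S$, and note that $\ul{g}$ induces a strict proper morphism $\ol{Y}\to \ol{X}$. Setting $V:=\ol{Y}\times_{\ol{X}}U$, one checks $Y_U=Y\times_{\ol{X}}U=Y\times_{\ol{Y}}V$, so $\pi$ factors as $Y_U\to V\to U$. Here $V\to \ol{Y}$ is strict (a pullback of $U\to \ol{X}$), so the strictly universal support property of $fg$ — equivalently of $Y\to \ol{Y}$, by the remark after Definition \ref{semisupp.1} — shows that $Y_U=Y\times_{\ol{Y}}V\to V$ satisfies the support property; and $V\to U$ is a pullback of $\ol{Y}\to \ol{X}$, hence strict proper, so it satisfies the support property by Proposition \ref{semisupp.10}. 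Both morphisms being proper, Proposition \ref{semisupp.2} then shows that $\pi$ satisfies the support property.

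Next, given an open immersion $j\colon U_0\to U$ with strict closed complement $i\colon U_1\to U$, I would form the pullbacks $j'\colon X_{U_0}:=X_U\times_U U_0\to X_U$, $j''\colon Y_{U_0}:=Y_U\times_{X_U}X_{U_0}\to Y_U$, and $g_{U_0}\colon Y_{U_0}\to X_{U_0}$. By Proposition \ref{basic.1} (applied with $f=p$, $i$, $j$) it suffices to prove $i^*p_*j'_\sharp\simeq 0$. Using $\id\simeq g_{U*}g_U^*$ and functoriality of $(-)_*$ one gets $p_*\simeq (pg_U)_*g_U^*=\pi_*g_U^*$; applying ($\eSm$-BC) from Theorem \ref{restriction.16} to the cartesian square with top $g_{U_0}$, right $j'$, bottom $g_U$, left $j''$ — note $j'$ is an open immersion, hence exact log smooth — gives $g_U^*j'_\sharp\simeq j''_\sharp g_{U_0}^*$; and since $Y_{U_0}=Y_U\times_U U_0$, the morphism $j''$ is the pullback of $j$ along $\pi$, so Proposition \ref{basic.1} applied to $\pi$ (which satisfies the support property by the previous paragraph) yields $i^*\pi_*j''_\sharp\simeq 0$. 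Chaining these isomorphisms gives $i^*p_*j'_\sharp\simeq i^*\pi_*j''_\sharp g_{U_0}^*\simeq 0$, as required; hence $p$ satisfies the support property, and as $U\to \ul{X}\times_{\ul{S}}S$ was an arbitrary strict morphism, $f$ satisfies the strictly universal support property.

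The only point that needs genuine (though mild) care is the factorization in the second paragraph: identifying $Y_U$ with $Y\times_{\ol{Y}}V$ and recognizing $\ol{Y}\to \ol{X}$ as a strict proper morphism is exactly what lets us feed the situation into Propositions \ref{semisupp.2} and \ref{semisupp.10}. Beyond that the argument is a formal manipulation of base change together with the unit isomorphism $\id\simeq g_{U*}g_U^*$; the remaining subtleties are purely bookkeeping of strictness and properness, which matters only because the support property is defined solely for proper morphisms.
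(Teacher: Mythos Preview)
Your argument is correct and follows the same core idea as the paper: use the full faithfulness of the pullback of $g$ to rewrite $p_*$ as $\pi_* g_U^*$, and then invoke the support property for the composite $\pi$, which in turn is deduced from the strictly universal support property of $fg$. The paper packages this slightly differently---it first normalizes to the case $\ul{f}$ an isomorphism and then verifies the exchange transformation $j_\sharp f'_*\to f_* j'_\sharp$ directly via a commutative diagram, whereas you keep the general strict base $U\to \ol{X}$, make the factorization $Y_U\to V\to U$ explicit, and appeal to criterion~(3) of Proposition~\ref{basic.1}; but these are cosmetic variations on the same proof.
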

\begin{proof}
Replacing $f$ by $X\to \ul{X}\times_{\ul{S}}S$, we may assume that $\ul{f}$ is an isomorphism. The question is preserved by any pullback along a strict morphism to $S$, so we only need to show that $f$ satisfies the support property.

Let $j\colon U\to S$ be an open immersion.
Consider the commutative diagram with cartesian squares
\[
\begin{tikzcd}
W\ar[d,"j''"']\ar[r,"g'"]&
V\ar[d,"j'"]\ar[r,"f'"]&
U\ar[d,"j"]
\\
Y\ar[r,"g"]&
X\ar[r,"f"]&
S.
\end{tikzcd}
\]
We have the commutative diagram
\[
\begin{tikzcd}
j_\sharp f_*'\ar[d,"ad"']\ar[rrrr,"Ex"]&
&
&
&
f_*j_\sharp'\ar[d,"ad"]
\\
j_\sharp f_*'g_*'g'^*\ar[r,"\simeq"]&
j_\sharp (f'g')_*g'^*\ar[r,"Ex"]&
(fg)_*j_\sharp''g'^*\ar[r,"Ex"]&
(fg)_*g^*j_\sharp\ar[r,"\simeq"]&
f_*g_*g^*j_\sharp'.
\end{tikzcd}
\]
The assumption implies that the vertical arrows are isomorphisms.
By the support property for $fg$ and ($\eSm$-BC), the lower horizontal arrows are isomorphisms.
Hence the upper horizontal arrow is an isomorphism.
\end{proof}

\begin{prop}
\label{semisupp.8}
Let $f\colon X\to S$ be a morphism in $\lSch/B$.
If there exists a dividing cover $j\colon U\to X$ such that $fj$ satisfies the strictly universal support property, then $f$ satisfies the strictly universal support property.
\end{prop}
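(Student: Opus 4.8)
The plan is to deduce this from Proposition \ref{semisupp.9} applied to the composite $U\xrightarrow{j}X\xrightarrow{f}S$, with $g:=j$. That proposition requires two inputs about $g$: that $g$ is proper, and that for every pullback $h$ of $g$ along a strict morphism to $X$, the functor $h^*$ is fully faithful. Once these are verified, the hypothesis that $fj=fg$ satisfies the strictly universal support property feeds directly into Proposition \ref{semisupp.9} and yields the claim.

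First I would observe that a dividing cover is proper, so the hypothesis on $g$ being proper is automatic. For the full-faithfulness hypothesis, I would use that the class of dividing covers is stable under base change: if $X'\to X$ is a strict morphism and $h\colon U\times_X X'\to X'$ is the pullback of $j$, then $h$ is again a dividing cover. Hence ($div$-inv) in Theorem \ref{restriction.16} applies and tells us that $h^*$ is fully faithful, which is exactly the assumption needed in Proposition \ref{semisupp.9}.

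With these two points in place, Proposition \ref{semisupp.9} gives immediately that $f$ satisfies the strictly universal support property. I do not expect a real obstacle here: all the substantive work is already contained in Proposition \ref{semisupp.9} (which packages the relevant descent/conservativity argument) and in ($div$-inv) of Theorem \ref{restriction.16}. The only point requiring a little care is to make sure the ``pullback along a strict morphism to $X$'' appearing in Proposition \ref{semisupp.9} is matched verbatim, and to invoke the stability of dividing covers under (strict) base change so that ($div$-inv) genuinely applies to each such pullback $h$.
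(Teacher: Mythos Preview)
Your proposal is correct and matches the paper's own proof, which simply records that the statement is an immediate consequence of ($div$-inv) and Proposition \ref{semisupp.9}; you have merely unpacked why those two ingredients suffice.
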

\begin{proof}
This is an immediate consequence of ($\divi$-inv) and Proposition \ref{semisupp.9}.
\end{proof}

\subsection{Projection formula}
\label{proj}

\begin{prop}
\label{proj.2}
Let
\[
\begin{tikzcd}
X'\ar[d,"f'"']\ar[r,"g'"]&
X\ar[d,"f"]
\\
S'\ar[r,"g"]&
S
\end{tikzcd}
\]
be a cartesian square in $\lSch/B$ such that $f$ is proper and $g$ is strict.
Assume that $f$ satisfies the strictly universal support property.
Then the natural transformation
\[
g^*f_*
\xrightarrow{Ex}
f_*'g'^*
\]
is an isomorphism.
\end{prop}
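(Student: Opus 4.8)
The plan is to reduce, by Zariski descent on the source $S'$ and by factoring $g$, to the two cases where $g$ is a strict closed immersion and where $g$ is exact log smooth, and to settle those directly. If $g=i\colon Z\to S$ is a strict closed immersion with open complement $j$, then Proposition \ref{semisupp.3} applied with the base $S$ itself shows that the strictly universal support property of $f$ implies the support property of $f$, i.e.\ that condition (1) of Proposition \ref{basic.1} holds for $j$; by the equivalence (1)$\Leftrightarrow$(2) in that proposition, condition (2) holds as well, which is exactly the assertion that $Ex\colon i^*f_*\to f'_*g'^*$ is an isomorphism. If instead $g$ is exact log smooth, then so is its base change $g'\colon X'\to X$, both $g_\sharp$ and $g'_\sharp$ exist, and ($\eSm$-BC) of Theorem \ref{restriction.16} (applied to the square transposed so that $g$ and $g'$ become the exact log smooth legs) yields an isomorphism $g'_\sharp f'^*\xrightarrow{\sim}f^*g_\sharp$. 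Feeding this into the standard chain of adjunction isomorphisms relating $\Hom(\cG,g^*f_*\cF)$ and $\Hom(\cG,f'_*g'^*\cF)$ and invoking Yoneda identifies $g^*f_*\simeq f'_*g'^*$, and one checks this agrees with $Ex$. Note that the exact log smooth case uses no hypothesis on $f$ at all, and in particular covers all strict smooth morphisms and all open immersions.

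To pass to a general strict $g$ I would use two formal properties. First, $Ex\colon g^*f_*\to f'_*g'^*$ is compatible with composition in $g$: for $g=g_2g_1$ it is the evident composite of $Ex$ for $g_1$ with $Ex$ for $g_2$ (the latter formed relative to the pullback of $f$ along $g_2$); since the strictly universal support property is preserved under base change along strict morphisms by Proposition \ref{semisupp.7}, each such pullback of $f$ again satisfies the hypotheses, so it suffices to prove the claim for each factor of a suitable strict factorization of $g$. Second, the claim is Zariski-local on $S'$: a covering of $S'$ by strict open immersions $u_k\colon S'_k\to S'$ produces a dividing Nisnevich covering sieve, so $\{u_k^*\}$ is conservative by Proposition \ref{restriction.12}, while $u_k^*(Ex_g)$ is carried, via the exact log smooth base change along $u_k$ already established for $f'$, to $Ex_{gu_k}$. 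Finally, after replacing $S'$ by a Zariski covering consisting of affine opens each mapping into an affine open of $\ul S$, the underlying morphism $\ul g$ factors as a closed immersion into an affine space over an open subscheme of $\ul S$, followed by the structure morphism to $\ul S$; as $g$ is strict, this realizes $g$ as a composite of a strict closed immersion and a strict smooth morphism. The two cases above together with these composition and locality properties then complete the proof.

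The argument is essentially formal, so the work lies in the bookkeeping. The points to check carefully are that the $Ex$ of the statement coincides with the Yoneda isomorphism extracted from ($\eSm$-BC) in the exact log smooth case, and that the transformations $Ex$ genuinely compose and localize as claimed — the usual calculus of mates for the cartesian squares in play. One must also keep track of the fact, supplied by Proposition \ref{semisupp.7}, that every auxiliary pullback of $f$ appearing in the factorization still satisfies the strictly universal support property, so that the strict closed immersion case is applicable at each stage. I expect the compatibility of $Ex$ with composition to be the most tedious point.
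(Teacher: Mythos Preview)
Your proposal is correct and follows essentially the same route as the paper: Zariski-localize on $S'$, factor $g$ as a strict closed immersion followed by a strict smooth morphism, handle the smooth factor by the mate of ($\eSm$-BC), and handle the closed immersion factor via the support property. The only cosmetic difference is that you invoke the equivalence (1)$\Leftrightarrow$(2) of Proposition~\ref{basic.1} for the closed immersion case, whereas the paper reproves that equivalence inline using (Loc) and the cofiber sequence; your citation is cleaner and entirely legitimate.
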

\begin{proof}
The question is Zariski local on $S'$ by ($\eSm$-BC) and Proposition \ref{restriction.12}.
so we may assume that $g$ is quasi-projective.
In this case, $g$ admits a factorization
\[
S'\xrightarrow{i} V\xrightarrow{p} S
\]
such that $p$ is strict smooth and $i$ is strict closed immersion.
By ($\eSm$-BC), we can replace $S'\to S$ by $S'\to V$.
Hence we may assume that $g$ is a strict closed immersion.

Let $h\colon S''\to S$ be the open complement of $g$.
Consider the induced commutative diagram with cartesian squares
\[
\begin{tikzcd}
X'\ar[r,"g'"]\ar[d,"f'"']&
X\ar[d,"f"]\ar[r,leftarrow,"h'"]&
X''\ar[d,"f''"]
\\
S'\ar[r,"g"]&
S\ar[r,leftarrow,"h"]&
S''.
\end{tikzcd}
\]
By (Loc), the two rows in the commutative diagram
\begin{equation}
\label{proj.2.1}
\begin{tikzcd}
g^*f_*h_\sharp'h'^*\ar[d,"Ex"']\ar[r,"ad'"]&
g^*f_*\ar[d,"Ex"]\ar[r,"ad"]&
g^*f_*g_*'g'^*\ar[d,"Ex"]
\\
f_*'g'^*h_\sharp'h'^*\ar[r,"ad'"]&
f_*'g'^*\ar[r,"ad"]&
f_*'g'^*g_*'g'^*
\end{tikzcd}
\end{equation}
are cofiber sequences.
Since $f$ satisfies the support property, we have $h_\sharp f_*'\simeq f_*h_\sharp'$.
Together with $g^*h_\sharp \simeq 0$ and $g'^*h_\sharp'\simeq 0$, we see that the left vertical arrow of \eqref{proj.2.1} is an isomorphism.
By (Loc),
we have $g^*g_*\simeq \id$ and $g'^*g_*'\simeq \id$.
The right vertical arrow of \eqref{proj.2.1} is the composition
\[
g^*f_*g_*'g'^*
\xrightarrow{\simeq}
g^*g_*f_*'g'^*
\xrightarrow{ad^{-1}}
f_*'g'^*
\xrightarrow{ad}
f_*'g'^*g_*'g'^*,
\]
which is an isomorphism.
It follows that the middle vertical arrow of \eqref{proj.2.1} is an isomorphism.
\end{proof}

\begin{df}
Let $f\colon X\to S$ be a morphism in $\lSch/B$.
We say that $f$ satisfies \emph{the vertically universal support property} if for every vertical morphism $S'\to S$,
the projection $X\times_S S'\to S'$ satisfies the strictly universal support property.
\end{df}

\begin{prop}
\label{proj.1}
Let $f\colon X\to S$ be a proper morphism in $\lSch/B$.
If $f$ satisfies the vertically universal support property, then $f$ satisfies the projection formula.
\end{prop}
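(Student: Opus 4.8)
We must show that the exchange transformation $Ex\colon f_*\cF\otimes\cG\to f_*(\cF\otimes f^*\cG)$ is an isomorphism for all $\cF\in\sT^\ex(X)$ and $\cG\in\sT^\ex(S)$; fix $\cF$. It suffices to check this after applying $\Hom_{\sT^\ex(S)}(M_S(W)(d)[n],-)$ for all $W\in\eSm/S$; since $f^*$ and $\otimes$ preserve colimits and these objects are compact (Proposition \ref{restriction.2}), both sides become colimit-preserving functors of $\cG$, so it is enough to treat $\cG$ in the generating family of Proposition \ref{restriction.2}, and then — as $f_*$, $f^*$, $\otimes$ commute with twists (Proposition \ref{restriction.11}) — just $\cG=M_S(V)$ with $V\in\eSm/S$, with structure morphism $p\colon V\to S$.

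Next I would reduce the pair $(f,V)$ to a better one in two steps. By \eqref{restriction.3.3} and ($\ver$-inv) of Definition \ref{logmotivic.1}, the open immersion $V-\partial_S V\hookrightarrow V$ induces an isomorphism $M_S(V-\partial_S V)\xrightarrow{\sim}M_S(V)$, and $V-\partial_S V$ is exact log smooth over $S$ with empty vertical boundary, hence vertical over $S$; so I may assume $p$ is vertical exact log smooth. Now apply Proposition \ref{restriction.7} to the exact morphism $p$: it produces a Kummer log smooth morphism $g\colon S'\to S$ — in particular exact log smooth, and vertical (a face of a characteristic monoid containing a Kummer image is the whole monoid) — such that every pullback of $g$ induces a conservative functor on $\sT^\ex$ and $V':=V\times_S S'\to S'$ is saturated (and log smooth, being a pullback of $p$). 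Put $f'\colon X':=X\times_S S'\to S'$; since $g$ is vertical and $f$ satisfies the vertically universal support property, $f'$ satisfies the strictly universal support property. As $g$ is exact log smooth, the base change $g^*f_*\simeq f'_*g'^*$ holds formally (by adjunction from ($\eSm$-BC)), and together with $g^*M_S(V)\simeq M_{S'}(V')$ (again ($\eSm$-BC)) this identifies $g^*$ of $Ex$ for $(f,M_S(V))$ with $Ex$ for $(f',M_{S'}(V'))$. Since $g^*$ is conservative, it is enough to prove the projection formula, for a proper morphism with the strictly universal support property, against $M_S(V)$ with $V\to S$ now saturated log smooth.

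Under that hypothesis Proposition \ref{generation.1} gives $M_S(V)\in\sT^\st(S)$, and by Proposition \ref{generation.4} the category $\sT^\st(S)$ is generated under colimits by $w_*w^*\unit(d)[n]$ for $w\colon S''\to S$ strict projective; so, repeating the continuity reduction of the first paragraph, it remains to check $Ex$ for $\cG=w_*w^*\unit$. Form the cartesian square with $w'\colon S''\times_S X\to X$ the pullback of $w$ (strict projective) and $f''\colon S''\times_S X\to S''$ the pullback of $f$ (proper). The projection formula for the strict proper morphisms $w$ and $w'$ (Proposition \ref{restriction.14}) gives $f_*\cF\otimes w_*w^*\unit\simeq w_*w^*f_*\cF$ and $\cF\otimes w'_*w'^*\unit\simeq w'_*w'^*\cF$, and the strict base change for the strict proper $w$ (Proposition \ref{restriction.10}) gives $f^*w_*w^*\unit\simeq w'_*w'^*\unit$; hence $f_*(\cF\otimes f^*w_*w^*\unit)\simeq f_*w'_*w'^*\cF\simeq w_*f''_*w'^*\cF$. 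Finally Proposition \ref{proj.2}, applicable since $w$ is strict and $f$ is proper with the strictly universal support property, gives $w^*f_*\simeq f''_*w'^*$, so $w_*f''_*w'^*\cF\simeq w_*w^*f_*\cF$; a routine compatibility check of the exchange transformations involved shows that $Ex$ is, under these identifications, this isomorphism.

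The crux is the second paragraph: the passage from an exact log smooth $V$ to a saturated one is only available after the Kummer log smooth base change $g$ of Proposition \ref{restriction.7}, and it is precisely there — together with the reduction to $p$ vertical via ($\ver$-inv) — that the hypothesis must be \emph{vertically}, and not merely strictly, universal, so that the support property survives pullback along $g$. Once the problem has been pushed into $\sT^\st$, every remaining base-change input is along a strict morphism and is furnished by Propositions \ref{proj.2}, \ref{restriction.10}, and \ref{restriction.14}.
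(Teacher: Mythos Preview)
Your proof is correct and follows essentially the same route as the paper's: reduce $\cG$ to $M_S(V)$ with $V\to S$ vertical exact log smooth via ($\ver$-inv); use Proposition~\ref{restriction.7} and the verticality of the Kummer morphism to base change so that $V\to S$ becomes saturated, invoking the \emph{vertically} universal hypothesis to keep the support property; then pass to $\sT^\st$ via Propositions~\ref{generation.1}--\ref{generation.4} and finish with the strict base change and projection inputs (Propositions~\ref{proj.2}, \ref{restriction.10}, \ref{restriction.14}). One small imprecision: you call the generators ``compact'' when Proposition~\ref{restriction.2} only gives $\kappa$-compactness, so the Hom-functors you use are only $\kappa$-filtered-colimit-preserving rather than colimit-preserving --- the reduction still goes through, but via the structure theory of $\kappa$-presentable stable $\infty$-categories rather than the one-line colimit argument you state (the paper is equally informal at this step).
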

\begin{proof}
We need to show that the morphism
\[
f_*\cF \otimes M_S(S') \xrightarrow{Ex}
f_*(\cF \otimes f^*M_S(S'))
\]
is an isomorphism for all $\cF\in \sT(X)$ and vertical exact log smooth morphism $S'\to S$.
Proposition \ref{restriction.7} yields a Kummer log smooth morphism $g\colon U\to S$ such that $g^*$ is conservative and the projection $S'\times_S U\to U$ is saturated.
By ($\eSm$-BC),
it suffices to show that the morphism
\[
f_*'g'^*\cF \otimes g^*M_S(S')
\xrightarrow{Ex}
f_*'(g'^*\cF\otimes f'^*g^*M_S(S'))
\]
is an isomorphism,
where $f'\colon X\times_S U\to U$ and $g'\colon X\times_S U\to X$ are the projections.
Since $g$ is vertical by \cite[Proposition I.4.3.3]{Ogu},
we can replace $X\to S\leftarrow S'$ by $X\times_S U\to U \leftarrow S'\times_S U$ to assume that $S'\to S$ is saturated.

In this case, by Propositions \ref{generation.1} and \ref{generation.4}, it suffices to show that the morphism
\[
f_*\cF\otimes v_*\unit
\xrightarrow{Ex}
f_*(\cF\otimes f^*v_*\unit)
\]
is an isomorphism for every strict proper morphism $v\colon V\to S$.
In the commutative diagram
\[
\begin{tikzcd}
f_*\cF\otimes v_*\unit\ar[rr,"Ex"]\ar[dd,"Ex"']&
&
f_*(\cF\otimes f^*v_*\unit)\ar[d,"Ex"]
\\
&
&
f_*(\cF\otimes v_*'f'^*\unit)\ar[d,"Ex"]
\\
v_*(v^*f_*\cF\otimes \unit)\ar[r,"Ex"]&
v_*(f_*'v'^*\cF\otimes \unit)\ar[r,"\simeq"]&
f_*v_*'(v'^*\cF\otimes f'^*\unit),
\end{tikzcd}
\]
the left and lower right vertical arrows are isomorphisms by Proposition \ref{restriction.14},
and the upper right vertical arrow is an isomorphism by Proposition \ref{restriction.10}.
Proposition \ref{proj.2} shows that the lower left horizontal arrow is an isomorphism.
It follows that the upper horizontal arrow is an isomorphism.
\end{proof}

\subsection{Support property for some vertical morphisms}
\label{versupp}

\begin{prop}
\label{versupp.3}
Let $f\colon X\to S$ be a Kummer morphism in $\lSch/B$.
Then $f$ satisfies the strictly universal support property.
\end{prop}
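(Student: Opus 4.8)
The plan is to reduce, using the permanence properties of the strictly universal support property established above, to the toric model $\A_\theta\colon\A_Q\to\A_P$ attached to a Kummer homomorphism $\theta\colon P\to Q$ of fs monoids, and then to settle this model case with the input of \cite{logGysin}.

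First, by the remark following Definition \ref{semisupp.1} we may replace $f$ by the induced morphism $X\to\ul X\times_{\ul S}S$ and hence assume that $\ul f$ is an isomorphism. By Proposition \ref{semisupp.4} the assertion is Zariski local on $X$; so, choosing a neat chart, we may assume that $f$ factors as $X\xrightarrow{q}S\times_{\A_P}\A_Q\xrightarrow{p}S$, where $\theta\colon P\to Q$ is a Kummer homomorphism of sharp fs monoids, $q$ is strict, and $p$ is the base change of $\A_\theta\colon\A_Q\to\A_P$ along the strict morphism $S\to\A_P$ given by the chart. By Propositions \ref{semisupp.5}, \ref{semisupp.6}, and \ref{semisupp.7} it therefore suffices to show that $\A_\theta$ satisfies the strictly universal support property for every Kummer homomorphism $\theta$; and since $P^{\gp}\hookrightarrow Q^{\gp}$ has finite cokernel, Proposition \ref{semisupp.6} lets us moreover reduce to the case in which this cokernel is cyclic of prime order.

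For the model morphism $\A_\theta$, its underlying morphism of schemes is finite, hence proper, so by Proposition \ref{semisupp.3} we are reduced to proving the support property for the projection $f'\colon\A_Q\times_{\A_P}S'\to S'$ for every strict morphism $S'\to\A_P$. Here the plan is to induct on $\rank Q^{\gp}$: using Proposition \ref{basic.1} to rephrase the support property as the vanishing $i^*f'_*j'_\sharp\simeq 0$ for the open--closed decomposition of $S'$, one stratifies $S'$ along the preimages of the torus orbits of $\A_P$ and applies (Loc) together with ($div$-inv) and ($ver$-inv) to descend to the individual strata. On the open stratum the log structure pulled back from $\A_P$ is trivial, so $f'$ is a finite strict morphism there and the support property holds by Proposition \ref{semisupp.10}; the deeper strata reduce to model morphisms of strictly smaller rank and, at the bottom, to a purity statement for a (compactified) Kummer morphism, which is where the results of \cite{logGysin} and the boundary axioms behind the cartesianness of \eqref{outline.0.2} and \eqref{outline.0.3} (Propositions \ref{restriction.9} and \ref{restriction.8}) enter.

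The crux, and the expected main obstacle, is exactly this bottom case: controlling $f'$ over the closed log stratum, where it has an isomorphism as underlying morphism but a nontrivial prime-index Kummer map on characteristic monoids. This is the genuinely logarithmic phenomenon that the boundary axioms and the log Gysin machinery of \cite{logGysin} are designed to handle; granting it, the induction and the bookkeeping with the permanence lemmas are formal.
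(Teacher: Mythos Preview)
Your proposal takes a much harder route than necessary and does not actually close the argument: you explicitly defer the ``bottom case'' to unspecified results from \cite{logGysin}, so what you have is a plan rather than a proof.

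The paper's proof is essentially a one-liner once you recall two facts that you overlook. After reducing to the case where $\ul f$ is an isomorphism (as you do), the paper observes that the class of Kummer morphisms is closed under pullbacks, so it suffices to prove the support property for $f$ itself; no reduction to a toric model is needed. Then Proposition \ref{restriction.7} produces a Kummer log smooth morphism $g\colon S'\to S$ with $g^*$ conservative such that the pullback $f'\colon X\times_S S'\to S'$ is \emph{saturated}. The key point you miss is that a Kummer saturated morphism is automatically strict \cite[Corollary I.4.8.12]{Ogu}, so $f'$ is strict proper and hence satisfies the support property by Proposition \ref{semisupp.10}. Conservativity of $g^*$ together with ($\eSm$-BC) then transfers the support property back to $f$. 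No induction on rank, no stratification, and no appeal to log Gysin machinery is required.
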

\begin{proof}
We can replace $f$ by the induced morphism $X\to \ul{X}\times_{\ul{S}}S$, so we may assume that $\ul{f}$ is an isomorphism.
Since the class of Kummer morphisms is closed under pullbacks, it suffices to show that $f$ satisfies the support property.

By Proposition \ref{restriction.7}, there exists a Kummer log smooth morphism $g\colon S'\to S$ such that $g^*$ is conservative and the projection $f'\colon X\times_S S'\to S'$ is saturated.
Furthermore, \cite[Corollary I.4.8.12]{Ogu} shows that any Kummer saturated morphism is strict.
Hence $f'$ is strict.
It follows that $f'$ satisfies the support property.
Since $g^*$ is conservative, we can use ($\eSm$-BC) to deduce that $f$ satisfies the support property.
\end{proof}

\begin{prop}
\label{versupp.5}
Let $S\to \A_{\N}\times B$ be a morphism in $\lSch/B$.
Consider the diagonal homomorphism $\N\to \N\oplus \N$, and use this to form the fiber product $S\times_{\A_\N}\A_{\N\oplus \N}$.
Then the projection $g\colon S\times_{\A_\N}\A_{\N\oplus \N}\to S$ satisfies the strictly universal support property.
\end{prop}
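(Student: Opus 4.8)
The morphism $g$ is the base change along $S\to\A_{\N}\times B$ of the multiplication morphism $\mathrm m\colon\A_{\N^{2}}\to\A_{\N}$ attached to the diagonal $\N\to\N\oplus\N$ (on underlying schemes, $\A^{2}\to\A^{1}$, $(x,y)\mapsto xy$). By the remark following Definition~\ref{semisupp.1} it is equivalent to treat the induced morphism $\mathfrak q\colon X\to\ul X\times_{\ul S}S$, a \emph{proper} morphism whose underlying morphism of schemes is an isomorphism. Using Proposition~\ref{semisupp.4} (the question is Zariski-local on the source) and Proposition~\ref{semisupp.8} (one may pass to a dividing cover of the source), I would reduce the situation to the base change of the universal multiplication morphism over $\A_{\N}$, so that the input below applies and can simply be pulled back.

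\textbf{Geometric input from \cite{logGysin}.}
By \cite[Theorem~5.2.11]{logGysin}, the multiplication morphism admits a compactification
\[
\A_{\N^{2}}\xrightarrow{j}P\xrightarrow{\bar{\mathrm m}}\A_{\N}
\]
with $j$ an open immersion, $\bar{\mathrm m}$ proper and exact log smooth, and such that $\bar{\mathrm m}$ satisfies the purity property for $\sT$, i.e.\ the transformation $\mathfrak p_{\bar{\mathrm m}}$ of Definition~\ref{generation.8} is an isomorphism; write $i\colon Z\hookrightarrow P$ for the reduced closed complement of $j$, whose structure morphism to $\A_{\N}$ is covered by strict proper morphisms. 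Pulling this diagram back along $S\to\A_{\N}$ produces a proper exact log smooth $\bar g\colon\overline X\to S$, an open immersion $\bar\jmath\colon X\to\overline X$, a closed complement $\bar\imath\colon Z_{S}\hookrightarrow\overline X$ with $g_{Z}:=\bar g\,\bar\imath$ covered by strict proper morphisms, and $\bar g$ still satisfies purity for $\sT$ by the base-change compatibility of \cite[Theorem~5.2.11]{logGysin}.

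\textbf{From purity to the support property.}
First, for an exact log smooth proper morphism, purity implies the support property: since $\mathfrak p_{\bar g}$ identifies $\bar g_{*}$ with $\bar g_{\sharp}$ up to a $\otimes$-invertible Thom twist $T$, and $T$ is compatible with $(-)_{\sharp}$ along an open immersion by ($\eSm$-PF), the exchange transformation $u_{\sharp}(\bar g_{U})_{*}\to\bar g_{*}(\bar\jmath_{U})_{\sharp}$ for an open immersion $u\colon U\to S$ (with base changes $\bar\jmath_{U}$, $\bar g_{U}$) becomes an isomorphism after substituting the purity isomorphisms and $u_{\sharp}(\bar g_{U})_{\sharp}\simeq\bar g_{\sharp}(\bar\jmath_{U})_{\sharp}$; hence $\bar g$, and by the same argument applied to its strict base changes $\bar g$ universally, satisfies the support property in $\sT$. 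Next, from the localization triangles on $\overline X$ for the complementary pair $(\bar\jmath,\bar\imath)$ I would express $g_{*}$ (where $g=\bar g\,\bar\jmath$) through $\bar g_{*}$ and the boundary term, and combine the support property of $\bar g$ with that of the strict proper morphisms covering $g_{Z}$ (Proposition~\ref{semisupp.10}) and with ($\eSm$-BC) to verify the vanishing criterion of Proposition~\ref{basic.1}(3) for $g$; this gives the strictly universal support property of $g$ in $\sT$. Finally I would transfer the statement from $\sT$ to $\sT^{\ex}$ along the adjunction $\varphi_{\sharp}\dashv\varphi^{*}$ of Construction~\ref{restriction.3}, using that $\varphi_{\sharp}$ is fully faithful and commutes with $f^{*}$ and $f_{\sharp}$ by \eqref{restriction.3.1} and \eqref{restriction.3.3} and with $i_{*}$ for strict closed immersions by Proposition~\ref{restriction.5}.

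\textbf{Main obstacle.}
Two points are delicate. The first is the passage from $\sT$ to $\sT^{\ex}$: the purity input of \cite{logGysin} lives only in $\sT$, and $\varphi_{\sharp}$ does \emph{not} commute with general direct images (only the exchange transformation \eqref{restriction.3.2} is available), so the computation must be arranged so that every pushforward entering the vanishing criterion is either along a strict closed immersion — where $\varphi_{\sharp}$ does commute, by Proposition~\ref{restriction.5} — or is replaced, via purity, by a $(-)_{\sharp}$-functor, before one descends the vanishing using the conservativity of $\varphi_{\sharp}$ on the subcategory generated by exact log smooth motives. The second, more geometric, difficulty is to control the boundary $Z$ of the chosen compactification precisely enough — in particular the contribution of $\bar\imath^{*}\bar\jmath_{*}$ to $g_{*}$ — so that it is genuinely accounted for by the strict proper morphisms covering $g_{Z}$ together with ($\eSm$-BC); this is exactly where the specific compactification supplied by \cite[Theorem~5.2.11]{logGysin} enters in an essential way.
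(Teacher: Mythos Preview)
Your identification of the key input (purity for the explicit compactification $W$ from \cite[\S 5.2]{logGysin}) and your diagnosis of the main obstacle are both on target, but the execution in your third paragraph has a genuine gap.

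You propose to ``express $g_*$ through $\bar g_*$ and the boundary term'' and then ``verify the vanishing criterion of Proposition~\ref{basic.1}(3) for $g$.'' But $g$ is not proper, so Proposition~\ref{basic.1} does not apply to it; the strictly universal support property for $g$ concerns the induced proper morphism $\mathfrak q\colon X\to\ul X\times_{\ul S}S$, and there is no useful way to express $\mathfrak q_*$ through $\bar g_*$ and a boundary contribution. The correct reduction (which the paper makes at once) goes in the opposite direction: show that the \emph{proper} compactification $\bar g$ satisfies the strictly universal support property; since the open immersion $\bar\jmath$ is strict it already satisfies the property by Proposition~\ref{semisupp.5}, and then $g=\bar g\,\bar\jmath$ does by Proposition~\ref{semisupp.6}. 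Once this reduction is made, the boundary $Z$ plays no role whatsoever, and your localization step is superfluous. Moreover, since a strict base change of $\bar g$ is again of the same form (with $S$ replaced by $S'$), by Proposition~\ref{semisupp.3} it suffices to prove the ordinary support property for $\bar g$ itself.

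For $\bar g$ the paper's route is also more direct than first proving support in $\sT$ and then transferring. Using full faithfulness of $\varphi_\sharp$ and its compatibility with $i^*$ and $j'_\sharp$, one reduces to showing that the exchange map $\varphi_\sharp\bar g_*\xrightarrow{Ex}\bar g_*\varphi_\sharp$ is an isomorphism; the vanishing $i^*\bar g_*j'_\sharp\simeq 0$ in $\sT$ is then supplied ready-made by \cite[Proposition~5.2.12]{logGysin} rather than re-derived. The exchange isomorphism is exactly what your obstacle analysis anticipates, but note the precise form of purity used: \cite[Proposition~4.1.2, Theorem~5.2.11]{logGysin} give $\bar g_\sharp\varphi_\sharp\simeq\bar g_*\,p_{n\sharp}a_{n*}\varphi_\sharp$ with $p_n$, $a_n$ the projection and zero section of a \emph{strict} vector bundle, so that $p_{n\sharp}a_{n*}$ is an equivalence by Proposition~\ref{generation.7}; hence the essential image of $\bar g_*\varphi_\sharp$ coincides with that of $\bar g_\sharp\varphi_\sharp\simeq\varphi_\sharp\bar g_\sharp$ and lies in the image of $\varphi_\sharp$. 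You cannot argue directly with $\mathfrak p_{\bar g}$ of Definition~\ref{generation.8} as you suggest, because the diagonal of $\bar g$ is not a strict closed immersion, so $p_{2\sharp}a_*$ is not a priori a Thom equivalence.
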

\begin{proof}
As in \cite[\S 5.2]{logGysin}, let $W$ be the gluing of
\[
\Spec(\N x\oplus \N y\to \Z[x,y]),
\;
\Spec(\N(xy)\to \Z[xy,y^{-1}]),
\;
\Spec(\N(xy)\to \Z[xy,x^{-1}]).
\]
Consider the morphism $W\to \A_\N=\Spec(\N t\to \Z[t])$ given by the formula $t\mapsto xy$.
We have the induced commutative triangle
\[
\begin{tikzcd}
S\times_{\A_\N}\A_{\N\oplus \N}\ar[r,"j"]\ar[rd,"g"']&
S\times_{\A_\N}W\ar[d,"f"]
\\
&
S,
\end{tikzcd}
\]
where $f$ is the projection, and $j$ is the induced open immersion.
Observe that $f$ is proper exact log smooth.
By Propositions \ref{semisupp.5} and \ref{semisupp.6}, it suffices to show that $f$ satisfies the strictly universal support property.
Let $S'\to S$ be any strict morphism.
By Proposition \ref{semisupp.3}, it suffices to show that the projection $S'\times_{\A_\N}W\to S'$ satisfies the support property.
Replace $S'$ by $S$ to reduce to showing that $f$ satisfies the support property.

Let $i\colon Z\to S$ be a strict closed immersion with its open complement $j\colon U\to S$, and let $j'\colon U\times_{\A_\N}W\to S\times_{\A_\N}W$ be the pullback of $j$.
By Proposition \ref{basic.1}, we only need to show $i^*f_*j_\sharp'\simeq 0$.
Consider the sequence of morphisms
\[
i^*f_*j_\sharp'
\xrightarrow{ad}
\varphi^*\varphi_\sharp i^*f_*j_\sharp'
\xrightarrow{\simeq}
\varphi^*i^*\varphi_\sharp f_*j_\sharp'
\xrightarrow{Ex}
\varphi^*i^*f_* \varphi_\sharp j_\sharp'
\xrightarrow{\simeq}
\varphi^*i^*f_*j_\sharp'\varphi_\sharp.
\]
The first arrow is an isomorphism since $\varphi_\sharp$ is fully faithful.
Hence it suffices to show that $\varphi_\sharp f_*\xrightarrow{Ex} f_*\varphi_\sharp$ is an isomorphism since \cite[Proposition 5.2.12]{logGysin} implies $\varphi^*i^*f_*j_\sharp'\varphi_\sharp\simeq 0$.

We need to show that the composition
\[
\varphi_\sharp f_*
\xrightarrow{ad}
\varphi_\sharp f_*\varphi^*\varphi_\sharp
\xrightarrow{\simeq}
\varphi_\sharp \varphi^*f_*\varphi_\sharp
\xrightarrow{ad'}
f_*\varphi_\sharp
\]
is an isomorphism.
The first arrow is an isomorphism since $\varphi_\sharp$ is fully faithful.
It remains to show that the third arrow is an isomorphism.
For this, it suffices to show that the essential image of $f_*\varphi_\sharp$ is in the essential image of $\varphi_\sharp$.

We set $X:=S\times_{\A_\N}W$, and let $a\colon X\to X\times_S X$ be the diagonal morphism.
Consider the vector bundle $\cE$ associated with the locally free $\cO_X$-module $a^*\Omega_{X\times_S X/X}$, see \cite[Proposition 9.15]{divspc}.
Let $p_n\colon \cE\to S$ be the projection, and let $a_n\colon S\to \cE$ be the $0$-section.
By \cite[Proposition 4.1.2, Theorem 5.2.11]{logGysin}, there exists an isomorphism
\[
f_\sharp \varphi_\sharp
\simeq
f_* p_{n\sharp}a_{n*} \varphi_\sharp.
\]
Use Proposition \ref{restriction.5} and $f_\sharp\varphi_\sharp \simeq \varphi_\sharp f_\sharp$ to obtain an isomorphism
\[
\varphi_\sharp f_\sharp
\simeq
f_*\varphi_\sharp p_{n\sharp}a_{n*}.
\]
Proposition \ref{generation.7} implies that $p_{n\sharp}a_{n*}$ is an equivalence of $\infty$-categories.
It follows that the essential image of $f_*\varphi_\sharp$ is in the essential image of $\varphi_\sharp$.
\end{proof}

\begin{prop}
\label{versupp.1}
Let $f\colon X\to S$ be a vertical morphism in $\lSch/B$.
If $\N$ is a chart of $S$, then $f$ satisfies the strictly universal support property.
\end{prop}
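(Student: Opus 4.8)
The plan is to reduce the vertical morphism $f\colon X\to S$ (with $\N$ a chart of $S$) to the model situation handled in Proposition \ref{versupp.5}, using the structural results on verticality from log geometry. First I would invoke the local structure theory for vertical log smooth morphisms: Zariski-locally on $X$, a vertical morphism with $S$ charted by $\N$ admits a chart $\theta\colon \N\to Q$ with $Q$ an fs monoid whose associated vertical face condition forces $\theta$ to factor the diagonal-type homomorphisms. More precisely, after a suitable chart one can write the relevant fiber product $X$ (or a dividing cover of it) as built out of pieces of the shape $S\times_{\A_\N}\A_{\N^{\oplus r}}$ via the iterated diagonal $\N\to \N^{\oplus r}$. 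By Proposition \ref{semisupp.4}, the strictly universal support property is Zariski-local on the source, so it is enough to treat each such local piece.

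Next I would run an induction on the rank $r=\max_{x\in X}\rank(\ol{\cM}_{X/S,x}^{\gp})$, exactly in the spirit of the proof of Proposition \ref{generation.1}. The base case $r=0$ is the strict case, which is Proposition \ref{semisupp.5}. For the inductive step, after choosing a neat chart $\theta\colon \N\to Q$ at a point and using \cite[Propositions A.3, A.4]{divspc} (as in the proof of Proposition \ref{generation.1}) one arranges that $X\to S\times_{\A_\N}\A_Q$ is strict smooth, so by Propositions \ref{semisupp.5} and \ref{semisupp.6} it suffices to show $S\times_{\A_\N}\A_Q\to S$ satisfies the strictly universal support property. Now $Q$ is an fs submonoid of $\N^{\oplus r}$ containing the diagonal image of $\N$ (verticality forces the diagonal $\N\to Q$ to be the relevant structure map), and one factors $\A_Q\hookrightarrow \A_{\N^{\oplus r}}$ through the diagonal homomorphism $\N\to \N^{\oplus r}=\N\oplus\cdots\oplus\N$. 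The morphism $S\times_{\A_\N}\A_{\N^{\oplus r}}\to S$ is the $r$-fold composite of pullbacks of the basic morphism $S'\times_{\A_\N}\A_{\N\oplus\N}\to S'$ from Proposition \ref{versupp.5}, so by Propositions \ref{semisupp.6} and \ref{semisupp.7} it satisfies the strictly universal support property. The remaining inclusion $\A_Q\hookrightarrow \A_{\N^{\oplus r}}$ is an open immersion onto a union of faces (away from the closed complement), which by stratification via (Loc) and Proposition \ref{generation.3} together with Proposition \ref{semisupp.8} (dividing covers) and the inductive hypothesis can be absorbed; alternatively one applies Proposition \ref{restriction.6} to replace $p_\sharp p^*$ by a strict smooth model as in the proof of Proposition \ref{generation.1}.

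The main obstacle I anticipate is the bookkeeping in the inductive step: matching the combinatorics of the vertical chart $\N\to Q$ with the iterated-diagonal model $\N\to\N^{\oplus r}$, and making sure the passage from $\A_Q$ to $\A_{\N^{\oplus r}}$ only involves open immersions, strict morphisms, dividing covers, and compositions thereof—each of which preserves the strictly universal support property by Propositions \ref{semisupp.5}, \ref{semisupp.6}, \ref{semisupp.7}, and \ref{semisupp.8}. The key technical input that makes the whole argument possible is Proposition \ref{versupp.5}, which in turn rests on \cite[Theorem 5.2.11]{logGysin}; everything else is a formal reduction using the closure properties of the strictly universal support property established in \S\ref{semisupp} and the stratification technique from the proof of Proposition \ref{generation.1}.
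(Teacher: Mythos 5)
Your overall skeleton matches the paper's proof: localize on $X$ via Proposition \ref{semisupp.4}, use dividing covers (Proposition \ref{semisupp.8}) and toric resolution to reduce to a chart $\theta\colon\N\to P$ with $\ol{P}\simeq\N^n$, split off a strict smooth piece via Propositions \ref{semisupp.5} and \ref{semisupp.6}, and handle $S\times_{\A_\N}\A_{\N^{\oplus n}}\to S$ as an iterated composite of pullbacks of the basic diagonal morphism of Proposition \ref{versupp.5}. But there is a genuine gap in the last step. Verticality of $\theta$ does \emph{not} force the chart to be the iterated diagonal: writing $P\simeq P^*\oplus\N^n$, verticality only says $\theta(1)=(x,a_1,\dots,a_n)$ with each $a_i>0$, where the $a_i$ can be arbitrary positive integers. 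So the morphism you actually need to control is not the one formed along the diagonal $1\mapsto(1,\dots,1)$ but the one formed along $1\mapsto(a_1,\dots,a_n)$, and your proposal never bridges this discrepancy. Your substitute step --- treating ``the remaining inclusion $\A_Q\hookrightarrow\A_{\N^{\oplus r}}$'' as an open immersion onto a union of faces to be absorbed by stratification via (Loc) and Proposition \ref{generation.3} --- does not correspond to anything that actually occurs: after toric resolution $\ol{Q}$ is already free, there is no residual open immersion to absorb, and Proposition \ref{generation.3} is a statement about objects of $\sT^{\st}$, not about the support property for a morphism.

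The missing idea is the Kummer trick: the endomorphism $\N^n\to\N^n$, $(x_1,\dots,x_n)\mapsto(a_1x_1,\dots,a_nx_n)$, is Kummer, hence the induced morphism $h\colon X\to S\times_{\A_\N}(\A_{P^*}\times\A_{\N^n})$ (target formed using the iterated diagonal) is Kummer, and Kummer morphisms satisfy the strictly universal support property by Proposition \ref{versupp.3}, which ultimately rests on Proposition \ref{restriction.7} and the fact that Kummer saturated morphisms are strict. With $h$ handled this way, composing with the chain of morphisms supplied by Proposition \ref{versupp.5} and applying Proposition \ref{semisupp.6} completes the proof; without Proposition \ref{versupp.3} your argument only covers the special case where $\theta(1)$ is exactly $(1,\dots,1)$.
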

\begin{proof}
By Propositions \ref{semisupp.4} and \ref{semisupp.8} and toric resolution of singularities \cite[Theorem 11.1.9]{CLStoric},
we may assume that $f$ admits a chart $\theta\colon \N\to P$ such that $P$ is an exact chart at some point $x\in X$ and $\ol{P}\simeq \N^n$ for some integer $n\geq 0$.
If $n=0$,
then $f$ is strict,
so $f$ satisfies the strictly universal support property.
Hence assume $n>0$.

By \cite[Lemma I.6.7]{zbMATH07027475},
there is a decomposition $P\simeq P^*\oplus \N^n$.
Since $\theta$ is vertical, we have $\theta(1)=(x,a_1,\ldots,a_n)\in P^*\oplus \N^n$ with $a_1,\ldots,a_n >0$.
Consider the homomorphism
\[
\N
\xrightarrow{\alpha_1}
\N^2
\xrightarrow{\alpha_2}
\cdots
\xrightarrow{\alpha_{n-1}}
\N^n
\]
such that $\alpha_i(x_1,\ldots,x_i):=(x_1,\ldots,x_i,x_i)$ for all integer $1\leq i\leq n-1$.
We have the induced morphisms
\[
X
\xrightarrow{h}
S\times_{\A_{\N}}(\A_{P^*}\times \A_{\N^n})
\xrightarrow{g_{n-1}}
\cdots
\xrightarrow{g_2}
S\times_{\A_{\N}}(\A_{P^*}\times \A_{\N^2})
\xrightarrow{g_1}
S,
\]
whose composition is equal to $f$.
By Proposition \ref{versupp.5}, $g_1,\ldots,g_{n-1}$ satisfy the strictly universal support property.
Since the homomorphism $\N^n\to \N^n$ sending $(x_1,\ldots,x_n)$ to $(a_1x_1,\ldots,a_nx_n)$ is Kummer, $h$ is Kummer.
Proposition \ref{versupp.3} implies that $h$ satisfies the strictly universal support property.
Together with Proposition \ref{semisupp.6}, we deduce that $f$ satisfies the strictly universal support property.
\end{proof}

\begin{prop}
\label{versupp.2}
Let $Y\xrightarrow{g} X\xrightarrow{f} S$ be morphisms in $\lSch/B$.
Assume that $\N$ is a chart of $S$.
If $f$ and $fg$ are vertical, then $g$ satisfies the vertically universal support property.
\end{prop}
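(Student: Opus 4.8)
The plan is to reduce the claim to a chart computation patterned on the proof of Proposition \ref{versupp.1}, but carried out relative to $X$ rather than to $S$, with the obstruction to strictness absorbed by the vertical base changes allowed in the definition of the vertically universal support property.

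First I would unwind the definition: I must show that for every vertical morphism $v\colon X'\to X$ the projection $g'\colon Y\times_X X'\to X'$ satisfies the strictly universal support property. Verticality is stable under composition and base change (\cite[\S I.4.3]{Ogu}), so $f':=fv$ is vertical and, since $Y\times_X X'\to Y$ is the base change of $v$ while $fg$ is vertical, $f'g'$ is vertical too; by Proposition \ref{versupp.1} both $f'$ and $f'g'$ satisfy the strictly universal support property. By Propositions \ref{semisupp.4} and \ref{semisupp.8} the assertion for $g'$ is Zariski–local on $Y\times_X X'$ and is unaffected by dividing covers of $Y\times_X X'$; moreover a dividing cover of $X'$ is itself vertical and, by ($div$-inv) and Proposition \ref{basic.1}, satisfies the strictly universal support property, so by Propositions \ref{semisupp.6} and \ref{semisupp.8} the assertion for $g'$ also descends along dividing covers of $X'$. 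Hence, using toric resolution \cite[Theorem 11.1.9]{CLStoric} and the chart machinery \cite[Propositions A.3, A.4]{divspc} applied to $f'$ and to $f'g'$ exactly as in the proof of Proposition \ref{versupp.1}, I may assume there are neat charts forming a commutative triangle $\N\to P\to R$ with $\ol P\simeq\N^m$ and $\ol R\simeq\N^n$, the composites $\N\to P$ and $\N\to R$ vertical, the morphisms $X'\to S\times_{\A_\N}\A_P$ and $Y\times_X X'\to S\times_{\A_\N}\A_R$ strict, and $g'$ equal, up to these strict morphisms, to the toric morphism $\A_R\to\A_P$ over $\A_\N$.

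Then I would mimic the combinatorics of the proof of Proposition \ref{versupp.1}: splitting off the free summands and using the homomorphisms that duplicate a coordinate — which realize base changes over $\A_P$ of the model morphisms of Proposition \ref{versupp.5} — followed by a Kummer homomorphism, the morphism $\A_R\to\A_P$ becomes a finite composite of base changes of morphisms of the form in Proposition \ref{versupp.5} and of a Kummer morphism. Composing with the strict morphisms above, $g'$ is then a composite of strict morphisms, of morphisms of the type treated in Proposition \ref{versupp.5}, and of a Kummer morphism, so Propositions \ref{versupp.3}, \ref{versupp.5}, \ref{semisupp.5}, \ref{semisupp.6}, and \ref{semisupp.7} show that $g'$ satisfies the strictly universal support property. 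Therefore $g$ satisfies the vertically universal support property.

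The hard part will be the chart reduction: one must simultaneously produce \emph{compatible} neat charts for $f$, $fg$, and $g$ lying over the fixed rank-one chart $\N$ of $S$, and forcing the chart on $Y$ to be compatible with the chart on $X$ through $g$ will in general require refining the chart on $X$ — that is, a dividing cover of $X$, which is a vertical but not a strict base change. This is precisely why the statement yields the vertically universal, rather than the strictly universal, support property; once compatible charts are in place, the combinatorial factorization of the resulting toric morphism through the model morphisms of Proposition \ref{versupp.5} and a Kummer morphism is a routine variant of the argument in the proof of Proposition \ref{versupp.1}.
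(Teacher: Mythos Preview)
Your strategy is genuinely different from the paper's, and it contains a real gap in the combinatorial step.

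The paper's argument never attempts a chart computation for $g$ at all. After the same opening reduction (replace $X$ by $\ul{Y}\times_{\ul{X}}X$, base change along an arbitrary vertical $X'\to X$, so that it suffices to prove the plain support property for $g$), the paper tests the map $v_\sharp g'_*\to g_*w_\sharp$ against the generators $M_X(X')(d)[n]$ with $X'\to X$ vertical exact log smooth. After adjunction and a further base change this reduces to the case $X'=X$, i.e.\ to testing against $\unit(d)[n]$. One more adjunction (after replacing $S$ by $\ul{X}\times_{\ul{S}}S$ so that $\ul f$ is an isomorphism and every open of $X$ comes from an open of $S$) pushes the question down to $S$: it becomes the assertion that $f_*v_\sharp g'_*\to f_*g_*w_\sharp$ is an isomorphism, and this is immediate from the support property for $f$ and for $fg$, both of which are already known by Proposition~\ref{versupp.1}. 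So the paper leverages Proposition~\ref{versupp.1} \emph{as a black box} for $f$ and $fg$; nothing about $g$ itself is ever charted.

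Your direct approach, by contrast, tries to factor the chart homomorphism $P\to R$ (with $\ol P\simeq\N^m$, $\ol R\simeq\N^n$) into ``duplicate one coordinate'' steps and a Kummer step, exactly as Proposition~\ref{versupp.1} does for $\N\to P$. That combinatorics works in Proposition~\ref{versupp.1} only because the source monoid is $\N$: duplicating the unique coordinate $n-1$ times and then scaling reaches any vertical $\N\to\N^n$. When the source has rank $m>1$, a vertical homomorphism $\N^m\to\N^n$ can mix the generators additively, and such maps are \emph{not} composites of coordinate duplications followed by a Kummer map. For a concrete obstruction, take $P=R=\N^2$ and $\theta(a,b)=(a+b,a)$; this is vertical (the image contains $(1,1)$ and $(1,0)$, whose generated face is all of $\N^2$) and sits over the diagonal $\N\to P$, $c\mapsto(c,c)$, but any composite of duplications and a Kummer map sends each output coordinate to a positive multiple of a \emph{single} input coordinate, never to $a+b$. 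So the factorization you invoke simply does not exist in general, and Proposition~\ref{versupp.5} cannot be applied in the way you describe.

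The compatible-chart reduction you flag as ``the hard part'' is also not carried out, but even granting it, the argument would still founder on the factorization step above. The paper's adjunction trick sidesteps both issues entirely.
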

\begin{proof}
Replace $X$ by $\ul{Y}\times_{\ul{X}}X$ to assume that $g$ is proper.

By Proposition \ref{semisupp.3}, it suffices to show that the projection $Y\times_X X'\to X'$ satisfies the support property for every vertical morphism $X'\to X$.
Since the class of vertical morphisms are closed under compositions and pullbacks by \cite[Propositions 2.3.8(1), 2.3.9]{logA1},
we can replace $Y\to X$ by $Y\times_X X'\to X'$ to reduce to showing that $g$ satisfies the support property.

Let
\[
\begin{tikzcd}
W\ar[d,"g'"']\ar[r,"w"]&
Y\ar[d,"g"]
\\
V\ar[r,"v"]&
X
\end{tikzcd}
\]
be a cartesian square such that $v$ is an open immersion.
We only need to show that the induced morphism
\[
\Hom_{\sT^\ex(X)}(M_X(X')(d)[n],v_\sharp g_*'\cF)
\to
\Hom_{\sT^\ex(X)}(M_X(X')(d)[n],g_*w_\sharp \cF)
\]
is an isomorphism for all $\cF\in \sT(W)$, vertical exact log smooth morphism $p\colon X'\to X$, and $d,n\in \Z$.
By adjunction,
it suffices to show that the induced morphism
\[
\Hom_{\sT^\ex(X')}(\unit(d)[n],p^*v_\sharp g_*'\cF)
\to
\Hom_{\sT^\ex(X')}(\unit(d)[n],p^*g_*w_\sharp \cF)
\]
is an isomorphism.
Since $X'$ and $Y\times_X X'$ are vertical over $S$ by \cite[Propositions 2.3.8(1), 2.3.9]{logA1}, we can replace $Y\to X\to S$ by $Y\times_X X'\to X'\to S$ using ($\eSm$-BC).
Hence we reduce to showing that the induced morphism
\[
\Hom_{\sT^\ex(X)}(\unit(d)[n],v_\sharp g_*'\cF)
\to
\Hom_{\sT^\ex(X)}(\unit(d)[n],g_*w_\sharp \cF)
\]
is an isomorphism.
We can replace $S$ by $\ul{X}\times_{\ul{S}}S$ to assume that $\ul{f}$ is an isomorphism.

With all the above reductions,
by adjunction,
it suffices to show that the induced natural transformation
\[
f_*v_\sharp g_*'
\xrightarrow{Ex}
f_*g_*w_\sharp
\]
is an isomorphism.
Since $\ul{f}$ is an isomorphism, there exists a unique cartesian square
\[
\begin{tikzcd}
V\ar[r,"v"]\ar[d,"f'"']&
X\ar[d,"f"]
\\
U\ar[r,"u"]&S,
\end{tikzcd}
\]
and $u$ is an open immersion.
In the commutative diagram
\[
\begin{tikzcd}
u_\sharp f_*'g_*'\ar[d,"\simeq"']\ar[r,"Ex"]&
f_*v_\sharp g_*'\ar[r,"Ex"]&
f_*g_*w_\sharp\ar[d,"Ex"]
\\
u_\sharp (f'g')_*\ar[rr,"\simeq"]&
&
(fg)_*w_\sharp,
\end{tikzcd}
\]
the upper left and lower horizontal arrows are isomorphisms by Proposition \ref{versupp.1}.
It follows that the upper right horizontal arrow is an isomorphism.
\end{proof}

\begin{prop}
\label{nonversupp.1}
Let $f\colon S\times \A_{\N}\times \pt_\N \to S\times \A_{\N}\times \pt_\N$ be the morphism in $\lSch/B$ induced by the homomorphism
\[
\N \oplus \N \to \N\oplus \N,
\;
(a,b) \mapsto (a+b,b).
\]
Then $f$ satisfies the vertically universal support property, and $f^*$ is fully faithful.
\end{prop}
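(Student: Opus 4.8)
Since the underlying morphism $\ul f$ is the identity, $f$ is proper. The plan is first to reduce the vertically universal support property to the plain support property for $f$ with respect to $\sT^\ex$: the homomorphism $(a,b)\mapsto(a+b,b)$ and the property $\ul f=\id$ are both stable under the base changes appearing in the definition, so by (the argument of) Proposition \ref{semisupp.3} it suffices to prove that $f$ itself satisfies the support property. By Proposition \ref{basic.1} (conditions (1)--(3) are equivalent for any proper $f$) this amounts to showing $i^*f_*j'_\sharp\simeq 0$ for every strict closed immersion $i$ with open complement $j$, where $j'$ denotes the pullback of $j$ along $f$.

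The morphism $f$ is log smooth and vertical, but \emph{not} exact; in coordinates it is ``$(p,q)\mapsto(p,pq)$'' where $p$ is the coordinate of $\A_\N$ and $q$ that of $\pt_\N$, and in particular $f$ is an isomorphism over the open locus $\{p\neq 0\}$. Because $f$ is not exact, it is not directly covered by Proposition \ref{versupp.5}; instead I would run the argument of that proposition at the level of $\sT$. Concretely, the defining homomorphism factors as $\N^2\xrightarrow{(a,b)\mapsto(a,b,b)}\N^3\xrightarrow{(x,y,z)\mapsto(x+y,z)}\N^2$, exhibiting $f$ as the composite of the diagonal-type morphism treated in Proposition \ref{versupp.5} with a proper closed immersion onto a diagonal. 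The key point — just as in the last part of the proof of Proposition \ref{versupp.5} — is that $Ex\colon\varphi_\sharp f_*\to f_*\varphi_\sharp$ is an isomorphism, i.e.\ that the essential image of $f_*\varphi_\sharp$ lies in that of $\varphi_\sharp$; this is obtained from the purity for a compactification of the multiplication morphism $\A_{\N^2}\to\A_\N$, namely \cite[Proposition 4.1.2, Theorem 5.2.11]{logGysin}, together with \cite[Proposition 5.2.12]{logGysin}, Proposition \ref{restriction.5}, and Proposition \ref{generation.7} (so that the relevant projection--zero section pair is an equivalence). Feeding this back into Proposition \ref{basic.1} yields $i^*f_*j'_\sharp\simeq 0$, hence the support property, hence the vertically universal support property. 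I expect the main obstacle to be exactly this step: checking that the \cite{logGysin} purity statement applies to the present morphism and carrying the whole computation through the adjunction $\varphi_\sharp\dashv\varphi^*$ of $\sT$ rather than through $f_\sharp$ on $\sT^\ex$, which is unavailable since $f$ is non-exact.

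For the full faithfulness of $f^*$, the unit $\id\xrightarrow{ad}f_*f^*$ is an isomorphism if and only if it becomes one after applying $i^*$ and $j^*$, where $i$ cuts out $\{p=0\}$ and $j$ is its open complement (conservativity, by (Loc)). Over $\{p\neq 0\}$ the morphism $f$ is an isomorphism, so $j^*(ad)$ is an isomorphism. For $i^*(ad)$, the base change isomorphism $i^*f_*\simeq f_{0*}i'^*$ — which holds by the support property for $f$ just proved, via condition (2) of Proposition \ref{basic.1} — identifies $i^*(ad)$ with the unit for $f_0$, the restriction of $f$ over $\{p=0\}\cong S\times\pt_\N\times\pt_\N$; since $f_0$ is again of the same shape but now over a log point, one concludes by the same purity input applied to $f_0$ (equivalently, $f_0$ induces an equivalence on $\sT$ over the log point by a direct computation). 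Hence $f^*$ is fully faithful.
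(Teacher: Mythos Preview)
Your proposal has a genuine gap in the first part and misses the much simpler route the paper takes.

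\textbf{The vertically universal support property.} Your reduction to the plain support property is not valid. The vertically universal support property requires checking that the pullback of $f$ along every \emph{vertical} morphism $S'\to S\times\A_\N\times\pt_\N$ satisfies the strictly universal support property. Vertical morphisms are typically not strict, and under such a base change the chart homomorphism $(a,b)\mapsto(a+b,b)$ does not persist; the pullback $f'$ can have a completely different monoidal description. Proposition~\ref{semisupp.3} only treats strict base changes, so it cannot bridge this gap. Consequently the whole machinery you propose to import from the proof of Proposition~\ref{versupp.5} (the $\varphi_\sharp$ trick and the purity input from \cite{logGysin}) would, at best, yield the strictly universal support property, not the vertically universal one.

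The paper avoids this entirely by invoking Proposition~\ref{versupp.2}. One introduces the morphism $g\colon S\times\A_\N\times\pt_\N\to S\times\pt_\N$ induced by the diagonal $\N\to\N\oplus\N$, $a\mapsto(a,a)$; the target has chart $\N$, and both $g$ and $gf$ are vertical (the images of $a\mapsto(a,a)$ and $a\mapsto(2a,a)$ each generate $\N^2$ as a face). Proposition~\ref{versupp.2} then gives the vertically universal support property for $f$ directly. This is the key step you are missing.

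\textbf{Full faithfulness.} Your argument ends with a recursion that does not terminate: the restriction $f_0$ over $\{p=0\}$ lives over $S\times\pt_{\N^2}$, not over $S\times\pt_\N$, so it is not ``of the same shape,'' and you have not supplied an independent reason for $f_0^*$ to be fully faithful. The paper instead first uses the projection formula (Proposition~\ref{proj.1}, available thanks to the vertically universal support property just established) to reduce to showing that $\unit\to f_*f^*\unit$ is an isomorphism. Then it observes that the restriction of $f$ to $U:=S\times\G_m\times\pt_\N$ is the identity, i.e.\ $f\circ j=j$, and combines this with $(\mathit{ver}\text{-inv})$ for the exact log smooth projection $S\times\A_\N\times\pt_\N\to S\times\pt_\N$, which gives $\unit\xrightarrow{\sim} j_*j^*\unit$ on both source and target of $f$. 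The commutative square
\[
\begin{tikzcd}
\unit\ar[r,"ad"]\ar[d,"\simeq"']& f_*f^*\unit\ar[d,"\simeq"]\\
j_*j^*\unit\ar[r,"\simeq"]& f_*j_*j^*f^*\unit
\end{tikzcd}
\]
then forces the top arrow to be an isomorphism.
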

\begin{proof}
Let $g\colon S\times \A_{\N}\times \pt_\N \to S\times \pt_\N$ be the morphism induced by the homomorphism
\[
\N \to \N \oplus \N,
\;
a\mapsto (a,a).
\]
Since $g$ and $gf$ is vertical, Proposition \ref{versupp.2} shows that $f$ satisfies the vertically universal support property.
Together with Proposition \ref{proj.1}, we see that the composition
\[
\cF \otimes f_*f^*\unit
\xrightarrow{Ex}
f_*(f^*\cF\otimes \unit)
\xrightarrow{\simeq}
f_*f^*\cF
\]
is an isomorphism for all $\cF\in \sT(S\times \A_{\N} \times \pt_{\N})$.
Hence it remains to show that the morphism
\[
\unit \xrightarrow{ad} f_*f^*\unit
\]
is an isomorphism.

Let $p\colon S\times \A_{\N}\times \pt_{\N} \to S\times \pt_{\N}$ be the projection, and let $j\colon S\times \G_m \times \pt_{\N}\to S\times \A_{\N} \times \pt_{\N}$ be the induced open immersion.
By ($\ver$-inv),
the natural transformation
\[
\unit \to j_*j^* \unit
\]
is an isomorphism.
Since $jf=j$, we have the commutative square
\[
\begin{tikzcd}
\unit\ar[r,"ad"]\ar[d,"ad"']&
f_*f^*\unit\ar[d,"ad"]
\\
j_*j^*\unit\ar[r,"\simeq"]&
f_*j_*j^*f^*\unit.
\end{tikzcd}
\]
We know that the vertical arrows are isomorphisms.
It follows that the upper horizontal arrow is an isomorphism.
\end{proof}

\subsection{Support property for some non-vertical morphisms}
\label{nonversupp}

\begin{prop}
\label{nonversupp.3}
Let $Y\xrightarrow{g} X\xrightarrow{f} S$ be morphisms in $\lSch/B$ such that $g$ is proper.
Assume $g^*$ is fully faithful.
If $g$ satisfies the vertically universal support property and $fg$ satisfies the strictly universal support property, then $f$ satisfies the strictly universal support property.
\end{prop}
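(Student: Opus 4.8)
The plan is to reduce the statement to Proposition~\ref{semisupp.9}, applied to $Y\xrightarrow{g}X\xrightarrow{f}S$. Two of the three hypotheses of that proposition are given: $g$ is proper, and $fg$ satisfies the strictly universal support property. What remains is the condition that for every pullback $h$ of $g$ along a strict morphism to $X$, the functor $h^*$ is fully faithful. Thus everything comes down to showing that, for a strict morphism $b\colon X'\to X$, the base change $g'\colon Y':=Y\times_X X'\to X'$ of $g$ satisfies that $g'^*$ is fully faithful. Since only full-faithfulness of $g^*$ itself is available, this propagation along strict base change is precisely where the vertically universal support property of $g$ is used.

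First I would record that $g'$ is proper (a pullback of the proper morphism $g$) and that both $g$ and $g'$ satisfy the vertically universal support property, hence also the strictly universal one (apply the definition to the identity morphism, which is vertical). For $g$ this is the hypothesis; for $g'$ it follows because a vertical morphism $X''\to X'$ composed with the strict morphism $b$ is again vertical (strict morphisms are vertical, and vertical morphisms are stable under composition by \cite[Propositions~2.3.8(1), 2.3.9]{logA1}), and the resulting fibre product over $X'$ is identified with the one over $X$. Then Proposition~\ref{proj.1} applies to $g'$ and, combined with ($ver$-inv) of Theorem~\ref{restriction.16} (used to replace motives of exact log smooth $X'$-schemes by motives of their vertical opens, which still generate $\sT^\ex(X')$), gives the projection formula for $g'$ in the form $g'_*\cF\otimes\cG\xrightarrow{\sim}g'_*(\cF\otimes g'^*\cG)$. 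Taking $\cF=\unit$ yields a natural isomorphism $g'_*\unit\otimes(-)\xrightarrow{\sim}g'_*g'^*(-)$ carrying the adjunction unit $\cG\to g'_*g'^*\cG$ to $(\unit\to g'_*\unit)\otimes\cG$; hence $g'^*$ is fully faithful provided $\unit_{X'}\to g'_*\unit_{Y'}$ is an isomorphism.

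To obtain this last isomorphism I would appeal to proper base change: since $g$ is proper and satisfies the strictly universal support property, Proposition~\ref{proj.2} applied to the cartesian square with bottom edge $b$ gives an exchange isomorphism $b^*g_*\xrightarrow{\sim}g'_*b'^*$, where $b'\colon Y'\to Y$ is the projection. Evaluating on $\unit_Y$ and using that $\unit_X\to g_*g^*\unit_X=g_*\unit_Y$ is an isomorphism (full-faithfulness of $g^*$), together with $b^*\unit_X=\unit_{X'}$ and $b'^*\unit_Y=\unit_{Y'}$, produces the desired isomorphism $\unit_{X'}\to g'_*\unit_{Y'}$, which is the unit map by the usual compatibility of the exchange transformation with units. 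This verifies the remaining hypothesis of Proposition~\ref{semisupp.9}, completing the argument. I expect the propagation of full-faithfulness along strict base change to be the main obstacle: $g^*$ being fully faithful is not formally pullback-stable, and it is only the vertically universal support property --- funnelled through the projection formula of Proposition~\ref{proj.1} and the proper base change of Proposition~\ref{proj.2} --- that makes it so; a secondary technical point is the upgrade of the projection formula from motives of vertical exact log smooth schemes to arbitrary objects via ($ver$-inv) and the check that the resulting identification is compatible with the adjunction unit.
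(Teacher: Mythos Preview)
Your proposal is correct and follows essentially the same route as the paper: reduce to Proposition~\ref{semisupp.9}, use Proposition~\ref{proj.1} on the strict pullback $g'$ to obtain the projection formula and thereby reduce full faithfulness of $g'^*$ to the single map $\unit\to g'_*g'^*\unit$, and then use Proposition~\ref{proj.2} together with full faithfulness of $g^*$ to conclude. Your explicit verification that $g'$ inherits the vertically universal support property (needed for Proposition~\ref{proj.1}) is a point the paper leaves implicit; the appeal to ($ver$-inv) is unnecessary since Proposition~\ref{proj.1} already delivers the projection formula for arbitrary $\cG\in\sT^\ex(X')$.
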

\begin{proof}
By Proposition \ref{semisupp.9}, it suffices to show that for any cartesian square
\[
\begin{tikzcd}
Y'\ar[r,"g'"]\ar[d,"h'"']&
X'\ar[d,"h"]
\\
Y\ar[r,"g"]&
X
\end{tikzcd}
\]
such that $h$ is a strict, the unit $\id \xrightarrow{ad} g_*'g'^*$ is an isomorphism.
Proposition \ref{proj.1} shows that the composition
\[
g_*'g'^*\unit \otimes \cF
\xrightarrow{Ex}
g_*'(g'^*\unit \otimes g'^*\cF)
\xrightarrow{\simeq}
g_*'g'^*\cF
\]
is an isomorphism.
Hence it suffices to show that the induced morphism
\[
\unit \xrightarrow{ad} g_*'g'^*\unit
\]
is an isomorphism.
This admits the factorization
\[
\unit \xrightarrow{\simeq} h^*\unit \xrightarrow{ad} h^*g_*g^*\unit \xrightarrow{Ex} g_*'h'^*g^*\unit \xrightarrow{\simeq} g_*'g'^*\unit.
\]
The second arrow is an isomorphism by assumption,
and the third arrow is an isomorphism by Proposition \ref{proj.2}.
\end{proof}

\begin{prop}
\label{nonversupp.2}
Let $p\colon S\times \A_{\N}\times \pt_{\N}\to S\times \pt_{\N}$ be the projection.
Then $p$ satisfies the strictly universal support property.
\end{prop}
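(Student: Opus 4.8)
The plan is to realize $p$ as the outer morphism of a two-step composite to which Proposition \ref{nonversupp.3} applies, using the morphism of Proposition \ref{nonversupp.1} as the inner one. Write $g\colon S\times\A_\N\times\pt_\N\to S\times\A_\N\times\pt_\N$ for the morphism induced by $\N\oplus\N\to\N\oplus\N$, $(a,b)\mapsto(a+b,b)$, as in Proposition \ref{nonversupp.1}. The first point I would check is that $g$ is proper: since the $\pt_\N$-coordinate of $S\times\A_\N\times\pt_\N$ is zero in the structure sheaf, a direct computation with the chart $\N\oplus\N$ shows that the underlying morphism of schemes of $g$ is the identity, so in particular $g$ is proper. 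Proposition \ref{nonversupp.1} then supplies the remaining hypotheses of Proposition \ref{nonversupp.3} for $g$: the functor $g^*$ is fully faithful and $g$ satisfies the vertically universal support property.

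The next step is to identify the composite $p\circ g$. As $p$ corresponds on charts to the inclusion $\N\to\N\oplus\N$, $b\mapsto(0,b)$, of the $\pt_\N$-factor, the composite $p\circ g$ is induced by the diagonal $\N\to\N\oplus\N$, $b\mapsto(b,b)$; this is precisely the vertical morphism called $g$ in the proof of Proposition \ref{nonversupp.1}. Since $\N$ is a chart of $S\times\pt_\N$ and $p\circ g$ is vertical, Proposition \ref{versupp.1} shows that $p\circ g$ satisfies the strictly universal support property.

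With these inputs assembled — $g$ proper with $g^*$ fully faithful, $g$ satisfying the vertically universal support property, and $p\circ g$ satisfying the strictly universal support property — I would apply Proposition \ref{nonversupp.3} to the composite $S\times\A_\N\times\pt_\N\xrightarrow{g}S\times\A_\N\times\pt_\N\xrightarrow{p}S\times\pt_\N$ to conclude that $p$ satisfies the strictly universal support property. The only step that is not a direct citation is the chart computation identifying the underlying morphism of $g$ with the identity; I do not expect this, or anything else, to pose a real difficulty.
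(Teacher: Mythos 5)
Your proposal is correct and follows essentially the same route as the paper: both use the endomorphism of Proposition \ref{nonversupp.1} as the inner morphism, identify the composite with the (vertical) diagonal-induced morphism so that Proposition \ref{versupp.1} applies, and conclude via Proposition \ref{nonversupp.3}. Your added verifications (that the underlying morphism of $g$ is the identity, hence $g$ is proper, and the explicit identification of $p\circ g$) are correct and merely make explicit what the paper leaves implicit.
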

\begin{proof}
Let $f\colon S\times \A_{\N}\times \pt_{\N}\to S\times \A_{\N}\times \pt_{\N}$ be the morphism in Proposition \ref{nonversupp.1}, which satisfies the vertically universal support property, and $f^*$ is fully faithful.
Since $pf$ is vertical, Proposition \ref{versupp.1} shows that $pf$ satisfies the strictly universal support property.
Proposition \ref{nonversupp.3} finishes the proof.
\end{proof}

\begin{prop}
\label{nonversupp.9}
Let $p\colon S\times \A_P\times \pt_\N\to S\times \pt_\N$ be the projection, where $P$ is an fs monoid.
Then $p$ satisfies the strictly universal support property.
\end{prop}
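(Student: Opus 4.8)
The plan is to reduce the case of a general fs monoid $P$ to the case $P=\N^n$ by toric resolution of singularities, and then to peel off the $\A_\N$-factors one at a time using Proposition \ref{nonversupp.2}. First I would note that, writing $\ol P=P/P^*$ and choosing a splitting $P\cong P^*\oplus\ol P$, the resulting projection $\A_P\to\A_{\ol P}$ is strict, so $S\times\A_P\times\pt_\N\to S\times\A_{\ol P}\times\pt_\N$ is strict and hence satisfies the strictly universal support property by Proposition \ref{semisupp.5}; Proposition \ref{semisupp.6} then reduces me to the projection $S\times\A_{\ol P}\times\pt_\N\to S\times\pt_\N$, so I may assume $P$ is sharp and in particular $P^\gp$ is torsion-free. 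Then I would apply toric resolution of singularities \cite[Theorem 11.1.9]{CLStoric} to $\A_P$ to get a dividing cover $Y\to\A_P$ admitting a Zariski covering by fs log schemes of the form $\A_{\N^n}\times\G_m^m$. Applying Proposition \ref{semisupp.8} to the base change $S\times Y\times\pt_\N\to S\times\A_P\times\pt_\N$ of this dividing cover, and then Proposition \ref{semisupp.4} to the induced Zariski covering of $S\times Y\times\pt_\N$, I am reduced to showing that each projection $S\times\A_{\N^n}\times\G_m^m\times\pt_\N\to S\times\pt_\N$ satisfies the strictly universal support property.

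Since $\G_m$ carries the trivial log structure, the morphism $S\times\A_{\N^n}\times\G_m^m\times\pt_\N\to S\times\A_{\N^n}\times\pt_\N$ is strict, so by Propositions \ref{semisupp.5} and \ref{semisupp.6} it suffices to treat the projection $S\times\A_{\N^n}\times\pt_\N\to S\times\pt_\N$. I would factor this as
\[
S\times\A_{\N^n}\times\pt_\N\to S\times\A_{\N^{n-1}}\times\pt_\N\to\cdots\to S\times\A_\N\times\pt_\N\to S\times\pt_\N,
\]
where each arrow forgets one copy of $\A_\N$. Rewriting $S\times\A_{\N^k}\times\pt_\N=(S\times\A_{\N^{k-1}})\times\A_\N\times\pt_\N$, each arrow is exactly the projection of Proposition \ref{nonversupp.2} with $S\times\A_{\N^{k-1}}$ in place of $S$, hence satisfies the strictly universal support property; iterating Proposition \ref{semisupp.6} finishes the proof.

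The whole argument is bookkeeping built on the stability properties of the strictly universal support property (Propositions \ref{semisupp.4}, \ref{semisupp.5}, \ref{semisupp.6}, \ref{semisupp.8}) together with the one substantive input, Proposition \ref{nonversupp.2}. The only point needing a little care is that the charts of the resolution $Y$ can be taken of the form $\A_{\N^n}\times\G_m^m$, which is the standard consequence of subdividing the fan of $\A_P$ into smooth cones; I do not anticipate a genuine obstacle.
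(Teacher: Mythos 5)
Your proof is correct, but it takes a genuinely different route from the paper's. The paper does not resolve singularities here: it picks a homomorphism $\theta\colon \N\to P$ whose image generates $P$ as a face (an ``interior'' element), factors $p$ as $S\times \A_P\times \pt_\N\xrightarrow{f} S\times\A_\N\times\pt_\N\to S\times\pt_\N$ with $f$ induced by $\theta$, handles the second map by Proposition \ref{nonversupp.2}, and handles $f$ by Proposition \ref{versupp.2} (applicable because $f$ and the composite are vertical over a base with chart $\N$). Your argument instead reduces to $P=\N^n$ via the splitting $P\simeq P^*\oplus\ol{P}$ and toric resolution (Propositions \ref{semisupp.4}, \ref{semisupp.5}, \ref{semisupp.6}, \ref{semisupp.8}), and then peels off one $\A_\N$-factor at a time, each step being Proposition \ref{nonversupp.2} over the base $S\times\A_{\N^{k-1}}$ (which is legitimate since that proposition places no restriction on its base). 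Both arguments are sound; the paper's is shorter and needs no induction or subdivision at this point, at the cost of invoking \ref{versupp.2}, while yours isolates \ref{nonversupp.2} as the single substantive input at the price of a resolution step --- a step the paper is in any case willing to take elsewhere (e.g.\ in Propositions \ref{versupp.1} and \ref{nonversupp.8}), and your observation that the smooth charts have the form $\A_{\N^d}\times\G_m^{m}$, with the $\G_m$-factors strict, is exactly right.
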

\begin{proof}
The claim is trivial if $\ol{P}$ is trivial.
Assume that $\ol{P}$ is not trivial.
Let $\theta\colon \N\to P$ be any homomorphism such that the face of $P$ generated by $\theta(1)$ is $P$.
Then $\theta$ is vertical.

Let $f\colon S\times \A_P\times \pt_\N\to S\times \A_\N\times \pt_\N$ be the morphism induced by $\theta$, and let $q\colon S\times \A_\N\times \pt_\N\to S\times \pt_\N$ be the morphism induced by the diagonal homomorphism $\N\to \N\oplus \N$.
Then $q$ and $qf$ are vertical, so Proposition \ref{versupp.2} shows that $f$ satisfies the strictly universal support property.
Together with Proposition \ref{nonversupp.2}
we deduce that $p$ satisfies the strictly universal support property.
\end{proof}

\begin{prop}
\label{nonversupp.10}
Let $f\colon X\times \pt_\N\to \ul{X}\times \pt_\N$ be the pullback of the morphism $X\to \ul{X}$ removing the log structure,
where $X\in \lSch/B$.
Then $f$ satisfies the strictly universal support property.
\end{prop}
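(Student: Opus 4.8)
The plan is to reduce, Zariski locally on $X$, to the projection treated in Proposition \ref{nonversupp.9} by choosing a chart of $X$.

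First I would observe that $\ul f$ is an isomorphism, so the strictly universal support property for $f$ is Zariski local on $X\times\pt_\N$ in the sense of Proposition \ref{semisupp.4}. Given a Zariski covering $\{v_i\colon V_i\to X\}$, the morphisms $\{V_i\times\pt_\N\to X\times\pt_\N\}$ form a Zariski covering, and the composite $V_i\times\pt_\N\to \ul X\times\pt_\N$ factors as the analogue of $f$ for $V_i$, namely $V_i\times\pt_\N\to \ul{V_i}\times\pt_\N$, followed by the strict open immersion $\ul{V_i}\times\pt_\N\to \ul X\times\pt_\N$. By Propositions \ref{semisupp.5} and \ref{semisupp.6} it then suffices to prove the assertion for each $V_i$; since a chart exists Zariski locally, we may assume that $X$ admits a chart $P$.

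Now suppose $X$ admits a chart $P$. The chart morphism $X\to\A_P$ is strict, so $X\simeq \ul X\times_{\ul{\A_P}}\A_P$. The graph of the underlying morphism $\ul X\to\ul{\A_P}$ is a closed immersion because $\A_P$ is affine, hence separated, over $B$; and since the composite $X\to \ul X\times\A_P\to\A_P$ is the chart morphism, the induced morphism $X\to \ul X\times\A_P$ is a strict closed immersion over $\ul X$. Hence $f$ factors as
\[
X\times\pt_\N \xrightarrow{i} \ul X\times\A_P\times\pt_\N \xrightarrow{p} \ul X\times\pt_\N,
\]
where $i$ is a strict closed immersion (being a base change of $X\to \ul X\times\A_P$) and $p$ is the projection. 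By Proposition \ref{semisupp.5}, $i$ satisfies the strictly universal support property; by Proposition \ref{nonversupp.9} applied with $S:=\ul X$, so does $p$. Proposition \ref{semisupp.6} then shows that $f=pi$ satisfies the strictly universal support property.

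The step I expect to be the main obstacle is the verification of the two factorizations: that $X\to \ul X\times\A_P$ is indeed a strict closed immersion and that this persists after $\times\pt_\N$, and — in the globalization step — that the composite $V_i\times\pt_\N\to \ul X\times\pt_\N$ factors through $\ul{V_i}\times\pt_\N$ as claimed. Once these are in place, the geometric input (the chart together with Proposition \ref{nonversupp.9}) does all the work, and the rest is routine bookkeeping about underlying schemes and log structures.
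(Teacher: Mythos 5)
Your proposal is correct and follows essentially the same route as the paper: reduce Zariski locally to the case where $X$ admits a chart $P$ via Proposition \ref{semisupp.4}, factor $f$ through $\ul{X}\times\A_P\times\pt_\N$, and combine Propositions \ref{semisupp.5}, \ref{nonversupp.9}, and \ref{semisupp.6}. The only difference is cosmetic: the paper needs only that the first factor $X\times\pt_\N\to\ul{X}\times\A_P\times\pt_\N$ is strict (which suffices for Proposition \ref{semisupp.5}), so your verification that it is moreover a closed immersion, while correct, is not required.
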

\begin{proof}
The question is Zariski local on $X$ by Proposition \ref{semisupp.4}.
Hence we may assume that $X$ has a chart $P$.
Then $f$ has the induced factorization
\[
X\times \pt_\N\xrightarrow{h}
\ul{X}\times \A_P\times \pt_\N\xrightarrow{g}
\ul{X}\times \pt_\N.
\]
By Propositions \ref{semisupp.5} and \ref{nonversupp.9}, $h$ and $g$ satisfy the strictly universal support property.
Proposition \ref{semisupp.6} shows that $f=gh$ satisfies the strictly universal support property.
\end{proof}

\subsection{Proof of the support property}
\label{proof}

\begin{lem}
\label{basic.4}
For $S\in \Sch/B$ and $X\in \lSm/S$ such that $\ul{\partial X}$ is a strict normal crossing divisor on $\ul{X}$,
consider the induced cartesian square
\[
\begin{tikzcd}
\ul{\partial X}\times_S S'\ar[d,"f'"']\ar[r,"g'"]&
S'\ar[d,"f"]
\\
\ul{\partial X}\ar[r,"g"]&
S.
\end{tikzcd}
\]
Then the composition
\[
g_*g^*f_*\xrightarrow{Ex}
g_*f_*'g'^*
\xrightarrow{\simeq}
f_*g_*'g'^*
\]
is an isomorphism.
\end{lem}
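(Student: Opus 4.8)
The plan is to decompose $\ul{\partial X}$ into the irreducible components of the strict normal crossing divisor, run a Mayer--Vietoris argument over their mutual intersections --- all of which are strict smooth over $S$ --- and then invoke smooth base change for $(-)_*$, which is available in $\sT^\ex$ because it is the mate of ($\eSm$-BC).

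Concretely, I would write $\ul{\partial X}=D_1\cup\dots\cup D_r$ for the decomposition into irreducible components, and for each nonempty $I\subseteq\{1,\dots,r\}$ set $D_I:=\bigcap_{i\in I}D_i$, with its strict closed immersion $\iota_I\colon D_I\hookrightarrow\ul{\partial X}$ and structure morphism $g_I:=g\circ\iota_I\colon D_I\to S$; since $X$ is log smooth over $S$ and $\ul{\partial X}$ is a strict normal crossing divisor on $\ul X$, each $D_I$ is smooth over $S$, so $g_I$ is strict smooth, hence exact log smooth. Base changing the whole picture along $f\colon S'\to S$ produces $\iota_I'\colon D_I\times_SS'\hookrightarrow\ul{\partial X}\times_SS'$, $(g_I')\colon D_I\times_SS'\to S'$, and $(f_I')\colon D_I\times_SS'\to D_I$, with $g_I\circ f_I'=f\circ g_I'$. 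Using the localization cofiber sequences supplied by (Loc) (and Proposition \ref{restriction.13}) together with induction on $r$, I would first establish a natural equivalence of endofunctors
\[
\id_{\sT^\ex(\ul{\partial X})}\ \xrightarrow{\sim}\ \lim_{\emptyset\neq I}\,(\iota_I)_*\iota_I^*,
\]
the limit being over the poset of nonempty subsets ordered by inclusion, with transition maps the restrictions along $D_J\hookrightarrow D_I$ for $I\subseteq J$, and likewise on $\sT^\ex(\ul{\partial X}\times_SS')$ for the $\iota_I'$. Applying $g_*$ to the first equivalence and $f_*$ to the second, these identify $g_*g^*f_*\simeq\lim_I(g_I)_*g_I^*f_*$ and $f_*g_*'g'^*\simeq\lim_If_*(g_I')_*(g_I')^*$, and exhibit the composite of the statement as the limit over $I$ of the composites
\[
(g_I)_*g_I^*f_*\ \xrightarrow{\ Ex\ }\ (g_I)_*(f_I')_*(g_I')^*\ =\ f_*(g_I')_*(g_I')^*,
\]
the equality being the canonical identification $g_I\circ f_I'=f\circ g_I'$. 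It then remains to observe that, for each $I$, the exchange transformation $g_I^*f_*\xrightarrow{Ex}(f_I')_*(g_I')^*$ is an isomorphism: applying ($\eSm$-BC) of Theorem \ref{restriction.16} to the cartesian square whose right vertical arrow is the exact log smooth morphism $g_I$ gives $(g_I')_\sharp(f_I')^*\xrightarrow{\sim}f^*(g_I)_\sharp$, and passing to right adjoints turns this into the exchange isomorphism in question. Applying $(g_I)_*$ and then taking the limit over $I$ finishes the proof.

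The main point requiring care is the first step. One must check that the Mayer--Vietoris equivalence holds in $\sT^\ex$ --- which follows from (Loc) and Proposition \ref{restriction.13} --- and that, under it, the exchange transformation $g^*f_*\to f_*'g'^*$ and the canonical identification $g_*f_*'=f_*g_*'$ decompose as the limit over $I$ of their component-level analogues. This is routine from the naturality of the exchange transformations and the fact that they are assembled from units and counits, but somewhat lengthy; alternatively, one may run the induction on the number of components through the single Mayer--Vietoris squares $\ul{\partial X}=D_1\cup(D_2\cup\dots\cup D_r)$ rather than through the full cubical diagram. All of the genuine content sits in the last step, which is simply smooth base change for $f_*$.
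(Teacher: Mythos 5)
Your argument is correct and follows essentially the same route as the paper: decompose $\ul{\partial X}$ into its irreducible components (all strict smooth over $S$), reduce to a single component by closed Mayer--Vietoris together with induction on the number of components, and conclude on each component by the mate of ($\eSm$-BC) applied to the strict smooth structure morphism. The only cosmetic differences are that the paper runs the induction through single Mayer--Vietoris squares (the alternative you yourself mention) rather than the full cubical limit, and that the descent input you need is the one coming from (Loc) --- the paper's ``strict cdh-descent'' for closed covers --- rather than Proposition \ref{restriction.13}, which concerns Nisnevich (open) squares.
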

\begin{proof}
Let $Z_1,\ldots,Z_n$ be the irreducible components of the strict normal crossing divisor $\ul{\partial X}$ on $\ul{X}\in \Sm/S$, and let $g_i\colon Z_i\to S$ and $g_i'\colon Z_i\times_S S'\to S'$ be the induced morphisms.
By strict cdh-descent and induction on $n$, we reduce to showing that the composition
\[
g_{i*}g_i^*f_*
\xrightarrow{Ex}
g_{i*}f_*'g_i'^*
\xrightarrow{\simeq}
f_*g_{i_*}g_i'^*
\]
is an isomorphism.
This holds by ($\eSm$-BC) since $g_i$ is strict smooth.
\end{proof}

\begin{prop}
\label{nonversupp.8}
For $S\in \Sch/B$,
consider the cartesian square
\[
\begin{tikzcd}
S\times \pt_{\N^2} \ar[d,"f'"']\ar[r,"g'"]&
S\times \pt_{\N}\ar[d,"f"]
\\
S\times \pt_{\N} \ar[r,"g"]&
S,
\end{tikzcd}
\]
where $f=g$ is the projection.
Then the natural transformation
\[
Ex\colon g^*f_*\to f_*'g'^*
\]
is an isomorphism.
\end{prop}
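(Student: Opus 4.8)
Here is how I would approach the final statement (Proposition~\ref{nonversupp.8}).

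\medskip

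The strategy is to realize the standard log point as the boundary divisor of $\A_\N$, which turns the desired base change into a statement about the morphism that adds a log structure along a smooth divisor, and the latter is governed by Proposition~\ref{basic.1} together with the cartesian‑square axiom and Proposition~\ref{nonversupp.2}. First I would record that $f=g$ is nothing but the morphism removing the log structure of $S\times\pt_\N$, and I would factor it as $S\times\pt_\N\xrightarrow{\iota}S\times\overline{\A}_\N\xrightarrow{\bar p}S\times\P^1\xrightarrow{\bar\pi}S$, where $\overline{\A}_\N:=(\P^1,\{0\})$, the morphism $\iota$ is the strict closed immersion onto the fiber of $\bar p$ over the zero section, the morphism $\bar p$ removes the log structure along $\{0\}$ (so $\ul{\bar p}$ is an isomorphism), and $\bar\pi$ is strict proper. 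Pulling this factorization back along $g$ produces a compatible factorization of $f'$, in which $S\times\pt_{\N^2}$ appears as the fiber of the pullback of $\bar p$ over the pullback of the zero section.

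\medskip

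Next I would perform the reductions. In $g^*f_*=g^*\bar\pi_*\bar p_*\iota_*$, the outer $\bar\pi_*$ is handled by strict proper base change (Proposition~\ref{restriction.10}), and base change for the strict closed immersion $\iota_*$ against an arbitrary morphism follows from (Loc), by checking invertibility after pulling back along $\iota$ and along its open complement $\lambda$ and using $\lambda^*\iota_*\simeq 0$. Strict Zariski descent (Propositions~\ref{restriction.12} and~\ref{restriction.13}) on $S\times\P^1$ then reduces the matter to the two standard charts: over the chart missing $\infty$, $\bar p$ is the morphism $p\colon S\times\A_\N\to S\times\A^1$ adding a log structure along the zero divisor (and, after the base change along $g$, the parallel morphism $p\colon S\times\A_\N\times\pt_\N\to S\times\A^1\times\pt_\N$), whereas over the chart missing $0$ it is an isomorphism and there is nothing to prove. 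Thus everything comes down to base change for $p$ against the zero section of $\A^1$. For this I would invoke Proposition~\ref{basic.1} with $f$ there equal to $p$ (which is proper, since $\ul p$ is an isomorphism), $i$ the zero section and $j$ its open complement: condition~(4) is available because the pullback of $p$ over the $\G_m$‑locus is an isomorphism, and that cartesian square is precisely the one furnished by Proposition~\ref{restriction.8} when $X=S\times\A_\N$ (which carries a chart $\N$), and by Proposition~\ref{nonversupp.2} in the variant where the auxiliary log point is present; hence condition~(2) of Proposition~\ref{basic.1} yields the required base‑change isomorphism for $p$, which combines with the reductions above to give $g^*f_*\simeq f'_*g'^*$.

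\medskip

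The hard part, and the main obstacle, is the last step: after the base change along $g$ one has to control a pushforward along a log point by means of the very same kind of pushforward, so there is a genuine circularity, and the bare cartesian‑square axiom for $X=S\times\A_\N$ no longer suffices once the base carries a log point. I would break the circularity by restricting to the open $\G_m$‑locus of $\A_\N$, where every morphism in sight is strict so that ($\eSm$-BC) and Proposition~\ref{restriction.10} apply verbatim, and then propagating the isomorphism across the closed locus by means of the localization triangle together with Proposition~\ref{nonversupp.2}, which supplies exactly the non‑circular input (resting in turn on the purity results of \cite{logGysin}).
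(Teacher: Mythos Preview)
Your reductions via the factorization $f=\bar\pi\bar p\iota$ are correct: strict proper base change handles $\bar\pi$, the (Loc) argument handles $\iota$, and Zariski descent reduces $\bar p$ to $p\colon S\times\A_\N\to S\times\A^1$. But the circularity you flag is not resolved by your third paragraph. After testing the exchange map $g_2^*p_*\to p'_*g_3^*$ via the pair $(i'^*,j'^*)$ on $S\times\A^1\times\pt_\N$, the $j'^*$-part is indeed easy, while on the closed part you compute
\[
i'^*g_2^*p_*\simeq g^*\,i^*p_*\simeq g^*f_*\,\tilde i^*,\qquad
i'^*p'_*g_3^*\simeq f'_*\,\tilde i'^*g_3^*\simeq f'_*g'^*\,\tilde i^*,
\]
using Proposition~\ref{restriction.8} for the first line and Proposition~\ref{nonversupp.2} (via Proposition~\ref{basic.1}) for the second. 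Since $\tilde i\colon S\times\pt_\N\to S\times\A_\N$ is a strict closed immersion, $\tilde i^*$ is essentially surjective, so the $i'^*$-part is literally the original map $g^*f_*\to f'_*g'^*$ again. Proposition~\ref{nonversupp.2} tells you how to pass $i'^*$ through $p'_*$, but it does not touch $g^*f_*$; the ``propagation across the closed locus'' simply reproduces the problem.

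The paper breaks the circularity by a different mechanism: it tests the exchange map against generators $M_{S\times\pt_\N}(V)$ with $V\to S\times\pt_\N$ vertical exact log smooth, then \emph{lifts} $V$ to some $Y\in\lSm/(S\times\A_\N)$ using \cite[Lemma~3.1.7]{logA1} (after which one may arrange $\ul Y\in\Sm/S$ with $\ul{\partial Y}$ strict normal crossing). Because $S$ itself carries no log structure, the axiom behind Proposition~\ref{restriction.9} applies to $Y\to S$ and produces a cartesian square $Q$ of Hom-spaces involving $p_*p^*f_*\cF$, $q_*q^*f_*\cF$, $r_*r^*f_*\cF$, $s_*s^*f_*\cF$ (with $p,q,r,s$ the structure maps of $\ul Y,Y,\ul{\partial Y},\partial Y$). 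A second cartesian square $Q'$, now over $S\times\pt_\N$, comes from Proposition~\ref{nonversupp.10} together with Proposition~\ref{basic.1}. The map $Q\to Q'$ is an isomorphism at the three corners indexed by $\ul Y$, $Y$, $\ul{\partial Y}$ (by $(\eSm$-BC) and Lemma~\ref{basic.4}), hence at the fourth. The point is that the lift to $Y$ over the \emph{scheme} $S$ makes Proposition~\ref{restriction.9} available, which is the non-circular input your argument is missing.
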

\begin{proof}
It suffices to show that the morphism
\begin{equation}
\label{basic.2.1}
\Hom_{\sT^\ex(S\times \pt_\N)}(M(V),g^*f_*\cF)
\to
\Hom_{\sT^\ex(S\times \pt_\N)}(M(V),f_*'g'^*\cF)
\end{equation}
is an isomorphism for all $\cF\in \sT^\ex(S\times \pt_\N)$ and vertical exact log smooth morphism $V\to S\times \pt_\N$.
This question is Zariski local on $V$.
Hence by \cite[Lemma 3.1.7]{logA1},
we may assume that there exists $Y\in \lSm/(S\times \A_\N)=\eSm/(S\times \A_\N)$ such that $Y\times_{\A_\N}\pt_\N\simeq V$.
Replace $Y$ by $Y-\partial_{S\times \A_\N}Y$ to further assume that $Y$ is vertical over $S\times \A_\N$.
Use toric resolution of singularities \cite[Theorem 11.1.9]{CLStoric} to assume that $\ul{Y}\in \Sm/S$ and $\ul{\partial Y}$ is a strict normal crossing divisor.

Let $p\colon \ul{Y} \to S$, $q\colon Y\to S$, $r\colon \ul{\partial Y}\to S$, and $s\colon \partial Y\to S$ be the induced morphisms.
Then let $p'$, $q'$, $r'$, and $s'$ be their pullbacks along $f$.
By adjunction, \eqref{basic.2.1} can be written as
\begin{equation}
\label{basic.2.2}
\Hom_{\sT^\ex(\partial Y)}(\unit,s_*s^*f_*\cF)
\to
\Hom_{\sT^\ex(\partial Y\times \pt_\N)}(\unit,s_*'s'^*\cF).
\end{equation}
We have commutative squares
\[
Q:=
\begin{tikzcd}
\Hom_{\sT^\ex(\ul{Y})}(\unit,p_*p^*f_*\cF)\ar[d]\ar[r]
&
\Hom_{\sT^\ex(Y)}(\unit,q_*q^*f_*\cF)\ar[d]
\\
\Hom_{\sT^\ex(\ul{\partial Y})}(\unit,r_*r^*f_*\cF)\ar[r]&
\Hom_{\sT^\ex(\partial Y)}(\unit,s_*s^*f_*\cF)
\end{tikzcd}
\]
and
\[
Q':=
\begin{tikzcd}
\Hom_{\sT^\ex(\ul{Y}\times \pt_\N)}(\unit,p_*'p'^*\cF)\ar[d]\ar[r]
&
\Hom_{\sT^\ex(Y\times \pt_\N)}(\unit,q_*'q'^*\cF)\ar[d]
\\
\Hom_{\sT^\ex(\ul{\partial Y}\times \pt_\N)}(\unit,r_*'r'^*\cF)\ar[r]&
\Hom_{\sT^\ex(\partial Y\times \pt_\N)}(\unit,s_*'s'^*\cF).
\end{tikzcd}
\]
We also have a morphism of commutative squares $Q\to Q'$.

By Proposition \ref{restriction.9} (resp.\ Propositions \ref{basic.1} and  \ref{nonversupp.10}), $Q$ (resp.\ $Q'$) is cartesian.
Hence to show that \eqref{basic.2.2} is an isomorphism, 
it suffices to show that the three morphisms from the other corners
\begin{gather*}
\Hom_{\sT^\ex(\ul{Y})}(\unit,p_*p^*f_*\cF)
\to
\Hom_{\sT^\ex(\ul{Y}\times \pt_\N)}(\unit,p_*'p'^*\cF),
\\
\Hom_{\sT^\ex(Y)}(\unit,q_*q^*f_*\cF)
\to
\Hom_{\sT^\ex(Y\times \pt_\N)}(\unit,q_*'q'^*\cF),
\\
\Hom_{\sT^\ex(\ul{\partial Y})}(\unit,r_*r^*f_*\cF)
\to
\Hom_{\sT^\ex(\ul{\partial Y}\times \pt_\N)}(\unit,r_*'r'^*\cF)
\end{gather*}
are isomorphisms.
The first two morphisms are isomorphisms by ($\eSm$-BC) since $Y\in \eSm/S$ and $\ul{Y}\in \Sm/S$.
The third morphism is an isomorphism by Lemma \ref{basic.4}.
\end{proof}

\begin{prop}
\label{nonversupp.11}
Let $f\colon S\times \pt_\N\to S$ be the projection, where $S\in \Sch/B$.
Then $f^*$ is conservative.
\end{prop}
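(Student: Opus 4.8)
\emph{Plan.} The idea is to compare $f$ with the projection $\ol\pi\colon S\times\A^1\to S$ by passing through the model $\A_\N$ and invoking Proposition~\ref{restriction.8}. Let $p\colon S\times\A_\N\to S\times\A^1$ be the morphism removing the log structure, $i\colon S\to S\times\A^1$ the zero section, $i'\colon S\times\pt_\N\to S\times\A_\N$ the strict closed immersion, and $q\colon S\times\pt_\N\to S\times\A^1$ the induced morphism. Since $\partial(S\times\A_\N)=S\times\pt_\N$, these are exactly the morphisms occurring in the cartesian square \eqref{outline.0.1} associated with $X=S\times\A_\N$ (which admits the chart $\N$, and whose underlying scheme is $S\times\A^1$), and the left vertical morphism of that square is our $f$. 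Writing $\pi\colon S\times\A_\N\to S$ for the projection, we record the identities $\ol\pi p=\pi$, $\ol\pi i=\id_S$, $q=if$, and hence $\ol\pi q=f$.

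The main step is to apply Proposition~\ref{restriction.8} to $X=S\times\A_\N$, which yields a cartesian square
\[
\begin{tikzcd}
\id\ar[d,"ad"']\ar[r,"ad"]&
i_*i^*\ar[d,"ad"]
\\
p_*p^*\ar[r,"ad"]&
q_*q^*
\end{tikzcd}
\]
of endofunctors of $\sT^\ex(S\times\A^1)$. Evaluating this square at $\ol\pi^*\cF$ for an arbitrary $\cF\in\sT^\ex(S)$, applying $\ol\pi_*$ (which preserves pullback squares), and simplifying by means of $\ol\pi i=\id_S$, $\ol\pi p=\pi$, $\ol\pi q=f$ together with the isomorphism $\cF\to\ol\pi_*\ol\pi^*\cF$ furnished by ($\A^1$-inv), I obtain a cartesian square in $\sT^\ex(S)$
\[
\begin{tikzcd}
\cF\ar[d]\ar[r,"\id"]&
\cF\ar[d]
\\
\pi_*\pi^*\cF\ar[r]&
f_*f^*\cF
\end{tikzcd}
\]
whose top horizontal arrow is the identity of $\cF$ (this is the compatibility of units of adjunctions with composition, applied to $\ol\pi i=\id_S$). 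Since the top arrow is an isomorphism, so is the bottom one, giving a natural isomorphism $\pi_*\pi^*\cF\simeq f_*f^*\cF$.

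To conclude, observe that $\pi$ admits the section $S=S\times\{1\}\hookrightarrow S\times\A_\N$ landing in the open locus where the log structure is trivial, so the unit $\cF\to\pi_*\pi^*\cF$ is a split monomorphism. Hence $f^*\cF\simeq 0$ implies $\pi_*\pi^*\cF\simeq f_*f^*\cF\simeq 0$, which forces $\cF\simeq 0$ as a retract of $0$; because $f^*$ is exact it detects zero objects, and therefore $f^*$ is conservative. I expect the only delicate point to be the identification of the top arrow of the last square with $\id_\cF$ after applying $\ol\pi_*$; the rest is bookkeeping with the displayed base-change-free identities and the single geometric input ($\A^1$-inv).
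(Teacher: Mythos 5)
Your proof is correct and follows essentially the same route as the paper: both apply Proposition \ref{restriction.8} to $S\times\A_\N$ (with its chart $\N$), push the resulting cartesian square down to $S$, use ($\A^1$-inv) to identify one edge with an isomorphism, and conclude via a section of $S\times\A_\N\to S$ landing in the trivial-log-structure locus. The only difference is that the paper delegates the final "reflects zero objects, hence conservative" step to \cite[Lemma 3.3.1]{logGysin}, whereas you spell it out with the retract argument; both are fine.
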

\begin{proof}
Consider the induced commutative diagram
\[
\begin{tikzcd}
S\times \pt_\N\ar[r,"i'"]\ar[d,"g'"']&
S\times \A_\N\ar[d,"g"]
\\
S\ar[r,"i"]&
S\times \A^1\ar[r,"p"]&
S,
\end{tikzcd}
\]
where $p$ is the projection, $i$ is the $0$-section, and $g$ is the morphism removing the log structure.
We set $h:=gi'$.
By Proposition \ref{restriction.8},
the square
\[
\begin{tikzcd}
p_*p^*\ar[r,"ad"]\ar[d,"ad"']&
p_*g_*g^*p^*\ar[d,"ad"]
\\
p_*i_*i^*p^*\ar[r,"ad"]&
p_*h_*h^*p^*
\end{tikzcd}
\]
is cartesian.
Due to ($\A^1$-inv), the left vertical arrow is an isomorphism.
It follows that the right vertical arrow is an isomorphism.
Since $pg$ has a section, $(pg)^*$ is conservative.
To show that $f^*\simeq (ph)^*$ is conservative,
use \cite[Lemma 3.3.1]{logGysin}.
\end{proof}

\begin{prop}
\label{nonversupp.7}
Let $f\colon S\times \pt_\N\to S$ be the projection,
where $S\in \Sch/B$.
Then $f$ satisfies the support property.
\end{prop}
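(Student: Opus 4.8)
The plan is to reduce the support property for $f$ to base change along an open immersion --- which is part of ($\eSm$-BC) --- by applying $f^*$ and exploiting both its conservativity (Proposition \ref{nonversupp.11}) and the self base change of Proposition \ref{nonversupp.8}.

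I would first observe that $f$ is proper, since $\ul{f}$ is an isomorphism ($\ul{\pt_\N}$ is a point). Given a cartesian square
\[
\begin{tikzcd}
V\ar[r,"j'"]\ar[d,"g"']&X\ar[d,"f"]\\U\ar[r,"j"]&S
\end{tikzcd}
\]
with $j$ an open immersion --- so that $U\in\Sch/B$, $V\simeq U\times\pt_\N$, and $g$ is the projection --- the task is to show that $Ex\colon j_\sharp g_*\to f_*j'_\sharp$ is an isomorphism, and by Proposition \ref{nonversupp.11} it is enough to do so after applying $f^*$.

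I would then compute $f^*$ of each side. Writing $X\times_S X\simeq S\times\pt_{\N^2}$ and $V\times_U V\simeq U\times\pt_{\N^2}$, with first and second projections $p_1,p_2$ and $q_1,q_2$ and induced open immersion $j''\colon U\times\pt_{\N^2}\to S\times\pt_{\N^2}$ (compatible with $p_1,q_1$ and with $p_2,q_2$), ($\eSm$-BC) for the open immersion $j$ gives $f^*j_\sharp\simeq j'_\sharp g^*$, and Proposition \ref{nonversupp.8} over $U$ gives $g^*g_*\simeq q_{1*}q_2^*$, so $f^*j_\sharp g_*\simeq j'_\sharp q_{1*}q_2^*$. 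Symmetrically, Proposition \ref{nonversupp.8} over $S$ gives $f^*f_*\simeq p_{1*}p_2^*$, and ($\eSm$-BC) gives $p_2^*j'_\sharp\simeq j''_\sharp q_2^*$, so $f^*f_*j'_\sharp\simeq p_{1*}j''_\sharp q_2^*$. Tracing through these identifications, $f^*Ex$ should become the exchange transformation $j'_\sharp q_{1*}\to p_{1*}j''_\sharp$ for the cartesian square with bottom $j'$, top $j''$ and verticals $q_1,p_1$, whiskered on the right with $q_2^*$. Since $j'$ is an open immersion, the mate $j'^*p_{1*}\to q_{1*}j''^*$ of that transformation is an isomorphism by ($\eSm$-BC), hence so is $j'_\sharp q_{1*}\to p_{1*}j''_\sharp$, hence so is $f^*Ex$; conservativity of $f^*$ then finishes the proof.

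The step I expect to be the main obstacle is the tracing in the previous paragraph: one must check that the chain of base-change isomorphisms genuinely identifies $f^*Ex$ with the whiskered open-immersion exchange transformation, which amounts to a coherence statement for exchange transformations over the diagram of cartesian squares coming from $\pt_{\N^2}\simeq\pt_\N\times_B\pt_\N$ and has to be established by a careful diagram chase; the remaining work is only bookkeeping with the monoid maps. The underlying reason the argument succeeds is that base change along $f$ --- which is conservative and satisfies base change against itself by Propositions \ref{nonversupp.11} and \ref{nonversupp.8} --- decouples the two $\pt_\N$-factors, turning the support property for the proper morphism $f$ into the elementary base change along the open immersion $j'$ for the projection $p_1$.
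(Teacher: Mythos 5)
Your overall architecture is sound and is essentially the paper's: reduce to a conservativity statement (the paper uses $g^*$ on the closed complement via Proposition \ref{basic.1}, you use $f^*$ directly), apply Proposition \ref{nonversupp.8} twice to decouple the two $\pt_\N$-factors, and reduce to a base change statement for one of the projections $S\times\pt_{\N^2}\to S\times\pt_\N$. But your justification of the final step is a genuine error. You claim that the exchange $j'_\sharp q_{1*}\to p_{1*}j''_\sharp$ is an isomorphism because its mate $j'^*p_{1*}\to q_{1*}j''^*$ is an isomorphism by ($\eSm$-BC). Isomorphy does not transfer across this mate construction: writing the first transformation as the composite
\[
j'_\sharp q_{1*}\xrightarrow{ad} j'_\sharp q_{1*}j''^*j''_\sharp \xrightarrow{\simeq} j'_\sharp j'^*p_{1*}j''_\sharp \xrightarrow{ad'} p_{1*}j''_\sharp,
\]
the last counit is an isomorphism if and only if $i^*p_{1*}j''_\sharp\simeq 0$ for $i$ the closed complement of $j'$ --- which is exactly condition (3) of Proposition \ref{basic.1}, i.e.\ the support property for $p_1$. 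If your deduction were valid it would prove the support property for \emph{every} morphism against \emph{every} open immersion using only ($\eSm$-BC), and the entire section \S\ref{supp} would be vacuous. Note also that $p_1\colon S\times\pt_{\N^2}\to (S\times\pt_\N)$ is a projection of the very same shape as $f$, but over the base $S\times\pt_\N\notin\Sch/B$, so you cannot invoke the proposition being proved either.

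The gap is repairable, and the repair is exactly what the paper does at the corresponding point: up to swapping the two factors, $p_1$ (or $p_2$) is the pullback $X\times\pt_\N\to\ul{X}\times\pt_\N$ of the log-structure-removing morphism for $X=S\times\pt_\N$, so Proposition \ref{nonversupp.10} gives it the strictly universal support property; since $j'$ is strict, Proposition \ref{basic.1} (or \ref{semisupp.3}) then yields precisely the isomorphism $j'_\sharp q_{1*}\xrightarrow{\sim} p_{1*}j''_\sharp$ you need. With that substitution your argument matches the paper's proof in all essentials, the only cosmetic differences being that the paper tests against the complementary closed immersion and uses conservativity of $g^*$ on $Z\times\pt_\N$ rather than of $f^*$.
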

\begin{proof}
Let $i\colon Z\to S$ be a closed immersion in $\Sch/B$.
Consider the induced cube with cartesian squares
\[
\begin{tikzcd}[column sep=small, row sep=small]
&
Z\times \pt_{\N}\times \pt_\N\ar[rr,"q_2"]\ar[ld,"i''"']\ar[dd,"q_1"',near end]&
&
Z\times \pt_\N\ar[ld,"i'"]\ar[dd,"g"]
\\
S\times \pt_\N\times \pt_\N\ar[rr,"p_2",near end,crossing over]\ar[dd,"p_1"']&
&
S\times \pt_\N
\\
&
Z\times \pt_\N\ar[rr,"g",near end]\ar[ld,"i'"']&
&
Z\ar[ld,"i"]
\\
S\times \pt_\N\ar[rr,"f"]&
&
S,\ar[uu,leftarrow,crossing over,"f",near start]
\end{tikzcd}
\]
where $g$ is the projection, $p_1$ and $q_1$ are the first projections, and $p_2$ and $q_2$ are the second projections.

By Proposition \ref{basic.1}, it suffices to show that the natural transformation
\[
i^*f_*
\xrightarrow{Ex}
g_*i'^*
\]
is an isomorphism.
Since $g^*$ is conservative by Proposition \ref{nonversupp.11}, it suffices to show that the natural transformation
\[
g^*i^*f_*
\xrightarrow{Ex}
g^*g_*i'^*
\]
is an isomorphism.

Consider the commutative diagram
\[
\begin{tikzcd}
g^*i^*f_*\ar[d,"\simeq"']\ar[r,"Ex"]&
g^*g_*i'^*\ar[r,"Ex"]&
q_{2*}q_1^*i'^*\ar[d,"\simeq"]
\\
i'^*f^*f_*\ar[r,"Ex"]&
i'^*p_{2*}p_1^*\ar[r,"Ex"]&
q_{2*}i''^*p_1^*.
\end{tikzcd}
\]
Proposition \ref{nonversupp.8} implies that the lower left and upper right horizontal arrows are isomorphisms.
By Propositions \ref{basic.1} and \ref{nonversupp.10}, the lower right horizontal arrow is an isomorphism.
It follows that the upper left horizontal arrow is an isomorphism.
\end{proof}

\begin{prop}
\label{nonversupp.4}
Let $f\colon X\to \ul{X}$ be the morphism removing the log structure,
where $X\in \lSch/B$.
If $X$ has a chart $\N$,
then $f$ satisfies the support property.
\end{prop}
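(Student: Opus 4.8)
The plan is to stratify $\ul{X}$ according to the chart and combine the last axiom of a log motivic $\infty$-category, in the form of Proposition \ref{restriction.8}, with the support property for the standard log point established in Proposition \ref{nonversupp.7}. Since $X$ admits a chart $\N$, this chart exhibits $X$ as the strict pullback $\ul{X}\times_{\A^1}\A_\N$ (fiber product in $\lSch/B$, with $\ul{X}$ and $\A^1=\ul{\A_\N}$ given the trivial log structure). Let $i_0\colon Z_0\hookrightarrow \ul{X}$ be the pullback of the origin $\{0\}\hookrightarrow \A^1$ and $j_0\colon U_0\hookrightarrow \ul{X}$ its open complement; these are a strict closed immersion and its open complement. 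Over $U_0$ the log structure of $X$ is trivial, so the pullback of $f$ over $U_0$ is the identity of $U_0$; since $Z_0\to \A^1$ factors through $\{0\}$, one has $X\times_{\ul{X}}Z_0\simeq Z_0\times_{\{0\}}(\{0\}\times_{\A^1}\A_\N)\simeq Z_0\times \pt_\N$, and the pullback of $f$ over $Z_0$ is the projection $h_0\colon Z_0\times \pt_\N\to Z_0$. Moreover the pullback square of $f$ along $i_0$ is the cartesian square of the form \eqref{outline.0.1} attached to $X$, so that $\ul{\partial X}\simeq Z_0$, $\partial X\simeq Z_0\times \pt_\N$, $p=f$, $p'=h_0$, $i=i_0$; write $i_0'\colon Z_0\times \pt_\N\hookrightarrow X$ for the pullback of $i_0$.

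The first step is to check that $Ex\colon i_0^*f_*\to h_{0*}i_0'^*$ is an isomorphism. Proposition \ref{restriction.8} applied to $X$ asserts precisely that the square
\[
\begin{tikzcd}
\id\ar[r,"ad"]\ar[d,"ad"']& i_{0*}i_0^*\ar[d,"ad"]\\
f_*f^*\ar[r,"ad"]& w_*w^*
\end{tikzcd}
\]
is cartesian, where $w:=i_0h_0=fi_0'$. Since the pullback of $f$ over $U_0$ is an isomorphism, this is exactly condition (4) of Proposition \ref{basic.1} for the proper morphism $f$, the strict closed immersion $i_0$, and its open complement $j_0$; hence condition (2) of that proposition holds, which is the claim.

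Next, let $i\colon Z\hookrightarrow \ul{X}$ be an arbitrary closed immersion with open complement $j\colon U\hookrightarrow \ul{X}$, and form the pullback square of $f$ along $j$, with $g\colon V\to U$ and $j'\colon V\to X$ the pullbacks of $f$ and $j$. By Proposition \ref{basic.1} it is enough to prove $i^*f_*j'_\sharp\simeq 0$, and since the restriction functors along the strict closed immersion $Z\cap Z_0\hookrightarrow Z$ and its open complement $Z\cap U_0\hookrightarrow Z$ are jointly conservative by (Loc), it suffices to check this vanishing after each of these two restrictions. For the restriction along $Z\cap U_0\hookrightarrow Z$ one factors it through $j_0$ and uses the base-change isomorphism $j_0^*f_*\simeq g_{0*}j_0'^*$ — obtained from ($\eSm$-BC) for the open immersion $j_0$ by passing to right adjoints — together with ($\eSm$-BC) for $j'$; since the pullback of $f$ over $U_0$ is an isomorphism, the resulting object lies in the essential image of $(U\cap U_0\hookrightarrow U_0)_\sharp$ and is thus annihilated by restriction along the complementary closed immersion $Z\cap U_0\hookrightarrow U_0$ by (Loc). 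For the restriction along $Z\cap Z_0\hookrightarrow Z$ one factors it through $i_0$, uses the first step to rewrite $i_0^*f_*j'_\sharp\simeq h_{0*}i_0'^*j'_\sharp$, and then ($\eSm$-BC) for $j'$ to identify $i_0'^*j'_\sharp$ with $(j'_{Z_0})_\sharp(-)$, where $j'_{Z_0}$ is the pullback along $h_0$ of the open immersion $U\cap Z_0\hookrightarrow Z_0$; the required vanishing then becomes condition (3) of Proposition \ref{basic.1} for the projection $h_0\colon Z_0\times \pt_\N\to Z_0$ and the open immersion $U\cap Z_0\hookrightarrow Z_0$, which holds because $h_0$ satisfies the support property by Proposition \ref{nonversupp.7} (here $Z_0\in \Sch/B$). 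Therefore $i^*f_*j'_\sharp\simeq 0$, and since $j$ was arbitrary, $f$ satisfies the support property.

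The main obstacle is the first step: recognizing that the localization-type axiom \eqref{outline.0.3}, i.e.\ Proposition \ref{restriction.8}, is exactly the input needed to run the equivalence $(4)\Leftrightarrow(2)$ of Proposition \ref{basic.1} along the stratification coming from the chart $\N$. The remainder is routine bookkeeping of base-change isomorphisms, all of which are instances of ($\eSm$-BC) and (Loc).
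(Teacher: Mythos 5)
Your proof is correct and follows essentially the same route as the paper's: both reduce via Proposition \ref{basic.1} to a statement checked separately over $\ul{\partial X}$ and its open complement, use the axiom of Proposition \ref{restriction.8} together with Proposition \ref{basic.1} to handle the boundary stratum, invoke Proposition \ref{nonversupp.7} for the projection $Z_0\times\pt_\N\to Z_0$, and dispose of the open stratum by ($\eSm$-BC). The only cosmetic difference is that you phrase the reduction through the vanishing condition (3) of Proposition \ref{basic.1} where the paper works with the base-change condition (2); these are equivalent there.
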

\begin{proof}
Let $i\colon \ul{Z}\to \ul{X}$ be an immersion in $\Sch/B$.
Consider the induced cartesian square
\[
\begin{tikzcd}
Z\ar[r,"i'"]\ar[d,"f'"']&
X\ar[d,"f"]&
\\
\ul{Z}\ar[r,"i"]&
\ul{X}.
\end{tikzcd}
\]
By Proposition \ref{basic.1}, it suffices to show that the natural transformation
\[
i^*f_*
\xrightarrow{Ex}
f_*'i'^*
\]
is an isomorphism.
Let $a\colon \ul{\partial Z}\to \ul{Z}$ and $b\colon Z-\partial Z \to \ul{Z}$ be obvious immersions.
Due to (Loc),
it suffices to show that  the natural transformations
\[
a^*i^*f_*
\xrightarrow{Ex}
a^*f_*'i'^*,
\text{ }
b^*i^*f_*
\xrightarrow{Ex}
b^*f_*'i'^*
\]
are isomorphisms.
Since $f_*'$ commutes with $a^*$ (resp.\ $b^*$) by Propositions \ref{restriction.8} and \ref{basic.1} (resp.\ ($\eSm$-BC)),
we reduce to the following two cases:
\begin{enumerate}
\item[(1)]
$\partial Z=Z$.
\item[(2)]
$\partial Z=\emptyset$.
\end{enumerate}

In the case (1), $i'\colon Z\to X$ factors through $\partial X$.
Consider the induced commutative diagram with cartesian squares
\[
\begin{tikzcd}
Z\ar[r,"h'"]\ar[d,"f'"']&
\partial X\ar[r,"g'"]\ar[d,"f''"]&
X\ar[d,"f"]
\\
\ul{Z}\ar[r,"h"]&
\ul{\partial X}\ar[r,"g"]&
\ul{X}.
\end{tikzcd}
\]
We have the commutative diagram
\[
\begin{tikzcd}
h^*g^*f_*\ar[r,"Ex"]\ar[d,"\simeq"']&
h^*f_*''g'^*\ar[r,"Ex"]&
f_*'h'^*g'^*\ar[d,"\simeq"]
\\
i^*f_*\ar[rr,"Ex"]&
&
f_*'i'^*.
\end{tikzcd}
\]
The upper left horizontal arrow is an isomorphism by Propositions \ref{restriction.8} and \ref{basic.1}.
The upper right horizontal arrow is an isomorphism by Propositions \ref{basic.1} and \ref{nonversupp.7}.
It follows that the lower horizontal arrow is an isomorphism.

In the case (2), $i\colon Z\to X$ factors through the obvious immersion $v\colon Z\to X-\partial X$.
Let $u'\colon X-\partial X\to X$ be the obvious immersion, and we set $u:=fu'$.
The natural transformation
\[
v^*u^*f_*
\xrightarrow{Ex}
v^*u'^*
\]
is an isomorphism by ($\eSm$-BC), which is what we need to show.
\end{proof}

\begin{prop}
\label{nonversupp.12}
Let $f\colon X\to \ul{X}$ be morphism removing the log structure,
where $X\in \lSch/B$.
Then $f$ satisfies the strictly universal support property.
\end{prop}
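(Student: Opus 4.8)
\textit{Proof sketch.} The plan is to peel off the log structure and reduce to pieces of the form $Z\times\pt_{\N^m}\to Z$ with $Z\in\Sch/B$, which are governed by the results on the standard log point.

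First, by Proposition \ref{semisupp.4} the strictly universal support property is Zariski-local on $X$, so we may assume $X$ admits a chart $P$. The chart provides a strict morphism $c\colon X\to\ul X\times\A_P$, which satisfies the strictly universal support property by Proposition \ref{semisupp.5}; hence by Proposition \ref{semisupp.6} it suffices to treat the projection $\ul X\times\A_P\to\ul X$, i.e.\ we may assume $X=S\times\A_P$ with $S\in\Sch/B$. Choosing a toric resolution of $\A_P$ via \cite[Theorem 11.1.9]{CLStoric} produces a dividing cover $j\colon\tilde X\to X$ such that $\tilde X$ is, Zariski-locally, covered by fs log schemes admitting a chart of the form $\N^n\oplus\Z^m$, hence a chart $\N^n$. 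By Proposition \ref{semisupp.8} it is enough to show that $fj\colon\tilde X\to\ul X$ satisfies the strictly universal support property; factoring $fj$ as the composite of the morphism $\tilde X\to\ul{\tilde X}$ removing the log structure with the strict and proper morphism $\ul{\tilde X}\to\ul X$, and applying Propositions \ref{semisupp.5} and \ref{semisupp.6} once more, we are reduced to the case in which $X$ admits a chart $\N^n$ for some $n\geq 0$.

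Now the underlying morphism of $f\colon X\to\ul X$ is an isomorphism, so $f$ is proper, and by Proposition \ref{semisupp.3} it suffices to prove that $f$ satisfies the support property (its base changes along strict morphisms to $\ul X$ are again the morphism removing the log structure of a log scheme with a chart $\N^n$, so the strictly universal statement follows). Stratify $\ul X$ by the vanishing loci of the images $e_1,\dots,e_n$ of the chart generators: for $T\subseteq\{1,\dots,n\}$ let $\ul X_T^\circ\subseteq\ul X$ be the locally closed subscheme where $e_i$ vanishes precisely for $i\in T$. Then $\ul X_T^\circ$ is a scheme and $X\times_{\ul X}\ul X_T^\circ\simeq\ul X_T^\circ\times\pt_{\N^{|T|}}$ over $\ul X_T^\circ$. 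For any $Z\in\Sch/B$, the morphism $Z\times\pt_{\N^m}\to Z$ satisfies the strictly universal support property: by induction on $m$, Proposition \ref{nonversupp.10} applied with ``$X$'' $=Z\times\pt_{\N^{m-1}}$ (whose underlying scheme is $Z$) shows that $Z\times\pt_{\N^m}\to Z\times\pt_\N$ does, Proposition \ref{nonversupp.7} shows that $Z\times\pt_\N\to Z$ does, and Proposition \ref{semisupp.6} concludes. Thus each stratum $X\times_{\ul X}\ul X_T^\circ\to\ul X_T^\circ$ satisfies the support property. It remains to glue: by Noetherian induction on $\ul X$ (reducing to $\ul X$ reduced, which is harmless since the assignment $\sT^\ex$ is invariant under nilpotent thickenings), one decomposes each test closed immersion $\ul Z\to\ul X$, using (Loc), into the locus where some chosen $e_i$ is invertible — which lies over the open subscheme where $f$ restricts to a morphism removing a chart $\N^{n-1}$, handled by the inductive hypothesis together with ($\eSm$-BC) — and the locus where $e_i=0$ — handled by the previous paragraph together with Propositions \ref{restriction.8} and \ref{restriction.9}, which allow one to commute $f_*$ past the relevant closed immersions, exactly as in the proof of Proposition \ref{nonversupp.4}.

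The main obstacle is this final gluing step: the support property is not formally local on the target, so propagating it along the stratification of $\ul X$ requires, as for $n=1$ in Proposition \ref{nonversupp.4}, a careful splitting of each test closed immersion into its ``boundary'' and ``non-boundary'' parts and an appeal to the cartesian-square axioms of a log motivic $\infty$-category in order to commute $f_*$ past the boundary closed immersions; the bookkeeping is heavier for a chart $\N^n$ with $n\geq 2$ than in the monogenic case.
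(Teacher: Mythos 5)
Your overall strategy diverges from the paper's, and it stalls at exactly the point you flag as ``the main obstacle'': the gluing step over the stratification of $\ul{X}$ does not go through with the tools available. The mechanism that makes the monogenic case (Proposition \ref{nonversupp.4}) work is the last axiom of a log motivic $\infty$-category, i.e.\ the cartesianness of the square \eqref{outline.0.3}, which is \emph{only assumed for fs log schemes admitting a chart $\N$}. Via Proposition \ref{basic.1} (equivalence of (2) and (4), applicable because $f$ is an isomorphism over the complement of $\partial X$ when the chart is $\N$), that axiom is precisely the proper base change $i^*f_*\xrightarrow{Ex} f''_*i'^*$ along the single boundary divisor. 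For a chart $\N^n$ with $n\geq 2$ there is no analogous axiom for the inclusion of $\partial X$ (now a union of $n$ divisors), nor for a single divisor $\{e_i=0\}$ (over whose complement $f$ is \emph{not} an isomorphism, so \ref{basic.1}(4) does not even translate the square into base change). Proposition \ref{restriction.9} does not fill the gap either: it only controls objects of the form $f^*\cF$ pulled back from a base scheme, whereas the support property requires handling $f_*j'_\sharp\cG$ for arbitrary $\cG$. Worse, the commutation of $f_*$ past the closed immersion of a stratum is itself an instance of the support property for $f$ (condition (2) of Proposition \ref{basic.1}), so your Noetherian induction is circular at precisely the step it needs to close. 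Deriving the missing cartesian square from cdh-descent (Corollary \ref{nonversupp.14}) is not an option, since that corollary is downstream of Theorem \ref{nonversupp.13}.

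The paper avoids the stratification entirely. After reducing to a chart $P$ with $\ol{P}\neq 0$, it chooses $\theta\colon\N\to P$ with $\theta(1)$ generating $P$ as a face, so that the induced morphism $q\colon\ul{X}\times\A_P\to\ul{X}\times\A_\N$ is \emph{vertical} over a base with chart $\N$; Proposition \ref{versupp.1} then gives the strictly universal support property for $q$, Proposition \ref{nonversupp.4} (plus \ref{semisupp.5}) handles $\ul{X}\times\A_\N\to\ul{X}$, and \ref{semisupp.6} composes. In other words, the general chart is funneled through the monogenic case by a single vertical morphism, which is exactly the device that lets one stay within the scope of axiom \eqref{outline.0.3}. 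Your preparatory reductions (Zariski localization, factoring through the chart, toric resolution, the identification of strata with $Z\times\pt_{\N^{|T|}}$, and the induction showing $Z\times\pt_{\N^m}\to Z$ satisfies the strictly universal support property via Propositions \ref{nonversupp.10} and \ref{nonversupp.7}) are all correct, but they are not needed once the vertical reduction is in place, and they do not substitute for it.
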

\begin{proof}
The question is Zariski local on $X$ by Proposition \ref{semisupp.4}.
Hence we may assume that $X$ has a chart $P$.
If $\ol{P}=0$,
then the claim is trivial.
If not,
then choose a homomorphism $\theta\colon \N\to P$ such that the face of $P$ generated by $\theta(1)$ is $P$.
Then $f$ has the induced factorization
\[
X\xrightarrow{r}
\ul{X}\times \A_P
\xrightarrow{q}
\ul{X}\times \A_\N
\xrightarrow{p}
\ul{X},
\]
where $p$ is the projection.
Observe that $q$ is vertical.
By Propositions \ref{semisupp.5}, \ref{versupp.1}, and \ref{nonversupp.4}, $p$, $q$, and $r$ satisfy the strictly universal support property.
Proposition \ref{semisupp.6} implies that $f$ satisfies the strictly universal support property.
\end{proof}

\begin{thm}
\label{nonversupp.13}
Let $f\colon Y\to X$ be a proper morphism in $\lSch/B$.
Then $f$ satisfies the support property.
\end{thm}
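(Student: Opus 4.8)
The plan is to prove the stronger statement that $f$ satisfies the strictly universal support property; since $f$ is proper, the support property then follows by taking $S'=S$ in Proposition \ref{semisupp.3}. As a preliminary organizational step one may factor $f$ as $Y\to \ul{Y}\times_{\ul{X}}X\to X$, where the second morphism is strict and proper and hence satisfies the strictly universal support property by Proposition \ref{semisupp.5}; by Proposition \ref{semisupp.6} this reduces us to the first morphism, whose underlying morphism of schemes is an isomorphism.

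By Proposition \ref{semisupp.4} the question is Zariski-local on the source, so we may assume that $f$ admits a chart: a homomorphism of fs monoids $\theta\colon P\to Q$ together with strict morphisms $X\to \A_P$ and $Y\to \A_Q$ inducing $f$. Then $f$ factors as
\[
Y\xrightarrow{c} X\times_{\A_P}\A_Q \xrightarrow{d} X,
\]
where $c$ is strict and $d$ is the base change of $\A_\theta\colon \A_Q\to \A_P$ along the strict morphism $X\to \A_P$. By Proposition \ref{semisupp.5} the morphism $c$ satisfies the strictly universal support property, and by Proposition \ref{semisupp.7} so does $d$ once $\A_\theta$ does; hence by Proposition \ref{semisupp.6} it is enough to prove that $\A_\theta$ satisfies the strictly universal support property.

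Now decompose $\A_\theta$ as $\A_Q\xrightarrow{\rho}\A'\xrightarrow{\pi}\A_P$, where $\A'$ is the scheme $\ul{\A_Q}$ equipped with the log structure pulled back from $\A_P$ along $\ul{\A_\theta}$. Then $\pi$ is strict, hence satisfies the strictly universal support property by Proposition \ref{semisupp.5}, while $\ul{\rho}$ is an isomorphism, so $\rho$ only modifies the log structure; by Proposition \ref{semisupp.6} we are reduced to $\rho$. Using Proposition \ref{semisupp.8} we may precompose $\rho$ with a suitable dividing cover of $\A_Q$ (a toric subdivision), and using (Loc) we may stratify the base; in this way $\theta$ is brought into a standard local form. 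In this form $\rho$ is a finite composition of morphisms each of which is (i) strict, (ii) Kummer, (iii) of the shape treated in Proposition \ref{versupp.5} (associated with a diagonal homomorphism $\N\to \N\oplus\N$), or (iv) a morphism removing part of the log structure. These are handled respectively by Proposition \ref{semisupp.5}, by Proposition \ref{versupp.3}, by Proposition \ref{versupp.5} together with the vertical case Proposition \ref{versupp.1}, and by Proposition \ref{nonversupp.12} together with Proposition \ref{nonversupp.9} for the part of the log structure that is horizontal over the base. Composing via Proposition \ref{semisupp.6} yields the strictly universal support property for $\A_\theta$, and therefore for $f$.

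The main obstacle is this last step. One must show that, after a toric subdivision of the source (permitted by Proposition \ref{semisupp.8}) and a stratification of the base, an arbitrary homomorphism $\theta\colon P\to Q$ of fs monoids presents $\A_\theta$ as a composition of the four standard types above; concretely, this amounts to separating the vertical and the horizontal parts of the relative log structure of $\A_Q$ over $\A_P$ and resolving each of them separately. This is exactly the point where the structure theory of charts and toric resolutions from \cite{divspc} and \cite{logGysin}, and ultimately the purity computation \cite[Theorem 5.2.11]{logGysin} underlying Propositions \ref{versupp.5} and \ref{versupp.1}, are used.
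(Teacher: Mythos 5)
Your proposal is incomplete precisely at the step you flag as ``the main obstacle'': the claim that, after a toric subdivision and a stratification of the base, the morphism $\rho$ (underlying an isomorphism of schemes) decomposes into strict, Kummer, diagonal-type, and ``partial log-structure removal'' pieces is asserted, not proved, and the cited results do not cover the pieces you would need. Proposition \ref{nonversupp.12} removes the \emph{entire} log structure, and Propositions \ref{nonversupp.9} and \ref{nonversupp.10} control the horizontal directions only over the standard log point $\pt_\N$; since Proposition \ref{semisupp.7} only permits \emph{strict} base change of the strictly universal support property, you cannot spread these special cases out over a general base. Already for $\theta\colon \N\to\N\oplus\N$, $1\mapsto(1,0)$, the resulting $\rho\colon \A_{\N^2}\to \A_\N\times\ul{\A_\N}$ is a non-strict base change of $\A_\N\to\ul{\A_\N}$, and no cancellation result in the paper (Proposition \ref{semisupp.9} cancels in the wrong direction) lets you deduce its strictly universal support property from that of $\A_{\N^2}\to\A^2$ and $\A_\N\times\ul{\A_\N}\to\A^2$. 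In addition, ``stratifying the base using (Loc)'' to check the support property requires commuting $f_*$ past $i^*$ for the strata, which is itself an instance of proper base change and hence of the property being proved; the paper only obtains this for the specific immersions $\ul{\partial X}\to\ul X$ via the last two axioms of a log motivic $\infty$-category (Propositions \ref{restriction.8} and \ref{restriction.9}).

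The paper's own proof avoids any decomposition of a general chart homomorphism. It first reduces, via the generators $M_X(X')$ with $X'\to X$ vertical exact log smooth, adjunction, and ($\eSm$-BC), to showing that $\Hom_{\sT(X)}(\unit, v_\sharp f'_*\cF)\to\Hom_{\sT(X)}(\unit, f_*w_\sharp\cF)$ is an isomorphism. Writing $p\colon X\to\ul X$ and $q\colon Y\to\ul Y$ for the morphisms removing the log structures, one has $pf=\ul f q$ with $\ul f$ strict proper; since $\Hom_{\sT(X)}(\unit,-)\simeq\Hom_{\sT(\ul X)}(\unit,p_*(-))$, it suffices that $p$ and $pf$ satisfy the support property, which follows from Propositions \ref{semisupp.5}, \ref{nonversupp.12}, and \ref{semisupp.2}. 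So the only non-strict morphisms that ever need the strictly universal support property are the total log-structure removals $X\to\ul X$, and all the hard toric analysis is concentrated in Proposition \ref{nonversupp.12} rather than in a general structure theorem for $\A_\theta$. If you want to salvage your approach, you would essentially have to reprove the content of Propositions \ref{nonversupp.4}--\ref{nonversupp.12} in greater generality; following the paper's reduction through $pf=\ul f q$ is much more economical.
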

\begin{proof}
Let
\[
\begin{tikzcd}
W\ar[d,"f'"']\ar[r,"w"]&
Y\ar[d,"f"]
\\
V\ar[r,"v"]&
X
\end{tikzcd}
\]
be a cartesian square such that $v$ is an open immersion.
We only need to show that the induced morphism
\[
\Hom_{\sT(X)}(M_X(X'),v_\sharp f_*'\cF)
\to
\Hom_{\sT(X)}(M_X(X'),f_*w_\sharp \cF)
\]
is an isomorphism for all $\cF\in \sT(V)$ and vertical exact log smooth morphism $p\colon X'\to X$.
By adjunction,
it suffices to show that the induced morphism
\[
\Hom_{\sT(X')}(\unit,p^*v_\sharp f_*'\cF)
\to
\Hom_{\sT(X')}(\unit,p^*f_*w_\sharp \cF)
\]
is an isomorphism.
Use ($\eSm$-BC) and replace $Y\to X$ by $Y\times_X X'\to X'$ to reduce to showing that the induced morphism
\[
\Hom_{\sT(X)}(\unit,v_\sharp f_*'\cF)
\to
\Hom_{\sT(X)}(\unit,f_*w_\sharp \cF)
\]
is an isomorphism.

Consider the cartesian square
\[
\begin{tikzcd}
V\ar[r,"v"]\ar[d,"p'"']&
X\ar[d,"p"]
\\
\ul{V}\ar[r,"\ul{v}"]&
\ul{X},
\end{tikzcd}
\]
where $p$ is the morphism removing the log structure.
By adjunction,
it suffices to show that the upper right horizontal arrow in the commutative diagram
\[
\begin{tikzcd}
\ul{v}_\sharp p_*'f_*'\ar[r,"Ex"]\ar[d,"\simeq"']&
p_*v_\sharp f_*'\ar[r,"Ex"]&
p_*f_*w_\sharp\ar[d,"\simeq"]
\\
\ul{v}_\sharp (p'f')_*\ar[rr,"Ex"]&
&
(pf)_*w_\sharp
\end{tikzcd}
\]
is an isomorphism.
Hence it suffices to show that $p$ and $pf$ satisfy the support property.

Let $q\colon Y\to \ul{Y}$ be the morphism removing log structure.
Then we have $pf=\ul{f}q$.
By Proposition \ref{semisupp.5}, $\ul{f}$ satisfies the support property.
Proposition \ref{nonversupp.12} shows that $p$ and $q$ satisfy the support property.
Use Proposition \ref{semisupp.2} to conclude.
\end{proof}

As a consequence of the support property,
we obtain a cdh-descent property as follows,
which is a log version of the cdh-descent property.

\begin{cor}
\label{nonversupp.14}
Let
\[
\begin{tikzcd}
Z'\ar[d,"p'"']\ar[r,"i'"]&
X'\ar[d,"p"]
\\
Z\ar[r,"i"]&
X
\end{tikzcd}
\]
be a cartesian square in $\lSch/B$ such that $i$ is a strict closed immersion,
$p$ is proper,
and the induced morphism $p^{-1}(X-i(Z))\to X-i(Z)$ is an isomorphism.
Then the commutative square
\[
\begin{tikzcd}
\id\ar[d,"ad"']\ar[r,"ad"]&
i_*i^*\ar[d]
\\
p_*p^*\ar[r]&
q_*q^*
\end{tikzcd}
\]
is cartesian,
where $q:=ip'=pi'$.
\end{cor}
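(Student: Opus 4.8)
The plan is to compare the two parallel vertical maps of the square. Write $j\colon U\to X$ for the open complement of the strict closed immersion $i$; then the square is cartesian if and only if the induced map of fibres
\[
\fib(\id\xrightarrow{ad}i_*i^*)\longrightarrow\fib(p_*p^*\to q_*q^*)
\]
is an isomorphism, and both fibres are computable by localization. By (Loc) the source is $j_\sharp j^*$. For the target, observe that $i'\colon Z'\to X'$ is again a strict closed immersion (a base change of $i$), with open complement $j'\colon U'\to X'$ the pullback of $j$; writing $q=pi'$ gives $q_*q^*\simeq p_*i'_*i'^*p^*$, and applying the exact functor $p_*$ to the localization cofibre sequence $j'_\sharp j'^*p^*\to p^*\to i'_*i'^*p^*$ of functors $\sT(X)\to\sT(X')$ identifies $\fib(p_*p^*\to q_*q^*)$ with $p_*j'_\sharp j'^*p^*$.

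It then suffices to show that the canonical map $j_\sharp j^*\to p_*j'_\sharp j'^*p^*$ is an isomorphism, and here the support property does the work. The hypothesis says precisely that $\ol p\colon U'\to U$, the base change of $p$, is an isomorphism. Theorem~\ref{nonversupp.13} applied to the cartesian square with corners $U',X',X,U$ gives $j_\sharp\ol p_*\xrightarrow{Ex}p_*j'_\sharp$; the exchange isomorphism for the functors $(-)^*$ gives $j'^*p^*\simeq\ol p^*j^*$; and since $\ol p$ is an isomorphism the unit $\id\to\ol p_*\ol p^*$ is an isomorphism. Composing these, $p_*j'_\sharp j'^*p^*\simeq j_\sharp\ol p_*\ol p^*j^*\simeq j_\sharp j^*$. (One could instead use that $(i^*,j^*)$ is conservative by (Loc) and verify the square is cartesian after applying $i^*$ and $j^*$ separately: the $i^*$-part reduces to $i^*p_*j'_\sharp\simeq 0$, which is condition~(3) of Proposition~\ref{basic.1}, and the $j^*$-part reduces to invertibility of $j^*\to j^*p_*p^*$, obtained from $j^*p_*\simeq\ol p_*j'^*$ via Proposition~\ref{proj.2} — legitimate because every proper morphism has the strictly universal support property by Theorem~\ref{nonversupp.13} and Proposition~\ref{semisupp.3}.)

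The one point requiring care is bookkeeping: one must check that the support-property isomorphism $Ex$, the exchange isomorphism for the functors $(-)^*$, and the localization cofibre sequences are all compatible with the unit and counit maps appearing in the square, so that the identifications above really carry the canonical comparison map to $j_\sharp(ad)j^*$ with $ad\colon\id\to\ol p_*\ol p^*$. No input from log geometry is needed beyond what is packaged in Theorem~\ref{nonversupp.13}; the argument is the standard deduction of cdh-descent from proper base change.
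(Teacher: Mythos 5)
Your argument is correct and follows essentially the same route as the paper: Theorem \ref{nonversupp.13} supplies the support property for $p$, and the passage from the support property to the cartesian square (your fibre comparison, or equivalently the $(i^*,j^*)$-conservativity variant in your parenthesis) is exactly the content of the equivalence (1)$\Leftrightarrow$(4) in Proposition \ref{basic.1} in the case where the map over the open complement is an isomorphism, which is what the paper's one-line proof implicitly invokes. The only difference is that you re-derive that implication rather than citing it.
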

\begin{proof}
This is an immediate consequence of Proposition  \ref{basic.1} and Theorem \ref{nonversupp.13}.
\end{proof}

\begin{prop}
\label{nonversupp.15}
Let $f\colon S\times \pt_\N\to S$ be the projection, where $S\in \lSch/B$.
Then $f^*$ is conservative.
\end{prop}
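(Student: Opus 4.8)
The plan is to reduce to Proposition \ref{nonversupp.11}, which is the case $S\in\Sch/B$, by descent together with the toric stratification of $S$. First I would observe that conservativity of $f^*$ is local on $S$ for the dividing Nisnevich topology: for a cartesian square over a strict morphism $g\colon S'\to S$ one has $g'^*f^*\simeq f'^*g^*$, where $f'\colon S'\times\pt_\N\to S'$ is the pullback of $f$ and $g'\colon S'\times\pt_\N\to S\times\pt_\N$; so if $f'^*$ is conservative for every member of a family generating a dividing Nisnevich covering sieve, then $f^*$ is conservative by Proposition \ref{restriction.12}. Hence we may assume $S$ admits a chart by a sharp fs monoid $P$, i.e.\ a strict morphism $S\to\A_P$.

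Next I would stratify. Pulling back the decomposition of $\A_P$ into torus orbits $O_F$, indexed by the faces $F$ of $P$, along $S\to\A_P$ yields a finite stratification of $S$ by strict locally closed immersions; by repeated use of (Loc) it suffices to prove that $(f|_{S_F})^*$ is conservative for each stratum $S_F:=S\times_{\A_P}O_F$. Over $O_F$ the toric log structure is split with $\ol{\cM}=P/F$, so $S_F\simeq\ul{S_F}\times\pt_{P/F}$ with $\ul{S_F}\in\Sch/B$; for $F=P$ this gives $S_F\in\Sch/B$ and Proposition \ref{nonversupp.11} applies. For $F\subsetneq P$ the morphism $f|_{S_F}$ is, after relabelling, the projection $T\times\pt_{Q\oplus\N}\to T\times\pt_Q$ with $T\in\Sch/B$ and $Q:=P/F$ a nonzero sharp fs monoid, induced by $Q\hookrightarrow Q\oplus\N$, and I would run an induction on $\rank Q^\gp$, the base case $Q=0$ being Proposition \ref{nonversupp.11} once more. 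For the inductive step one factors $Q\hookrightarrow Q\oplus\N$ through a Kummer injection $\N^{\rank Q^\gp}\hookrightarrow Q$ and the quotient onto a facet in order to compare $\pt_{Q\oplus\N}\to\pt_Q$ with log points of strictly smaller rank, using Propositions \ref{versupp.3} and \ref{restriction.7} to see that the relevant conservativity is preserved under the Kummer log smooth base changes produced there, together with the boundary-comparison inputs of the type of Propositions \ref{restriction.8}, \ref{restriction.9}, and \ref{nonversupp.8}; one then concludes by \cite[Lemma 3.3.1]{logGysin}, exactly as in the proof of Proposition \ref{nonversupp.11}.

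The hard part will be the inductive step just described: the deeper strata carry a nontrivial log structure, so the argument of Proposition \ref{nonversupp.11}---which rested on the chart-$\N$ localization square of Proposition \ref{restriction.8} and on $\A^1$-invariance, both formulated over a scheme-theoretic base---must be propagated along the $\pt_Q$-direction, and carrying the localization squares of Propositions \ref{restriction.8} and \ref{restriction.9} (the two final axioms of a log motivic $\infty$-category, verified for $\infSH$ in \cite[Theorems 4.3.8, 4.3.9]{logA1}) through this base change is the technical core.
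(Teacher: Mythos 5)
There is a genuine gap: the inductive step that you yourself identify as ``the technical core'' is never actually carried out, and the tools you propose to carry it with do not apply where you need them. Proposition \ref{restriction.8} is stated only for $X$ admitting a chart $\N$, and Proposition \ref{restriction.9} only for a base $S\in \Sch/B$ with trivial log structure; on the deeper strata $T\times\pt_Q$ with $Q$ of rank $\geq 1$ neither hypothesis holds, which is exactly the obstruction you flag. Saying that one ``factors $Q\hookrightarrow Q\oplus\N$ through a Kummer injection and the quotient onto a facet'' and then invokes Propositions \ref{versupp.3}, \ref{restriction.7}, \ref{restriction.8}, \ref{restriction.9}, and \ref{nonversupp.8} does not produce the needed comparison square over a log point of positive rank; as written the induction has no base-change mechanism that survives the nontrivial log structure on the base.

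The point you are missing is that by this stage of the paper the support property (Theorem \ref{nonversupp.13}) is available in full generality, and hence so is Corollary \ref{nonversupp.14}. Applied to the proper morphism $g\colon S\times\A_\N\to S\times\A^1$ removing the $\A_\N$-part of the log structure --- which is an isomorphism over $S\times\G_m$, with $i\colon S\times\{0\}\to S\times\A^1$ the strict closed complement --- this gives the cartesian square
\[
\begin{tikzcd}
\id\ar[d,"ad"']\ar[r,"ad"]&
i_*i^*\ar[d]
\\
g_*g^*\ar[r]&
q_*q^*
\end{tikzcd}
\]
for \emph{arbitrary} $S\in\lSch/B$, with no chart hypothesis. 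Substituting this square for the use of Proposition \ref{restriction.8} in the proof of Proposition \ref{nonversupp.11} and then arguing verbatim (apply $p_*(-)p^*$, use ($\A^1$-inv) to see the left vertical arrow is an isomorphism, note $(pg)^*$ is conservative because $pg$ has a section, and conclude by \cite[Lemma 3.3.1]{logGysin}) proves the proposition directly. The stratification by torus orbits and the induction on $\rank Q^{\gp}$ are unnecessary, and the part of your argument that would have to do real work is precisely the part that is not supplied.
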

\begin{proof}
Argue as in the proof of Proposition \ref{nonversupp.11},
but use Corollary \ref{nonversupp.14} instead of Proposition \ref{restriction.8}.
\end{proof}

\subsection{Exact base change property}

The purpose of this subsection is to define $f_!$ for compactifiable morphisms $f$ in $\lSch/B$ and to show that $(-)_!$ satisfies a certain base change property with $(-)^*$.
We use the technique of Liu-Zheng developed in \cite{LZ}
For a usage of this technique in the context of six-functor formalism in rigid analytic geometry,
we refer to Mann's thesis \cite[\S A.5]{2206.02022}.

\begin{prop}
\label{base.5}
Let
\[
C:=
\begin{tikzcd}
X'\ar[d,"f'"']\ar[r,"g'"]&
X\ar[d,"f"]
\\
S'\ar[r,"g"]&
S
\end{tikzcd}
\]
be a cartesian square in $\lSch/B$.
If $f$ is proper and $g$ is exact,
then the natural transformation
\begin{equation}
\label{base.5.1}
g^*f_*
\xrightarrow{Ex}
f_*'g'^*
\end{equation}
is an isomorphism.
\end{prop}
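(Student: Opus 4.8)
The plan is to reduce the statement, in several steps, to the already-settled case of a strict $g$. First, since $f$ is proper, Theorem \ref{nonversupp.13} shows $f$ satisfies the support property; as the base change of $f$ along any strict morphism to $\ul{X}\times_{\ul{S}}S$ is again proper, $f$ in fact satisfies the strictly universal support property of Definition \ref{semisupp.1}, so Proposition \ref{proj.2} gives \eqref{base.5.1} whenever $g$ is strict. Two formal properties of \eqref{base.5.1} are used repeatedly: (i) if $g$ factors as $S'\xrightarrow{g_1}S''\xrightarrow{g_2}S$, then \eqref{base.5.1} for $g$ is the vertical composite of the transformations for $g_1$ and $g_2$ against the relevant pullbacks of $f$ (all of which remain proper), so it suffices to treat $g_1,g_2$ separately; and (ii) whether \eqref{base.5.1} is an isomorphism can be tested Zariski-locally on $S'$, since the members of an open cover are exact log smooth — so $(\eSm$-BC) identifies the restriction of \eqref{base.5.1} with the corresponding transformation of the restricted square — and Proposition \ref{restriction.12} supplies conservativity.

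Using (i) and (ii) I would pass to charts. Working Zariski-locally on $S$, assume $S$ has an fs chart $P$; since $g$ is exact, the theory of charts for exact morphisms (Ogus's book) lets me choose a chart for $g$ given by an injective, local, exact homomorphism $\theta\colon P\to Q$ of fs monoids. The induced $S'\to S\times_{\A_P}\A_Q$ is strict, hence handled, so by (i) we may assume $g$ is the projection $S\times_{\A_P}\A_Q\to S$. Now decompose $\theta$: using injectivity ($P^{\gp}\subseteq Q^{\gp}$) and exactness ($P=Q\cap P^{\gp}$), set $P_1:=Q\cap L$ where $L\subseteq Q^{\gp}$ is the preimage of the torsion subgroup of $Q^{\gp}/P^{\gp}$. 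Then $\theta$ factors as a Kummer homomorphism $P\to P_1$ followed by an injective exact homomorphism $P_1\to Q$ with torsion-free cokernel $Q^{\gp}/P_1^{\gp}$; correspondingly $g$ factors into a Kummer morphism and a morphism with torsion-free relative characteristic, and by (i) it remains to treat these two building blocks.

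For the Kummer building block I would invoke Proposition \ref{restriction.7}: applied to the Kummer (hence exact) morphism at hand, it produces a Kummer log smooth morphism $\pi$ such that $\pi^*$ and all its strict pullbacks are conservative, and such that the pullback of our Kummer morphism along $\pi$ becomes saturated; a Kummer saturated morphism is strict by \cite[Corollary I.4.8.12]{Ogu}. Pulling the cartesian square back along $\pi$ and using $(\eSm$-BC) to see that \eqref{base.5.1} pulls back to the analogous transformation, the strict case applies, and conservativity of $\pi^*$ concludes. For the torsion-free-cokernel building block the relevant homomorphism is integral (so the morphism is flat on underlying schemes), and I would argue as in the proofs of Propositions \ref{versupp.5} and \ref{versupp.1}: after toric resolution of singularities the morphism is assembled, via strict morphisms and open immersions, from the elementary diagonal-type homomorphisms $\alpha_i\colon\N^i\to\N^{i+1}$, $\alpha_i(x_1,\dots,x_i)=(x_1,\dots,x_i,x_i)$; for each elementary piece the base change isomorphism is extracted from the strict base change already established and the support-property results of \S\ref{versupp}--\S\ref{nonversupp} (Propositions \ref{versupp.5}, \ref{versupp.1}, \ref{nonversupp.9}) via the equivalences of Proposition \ref{basic.1}.

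The main obstacle is this last step. The reason for allowing $g$ merely exact rather than strict is precisely to permit the underlying schemes of the square to change, and the machinery of \S\ref{versupp}--\S\ref{nonversupp} was developed to control exactly such boundary behavior; but those results are phrased as support properties of source morphisms, so converting them into the base change isomorphism needed here — and tracking how exactness and saturatedness interact under the Kummer log smooth base changes and toric resolutions used to reach the elementary case — is the delicate part. Everything else is formal manipulation with $(\eSm$-BC), adjunctions, and properties (i) and (ii) of \eqref{base.5.1}.
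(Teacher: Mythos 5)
Your treatment of the strict case (via Theorem \ref{nonversupp.13}, Proposition \ref{semisupp.3}, and Proposition \ref{proj.2}) and of the Kummer factor (via Proposition \ref{restriction.7} and \cite[Corollary I.4.8.12]{Ogu}) is sound and agrees in substance with what the paper does. The genuine gap is the remaining ``torsion-free cokernel'' factor, and it is exactly the step you flag as delicate. Two problems. First, an injective exact homomorphism with torsion-free cokernel of groupifications need not be integral, and, more to the point, need not be \emph{locally} exact; consequently the induced morphism $S\times_{\A_{P_1}}\A_Q\to S$ need not be an exact morphism of fs log schemes at all of its points (exactness of $S'\to S$ only controls the points of $S'$), and in particular need not be exact log smooth, so neither ($\eSm$-BC) nor your reduction scheme applies to it as stated. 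Second, the machinery you propose to import from \S\ref{versupp}--\S\ref{nonversupp} does not produce the statement you need: Proposition \ref{basic.1} converts the support property of a \emph{proper} morphism $f$ into base change along a \emph{strict closed immersion} in the base, whereas here you need base change of $f_*$ along a non-strict, non-proper base morphism such as $S\times_{\A_\N}\A_{\N\oplus\N}\to S$; the results of \S\ref{versupp} concern such morphisms as \emph{sources} satisfying the support property, which is a different assertion, and no amount of toric resolution converts one into the other.

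The paper closes this gap by applying Proposition \ref{restriction.7} once, at the very beginning, to the whole exact morphism $g$: after pulling the square back along a Kummer log smooth morphism with conservative pullback, one may assume $g$ is \emph{saturated}. A saturated morphism admits a neat chart $\theta\colon P\to Q$ which is itself saturated, and the factor $\eta\colon P/\theta^{-1}(0)\to Q$ is then injective, local, and \emph{locally} exact with the right torsion condition (\cite[Propositions I.4.2.1(5), I.4.6.3(4), I.4.8.5(3), Theorems I.4.8.14(4), IV.3.1.8]{Ogu}), so that $S\times_{\A_P}\A_Q\to S\times_{\A_P}\A_{P/F}$ is genuinely exact log smooth and ($\eSm$-BC), in its adjoint form, disposes of it; the remaining factors are strict and are handled as in your first paragraph. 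The missing idea, then, is to use saturation globally to upgrade ``exact'' to ``locally exact'' before choosing charts, rather than decomposing the chart of a merely exact morphism.
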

\begin{proof}
By Proposition \ref{restriction.7},
there exists a Kummer log smooth morphism $U\to S$ such that the projection $u\colon S'\times_S U\to U$ is saturated and $v^*$ is conservative,
where $v\colon S'\times_S U\to X'$ is the projection.
It suffices to show that $v^*g^*f_*\xrightarrow{Ex} v^*f_*'g'^*$ is an isomorphism.
By ($\eSm$-BC),
we can replace $C$ by $C\times_S U$ to reduce to the case when $g$ is saturated.

The question is Zariski local on $S$ and $S'$ by ($\eSm$-BC) and Proposition \ref{restriction.12}.
Hence we may assume that $S$ admits a neat chart $P$.
By \cite[Theorems III.1.2.7(4), IV.2.5.5(4)]{Ogu},
we may further assume that $g$ admits a neat chart $\theta\colon P\to Q$.
Observe that $Q$ is a neat chart of $S'$ by \cite[Proposition A.3]{divspc}.
In particular, $P$ and $Q$ are sharp.
Observe also that $\theta$ is saturated.
We have the induced factorization $P\to P/F\xrightarrow{\eta} Q$ of $\theta$,
where $F:=\theta^{-1}(0)$.
This induces the factorization
\[
S'\xrightarrow{p}
S\times_{\A_P}\A_Q
\xrightarrow{q}
S\times_{\A_P}\A_{P/F}
\xrightarrow{r}
S
\]
of $g$.
Let $\cF$ be the class of morphisms $g$ such that \eqref{base.5.1} is an isomorphism for every morphism $f$.
By \cite[Proposition I.4.8.5(3)]{Ogu},
$\eta$ is saturated.
Furthermore,
$\eta$ is local.
By \cite[Propositions I.4.2.1(5), I.4.6.3(4)]{Ogu},
$\eta$ is injective and locally exact.
Together with \cite[Theorems I.4.8.14(4), IV.3.1.8]{Ogu},
we see that $q$ is exact log smooth.
Hence $\cF$ contains $q$ by ($\eSm$-BC).
Since $\cF$ is closed under compositions and $p$ and $r$ are strict,
we reduce to the case when $g$ is strict.

The question is Zariski local on $S'$,
so we may assume that $g$ admits a factorization
\[
S'\xrightarrow{i} S\times \P^n \xrightarrow{h} S
\]
for some integer $n\geq 0$ such that $i$ is a strict closed immersion and $h$ is the projection.
By Proposition \ref{basic.1} and Theorem \ref{nonversupp.13},
$i$ is in $\cF$.
Since $h$ is in $\cF$ by ($\eSm$-BC) and $\cF$ is closed under compositions,
we deduce that $g$ is in $\cF$.
\end{proof}

We recall the notation of multisimplicial sets in \cite[\S 1.3]{LZ} as follows.
For a set $I$,
let $\Set_{I \Delta}$ denote the category of $I$-simplicial sets.
For a subset $J$ of $I$,
let $\op_J^I$ (or $\op_J$ for short) denote the partial opposite functor $\Set_{I \Delta}\to \Set_{I \Delta}$ with respect to $J$.
Let $\delta_I^*\colon \Set_{I \Delta}\to \Set_{\Delta}$ (or $\delta^*$ for short) denote the functor induced by the diagonal functor $\mathbf{\Delta}\to \mathbf{\Delta}^I:=\Fun(I,\mathbf{\Delta})$,
and let $\delta_*^I\colon \Set_{\Delta}\to \Set_{I \Delta}$ (or $\delta_*$ for short) denote its right adjoint.

\begin{const}
\label{base.4}
Let $\cC$ be an $\infty$-category,
and let $\{\cE_i\}_{i\in I}$ be a set of classes of morphisms in $\cC$ containing all degenerate edges.
Consider the $I$-simplicial set $\cC_{\{\cE_i\}_{i\in I}}^\cart$ in \cite[Definition 1.3.16]{LZ}.
As noted in \cite[Remark 1.3.13]{LZ},
$\cC_{\{\cE_i\}_{i\in I}}^\cart$ is an $I$-simplicial subset of $\delta_*\cC$.
By adjunction,
we obtain a map of simplicial sets
\begin{equation}
\label{base.4.1}
\delta^*\cC_{\{\cE_i\}_{i\in I}}^\cart
\to
\cC.
\end{equation}
\end{const}

\begin{const}
\label{base.6}
Let $f\colon X\to S$ be a compactifiable morphism in $\lSch/B$.
As observed in \cite[\S 5.4]{MR1457738},
we have the induced factorization
\[
X
\xrightarrow{q}
\ul{X}\times_{\ul{S}} S
\xrightarrow{j}
\ul{V}\times_{\ul{S}}S
\xrightarrow{p}
\ul{S}
\]
such that $j$ is an open immersion,
$p$ is strict proper,
and $\ul{q}$ is an isomorphism.
\end{const}

\begin{const}
\label{base.1}
Consider the following classes of morphisms in $\cC:=\lSch/B$:
\begin{itemize}
\item $\cE_1$ is the class of strict proper morphisms,
\item $\cE_2$ is the class of open immersions,
\item $\cE_3$ is the class of strict compactifiable morphisms,
\item $\cE_4$ is the class of morphisms $f\colon X\to S$ such that $\ul{f}$ is an isomorphism,
\item $\cE_5$ is the class of compactifiable morphisms,
\item $\cE_6$ is the class of exact morphisms.
\end{itemize}
Observe that $\cE_i$ is closed under pullbacks and compositions for $i=1,\ldots,6$.
Furthermore, $\cE_i$ is admissible in the sense of \cite[Definition 1.3.18(5)]{LZ} for $i=1,\ldots,4$,
i.e., if $p$ is a morphism in $\cE_i$ and $q$ is a morphism in $\cC$,
then $pq$ is in $\cE_i$ if and only if $q$ is in $\cE_i$.

We begin with the functor of $\infty$-categories
\[
\sT^\ex
\colon
\cC^{\op} \to \infCat_\infty.
\]
Compose this with the map of simplicial sets
$
\delta^*\op_{\{1,2,4,6\}}\cC_{\cE_1,\cE_2,\cE_4,\cE_6}^\cart
\to
\cC^{\op}
$
obtained by \eqref{base.4.1} to yield a map of simplicial sets
\[
\delta^*\op_{\{1,2,4,6\}}\cC_{\cE_1,\cE_2,\cE_4,\cE_6}^\cart
\to
\infCat_\infty.
\]
By Proposition \ref{base.5},
we can apply the right adjoint version of \cite[Proposition 2.2.4]{LZ} for $J:=\{1,4\}$ to yield a map of simplicial sets
\[
\delta^*\op_{\{2,6\}}\cC_{\cE_1,\cE_2,\cE_4,\cE_6}^\cart
\to
\infCat_\infty
\]
using $\op_{\{1,4\}}\op_{\{1,2,4,6\}}\simeq \op_{\{2,6\}}$.
By Theorem \ref{nonversupp.13} and ($\eSm$-BC),
we can apply \cite[Proposition 2.2.4]{LZ} for $J:=\{2\}$ to yield a map of simplicial sets
\begin{equation}
\label{base.1.1}
\delta^*\op_{\{6\}}\cC_{\cE_1,\cE_2,\cE_4,\cE_6}^\cart
\to
\infCat_\infty
\end{equation}
using $\op_{\{2\}}\op_{\{2,6\}}\simeq \op_{\{6\}}$.
By Construction \ref{base.6},
every morphism in $\cE_3$ admits a factorization $pq$ with $p\in \cE_1$ and $q\in \cE_2$.
By \cite[Theorem 1.5.4]{LZ},
there is a categorical equivalence of simplicial sets
\[
\delta^*\op_{\{6\}}\cC_{\cE_1,\cE_2,\cE_4,\cE_6}^\cart
\to
\delta^*\op_{\{6\}}\cC_{\cE_3,\cE_4,\cE_6}^\cart.
\]
Combine this with \eqref{base.1.1} to yield a map of simplicial sets
\begin{equation}
\label{base.1.2}
\delta^*\op_{\{6\}}\cC_{\cE_3,\cE_4,\cE_6}^\cart
\to
\infCat_\infty
\end{equation}
If $f\colon X\to S$ is a morphism in $\cE_5$,
then $f$ admits the induced factorization
\[
X\xrightarrow{q} \ul{X}\times_{\ul{S}} S \xrightarrow{p} S
\]
with $p\in \cE_3$ and $q\in \cE_4$.
By \cite[Theorem 1.5.4]{LZ} again,
there is a categorical equivalence of simplicial sets
\[
\delta^*\op_{\{6\}}\cC_{\cE_3,\cE_4,\cE_6}^\cart
\to
\delta^*\op_{\{6\}}\cC_{\cE_5,\cE_6}^\cart.
\]
Combine this with \eqref{base.1.2} to yield a map of simplicial sets
\begin{equation}
\label{base.1.3}
\delta^*\op_{\{6\}}\cC_{\cE_5,\cE_6}^\cart
\to
\infCat_\infty.
\end{equation}
By \cite[Example 1.4.29]{LZ}, there is a categorical equivalence of simplicial sets
\begin{equation}
\label{base.1.9}
\delta^*\op_{\{6\}}\cC_{\cE_5,\cE_6}^\cart
\to
\Corr(\cC,\cE_5,\cE_6),
\end{equation}
where we refer to \cite[\S A.2]{logGysin} for the notation $\Corr(\cC,\cE_5,\cE_6)$.
We note that a morphism in the $\infty$-category $\Corr(\cC,\cE_5,\cE_6)$ can be written as a diagram $(Y\xleftarrow{g} X\xrightarrow{f} S)$ with $g\in \cE_5$ and $f\in \cE_6$.
Combine \eqref{base.1.3} and \eqref{base.1.9} to yield a functor of $\infty$-categories
\begin{equation}
\label{base.1.7}
\Corr(\cC,\cE_5,\cE_6)
\to
\infCat_\infty.
\end{equation}
\end{const}

\begin{thm}
\label{base.2}
There exists a functor
\begin{gather*}
(\sT^\ex)_!^*
\colon
\Corr(\lSch/B,\mathrm{compactifiable},\mathrm{exact})
\to
\infCat_\infty
\end{gather*}
sending a morphism $(Y\xleftarrow{g} X \xrightarrow{f} S)$ in the correspondence category to
\[
g_!f^*
\colon
\sT^\ex(S)\to \sT^\ex(Y).
\]
Furthermore, $g_!\simeq g_\sharp$ if $g$ is an open immersion,
and $g_!\simeq g_*$ if $g$ is proper.
\end{thm}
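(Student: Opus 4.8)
The plan is to take $(\sT^\ex)_!^*$ to be precisely the functor \eqref{base.1.7} produced at the end of Construction \ref{base.1}, since $\Corr(\cC,\cE_5,\cE_6)$ is by definition $\Corr(\lSch/B,\mathrm{compactifiable},\mathrm{exact})$; all that is then left is to read off its values. Each operation used in Construction \ref{base.1} — the pullback along \eqref{base.4.1}, the right-adjoint version of \cite[Proposition 1.4.4]{1211.5948} for $J=\{1,4\}$, the version for $J=\{2\}$, the categorical equivalences of multisimplicial sets from \cite[Theorem 5.4]{1211.5294}, and the equivalence \eqref{base.1.9} — is the identity on objects, so at once $(\sT^\ex)_!^*(S)\simeq \sT^\ex(S)$.

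For morphisms I would use functoriality: the span $(Y\xleftarrow{g}X\xrightarrow{f}S)$ is the composite of $(X\xleftarrow{\id}X\xrightarrow{f}S)$ followed by $(Y\xleftarrow{g}X\xrightarrow{\id}X)$, so its image is $g_!\circ f^*$, where $f^*$ and $g_!$ denote the images of these two factors. Since the index $6$ stays inside every partial opposite (the construction ends with $\op_{\{6\}}$), the $\cE_6$-leg is treated contravariantly at every stage, so $f^*$ is literally $\sT^\ex(f)$; and $g_!$, which is thereby defined for an arbitrary compactifiable $g$, is what must be identified in the two stated cases. If $g$ is an open immersion, then $g\in\cE_2$ and $g\notin\cE_1\cup\cE_4$, so $g$ is untouched by the $J=\{1,4\}$ step and is made covariant by the $J=\{2\}$ step, where it acts by the left adjoint $g_\sharp$ of $g^*$, which exists (open immersions being exact log smooth) by Theorem \ref{restriction.16}; hence $g_!\simeq g_\sharp$. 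If $g$ is proper, I would invoke Construction \ref{base.6} with the trivial open part, possible since $\ul g$ is already proper: $g=p\circ q$ with $q\in\cE_4$, indeed $\ul q$ an isomorphism, and $p\in\cE_1$ strict proper. The $J=\{1,4\}$ step makes both legs covariant, acting by $q_*$ and $p_*$, and functoriality of $(\sT^\ex)_!^*$ gives $g_!\simeq p_*q_*\simeq g_*$.

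I expect the one genuine difficulty to be bookkeeping rather than mathematics: I must check that the categorical equivalences of \cite[Theorem 5.4]{1211.5294}, which merge $\cE_1,\cE_2$ into $\cE_3$ and then $\cE_3,\cE_4$ into $\cE_5$, identify the morphism attached to a proper $g$ (or to an open immersion $g$) with the composite built from the factorization of Construction \ref{base.6}. This rests on the properties recorded in Construction \ref{base.1}: that $\cE_1,\dots,\cE_4$ are admissible and stable under pullback and composition, that every morphism of $\cE_3$ factors as $pq$ with $p\in\cE_1$, $q\in\cE_2$, and that every morphism of $\cE_5$ factors as $pq$ with $p\in\cE_3$, $q\in\cE_4$. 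The substantive inputs — exact base change against proper morphisms (Proposition \ref{base.5}), the support property (Theorem \ref{nonversupp.13}), and ($\eSm$-BC) — are already in hand and are only invoked at the two applications of \cite[Proposition 1.4.4]{1211.5948}.
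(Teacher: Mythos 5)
Your proposal is correct and follows essentially the same route as the paper: the functor is taken to be the output \eqref{base.1.7} of Construction \ref{base.1}, the open-immersion case is read off from the $J=\{2\}$ left-adjoint passage, and the proper case is handled exactly as in the paper by factoring $g$ as $X\to \ul{X}\times_{\ul{S}}S\to S$ with the first map in $\cE_4$ and the second strict proper in $\cE_1$, both of which the $J=\{1,4\}$ right-adjoint passage sends to $(-)_*$. (The parenthetical claim that an open immersion never lies in $\cE_1\cup\cE_4$ is not literally true in degenerate cases, but it is not load-bearing for your argument.)
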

\begin{proof}
Only the last claim requires attention by Construction \ref{base.1}.
If $g\colon X\to Y$ is a proper morphism in $\lSch/B$,
then $g$ admits the induced factorization
\[
X\xrightarrow{i} \ul{X}\times_{\ul{S}}S \xrightarrow{p} S.
\]
Since $\ul{i}$ is an isomorphism and $p$ is strict proper,
we have $p_!\simeq p_*$ and $i_!\simeq i_*$ by Construction \ref{base.1}.
Hence we have $g_!\simeq p_!i_!\simeq p_*i_*\simeq g_*$.
\end{proof}

\begin{cor}
\label{base.3}
Let
\[
\begin{tikzcd}
X'\ar[d,"f'"']\ar[r,"g'"]&
X\ar[d,"f"]
\\
S'\ar[r,"g"]&
S
\end{tikzcd}
\]
be a cartesian square in $\lSch/B$ such that $f$ is compactifiable and $g$ is exact.
Then there is a natural isomorphism
\[
g^*f_!
\simeq
f_!'g'^*.
\]
\end{cor}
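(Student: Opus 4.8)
The plan is to derive Corollary~\ref{base.3} directly from the functoriality of the correspondence‑category functor $(\sT^\ex)_!^*$ constructed in Theorem~\ref{base.2}, by exhibiting both $g^*f_!$ and $f_!'g'^*$ as the image of one and the same morphism in $\Corr(\lSch/B,\mathrm{compactifiable},\mathrm{exact})$.

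First I would write down, for each of the four functors involved, the morphism of the correspondence category that $(\sT^\ex)_!^*$ sends it to. Recall that $(\sT^\ex)_!^*$ takes a span $(Y\xleftarrow{a}Z\xrightarrow{b}S)$, with $a$ compactifiable and $b$ exact, to $a_!b^*$. Thus $f_!$ is the image of $(S\xleftarrow{f}X\xrightarrow{\id}X)$, viewed as a morphism $X\to S$; $g^*$ is the image of $(S'\xleftarrow{\id}S'\xrightarrow{g}S)$, a morphism $S\to S'$; $g'^*$ is the image of $(X'\xleftarrow{\id}X'\xrightarrow{g'}X)$, a morphism $X\to X'$; and $f_!'$ is the image of $(S'\xleftarrow{f'}X'\xrightarrow{\id}X')$, a morphism $X'\to S'$. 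Here I use the last sentence of Theorem~\ref{base.2} to identify $\id_!\simeq\id$ on the nose, so these are literally $f_!$, $g^*$, $g'^*$, $f_!'$. All four spans are legitimate morphisms of $\Corr(\lSch/B,\mathrm{compactifiable},\mathrm{exact})$: the ``wrong‑way'' legs $f,\id,\id,f'$ are compactifiable and the ``right‑way'' legs $\id,g,g',\id$ are exact, where I use that both classes are stable under pullback (the observation recorded in Construction~\ref{base.1}) to see that $g'$ is exact and $f'$ is compactifiable.

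Next I would compute the two composites of spans. Composing the span for $f_!$ with the span for $g^*$ requires forming the fibre product of $f\colon X\to S$ with $g\colon S'\to S$, which is precisely the given cartesian square, and produces the span $(S'\xleftarrow{f'}X'\xrightarrow{g'}X)$ as a morphism $X\to S'$; so $g^*f_!\simeq(\sT^\ex)_!^*(S'\xleftarrow{f'}X'\xrightarrow{g'}X)$ by functoriality. Composing the span for $g'^*$ with the span for $f_!'$ involves only fibre products along identity maps and yields the very same span $(S'\xleftarrow{f'}X'\xrightarrow{g'}X)$, so $f_!'g'^*\simeq(\sT^\ex)_!^*(S'\xleftarrow{f'}X'\xrightarrow{g'}X)$ as well. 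Chaining the two equivalences gives the natural isomorphism $g^*f_!\simeq f_!'g'^*$.

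Since the entire content is contained in Theorem~\ref{base.2}, I do not expect any real obstacle; the only things to be careful about are the bookkeeping of span orientations and the composition rule for correspondences, and checking that the intermediate span has legs in the prescribed admissible classes, both of which are routine. Equivalently, one may phrase the argument by observing that the cartesian square of the statement, read inside the correspondence category, is a commuting square both of whose composites equal $(S'\xleftarrow{f'}X'\xrightarrow{g'}X)$, so that functoriality of $(\sT^\ex)_!^*$ supplies the isomorphism together with its naturality.
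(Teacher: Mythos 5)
Your proof is correct and is exactly the unpacking of what the paper means when it says the corollary is ``an immediate consequence of Theorem \ref{base.2}'': both $g^*f_!$ and $f_!'g'^*$ arise as the value of $(\sT^\ex)_!^*$ on the single span $(S'\xleftarrow{f'}X'\xrightarrow{g'}X)$, obtained as the two composites in $\Corr(\lSch/B,\mathrm{compactifiable},\mathrm{exact})$. The bookkeeping of the span orientations, the admissibility of the legs via stability under pullback, and the identification $\id_!\simeq\id$ are all handled as the paper intends, so there is nothing to add.
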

\begin{proof}
This is an immediate consequence of Theorem \ref{base.2}.
\end{proof}

With the above notation,
if $g$ is exact log smooth,
then
we have the natural transformation
\begin{equation}
Ex\colon g_\sharp f_!'
\to
f_!g_\sharp'
\end{equation}
given by the composite $g_\sharp f_!'\xrightarrow{ad}g_\sharp f_!'g'^*g_\sharp'\xrightarrow{\simeq} g_\sharp g^* f_!g_\sharp' \xrightarrow{ad'} f_!g_\sharp'$.

The condition in the following statement will be proven in \cite{logsix}.

\begin{prop}
\label{base2.5}
Assume that for every cartesian square
\[
C:=
\begin{tikzcd}
X'\ar[d,"f'"']\ar[r,"g'"]&
X\ar[d,"f"]
\\
S'\ar[r,"g"]&
S
\end{tikzcd}
\]
such that $f$ is exact proper,
the natural transformation
\[
Ex\colon
g^*f_*
\to
f_*'g'^*
\]
is an isomorphism.
Then there exists a functor
\begin{gather*}
(\sT^\ex)_!^*
\colon
\Corr(\lSch/B,\mathrm{exact}\;\mathrm{compactifiable},\mathrm{all})
\to
\infCat_\infty
\end{gather*}
sending a morphism $(Y\xleftarrow{g} X \xrightarrow{f} S)$ in the correspondence category to
\[
g_!f^*
\colon
\sT^\ex(S)\to \sT^\ex(Y).
\]
\end{prop}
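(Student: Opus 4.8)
The plan is to repeat Construction \ref{base.1} and the proof of Theorem \ref{base.2} with three modifications: replace the sixth class $\cE_6$ of exact morphisms by the class of \emph{all} morphisms in $\lSch/B$; replace the fifth class $\cE_5$ of compactifiable morphisms by the class of exact compactifiable morphisms; and replace the fourth class $\cE_4$ of morphisms with underlying isomorphism by its subclass $\cE_4'$ of those that are moreover exact (such a morphism being automatically exact and proper). The classes $\cE_1$ (strict proper), $\cE_2$ (open immersions), and $\cE_3$ (strict compactifiable) are kept unchanged. The point of shrinking $\cE_4$ to $\cE_4'$ is that then every morphism which the construction turns into a right adjoint $(-)_*$ is either strict proper or exact proper, and hence satisfies base change against \emph{arbitrary} morphisms: strict proper morphisms by Proposition \ref{restriction.10}, and exact proper morphisms by the hypothesis of the present proposition, which takes the place of Proposition \ref{base.5} in the unconditional construction.

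Concretely, one starts with $\sT^\ex\colon \cC^{op}\to \infCat_\infty$, pulls it back along \eqref{base.4.1} to obtain a map out of $\delta^*\op_{\{1,2,4,6\}}\cC_{\cE_1,\cE_2,\cE_4',\cE_6}^\cart$, applies the right adjoint version of \cite[Proposition 1.4.4]{1211.5948} for $J=\{1,4\}$ (its base-change hypothesis supplied by Proposition \ref{restriction.10} together with the hypothesis of the proposition), and then applies \cite[Proposition 1.4.4]{1211.5948} for $J=\{2\}$ (its base-change hypothesis supplied by Theorem \ref{nonversupp.13}, the support property against the proper classes $\cE_1$ and $\cE_4'$, and by ($\eSm$-BC), since open immersions are exact log smooth and so $j_\sharp$ satisfies base change against all morphisms — this is exactly what allows $\cE_6$ to be taken to be all morphisms). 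Because exactness of a composite forces the underlying-isomorphism factor $q$ in the Nakayama factorization of Construction \ref{base.6} to be exact, every exact compactifiable morphism factors as an $\cE_3$-morphism preceded by an $\cE_4'$-morphism, and every $\cE_3$-morphism factors as an $\cE_1$-morphism preceded by an $\cE_2$-morphism; two applications of \cite[Theorem 5.4]{1211.5294} together with \cite[Example 4.30]{1211.5294} then transport the resulting functor to $\Corr(\lSch/B,\mathrm{exact}\;\mathrm{compactifiable},\mathrm{all})$. The functor $g_!$ produced this way agrees with the one of Theorem \ref{base.2} for exact compactifiable $g$, since both are assembled from the same factorization; in particular $g_!\simeq g_\sharp$ for open immersions and $g_!\simeq g_*$ for proper morphisms.

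The main obstacle is the combinatorial input to the Liu--Zheng machinery. Unlike the class of all morphisms with underlying isomorphism used in Construction \ref{base.1}, the class $\cE_4'$ of \emph{exact} such morphisms is not admissible in the sense of \cite[Definition 3.18]{1211.5294}: a morphism inducing the addition map $\N^2\to\N$ on characteristic monoids is not exact, yet its composite with the (exact) morphism removing the log structure is exact, so exactness fails the left-cancellation property that admissibility demands. One must therefore verify that the two uses of \cite[Theorem 5.4]{1211.5294} above — notably the one merging $\cE_3$ and $\cE_4'$ into the class of exact compactifiable morphisms — still go through with the properties that $\cE_4'$ does enjoy (stability under composition and under base change, $\cE_3\cap\cE_4'$ consisting of isomorphisms, and the essentially unique factorization recorded above), or else reorganize that merging step so as not to invoke admissibility of $\cE_4'$ at all. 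Once this bookkeeping is settled, every remaining ingredient — existence of the relevant adjoints, the base-change isomorphisms, and the values of $g_!$ on proper morphisms and open immersions — is already available from the results of this paper together with the hypothesis of the proposition.
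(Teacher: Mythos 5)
Your proposal is exactly the argument the paper intends: its own proof of Proposition \ref{base2.5} consists of the single sentence ``Argue as in Construction \ref{base.1}'', and your instantiation --- enlarging $\cE_6$ to all morphisms, shrinking $\cE_5$ to the exact compactifiable morphisms and $\cE_4$ to its exact part $\cE_4'$, keeping $\cE_1,\cE_2,\cE_3$, and feeding Proposition \ref{restriction.10} and the hypothesis of the proposition into the right-adjoint step for $J=\{1,4\}$ and Theorem \ref{nonversupp.13} together with ($\eSm$-BC) into the left-adjoint step for $J=\{2\}$ --- is the correct way to carry it out. Your observation that the underlying-isomorphism factor of the Nakayama factorization of an exact compactifiable morphism is automatically exact is also right and is what makes the factorization hypotheses of \cite[Theorem 5.4]{1211.5294} available.

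The admissibility point you flag is genuine and the paper is silent on it: your counterexample is valid (with $q\colon \pt_{\N}\to\pt_{\N^2}$ induced by the sum $\N^2\to\N$ and $p\colon \pt_{\N^2}\to\Spec(k)$, both $p$ and $pq$ lie in $\cE_4'$ while $q$ does not), so $\cE_4'$ indeed fails the cancellation property in the paper's gloss of \cite[Definition 3.18]{1211.5294}. Here is how to settle the bookkeeping you leave open. The cancellation actually relevant to merging $\cE_3$ with $\cE_4'$ is the mixed one: if $p\in\cE_3$ and $q\in\cE_4$ with $p\circ q$ exact, then $q$ is exact, because $p$ is strict and hence the characteristic homomorphism of $p\circ q$ at any point agrees with that of $q$ up to precomposition with an isomorphism. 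Consequently every decomposition of an exact compactifiable morphism into $\cE_3\circ\cE_4$ automatically lies in $\cE_3\circ\cE_4'$, so the multisimplicial subset built from $(\cE_3,\cE_4',\cE_6)$ is precisely the preimage of the exact-compactifiable part under the admissible merging of $(\cE_3,\cE_4)$ into $\cE_5$ already performed in Construction \ref{base.1}. Moreover such a decomposition of $f\colon X\to S$ is essentially unique: strictness of $p$ forces the middle term to be $\ul{X}\times_{\ul{S}}S$ with its canonical maps, so the decomposition spaces whose weak contractibility drives \cite[Theorem 5.4]{1211.5294} are contractible groupoids here, and the admissibility of $\cE_4'$, which in Liu--Zheng's argument serves only to compare distinct decompositions, is never invoked in a nontrivial way. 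With either of these two observations made explicit, your construction closes; the remainder of your account is complete and matches the paper's.
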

\begin{proof}
Argue as in Construction \ref{base.1}.
\end{proof}

We also obtain the projection formula as follows.

\begin{thm}
For every compactifiable morphism $f\colon X\to S$ in $\lSch/B$,
there exists a natural isomorphism
\[
f_!\cF \otimes \cG
\simeq
f_!(\cF\otimes f^*\cG)
\]
for $\cF\in \sT^\ex(X)$ and $\cG\in \sT^\ex(S)$.
\end{thm}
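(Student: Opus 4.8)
The plan is to reduce the projection formula for $f_!$ to three special cases coming from the canonical factorization of a compactifiable morphism. By Construction \ref{base.6}, $f$ factors as
\[
X\xrightarrow{\ q\ }\ul{X}\times_{\ul{S}}S\xrightarrow{\ j\ }\ul{V}\times_{\ul{S}}S\xrightarrow{\ p\ }S,
\]
with $j$ an open immersion, $p$ strict proper, and $\ul{q}$ an isomorphism; in particular $q$ and $p$ are proper. Applying the functor $(\sT^\ex)_!^*$ of Theorem \ref{base.2} to this factorization (each leg taken with identity, hence exact, backward arrow) yields a natural equivalence $f_!\simeq p_*\,j_\sharp\,q_*$, using $p_!\simeq p_*$, $j_!\simeq j_\sharp$, $q_!\simeq q_*$. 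It therefore suffices to establish the projection formula for each of $q_*$, $j_\sharp$, $p_*$ and then paste the three isomorphisms together.

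Next I would record the projection formula for each of the three functors. For $j_\sharp$ it is \textup{($\eSm$-PF)} of Theorem \ref{restriction.16}, as open immersions are exact log smooth; for $p_*$ it is Proposition \ref{restriction.14}, as $p$ is strict proper. For $q_*$ I would invoke Proposition \ref{proj.1}: its hypothesis is that the proper morphism $q$ satisfies the vertically universal support property, and this holds because any proper morphism whose underlying morphism of schemes is an isomorphism satisfies the strictly universal support property — for such a $q'\colon X'\to T'$ and any strict $U\to T'=\ul{X'}\times_{\ul{T'}}T'$ the base change $X'\times_{T'}U\to U$ is again proper with underlying isomorphism, hence has the support property by Theorem \ref{nonversupp.13} — and having an underlying isomorphism is preserved by base change along vertical morphisms.

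The projection formula for $f$ then follows by composing the three exchange isomorphisms along
\[
f_!(\cF\otimes f^*\cG)=p_* j_\sharp q_*(\cF\otimes q^* j^* p^*\cG)\xrightarrow{\ \sim\ }p_* j_\sharp(q_*\cF\otimes j^* p^*\cG)\xrightarrow{\ \sim\ }p_*(j_\sharp q_*\cF\otimes p^*\cG)\xrightarrow{\ \sim\ }p_*(j_\sharp q_*\cF)\otimes\cG=f_!\cF\otimes\cG,
\]
where $f^*=q^* j^* p^*$ is the monoidal structure map of $\sT^\ex$. To finish, one must check that this composite agrees with the canonical natural transformation $f_!\cF\otimes\cG\to f_!(\cF\otimes f^*\cG)$; this amounts to the standard compatibility of the projection-formula exchange map with composition of morphisms (that $Ex_{hg}$ equals $Ex_h$ followed by $h_!(Ex_g)$), which I would verify by unwinding the exchange transformations and their behaviour under horizontal pasting of the relevant squares.

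I expect the main obstacle to be the $q_*$ case. The support-property verification for $q$ is short once Theorem \ref{nonversupp.13} is available, but Proposition \ref{proj.1} is here being applied against the object $j^* p^*\cG$, which ranges over all of $\sT^\ex(\ul{X}\times_{\ul{S}}S)$; should one prefer to stay among the vertical test objects for which Proposition \ref{proj.1} is most transparently proved, a preliminary reduction is available: by Proposition \ref{restriction.2} the objects $M_S(V)(d)[n]$ with $V\in\eSm/S$ generate $\sT^\ex(S)$ under colimits, both sides of the projection formula commute with twists and shifts by Proposition \ref{restriction.11}, \textup{($\ver$-inv)} gives $M_S(V)\simeq M_S(V-\partial_S V)$ with $V-\partial_S V$ vertical over $S$, and both sides preserve colimits in $\cG$ because $q^*$, $j^*$, $p^*$ carry the compact generators to compact objects by \textup{($\eSm$-BC)}, so each of $q_*$, $j_\sharp$, $p_*$ preserves colimits; this reduces the statement to $\cG=M_S(V)$ with $V$ vertical, whence $j^* p^*\cG$ is vertical over $\ul{X}\times_{\ul{S}}S$ and Proposition \ref{proj.1} applies directly. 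The remaining work is the coherence check above, which is routine but tedious.
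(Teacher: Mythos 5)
Your proposal is correct and follows essentially the same route as the paper: the paper's proof is precisely the combination of the factorization $f_!\simeq p_*j_\sharp q_*$ from Construction \ref{base.6} and Theorem \ref{base.2}, the projection formula for the proper legs via Proposition \ref{proj.1} together with Theorem \ref{nonversupp.13} (which, as you note, makes every proper morphism satisfy the vertically universal support property), and \textup{($\eSm$-PF)} for the open immersion. Your extra reduction to vertical generators and the pasting/coherence remarks are elaborations of steps the paper leaves implicit, not a different argument.
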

\begin{proof}
This is an immediate consequence of Proposition \ref{proj.1}, Theorem \ref{nonversupp.13}, and ($\eSm$-PF).
\end{proof}

\appendix

\section{Update\texorpdfstring{ of \cite[Theorem 4.3.10]{logA1}}{}}

In this section,
we prove the Nisnevich version of \cite[Theorem 4.3.10]{logA1}.

Let $B\in \Sch$,
and let $\sT$ be a motivic $\infty$-category used in \cite[\S 4.1]{logA1}.
For $Y\in \lSch/\A_{\N,B}$ and integer $d$,
consider
\[
\Theta_d(Y)
:=
\hom_{\sT(\ul{Y-\partial_{\A_{\N,B}}Y})}
(
M(\ul{Y-\partial_{\A_{\N,B}}Y})(d),
M((Y-\partial_{\A_{\N,B}}Y)\times_{\A_{\N,B}}C)
)
\]
in \cite[Construction 4.1.1]{logA1},
where $\hom$ denotes the Hom spectrum.
We will often omit $d$ in $\Theta_d$ for simplicity.
If $f\colon Y'\to Y$ is a dividing cover in $\A_{\N,B}$,
then \cite[(4.1.3)]{logA1} constructs a morphism
\[
f_*
\colon
\Theta(Y')
\to
\Theta(Y).
\]

If $f\colon Y'\to Y$ is a morphism in $\A_{\N,B}$ such that
\begin{equation}
\label{update.0.1}
Y'-\partial_{\A_{\N,B}}Y'\simeq (Y-\partial_{\A_{\N,B}} Y)\times_Y Y',
\end{equation}
then we have the induced morphism
\[
f^*\colon
\Theta(Y)
\to
\Theta(Y').
\]

\begin{exm}
\label{update.3}
Assume that $f$ is of the form $\id \times g\colon Y':=X\times_{\A_{\N,B}}V'\to Y:=X\times_{\A_{\N,B}}V$ for some $X\in \lSm/\A_{\N,B}$ and a morphism $g\colon V'\to V$ in $\lSm/\A_{\N,B}$ such that $V$ and $V'$ are vertical over $\A_{\N,B}$.
Then we have
\[
Y-\partial_{\A_{\N,B}}Y
\simeq
Y-\partial_V Y
\simeq
(X-\partial_{\A_{\N,B}}X)\times_{\A_{\N,B}}V
\]
by \cite[Propositions 2.3.8(4),(5), 2.3.9]{logA1}.
We similarly have
\[
Y'-\partial_{\A_{\N,B}}Y'
\simeq
(X-\partial_{\A_{\N,B}}X)\times_{\A_{\N,B}}V'.
\]
Hence \eqref{update.0.1} is satisfied.
\end{exm}

\begin{exm}
\label{update.4}
If $f$ is an open immersion, then $f$ satisfies \eqref{update.0.1}.
\end{exm}

The following removes an assumption in \cite[Proposition 4.1.4]{logA1}:

\begin{prop}
\label{update.1}
Let $f\colon Y'\to Y$ be a dividing cover in $\lSm/\A_{\N,B}$.
Then the morphism $f_*\colon \Theta(Y')\to \Theta(Y)$ is an isomorphism.
\end{prop}
\begin{proof}
Consider the map $\theta\colon \N\to P:=\N\oplus \N$ given by $1\mapsto (1,n)$ with an integer $n>0$,
and consider the face $F:=\N\oplus 0$ of $P$.
We have an induced commutative diagram
\[
\begin{tikzcd}
Y'\ar[d,"f"']\ar[r,"i'"]&
Y'\times_{\A_{\N,B}}\A_{P,B}\ar[r,leftarrow,"j'"']\ar[d,"f''"]\ar[rr,bend left=15,"q'"]&
Y'\times_{\A_{\N,B}}\A_{P_F,B}\ar[r,"p'"']\ar[d,"f'"]&
Y'\ar[d,"f"]
\\
Y\ar[r,"i"]&
Y\times_{\A_{\N,B}}\A_{P,B}\ar[r,leftarrow,"j"]\ar[rr,bend right=15,"q"']&
Y\times_{\A_{\N,B}}\A_{P_F,B}\ar[r,"p"]&
Y,
\end{tikzcd}
\]
where $f'$ and $f''$ are the pullbacks,
$p$, $p'$, $q$, and $q'$ are the projections,
$j$ and $j'$ are the obvious open immersions,
and $i$ and $i'$ are induced by the first projection $P=\N\oplus \N\to \N$.
We have an induced commutative diagram
\[
\begin{tikzcd}
\Theta(Y)\ar[r,"p^*"']\ar[d,"f_*"']\ar[rr,bend left=15,"q^*"]&
\Theta(Y\times_{\A_{\N,B}}\A_{P_F,B})\ar[d,"f_*'"]\ar[r,leftarrow,"j^*"']&
\Theta(Y\times_{\A_{\N,B}}\A_{P,B})\ar[d,"f_*''"]\ar[r,"i^*"]&
\Theta(Y)\ar[d,"f_*"]
\\
\Theta(Y')\ar[r,"p'^*"]\ar[rr,bend right=15,"q'^*"']&
\Theta(Y'\times_{\A_{\N,B}}\A_{P_F,B})\ar[r,leftarrow,"j'^*"]&
\Theta(Y'\times_{\A_{\N,B}}\A_{P,B})\ar[r,"i'^*"]&
\Theta(Y').
\end{tikzcd}
\]
Here, Examples \ref{update.3} and \ref{update.4} construct the morphisms $(-)^*$.
By \cite[Theorem I.4.9.1]{Ogu},
there exists an integer $n>0$ such that the projection $Y\times_{\A_{\N,B}} \A_{P_F,B}\to \A_{P_F,B}$ is saturated,
so \cite[Proposition 4.1.4]{logA1} is applicable Zariski locally on $Y\times_{\A_{\N,B}} \A_{P_F,B}$ by \cite[Proposition A.4]{divspc} and \cite[Theorem III.2.5.5]{Ogu}.
Hence $f_*'$ is an isomorphism.

Assume that $j^*$ and $j'^*$ are isomorphisms.
Let $\alpha$ be the composite
\begin{align*}
\Theta(Y')
\xrightarrow{p'^*} &
\Theta(Y'\times_{\A_{\N,B}}\A_{P_F,B})
\\
\xrightarrow{(f_*')^{-1}} &
\Theta(Y\times_{\A_{\N,B}}\A_{P_F,B})
\xrightarrow{(j^*)^{-1}}
\Theta(Y\times_{\A_{\N,B}}\A_{P,B})
\xrightarrow{i^*}
\Theta(Y).
\end{align*}
Then we have
\begin{align*}
f_*\alpha
= &
f_*i^*(j^*)^{-1}(f_*')^{-1}p'^*
\simeq
i'^*f_*''(j^*)^{-1}(f_*')^{-1}p'^*
\\
\simeq &
i'^*(j'^*)^{-1}f_*'(f_*')^{-1}p'^*
\simeq 
i'^*(j'^*)^{-1}p'^*
\simeq
i'^*q^*
\simeq
\id
\end{align*}
and
\[
\alpha f_*
= 
i^*(j^*)^{-1}(f_*')^{-1}p'^* f_*
\simeq
i^*(j^*)^{-1}(f_*')^{-1}f_*'p^*
\simeq
i^*(j^*)^{-1}p^*
\simeq
i^*q^*
\simeq
\id.
\]
It follows that $f_*$ is an isomorphism.

Hence it remains to show that $j^*$ and $j'^*$ are isomorphisms.
Let us focus on $j^*$ since the proof for $j'^*$ is similar.
For $V\in \lSm/\A_{\N,B}$,
consider
\[
\Phi_U(V)
:=
\hom_{\sT(\ul{U\times_{\A_{\N,B}}V})}
(
M(\ul{U\times_{\A_{\N,B}}V})(d)
,
M(\ul{U\times_{\A_{\N,B}}V \times_{\A_{\N,B}}C})
)
\]
with $U:=Y-\partial_{\A_{\N,B}}Y$.
Note that $\Phi_U(V)$ agrees with $\Phi(V)$ in \cite[Construction 4.1.1]{logA1} if $U=\A_{\N,B}$.
We need to show that the induced morphism
\[
\Phi_U(\A_{P,B})
\to
\Phi_U(\A_{P_F,B})
\]
is an isomorphism.
This can be shown by arguing as in \cite[Lemma 4.1.5]{logA1}.
\end{proof}

\begin{thm}
\label{update.2}
For $\SH$ and $\DA(-,\Lambda)$,
\cite[Theorem 4.3.10]{logA1} holds,
where $\Lambda$ is a commutative ring.
\end{thm}
\begin{proof}
With Proposition \ref{update.1} in hand,
the arguments leading to \cite[Theorem 4.3.10]{logA1} can be written for $\SH$ and $\DA(-,\Lambda)$.
\end{proof}

\bibliography{bib}

\begin{thebibliography}{10}

\bibitem{SGA4}
{\sc M.~Artin, A.~Grothendieck, and J.~L. Verdier}, {\em Th\'eorie des topos et cohomologie \'etale des sch\'emas}, vol.~269, 270, 305 of Lecture Notes in Mathematics, Springer-Verlag, 1972--1973.
\newblock S\'eminaire de G\'eom\'etrie Alg\'ebrique du Bois--Marie 1963---64.

\bibitem{Ayo071}
{\sc J.~Ayoub}, {\em Les six op\'{e}rations de {G}rothendieck et le formalisme des cycles \'{e}vanescents dans le monde motivique {(I)}}, Ast\'{e}risque, 314 (2007).

\bibitem{Ayo072}
\leavevmode\vrule height 2pt depth -1.6pt width 23pt, {\em Les six op\'{e}rations de {G}rothendieck et le formalisme des cycles \'{e}vanescents dans le monde motivique {(II)}}, Ast\'{e}risque, 315 (2007).

\bibitem{logDM}
{\sc F.~Binda, D.~Park, and P.~A. {\O}stv{\ae}r}, {\em Triangulated categories of logarithmic motives over a field}, Ast{\'e}risque, 433 (2022).

\bibitem{MR3404379}
{\sc D.-C. Cisinski and F.~D\'{e}glise}, {\em Integral mixed motives in equal characteristic}, Doc. Math.,  (2015), pp.~145--194.

\bibitem{CD19}
\leavevmode\vrule height 2pt depth -1.6pt width 23pt, {\em {Triangulated categories of mixed motives}}, Cham: Springer, 2019.

\bibitem{CLStoric}
{\sc D.~Cox, J.~Little, and H.~Schenck}, {\em Toric Varieties}, Graduate studies in mathematics, American Mathematical Soc., 2011.

\bibitem{EGAIVIV}
{\sc A.~Grothendieck}, {\em \'el\'ements de g\'eom\'etrie alg\'ebrique. {IV}. \'etude locale des sch\'emas et des morphismes de sch\'emas {IV}}, Inst. Hautes \'Etudes Sci. Publ. Math.,  (1967), p.~361.

\bibitem{LZ}
{\sc Y.~Liu and W.~Zheng}, {\em Enhanced six operations and base change theorem for higher artin stacks}.
\newblock Preprint, \href{https://arxiv.org/pdf/1211.5948v4.pdf}{arXiv:1211.5948v4}, 2024.

\bibitem{HTT}
{\sc J.~{Lurie}}, {\em {Higher topos theory}}, vol.~170, Princeton, NJ: Princeton University Press, 2009.

\bibitem{HA}
\leavevmode\vrule height 2pt depth -1.6pt width 23pt, {\em Higher algebra}.
\newblock \url{https://www.math.ias.edu/~lurie/papers/HA.pdf}, 2017.

\bibitem{2206.02022}
{\sc L.~Mann}, {\em A $p$-adic 6-functor formalism in rigid-analytic geometry}.
\newblock \url{https://arxiv.org/pdf/2206.02022v1.pdf}, 2022.

\bibitem{MV}
{\sc F.~Morel and V.~Voevodsky}, {\em {${\bf A}^1$}-homotopy theory of schemes}, Inst. Hautes \'{E}tudes Sci. Publ. Math.,  (1999), pp.~45--143 (2001).

\bibitem{MR1457738}
{\sc C.~Nakayama}, {\em Logarithmic \'{e}tale cohomology}, Math. Ann., 308 (1997), pp.~365--404.

\bibitem{zbMATH05809283}
{\sc C.~{Nakayama} and A.~{Ogus}}, {\em {Relative rounding in toric and logarithmic geometry}}, {Geom. Topol.}, 14 (2010), pp.~2189--2241.

\bibitem{Ogu}
{\sc A.~Ogus}, {\em Lectures on Logarithmic Algebraic Geometry}, Cambridge Studies in Advanced Mathematics, Cambridge University Press, 2018.

\bibitem{divspc}
{\sc D.~Park}, {\em Inverting log blowups in log geometry}, Tunis. J. Math., 6 (2024), pp.~405--453.

\bibitem{logGysin}
\leavevmode\vrule height 2pt depth -1.6pt width 23pt, {\em Log motivic {G}ysin isomorphisms}.
\newblock Preprint, \href{https://arxiv.org/pdf/2303.12498v2.pdf}{arXiv:2303.12498v2}, 2026.

\bibitem{logA1}
\leavevmode\vrule height 2pt depth -1.6pt width 23pt, {\em $\mathbb{A}^1$-homotopy theory of log schemes}.
\newblock Preprint, \href{http://arxiv.org/pdf/2205.14750v4}{arXiv:2205.14750v4}, to appear in Eur. J. Math, 2026.

\bibitem{logsix}
\leavevmode\vrule height 2pt depth -1.6pt width 23pt, {\em Motivic six-functor formalism for log schemes}.
\newblock Preprint, \href{https://arxiv.org/pdf/2403.07645v2.pdf}{arXiv:2403.07645v2}, 2026.

\bibitem{MR2435654}
{\sc O.~R\"{o}ndigs and P.~A. {\O}stv{\ae}r}, {\em Modules over motivic cohomology}, Adv. Math., 219 (2008), pp.~689--727.

\bibitem{MR3865569}
{\sc M.~Spitzweck}, {\em A commutative {$\mathbb{P}^1$}-spectrum representing motivic cohomology over {D}edekind domains}, M\'{e}m. Soc. Math. Fr. (N.S.),  (2018), p.~110.

\bibitem{zbMATH07027475}
{\sc T.~Tsuji}, {\em Saturated morphisms of logarithmic schemes}, Tunis. J. Math., 1 (2019), pp.~185--220.

\bibitem{zbMATH01194164}
{\sc V.~{Voevodsky}}, {\em {\(\mathbb{A}^1\)-homotopy theory}}, {Doc. Math.}, Extra Vol. (1998), pp.~579--604.

\end{thebibliography}
\bibliographystyle{siam}

\end{document}